\tikzstyle{vertex}=[circle, draw, inner sep=0pt, minimum size=4pt]
\newcommand{\vertex}{\node[vertex]}
\tikzstyle{vtx}=[circle, draw, inner sep=0pt, minimum size=8pt]
\newcommand{\vtx}{\node[vtx]}
\definecolor{darkgreen}{cmyk}{.9,0,.9,.2}
\definecolor{midgray}{gray}{0.60}
\definecolor{lightgray}{gray}{0.90}
\definecolor{lmgray}{gray}{0.70}
\definecolor{one}{RGB}{203,152,203}
\definecolor{two}{RGB}{254,218,152}
\definecolor{three}{RGB}{152,152,253}
\definecolor{four}{RGB}{253,152,152}
\definecolor{five}{RGB}{152,203,152}
\definecolor{six}{RGB}{152,152,152}
\definecolor{seven}{RGB}{203,203,152}
\definecolor{eight}{RGB}{254,202,184}
\definecolor{nine}{RGB}{254,238,152}
\definecolor{ten}{RGB}{240,160,176}
\definecolor{eleven}{RGB}{211,152,236}
\definecolor{twelve}{RGB}{152,152,203}
\definecolor{thirteen}{RGB}{223,156,121}
\definecolor{fourteen}{RGB}{121,90,223}
\definecolor{fifteen}{RGB}{152,181,90}
\definecolor{sixteen}{RGB}{192,90,152}
\definecolor{seventeen}{RGB}{203,142,90}
\definecolor{eighteen}{RGB}{90,121,90}
\definecolor{nineteen}{RGB}{192,176,90}
\definecolor{twenty}{RGB}{223,172,122}
\definecolor{twenty-one}{RGB}{227,141,173}
\definecolor{twenty-two}{RGB}{239,150,114}
\definecolor{twenty-three}{RGB}{143,95,156}
\definecolor{twenty-four}{RGB}{150,90,205}
\definecolor{twenty-five}{RGB}{174,119,133}
\definecolor{twenty-six}{RGB}{133,144,72}
\definecolor{twenty-seven}{RGB}{174,53,133}
\definecolor{twenty-eight}{RGB}{90,108,53}
\definecolor{twenty-nine}{RGB}{155,158,53}
\definecolor{thirty}{RGB}{202,113,77}
\definecolor{thirty-one}{RGB}{143,89,119}
\definecolor{thirty-two}{RGB}{144,56,177}
\definecolor{thirty-three}{RGB}{191,104,154}
\definecolor{thirty-four}{RGB}{209,122,136}
\definecolor{thirty-five}{RGB}{223,135,85}
\definecolor{thirty-six}{RGB}{166,104,90}
\definecolor{thirty-seven}{RGB}{205,71,79}
\definecolor{thirty-eight}{RGB}{104,122,79}
\definecolor{thirty-nine}{RGB}{79,86,42}
\definecolor{forty}{RGB}{155,150,31}
\definecolor{forty-one}{RGB}{180,102,55}
\definecolor{forty-two}{RGB}{120,67,96}
\definecolor{forty-three}{RGB}{144,53,155}
\definecolor{forty-four}{RGB}{187,83,137}
\definecolor{forty-five}{RGB}{180,93,133}
\definecolor{forty-six}{RGB}{209,100,113}
\definecolor{forty-seven}{RGB}{201,112,85}
\definecolor{forty-eight}{RGB}{155,82,79}
\definecolor{forty-nine}{RGB}{122,93,47}
\definecolor{fifty}{RGB}{62,72,47}
\definecolor{fifty-one}{RGB}{148,137,24}
\definecolor{fifty-two}{RGB}{180,97,42}
\definecolor{fifty-three}{RGB}{107,60,83}
\definecolor{fifty-four}{RGB}{130,39,141}
\definecolor{fifty-five}{RGB}{187,82,124}
\definecolor{fifty-six}{RGB}{181,77,124}
\definecolor{fifty-seven}{RGB}{180,80,119}
\definecolor{fifty-eight}{RGB}{196,87,113}
\definecolor{fifty-nine}{RGB}{194,99,79}
\definecolor{sixty}{RGB}{173,93,47}
\definecolor{onectwo}{RGB}{165,42,42}
\definecolor{twoctwo}{RGB}{255,165,0}
\definecolor{threectwo}{RGB}{255,255,0}
\definecolor{fourctwo}{RGB}{127,0,0}
\definecolor{fivectwo}{RGB}{0,255,255}
\definecolor{sixctwo}{RGB}{128,128,128}
\definecolor{sevenctwo}{RGB}{255,0,0}
\definecolor{eightctwo}{RGB}{127,255,212}
\definecolor{ninectwo}{RGB}{220,20,60}
\definecolor{tenctwo}{RGB}{255,192,203}
\definecolor{elevenctwo}{RGB}{50,205,50}
\definecolor{twelvectwo}{RGB}{173,255,47}
\definecolor{thirteenctwo}{RGB}{0,0,255}
\definecolor{fourteenctwo}{RGB}{255,0,255}
\definecolor{fifteenctwo}{RGB}{255,215,0}
\definecolor{sixteenctwo}{RGB}{30,144,255}
\definecolor{seventeenctwo}{RGB}{63,224,208}
\definecolor{eighteenctwo}{RGB}{238,130,238}
\definecolor{nineteenctwo}{RGB}{0,127,0}
\definecolor{twentyctwo}{RGB}{255,140,0}
\definecolor{twenty-onectwo}{RGB}{0,0,127}
\definecolor{twenty-twoctwo}{RGB}{102,205,170}
\definecolor{twenty-threectwo}{RGB}{123,104,238}
\definecolor{twenty-fourctwo}{RGB}{255,69,0}
\definecolor{onedtwo}{RGB}{255,0,0}
\definecolor{twodtwo}{RGB}{0,0,255}
\definecolor{threedtwo}{RGB}{0,128,0}
\definecolor{fourdtwo}{RGB}{165,42,42}
\def\l{\lambda}
\def\r{\rho}
\def\NN{{\mathbb N}}
\def\ZZ{{\mathbb Z}}
\def\QQ{{\mathbb Q}}
\def\RR{{\mathbb R}}
\def\FF{{\mathbb F}}
\def\CC{{\mathbb C}}
\def\B{\mathcal{B}}
\def\C{\mathcal{C}}
\def\A{\mathcal{A}}
\def\D{\mathcal{D}}
\def\a{\alpha}
\def\w{\varpi}
\def\al{\alpha}
\def\fg{\mathfrak{g}}
\def\fh{\mathfrak{h}}
\def\la{\lambda}
\def\be{\beta}
\def\P{\mathscr{P}}
\def\hroot{\tilde{\alpha}}
\numberwithin{equation}{section}
\newtheorem*{rep@theorem}{\rep@title}
\newcommand{\newreptheorem}[2]{%
\newenvironment{rep#1}[1]{%
 \def\rep@title{#2 \ref{##1}}%
 \begin{rep@theorem}}%
 {\end{rep@theorem}}}
\newtheorem*{rep@conjecture}{\rep@title}
\newcommand{\newrepconjecture}[2]{%
\newenvironment{rep#1}[1]{%
 \def\rep@title{#2 \ref{##1}}%
 \begin{rep@conjecture}}%
 {\end{rep@conjecture}}}
\newcommand{\addresseshere}{%
  \enddoc@text\let\enddoc@text\relax
}
\theoremstyle{definition}
\newtheorem{definition}{Definition}[section]
\newtheorem{theorem}{Theorem}[section]
\newtheorem{lemma}{Lemma}[section] 
\newtheorem{question}{Question}[section]
\title{Visualizing the support of Kostant's weight multiplicity formula for the rank two Lie algebras}
\author{Pamela E. Harris}
\address{Department of Mathematics and Statistics, Williams College, United States}
\email{\textcolor{blue}{\href{mailto:peh2@williams.edu}{peh2@williams.edu}}}
\author{Marissa Loving}
\address{Department of Mathematics, University of Illinois at Urbana-Champaign, United States}
\email{\textcolor{blue}{\href{mailto:mloving2@illinois.edu}{mloving2@illinois.edu}}}
\author{Juan Ramirez}
\address{Department of Mathematics, University of Houston, United States}
\email{\textcolor{blue}{\href{mailto:jjramirez8@uh.edu}{jjramirez8@uh.edu}}}
\author{Joseph Rennie}
\address{Department of Mathematics, University of Illinois at Urbana-Champaign, United States}
\email{\textcolor{blue}{\href{mailto:rennie2@illinois.edu}{rennie2@illinois.edu}}}
\author{Gordon Rojas Kirby}
\address{Department of Mathematics, University of California Santa Barbara, United States}
\email{\textcolor{blue}{\href{mailto:gkirby@math.ucsb.edu}{gkirby@math.ucsb.edu}}}
\author{Eduardo Torres Davila}
\address{Department of Mathematics, San Diego State University, United States}
\email{\textcolor{blue}{\href{mailto:etorresdavila@sdsu.edu}{etorresdavila@sdsu.edu}}}
\author{Fabrice O. Ulysse}
\address{Department of Mathematics, Cornell University, United States}
\email{\textcolor{blue}{\href{mailto:fou3@cornell.edu}{fou3@cornell.edu}}}
\keywords{Kostant's weight multiplicity formula, Kostant's partition function, Weyl alternation sets}
\begin{document}

\maketitle

\begin{abstract}
The multiplicity of a weight in a finite-dimensional irreducible representation of a simple Lie algebra $\mathfrak{g}$ can be computed via Kostant's weight multiplicity formula. 
This formula consists of an alternating sum over the Weyl group (a finite group) and involves a partition function known as Kostant's partition function. 
Motivated by the observation that, in practice, most terms in the sum are zero, our main results describe the elements of the Weyl alternation sets. The Weyl alternation sets are subsets of the Weyl group which contributes nontrivially to the multiplicity of a weight in a highest weight representation of the Lie algebras 
$\mathfrak{so}_4(\mathbb{C})$, $\mathfrak{so}_5(\mathbb{C})$,  $\mathfrak{sp}_4(\mathbb{C})$, and the exceptional Lie algebra $\mathfrak{g}_2$. 
By taking a geometric approach, we extend the work of Harris, Lescinsky, and Mabie on
$\mathfrak{sl}_3(\mathbb{C})$, to provide visualizations of these Weyl alternation sets for all pairs of integral weights $\lambda$ and $\mu$ of the Lie algebras considered. \end{abstract}

\section{Introduction}
Throughout we let $\mathfrak{g}$ be a simple Lie algebra and  $\mathfrak{h}$ be a Cartan subalgebra of $\mathfrak{g}$. We let $\Phi$ denote the set of roots corresponding to $(\mathfrak{g},\mathfrak{h})$, $\Phi^+\subseteq\Phi$ is the set of positive roots, and $\Delta\subseteq\Phi^+$ is the set of simple roots. We denote the Weyl group by $W$, and recall that it is generated by reflections orthogonal to the simple roots. For any $\sigma \in W$ we let $\ell(\sigma)$ denote the length of $\sigma$, which represents the minimum nonnegative integer $k$ such that $\sigma$ is a product of $k$ reflections.

In the representation theory of simple Lie algebras it is of interest to compute the multiplicity of a weight $\mu$ in a finite-dimensional complex irreducible representation of the Lie algebra $\mathfrak{g}$. We recall that the multiplicity of a weight $\mu$ is the dimension of a vector subspace associated to $\mu$, which is called a weight space. The theorem of the highest weight states that every irreducible (complex) representation of $\mathfrak{g}$ is a highest weight representation with highest weight $\lambda$. Such a representation is denoted by $L(\lambda)$, and if $\mu$ is a weight of $L(\lambda)$ then one can compute the multiplicity of this weight using Kostant's weight multiplicity formula, which is defined in~\cite{kostant} as:
\begin{equation}
m( \lambda, \mu) = \sum_{ \sigma \in W} (-1)^{ \ell (\sigma)}   \wp ( \sigma ( \lambda + \rho ) - ( \mu + \rho)),
\label{eq:KWMF} 
\end{equation}
where $\rho$ is equal to half the sum of the positive roots, and $\wp:\mathfrak{h}^*\to\mathbb{N}$ denotes Kostant's partition function, where $\wp(\xi)$  gives the  number of ways of expressing the weight $\xi$ as a nonnegative integral sum of positive roots.  

Although Kostant's formula can be used to compute weight multiplicities, its implementation is very difficult given that the order of the Weyl group grows factorially in the rank of the Lie algebra. Moreover, closed formulas for Kostant's partition function are not known in much generality. However,  
one way to address these challenges is to determine the elements of the Weyl group that contribute nonzero partition function values, and eliminate the numerous terms appearing in the multiplicity formula that contribute trivially to Equation \eqref{eq:KWMF}. 
This requires one to determine when $ \sigma( \lambda + \rho) - (\mu + \rho)$ can be expressed as a nonnegative integral linear combination of the simple roots of $\mathfrak{g}$.
Thus, we are interested in determining the \textit{Weyl alternation set}, denoted $\mathcal{A} (\lambda, \mu)$, which is the set of $\sigma\in W$  satisfying $\wp(\sigma( \lambda + \rho) - (\mu + \rho)) > 0.$

Previous work concerning Weyl alternation sets includes determining and enumerating $\A(\hroot,0)$ when $\hroot$ is the highest root of the simple Lie algebras \cite{harristhesis,hiw} and $\A(\lambda,0)$ when  $\lambda$ is the sum of the simple roots of the classical Lie algebras \cite{CHI}. 
In these cases the size of the Weyl alternation sets are given by constant coefficient homogeneous relations. In particular, for the Lie algebras $\mathfrak{sl}_{r+1}(\mathbb{C})$ and $\mathfrak{so}_{2r+1}(\mathbb{C})$ the size of the sets $\A(\sum_{\alpha\in\Delta}\alpha,0)$ are given by Fibonacci numbers, while in  $\mathfrak{sp}_{2r}(\mathbb{C})$ and $\mathfrak{so}_{2r}(\mathbb{C})$, the analogous sets have cardinalities given by a multiple of the Lucas numbers. More recently, Harris,  Rahmoeller, Schneider, and  Simpson determine Weyl alternation sets for pairs of weights whose multiplicity is equal to one \cite{HRSS}. 

In this work, we present the Weyl alternation sets $\A(\lambda,\mu)$ for  integral weight $\lambda$ and dominant integral weight $\mu$ of the Lie algebras $\mathfrak{so}_4(\mathbb{C})$, $\mathfrak{so}_5(\mathbb{C})$, $\mathfrak{sp}_4(\mathbb{C})$, and the exceptional Lie algebra $\mathfrak{g}_2$ (Theorems \ref{thm:maind2}, \ref{thm:mainb2}, \ref{thm:mainc2}, and \ref{thm:maing2}, respectively). These results generalize \cite[Theorem 2.3.1, 2.4.1, 2.5.1, and 2.6.1]{harristhesis} which only present the Weyl alternation sets $\A(\lambda,0)$, where $\lambda$ is an integral weight.
\begin{figure}[H]
\centering
 \resizebox{!}{3in}{%
\input{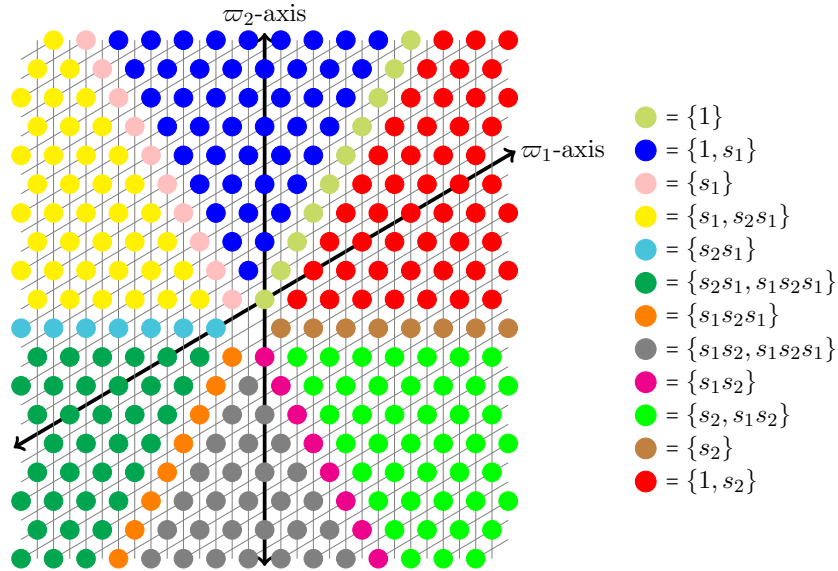}
 }%
\caption{Weyl alternation diagram for $\mathfrak{sl}_3(\mathbb{C})$ with $\mu=0$. Reproduced from \cite{HLM}.}
\label{fig:HLM}
\end{figure}
More recently, Harris, Lescinsky, and Mabie considered the Lie algebra $\mathfrak{sl}_3(\mathbb{C})$ and took a geometric approach to determining the elements of the Weyl group that contribute nontrivially to $m(\lambda,\mu)$ by varying the weights $\lambda$ and $\mu$ over the fundamental weight lattice \cite{HLM}. This involved computing the  \textit{Weyl alternation diagram} (associated to the weight $\mu$), introduced in \cite[Section 2.7]{harristhesis}. These  diagrams provide a visualization of the Weyl alternation sets by associating all integral linear combinations of the fundamental weights with lattice points and encoding the elements in Weyl alternation sets via coloring a corresponding subset of weights on the fundamental lattice which share a common Weyl alternation set. For example, Figures \ref{fig:HLM} and \ref{fig:mu=0}, first appearing in \cite[Figures 2.16 - 2.20]{harristhesis}, present the Weyl alternation diagrams of $\mu=0$ for each rank 2 Lie algebra. 
In this paper, we present the Weyl alternation diagrams when $\mu$ is a dominant integral weight of the Lie algebras $\mathfrak{so}_4(\mathbb{C})$, $\mathfrak{so}_5(\mathbb{C})$, $\mathfrak{sp}_4(\mathbb{C})$, and the exceptional Lie algebra $\mathfrak{g}_2$ . 

This paper is organized as follows: Section \ref{sec:background} provides the background and definitions necessary to make our approach precise. Section \ref{sec:altsets} provides the proofs of our main results describing the Weyl alternation sets (Theorem \ref{thm:mainb2}, \ref{thm:mainc2}, \ref{thm:maind2}, and \ref{thm:maing2}), and Section \ref{sec:diagrams} provides the construction of the Weyl alternation diagrams. We end with Section \ref{sec:open} where we provide a direction for future research.
\begin{figure}[H]%
    \centering
    \subfloat[$B_2$]{{\includegraphics[width=6.5cm, height=6.5cm]{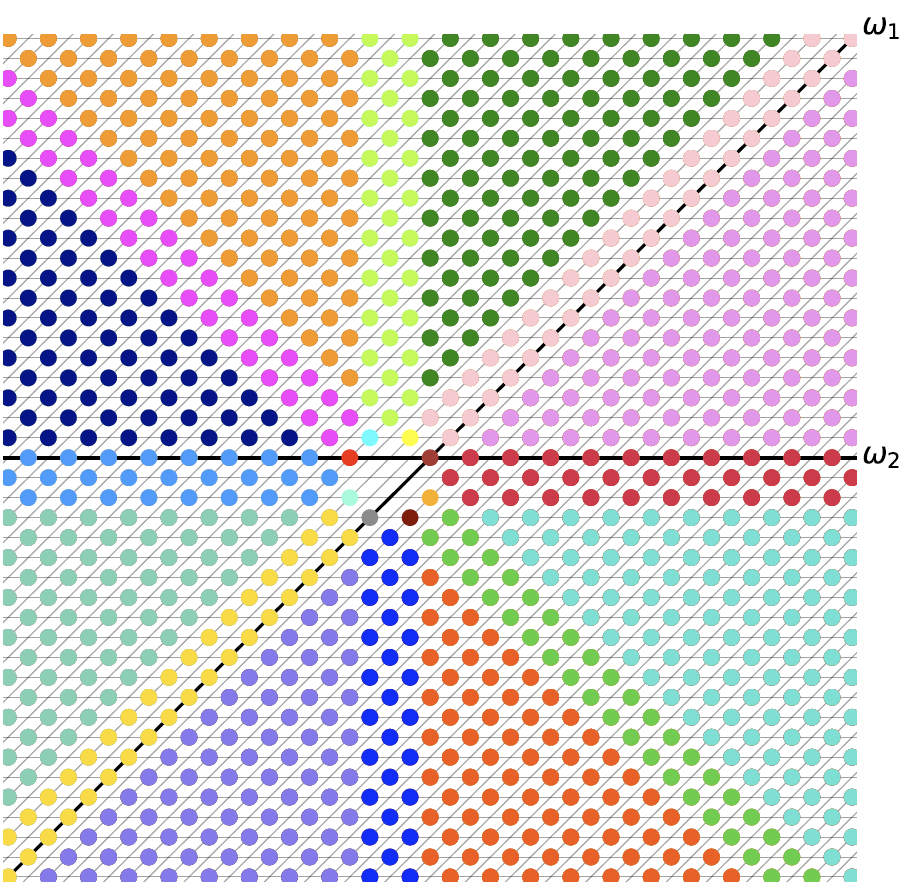} }
    \label{subfig:b2_n0_m0}
    }
    \qquad
    \subfloat[$C_2$]{{\includegraphics[width=6.5cm, height=6.5cm]{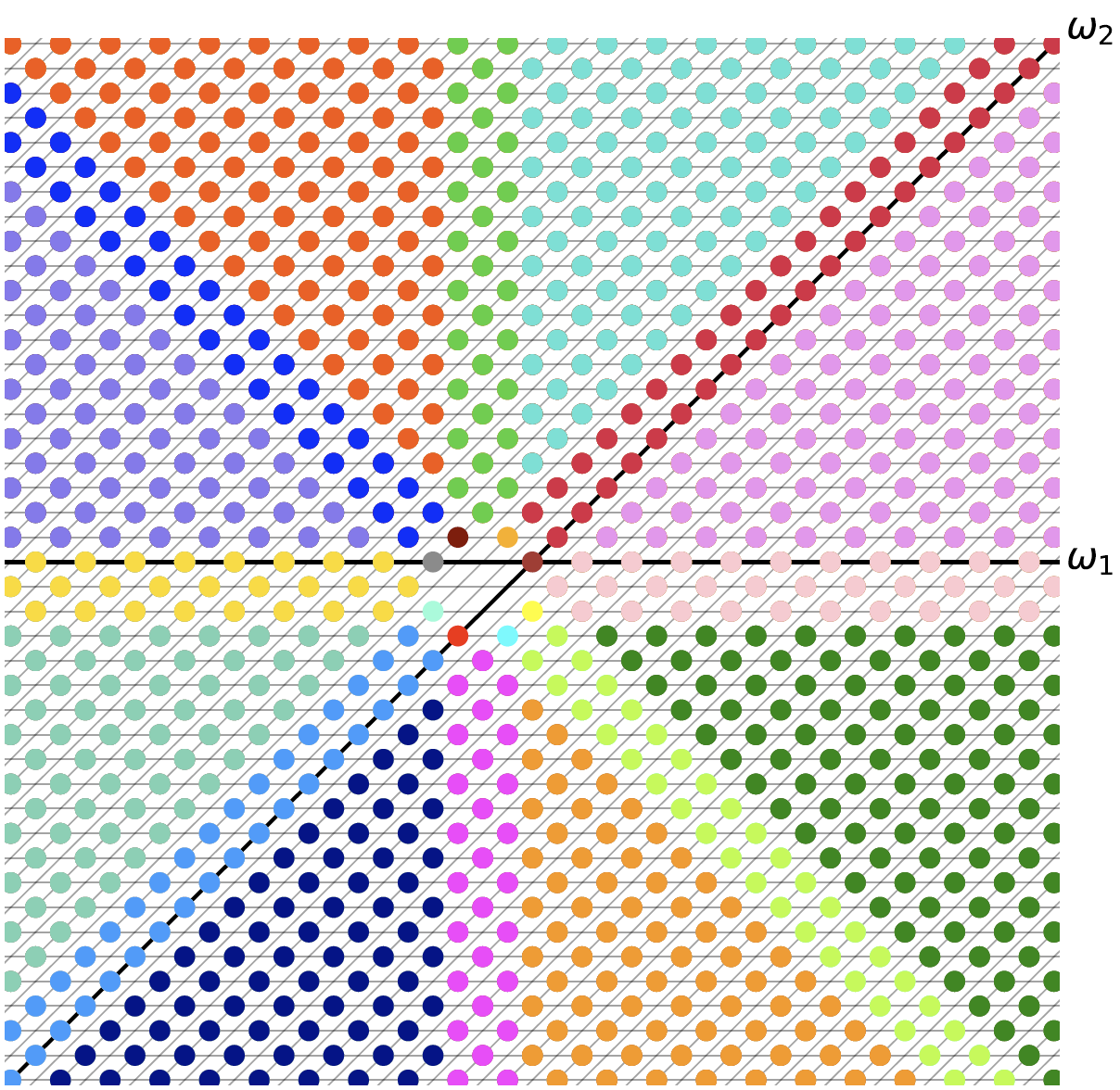} }
    \label{subfig:c2_n0_m0}
    }\\
    \subfloat[$D_2$]{{\includegraphics[width=6.5cm, height=6.5cm]{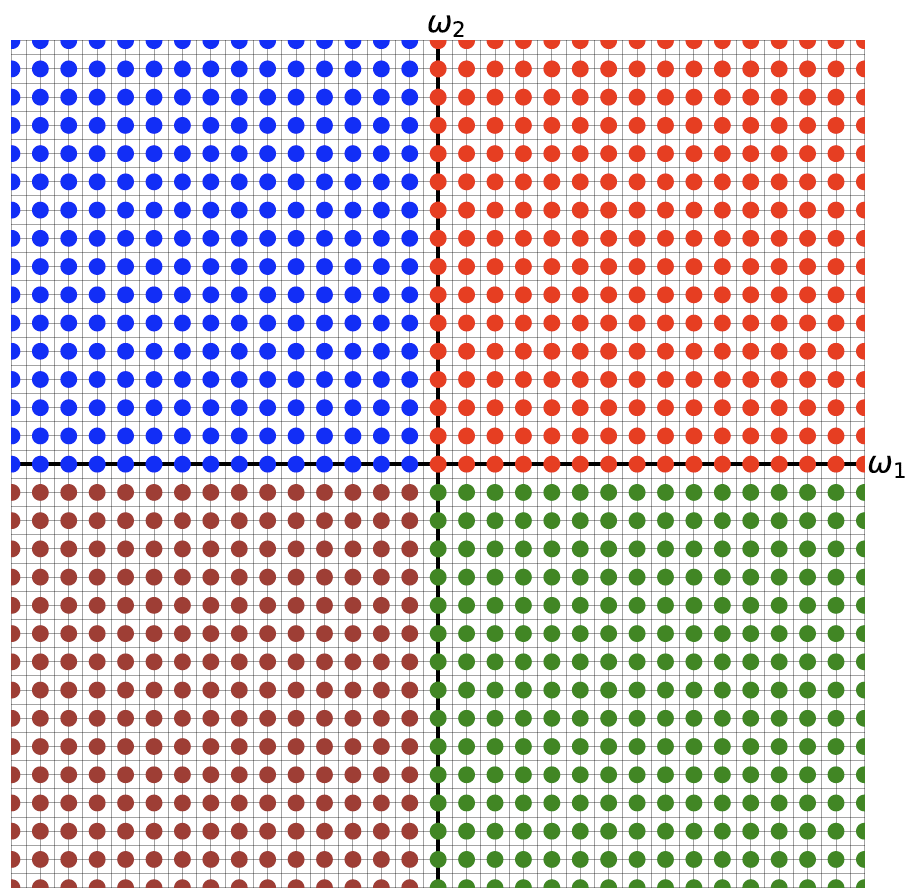} }
    \label{subfig:d2_n0_m0}
    }
    \qquad
    \subfloat[$G_2$]{{\includegraphics[width=6.5cm, height=6.5
    cm]{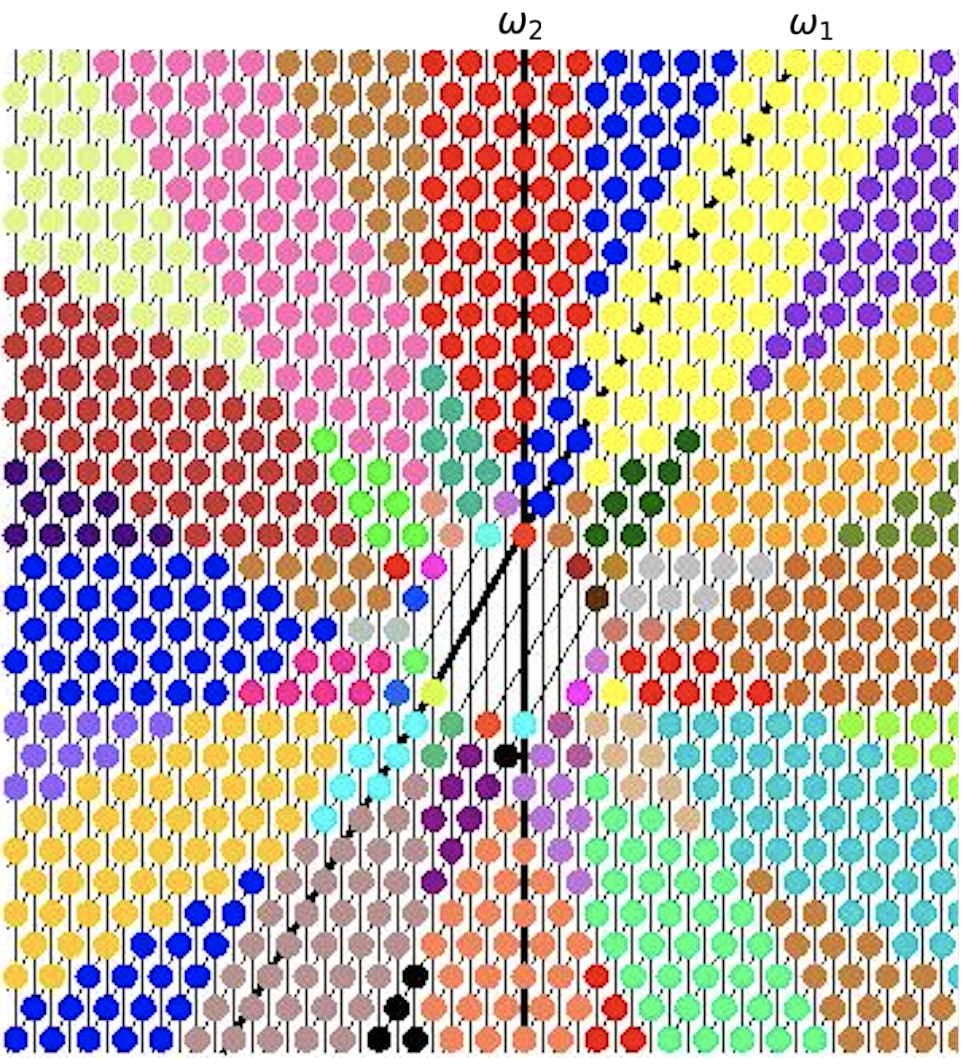}}
    \label{subfig:g2_n0_m0}
    }
    \caption{Weyl alternation diagrams for the Lie algebras of type $B_2$, $C_2$, $D_2$, and $G_2$, where $\mu = 0$. Reproduced from \cite{harristhesis}.}
    \label{fig:mu=0}
\end{figure}

\section{Background}
\label{sec:background}
In this section we provide the background necessary to make our approach precise. We use the notation and definitions of \cite{GW} and for the technical background on the representation theory of Lie algebras, as it relates to computing weight multiplicities, we refer the interested reader to \cite{harris}. 

For any $k\geq 2$, let $M_k(\mathbb{C})$ denote the set of $k\times k$ matrices with complex valued entries. Then the finite-dimensional (classical) Lie algebras are classified into families in the following way.\begin{itemize}
\item Type $A_r$ $(r\geq 1)$: $\mathfrak{sl}_{r+1}(\mathbb{C})=\{X\in M_{r+1}(\mathbb{C}): Trace(X)=0\}$.
\vspace{.2cm}
\item Type $B_r$ $(r\geq 2)$: $\mathfrak{so}_{2r+1}(\mathbb{C})=\{X\in M_{2r+1}(\mathbb{C}): X^t=-X\}$.
\vspace{.2cm}
\item Type $C_r$ $(r\geq 3)$: $\mathfrak{sp}_{2r}(\mathbb{C})=\{X\in M_{2r}(\mathbb{C}): X^tJ=-JX\}$, where $J=\begin{bmatrix}
  0 & I \\
  -I & 0
 \end{bmatrix}$ with $I$ the $r\times r$ identity matrix.
\vspace{.2cm}
\item Type $D_r$ $(r\geq 4)$: $\mathfrak{so}_{2r}(\mathbb{C})=\{X\in M_{2r}(\mathbb{C}): X^t=-X\}$.
\end{itemize}
In these cases, if $X,Y$ are elements of the Lie algebra, then the Lie bracket is defined by
\[
[X,Y]=XY-YX.
\]

The main definition we need in our work is the following. 
\begin{definition}\label{def:Weylaltset}
Given a pair of weights $\lambda$ and $\mu$ we define the \textit{Weyl alternation set} associated to $\l$ and $\mu$, denoted $\A(\l, \mu)$, as the set of Weyl group elements $\sigma \in W$ such that $\wp(\sigma(\l + \r)-(\mu + \r)) > 0$.
\end{definition}

We now give the necessary background for each specific Lie algebra we consider.

\subsection{Lie algebra of type \texorpdfstring{$B_2$}{B2}}
The Lie algebra of type $B_2$ is $\mathfrak{so}_{5}(\mathbb{C})$.
Let $\Delta = \{\al_1, \al_2\}$ be the set of simple roots and $\Phi^{+} = \{\al_1, \al_2, \al_1+\al_2, \al_1+2\al_2\}$ be the set of positive roots. Then the fundamental weights are
\[
    \w_1 = \al_1 + \al_2 \mbox{ and }
    \w_2 = \frac{1}{2}\al_1 + \al_2.
\]
As a result, $\rho = \w_1 + \w_2 =\frac{1}{2}\sum_{\alpha\in\Phi^+}\alpha= \frac{3}{2}\al_1+2\al_2$. 
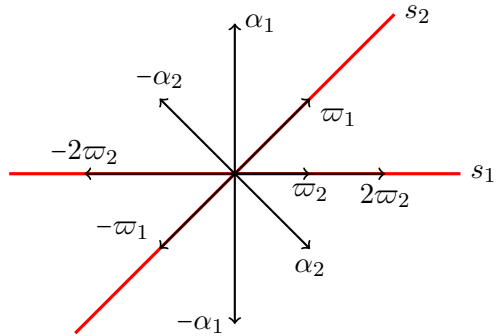
\begin{figure}[ht]
\begin{center}
\begin{tikzpicture}
\draw[color=red,very thick] (180:3) to (0:3);
\draw[color=red,very thick] (225:3) to (45:3);
\foreach \i in {0,1,2,3}{
\draw[solid, thick, ->] (0:0) to ({90*\i}:2);
\draw[solid, thick, ->] (0:0) to ({45+90*\i}: sqrt 2);
}
\draw [thick, ->] (0:0) to (0:1);
\node[anchor=north] at (0:1) {$\w_2$};
\node [anchor=north] at (0:2) {$2\w_2$};
\node[anchor=north west] at (45:sqrt 2) {$\w_1$};
\node[anchor=west] at (90: 2) {$\al_1$};
\node[anchor=south] at (135: sqrt 2) {$-\al_2$};
\node[anchor=south] at (180: 2) {$-2\w_2$};
\node[anchor=south east] at (225: sqrt 2) {$-\w_1$};
\node[anchor=east] at (270: 2) {$-\al_1$};
\node[anchor=north] at (270+45: sqrt 2) {$\al_2$};
\node[anchor=west] at (45:3) {$s_2$};
\node[anchor=west] at (0:3){$s_1$};
\end{tikzpicture}
 \caption{The root system of $B_2$.}
    \label{fig:root_systemb2}
\end{center}
\end{figure}
The Weyl group is generated by $s_1$ and $s_2$, where $s_1$ is the reflection through the hyperplane perpendicular to $\al_1$, and $s_2$ is the reflection through the hyperplane perpendicular to $\al_2$. Figure \ref{fig:root_systemb2} provides a geometric interpretation of the Weyl group; the roots are depicted by vectors, and the two hyperplanes $s_1$ and $s_2$ are colored red. Table \ref{tab:WeylB2} shows how each element of the Weyl group acts on the simple roots. Moreover, the action of the reflections on the simple roots and fundamental weights is given by 
\begin{align*}
    s_i(\al_j) = \begin{cases}
    -\al_j &\mbox{ if } i = j\\ 
    \al_j+i\al_i &\mbox{ if } i \neq j 
    \end{cases}\qquad\mbox{and}\qquad&
    s_i(\w_j) = \begin{cases}
    \w_j-\al_j &\mbox{ if } i = j\\
    \w_j &\mbox{ if } i \neq j.
    \end{cases}
\end{align*}

\begin{table}[H]
    \centering
\begin{tabular}{|c|c|c|c|c|c|c|c|c|c|}\hline
    $\sigma\in W$ & 1 & $s_1$ & $s_2$ &$s_2s_1$ & $s_1s_2$ & $s_1s_2s_1$ &$s_2s_1s_2$ & $s_2s_1s_2s_1$ & $s_1s_2s_1s_2$\\ \hline
    $\sigma(\al_1)$ & $\al_1$ & $-\al_1$ & $2\w_2$ &$-2\w_2$ &$2\w_2$ &$-2\w_2$ & $\al_1$ & $-\al_1$ & $-\al_1$\\ \hline
    $\sigma(\al_2)$ & $\al_2$ &$\w_1$ & $-\al_2$ &$\w_1$ & $-\w_1$ & $\al_2$ & $-\w_1$ & $-\al_2$ & $-\al_2$\\ \hline
\end{tabular}  
\caption{The elements of the Weyl group of $B_2$ and their action on the simple roots.}
\label{tab:WeylB2}
\end{table}

Lastly, we remark that the Weyl group of $B_2$ is isomorphic to the dihedral group of order 8.

\subsection{Lie algebra of type \texorpdfstring{$C_2$}{C2}}
The Lie algebra of type $C_2$ is $\mathfrak{sp}_{4}(\mathbb{C})$. For the Lie algebra of type $C_2$ we have $\Delta = \{\al_1, \al_2\}$, $\Phi^+ = \{\al_1, \al_2, \al_1+\al_2, 2\al_1+\al_2\}$, and the fundamental weights are
\[    \w_1 = \al_1 + \frac{1}{2}\al_2 \qquad\mbox{and}\qquad
    \w_2 = \al_1+\al_2.\]
As a result, $\rho = \w_1 + \w_2 = 2\al_1+\frac{3\al_2}{2}$.
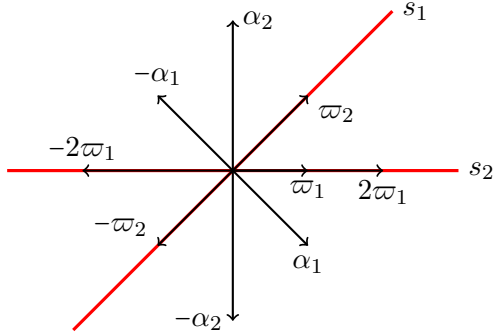
\begin{figure}[ht]
\begin{center}
\begin{tikzpicture}
\draw[color=red,very thick] (180:3) to (0:3);
\draw[color=red,very thick] (225:3) to (45:3);
\foreach \i in {0,1,2,3}{
\draw[thick, ->] (0:0) to ({90*\i}:2);
\draw[thick, ->] (0:0) to ({45+90*\i}: sqrt 2);
}
\draw [thick, ->] (0:0) to (0:1);
\node[anchor=north] at (0:1) {$\w_1$};
\node [anchor=north] at (0:2) {$2\w_1$};
\node[anchor=north west] at (45:sqrt 2) {$\w_2$};
\node[anchor=west] at (90: 2) {$\al_2$};
\node[anchor=south] at (135: sqrt 2) {$-\al_1$};
\node[anchor=south] at (180: 2) {$-2\w_1$};
\node[anchor=south east] at (225: sqrt 2) {$-\w_2$};
\node[anchor=east] at (270: 2) {$-\al_2$};
\node[anchor=north] at (270+45: sqrt 2) {$\al_1$};
\node[anchor=west] at (45:3) {$s_1$};
\node[anchor=west] at (0:3){$s_2$};
\end{tikzpicture}
 \caption{The root system of $C_2$.}
    \label{fig:root_systemc2}
\end{center}
\end{figure}

The Weyl group is generated by $s_1$ and $s_2$, where $s_1$ is the reflection through the hyperplane perpendicular to $\al_1$, and $s_2$ is the reflection through the hyperplane perpendicular to $\al_2$. Figure \ref{fig:root_systemc2} provides a geometric interpretation of the Weyl group; the roots are depicted by vectors, and the two hyperplanes $s_1$ and $s_2$ are colored red. Table \ref{tab:WeylC2} shows how each element of the Weyl group acts on the simple roots. Moreover, the action of the reflections $s_1$ and $s_2$ on the simple roots and fundamental weights is given by 
\begin{align*}
    s_i(\al_j) = \begin{cases}
    -\al_j &\mbox{ if } i = j\\ 
    \al_j+j\al_i &\mbox{ if } i \neq j
    \end{cases}\qquad&\mbox{and}\qquad
    s_i(\w_j) = \begin{cases}
    \w_j-\al_j &\mbox{ if } i = j\\
    \w_j &\mbox{ if } i \neq j.\\
    \end{cases}
\end{align*}
\begin{table}[H]
    \centering
\begin{tabular}{|c|c|c|c|c|c|c|c|c|c|}\hline
    $\sigma\in W$ & 1 & $s_1$ & $s_2$ &$s_2s_1$ & $s_1s_2$ & $s_1s_2s_1$ &$s_2s_1s_2$ & $s_2s_1s_2s_1$ & $s_1s_2s_1s_2$\\ \hline 
    $\sigma(\al_1)$ & $\al_1$ & $-\al_1$ & $\w_2$ &$-\w_2$ &$\w_2$ &$-\w_2$ & $\al_1$ & $-\al_1$ & $-\al_1$\\ \hline
    $\sigma(\al_2)$ & $\al_2$ &$2\w_1$ & $-\al_2$ &$2\w_1$ & $-2\w_1$ & $\al_2$ & $-2\w_1$ & $-\al_2$ & $-\al_2$\\ \hline
\end{tabular}
\caption{The elements of the Weyl group of $C_2$ and their action on the simple roots.}\label{tab:WeylC2}
\end{table}
Lastly, we remark that the Weyl group of $C_2$ is isomorphic to the dihedral group of order 8.
\subsection{Lie algebra of type \texorpdfstring{$D_2$}{D2}}
The Lie algebra of type $D_2$ is $\mathfrak{so}_{4}(\mathbb{C})$. For the Lie algebra of type  $D_2$, $\Delta=\Phi^{+} = \{\al_1, \al_2\}$ and the fundamental weights are
\[
    \w_1 = \tfrac{1}{2}\al_1\qquad\mbox{and}\qquad
    \w_2 = \tfrac{1}{2}\al_2.
\]
As a result, $\rho = \w_1 + \w_2 = \tfrac{1}{2}\al_1+\tfrac{1}{2}\al_2$.
\begin{figure}[ht]
\begin{center}
\begin{tikzpicture}[scale=.8]
\node [anchor=south] at (0:2) {$\al_1$};
\node[anchor=west] at (90: 2) {$\al_2$};
\node[anchor=south] at (180: 2) {$-\al_1$};
\node[anchor=east] at (270: 2) {$-\al_2$};
\node[anchor=west] at (0:3){$s_2$};
\node[anchor=west] at (90:3){$s_1$};
\node[anchor=south] at (0:1){$\w_1$};
\node[anchor=west] at (90:1){$\w_2$};
\draw[red, very thick] (0:-3) to (0:3);
\draw[red, very thick] (90:-3) to (90:3);
\draw[thick, ->] (0:0) to (0:2);
\draw[thick, ->] (0:0) to (90:2);
\draw[thick, ->] (0:0) to (0:-2);
\draw[thick, ->] (0:0) to (90:-2);
\draw[thick, ->] (0:0) to (0:1);
\draw[thick, ->] (0:0) to (90:1);
\end{tikzpicture}
 \caption{The root system of $D_2$.}
    \label{fig:root_systemd2}
\end{center}
\end{figure}

The Weyl group is generated by $s_1$ and $s_2$, where $s_1$ is the reflection through the hyperplane perpendicular to $\al_1$, and $s_2$ is the reflection through the hyperplane perpendicular to $\al_2$. Figure \ref{fig:root_systemd2} provides a geometric interpretation of the Weyl group; the roots are depicted by vectors, and the two hyperplanes $s_1$ and $s_2$ are colored red. Table \ref{tab:WeylD2} shows how each element of the Weyl group acts on the simple roots. Moreover, the action of the reflections on the simple roots and fundamental weights is given by
\begin{align*}
    s_i(\al_j) = \begin{cases}
    -\al_j &\mbox{ if } i = j\\ 
    \al_j &\mbox{ if } i \neq j
    \end{cases}\qquad&\mbox{and}\qquad
    s_i(\w_j) = \begin{cases}
    \w_j-\al_j &\mbox{ if } i = j\\
    \w_j &\mbox{ if } i \neq j.\\
    \end{cases}
\end{align*}
\begin{table}[H]
\centering
\begin{tabular}{|c|c|c|c|c|}\hline
    $\sigma \in W$ & 1 & $s_1$ & $s_2$ &$s_2s_1$ \\ \hline
    $\sigma(\al_1)$ & $\al_1$ & $-\al_1$ & $\al_1$ &$-\al_1$ \\ \hline
    $\sigma(\al_2)$ & $\al_2$ &$\al_2$ & $-\al_2$ &$-\al_2$ \\ \hline
\end{tabular}
\caption{The elements of the Weyl group of $D_2$ and their action on the simple roots.}\label{tab:WeylD2}
\end{table}
Lastly, we remark that the Weyl group of $D_2$ is isomorphic to the Klein-four group.

\subsection{Lie algebra of type \texorpdfstring{$G_2$}{G2}}
For the exceptional Lie algebra of type $G_2$, \[\Delta = \{\al_1, \al_2\}, \Phi^{+} = \{\al_1, \al_2, \al_1+\al_2, 2\al_1+\al_2, 3\al_1+\al_2, 3\al_1+2\al_2 \}.\] To simplify notation we let
\[
    \be_1 = \al_1 + \al_2,\;
    \be_2 = 3\al_1 + 2\al_2,\;
    \be_3 = 2\al_1 + \al_2, \text{ and } 
    \be_4 = 3\al_1 + \al_2. 
    \]
Then the fundamental weights are
\[
    \w_1 = 2\al_1 + \al_2 \qquad\mbox{and}\qquad
    \w_2 = 3\al_1 + 2\al_2.
\]
As a result, $\rho = \w_1 + \w_2 = 5\al_1+3\al_2$.
\begin{figure}[ht]
\centering
\begin{tikzpicture}
\draw[red, very thick] (90:-3) to (90:3);
\draw[red, very thick] (60:-3) to (60:3);
\foreach \i in {0,1,2}{
\draw[thick, ->] (0:0) to ({30+120*\i}:2);
\draw[thick, ->] (0:0) to ({90+120*\i}:2);
\draw[thick, ->] (0:0) to ({60+120*\i}:1.11803);
\draw[thick, ->] (0:0) to ({120*(\i+1)}:1.11803);
}
\node [anchor=north west] at (0:1.11803) {$\a_1$};
\node [anchor=south] at (150:2) {$\a_2$};
\node [anchor=south] at (90:3) {$s_1$};
\node [anchor=south] at (0:-1.11803) {$-\a_1$};
\node [anchor=north] at (150:-2) {$-\a_2$};
\node [anchor=south west] at (60:3) {$s_2$};
\node [anchor=south west] at (120:1.11803) {$\be_1$};
\node [anchor=north] at (120:-1.11803) {$-\be_1$};
\node [anchor=north west] at (30:2) {$\be_4$};
\node [anchor=south east] at (30:-2) {$-\be_4$};
\node [anchor=west] at (90:2) {$\be_2$};
\node [anchor=east] at (90:-2) {$-\be_2$};
\node [anchor=west] at (60:1.11803) {$\be_3$};
\node [anchor=east] at (60:-1.11803) {$-\be_3$};
\end{tikzpicture}
 \caption{The root system of $G_2$.}
    \label{fig:root_systemg2}
\end{figure}
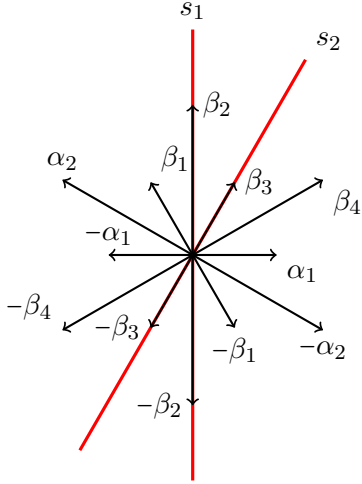
In Figure \ref{fig:root_systemg2}, the roots are depicted by vectors, and the two red lines denoted by $s_1$ and $s_2$ are the  hyperplanes orthogonal to the simple roots $\alpha_1$ and $\alpha_2$, respectively, which define the generators of the Weyl group. Table \ref{tab:WeylG2} shows how each element of the Weyl group acts on the simple roots. Moreover, the action of the reflections on the simple roots and fundamental weights is given by
\begin{align*}
    s_i(\al_j) = \begin{cases}
    -\al_j &\mbox{ if } i = j\\ 
    \w_j-\al_j &\mbox{ if } i \neq j
    \end{cases}\qquad&\mbox{and}\qquad
    s_i(\w_j) = \begin{cases}
    \w_j-\al_j &\mbox{ if } i = j\\
    \w_j &\mbox{ if } i \neq j.\\
    \end{cases}
\end{align*}
\begin{table}[H]
\centering
\resizebox{.9\textwidth}{!}{%
\begin{tabular}{|c|c|c|c|c|c|c|c|c|c|c|c|c|} 
 \hline
 $\sigma \in W$ & 1 & $s_1$ & $s_2$ & $s_2 s_1$ & $s_1 s_2$ & $s_1 s_2 s_1$ & $s_2s_1 s_2$ & $(s_2s_1)^2$ & $(s_1s_2)^2$ & $s_1(s_2s_1)^2$ & $s_2(s_1s_2)^2$ & $(s_2s_1)^3$\\ 
 \hline
 $\sigma(\al_1)$ & $\al_1$ & -$\al_1$ & $\be_1$  & -$\be_1$ & $\be_3$ & -$\be_3$ & $\be_3$ & -$\be_3$ & $\be_1$ & $-\be_1$ & $\al_1$ & $-\al_1$ \\ 
 \hline
 $\sigma(\al_2)$ & $\al_2$ & $\be_4$ & -$\al_2$ & $\be_2$ & -$\be_4$ & $\be_2$ & -$\be_2$ & $\be_4$ & -$\be_2$ & $\al_2$ & $-\be_4$ & $-\al_2$\\ 
 \hline
\end{tabular}}\\
\caption{The elements of the Weyl group of $G_2$ and their action on the simple roots.}\label{tab:WeylG2}
\end{table}
Lastly, we remark that the Weyl group of $G_2$ is isomorphic to the dihedral group of order 12.

\section{Weyl alternation sets}\label{sec:altsets}
We  begin by recalling that the only portion of the fundamental weight lattice of a Lie algebra that encodes finite-dimensional irreducible representations is the nonnegative quadrant, which is called the dominant Weyl chamber \cite[Chapter 3]{GW}. However, in what follows, we consider the entire fundamental weight lattice. Doing so fully illustrates the symmetry of the Weyl group's action as a reflection group on the weights of the Lie algebras. 
In the following sections we describe the Weyl alternation sets of the Lie algebras of type $B_2$, $C_2$, $D_2$, and $G_2$. 
We have also color-coded each of the Weyl alternation set conditions. Each color corresponds to a region of the Lie algebra's Weyl alternation diagram which we describe fully in Section \ref{sec:diagrams}.

We remark that to give a complete description of the Weyl alternation sets, $\A(\lambda, \mu)$, we must consider each $\sigma\in W$ and determine when $\sigma(\lambda+\rho)-\rho-\mu$ has coefficients in $\mathbb{N}$ when expressed as a linear combination of the simple roots.
Namely, for every $\sigma\in W$, we want to find conditions such that $\sigma(\lambda+\rho)-(\mu+\rho) \in \NN{\a_1} \oplus \NN{\a_2}$. When restricting $\lambda,\mu$ to be in the dominant chamber, the Weyl alternation sets $\A(\lambda, \mu)$ allow us to reduce the number of terms in the computation of weight multiplicities.

\subsection{Lie algebra of type \texorpdfstring{$B_2$}{B2}}
We begin by establishing a divisibility condition necessary for a weight in the fundamental weight lattice to be in the root lattice.
\begin{lemma}\label{ref:divlemB2}
Let $\lambda = c_1\w_1+c_2\w_2$ with $c_1, c_2 \in \ZZ$. Then $\lambda \in \ZZ{\a_1} \oplus \ZZ{\a_2}$ if and only if $c_2$ is divisible by $2$.
\end{lemma}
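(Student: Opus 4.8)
The plan is to reduce the question to a direct computation in the simple-root basis and then read off the divisibility condition. First I would substitute the given expressions for the fundamental weights, $\w_1 = \a_1 + \a_2$ and $\w_2 = \tfrac{1}{2}\a_1 + \a_2$, into $\lambda = c_1\w_1 + c_2\w_2$ and collect coefficients. This yields
\[
\lambda = \left(c_1 + \tfrac{c_2}{2}\right)\a_1 + (c_1 + c_2)\a_2.
\]
Since $\{\a_1,\a_2\}$ is a basis of $\fh^*$, the expression of $\lambda$ in this basis is unique, so $\lambda \in \ZZ\a_1 \oplus \ZZ\a_2$ holds precisely when both coefficients above lie in $\ZZ$.

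Next I would analyze the two coefficients separately. The coefficient of $\a_2$ is $c_1 + c_2$, which is automatically an integer because $c_1, c_2 \in \ZZ$; hence it imposes no constraint. The whole condition therefore collapses to requiring that the coefficient of $\a_1$, namely $c_1 + \tfrac{c_2}{2}$, be an integer. Because $c_1 \in \ZZ$, this is equivalent to $\tfrac{c_2}{2} \in \ZZ$, i.e.\ to $2 \mid c_2$, establishing both directions of the biconditional simultaneously.

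This lemma is essentially a one-line verification, so I do not anticipate a genuine obstacle; the only point requiring minor care is confirming that the $\a_2$-coefficient $c_1+c_2$ never contributes a fractional part, so that integrality of $\lambda$ in the root lattice is governed entirely by the parity of $c_2$. I would state that observation explicitly to make the reduction airtight.
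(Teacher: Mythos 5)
Your proof is correct and follows essentially the same route as the paper: expand $\lambda$ in the simple-root basis as $\left(c_1+\tfrac{c_2}{2}\right)\a_1+(c_1+c_2)\a_2$, observe the $\a_2$-coefficient is automatically integral, and reduce the condition to $2\mid c_2$. Your explicit appeal to uniqueness of the basis expansion is a minor (and welcome) extra precision, but the argument is the same.
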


\begin{proof}
Observe that $\lambda = c_1\w_1+c_2\w_2 = (c_1+\tfrac{c_2}{2})\al_1+(c_1+c_2)\al_2$. Since $c_1,c_2 \in \ZZ$, then $c_1+c_2\in\ZZ$. Also, $\tfrac{c_2}{2}+c_1=a$ for some $a \in \ZZ$ if and only if $c_2 = 2(a-c_1)$. Hence, $c_2$ is divisible by 2.
\end{proof}
\begin{table}[H]
    \centering
\begin{tabular}{|l|l|}\hline
    $J_1: \tfrac{c_2-m}{2}+c_1-n \geq 0 $&$J_2: c_1+c_2-n-m \geq 0 $\\\hline
    $J_3: \tfrac{c_2-m}{2}-n-1 \geq 0 $ & $J_4:c_1-n-m-1  \geq 0 $\\\hline
    $J_5: -c_1-n-m-3  \geq0 $ & $J_6: \tfrac{-c_2-m}{2}-n-2 \geq 0 $\\\hline
    $J_7: \tfrac{-c_2-m}{2}-c_1-n-3 \geq 0 $&$J_8: -c_1-c_2-n-m-4 \geq 0 $\\\hline
\end{tabular}
\caption{Conditions for Theorem \ref{thm:mainb2}.}
    \label{tab:b2}
\end{table}

We are now ready to describe the Weyl alternation sets for the Lie algebra of type $B_2$.
\begin{theorem}[Weyl alternation sets of  $\mathfrak{so}_{5}(\mathbb{C})$]\label{thm:mainb2}
Let $\lambda = c_1\w_1+c_2\w_2$ for some $c_1,c_2 \in \ZZ$ with $2|c_2$. Fix $\mu = n\w_1+m\w_2$ with $n,m \in \mathbb{N}$ and $2|m.$ To simplify notation we use inequalities in Table \ref{tab:b2} and let $\neg$ be the negation of an inequality. Then 
\begin{align*}
\A(\l, \mu) &=\begin{cases}    
\tikz\draw[onectwo, fill] (0,0) circle (2.5pt); \quad
      \{1\} &\mbox{if } J_1, J_2, \neg J_3, \mbox{ and } \neg J_4 ,\\
      \tikz\draw[twoctwo, fill] (0,0) circle (2.5pt); \quad
      \{s_1\} &\mbox{if } J_2, J_3, \neg J_1, \mbox{ and } \neg J_5 ,\\
      \tikz\draw[threectwo, fill] (0,0) circle (2.5pt); \quad
      \{s_2\} &\mbox{if } J_1, J_4, \neg J_2, \mbox{ and } \neg J_6 ,\\
      \tikz\draw[fourctwo, fill] (0,0) circle (2.5pt); \quad
      \{s_2s_1\} &\mbox{if } J_3, J_5, \neg J_2, \mbox{ and } \neg J_7 ,\\
      \tikz\draw[fivectwo, fill] (0,0) circle (2.5pt); \quad
      \{s_1s_2\} &\mbox{if } J_6, J_4, \neg J_8, \mbox{ and } \neg J_1 ,\\
      \tikz\draw[sixctwo, fill] (0,0) circle (2.5pt); \quad
      \{s_1s_2s_1\} &\mbox{if } J_7, J_5, \neg J_8, \mbox{ and } \neg J_3 ,\\
      \tikz\draw[sevenctwo, fill] (0,0) circle (2.5pt); \quad
      \{s_2s_1s_2\} &\mbox{if } J_6, J_8, \neg J_4, \mbox{ and } \neg J_7 ,\\
      \tikz\draw[eightctwo, fill] (0,0) circle (2.5pt); \quad
      \{(s_2s_1)^2\} &\mbox{if } J_7, J_8, \neg J_5, \mbox{ and } \neg J_6 ,\\
      \tikz\draw[ninectwo, fill] (0,0) circle (2.5pt); \quad
      \{1, s_1\} &\mbox{if } J_1, J_3, \neg J_4, \mbox{ and } \neg J_5 ,\\
      \tikz\draw[tenctwo, fill] (0,0) circle (2.5pt); \quad
      \{1, s_2\} &\mbox{if } J_2, J_4, \neg J_3, \mbox{ and } \neg J_6 ,\\
      \tikz\draw[elevenctwo, fill] (0,0) circle (2.5pt); \quad
      \{s_1, s_2s_1\} &\mbox{if } J_2, J_5, \neg J_1, \mbox{ and } \neg J_7 ,\\
      \tikz\draw[twelvectwo, fill] (0,0) circle (2.5pt); \quad
      \{s_2, s_1s_2\} &\mbox{if } J_1, J_6, \neg J_2, \mbox{ and } \neg J_8 ,\\
      \tikz\draw[thirteenctwo, fill] (0,0) circle (2.5pt); \quad
      \{s_2s_1, s_1s_2s_1\} &\mbox{if } J_3, J_7, \neg J_2, \mbox{ and } \neg J_8 ,\\
      \tikz\draw[fourteenctwo, fill] (0,0) circle (2.5pt); \quad
      \{s_1s_2, s_2s_1s_2\} &\mbox{if } J_4, J_8, \neg J_1, \mbox{ and } \neg J_7 ,\\
      \tikz\draw[fifteenctwo, fill] (0,0) circle (2.5pt); \quad
      \{s_1s_2s_1, (s_2s_1)^2\} &\mbox{if } J_5, J_8, \neg J_6, \mbox{ and } \neg J_3 ,\\
      \tikz\draw[sixteenctwo, fill] (0,0) circle (2.5pt); \quad
      \{s_2s_1s_2, (s_2s_1)^2\} &\mbox{if } J_6, J_7, \neg J_5, \mbox{ and } \neg J_4 ,\\
      \tikz\draw[seventeenctwo, fill] (0,0) circle (2.5pt); \quad
      \{1, s_1, s_2s_1\} &\mbox{if } J_3, J_4, \neg J_5, \mbox{ and } \neg J_6 ,\\
      \tikz\draw[eighteenctwo, fill] (0,0) circle (2.5pt); \quad
      \{1, s_1, s_2\} &\mbox{if } J_1, J_5, \neg J_4, \mbox{ and } \neg J_7 ,\\
      \tikz\draw[nineteenctwo, fill] (0,0) circle (2.5pt); \quad
      \{1, s_2, s_1s_2\} &\mbox{if } J_2, J_6, \neg J_3, \mbox{ and } \neg J_8 ,\\
   \tikz\draw[twentyctwo, fill] (0,0) circle (2.5pt); \quad  \{s_2, s_1s_2, s_2s_1s_2\} 
       &\mbox{if } J_7, J_2, \neg J_8, \mbox{ and } \neg J_1 ,\\
      \tikz\draw[twenty-onectwo, fill] (0,0) circle (2.5pt); \quad
  \{s_1s_2, s_2s_1s_2, (s_2s_1)^2\}    &\mbox{if } J_1, J_8, \neg J_2, \mbox{ and } \neg J_7 ,\\
      \tikz\draw[twenty-twoctwo, fill] (0,0) circle (2.5pt); \quad
     \{s_2s_1s_2, (s_2s_1)^2, s_1s_2s_1\}  &\mbox{if } J_3, J_8, \neg J_2, \mbox{ and } \neg J_6 ,\\
      \tikz\draw[twenty-threectwo, fill] (0,0) circle (2.5pt); \quad
      \{s_2s_1, s_1s_2s_1, (s_2s_1)^2\}  &\mbox{if } J_7, J_4, \neg J_5, \mbox{ and } \neg J_1 ,\\
      \tikz\draw[twenty-fourctwo, fill] (0,0) circle (2.5pt); \quad
     \{s_1, s_2s_1, s_1s_2s_1\}  &\mbox{if } J_6, J_5, \neg J_4, \mbox{ and } \neg J_3 ,\mbox{ and}\\
      \phantom{\tikz\draw[twenty-fourctwo, fill] (0,0) circle (2.5pt);}\quad \emptyset & \mbox{otherwise}.
\end{cases}
\end{align*}
\end{theorem}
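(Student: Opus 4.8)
The plan is to reduce the positivity of $\wp$ to a pair of linear inequalities for each Weyl group element and then assemble the cases combinatorially, exploiting the low rank throughout.

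First I would record the key reduction for rank two. Since every positive root $\al_1,\al_2,\al_1+\al_2,\al_1+2\al_2$ of $B_2$ has nonnegative integer coordinates in the basis $\{\al_1,\al_2\}$, a weight $\xi=a\al_1+b\al_2$ satisfies $\wp(\xi)>0$ if and only if $a,b\in\NN$: the ``if'' direction is immediate, and the ``only if'' direction holds because any nonnegative integral combination of positive roots has nonnegative integral simple-root coordinates. Hence $\sigma\in\A(\la,\mu)$ exactly when $\sigma(\la+\r)-(\mu+\r)\in\NN\al_1\oplus\NN\al_2$, so the entire problem becomes: for which $\sigma$ are both simple-root coordinates of $\sigma(\la+\r)-(\mu+\r)$ nonnegative integers?

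Next I would carry out the eight evaluations. Writing $\la+\r=(c_1+1)\w_1+(c_2+1)\w_2$ and $\mu+\r=(n+1)\w_1+(m+1)\w_2$, I would apply each $\sigma\in W$ using Table~\ref{tab:WeylB2} together with the relations $\al_1=2\w_1-2\w_2$ and $\al_2=-\w_1+2\w_2$ (inverting the definitions of the fundamental weights), and express $\sigma(\la+\r)-(\mu+\r)$ as $A_\sigma\al_1+B_\sigma\al_2$. The divisibility hypotheses $2\mid c_2$ and $2\mid m$, via Lemma~\ref{ref:divlemB2}, guarantee $A_\sigma,B_\sigma\in\ZZ$, so membership in $\A(\la,\mu)$ reduces to $A_\sigma\ge 0$ and $B_\sigma\ge 0$. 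The central bookkeeping fact is that the resulting sixteen inequalities collapse onto the eight conditions $J_1,\dots,J_8$ of Table~\ref{tab:b2}, with each $\sigma$ governed by a single pair; for instance the identity requires $J_1,J_2$ and $s_1$ requires $J_2,J_3$.

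Finally I would organize the enumeration geometrically. Viewing each governing pair as an edge, the eight conditions become the vertices of an octagon in the cyclic order $J_1,J_2,J_3,J_5,J_7,J_8,J_6,J_4$, and the eight Weyl elements are its edges. Two facts then pin down every case. Summing antipodal conditions gives $J_1+J_7=-2n-m-3<0$ and likewise $J_2+J_8,\,J_3+J_6,\,J_4+J_5<0$, so no two opposite vertices can hold simultaneously; and a short estimate shows that if two conditions separated by one vertex hold then the middle vertex also holds (for example $J_1\wedge J_3\Rightarrow J_2$, since $J_3\ge 0$ forces $\tfrac{c_2-m}{2}\ge n+1>0$ while $J_2=J_1+\tfrac{c_2-m}{2}$). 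Together these force the set of satisfied conditions to be a contiguous arc of at most four vertices, and $\A(\la,\mu)$ is then exactly the set of edges internal to that arc. Enumerating arcs of length $0$ or $1$ (giving $\emptyset$), $2$ (the singletons), $3$ (the doubletons), and $4$ (the triples) produces the stated list, the accompanying $\neg$-conditions simply recording that the two vertices bounding the arc fail. I expect the main obstacle to be precisely this last step: establishing that the satisfied conditions always form a single arc, so that only the endpoints of the arc (not the forced interior conditions) need to be listed and no ``gapped'' configuration can occur. This is exactly what the antipodal incompatibilities and the betweenness implications are designed to rule out, and verifying that these implications hold for every consecutive triple on the octagon is where the real content lies, as opposed to grinding through the twenty-four cases individually.
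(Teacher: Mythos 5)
Your computational core coincides with the paper's proof: the reduction of $\wp(\sigma(\lambda+\rho)-(\mu+\rho))>0$ to membership in $\NN\al_1\oplus\NN\al_2$, the eight coordinate evaluations, and the assignment to each $\sigma\in W$ of a pair of conditions from Table~\ref{tab:b2} (e.g.\ $1\leftrightarrow(J_1,J_2)$, $s_2s_1\leftrightarrow(J_3,J_5)$) are exactly what the paper records. Your octagon picture --- cyclic order $J_1,J_2,J_3,J_5,J_7,J_8,J_6,J_4$ with the Weyl elements as edges and negative antipodal sums --- is correct and is genuinely more than the paper supplies, since the paper's proof ends with the one-line assertion that ``intersecting these solution sets \ldots produces the desired results.'' Indeed your two structural facts do settle the one- and two-element cases: the singletons follow from antipodal exclusion together with the stated negations, and the pairs additionally use the distance-two (betweenness) implications to force the middle vertex.

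The genuine gap is in the three-element cases. Antipodal exclusion plus the distance-two implications do \emph{not} force the satisfied set $S$ to be a contiguous arc: $S=\{J_3,J_4\}$, two vertices at distance three, contains no antipodal pair and triggers no betweenness implication, yet is not an arc. This is precisely the configuration in the triple cases, whose hypotheses are a distance-three pair plus two negations (the negations being automatic by antipodality); their entire content is the family of distance-three implications, e.g.\ $J_3\wedge J_4\Rightarrow J_1\wedge J_2$ and $J_1\wedge J_5\Rightarrow J_2\wedge J_3$ and the six rotated versions, which never follow from your two facts and must be checked separately. (They are true, by the same kind of estimate: $J_3$ gives $\tfrac{c_2-m}{2}\geq n+1$ and $J_4$ gives $c_1\geq n+m+1$, whence $J_1\geq n+m+2$ and $J_2\geq 2n+m+3$.) Moreover, once this is repaired, the enumeration does \emph{not} produce the stated list, because Theorem~\ref{thm:mainb2} as printed mismatches conditions and sets in seven of its eight triple cases. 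Under $J_3,J_4,\neg J_5,\neg J_6$ one gets $S=\{J_1,J_2,J_3,J_4\}$ and hence $\A(\lambda,\mu)=\{1,s_1,s_2\}$, while the theorem asserts $\{1,s_1,s_2s_1\}$; this is impossible on its face, since $s_2s_1\in\A(\lambda,\mu)$ requires $J_5\geq 0$, which that case negates. Concretely, $\lambda=\w_1+2\w_2$ and $\mu=0$ satisfy $J_3$, $J_4$, $\neg J_5$, $\neg J_6$, yet $\A(\lambda,0)=\{1,s_1,s_2\}$. Only the $\{1,s_2,s_1s_2\}$ case is internally consistent; in the remaining seven the sets must be permuted to match their conditions (the correct pairing is, e.g., $\{1,s_1,s_2s_1\}$ with $J_1,J_5,\neg J_4,\neg J_7$, and $\{s_1,s_2s_1,s_1s_2s_1\}$ with $J_2,J_7,\neg J_1,\neg J_8$). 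So, carried out with the missing implications, your method proves a corrected form of the theorem; it cannot prove the statement as printed, and the paper's own proof is too compressed to detect this discrepancy.
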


\begin{proof}
Let $\l = c_1\w_1+c_2\w_2$ for some $c_1, c_2 \in \ZZ$ and $\mu = n\w_1+m\w_2$ with $n,m \in \mathbb{N}$. Then
\begin{align*}
    1(\l + \rho) - (\mu + \rho) &= (\tfrac{c_2-m}{2}+c_1-n)\al_1+(c_1+c_2-n-m)\al_2,\\
    s_1(\l + \rho) - (\mu + \rho) &= (\tfrac{c_2-m}{2}-n-1)\al_1 + (c_1+c_2-n-m)\al_2,\\
    s_2(\l + \rho) - (\mu + \rho) &= (\tfrac{c_2-m}{2}+c_1-n)\al_1+(c_1-n-m-1)\al_2,\\
    s_2s_1(\l + \rho) - (\mu + \rho) &= (\tfrac{c_2-m}{2}-n-1)\a_1 + (-c_1-n-m-3)\a_2,\\
    s_1s_2(\l + \rho) - (\mu + \rho) &= (\tfrac{-c_2-m}{2}-n-2)\a_1 + (c_1-n-m-1)\a_2,\\
    s_1(s_2s_1)(\l + \rho) - (\mu + \rho) &= (\tfrac{-c_2-m}{2}-c_1-n-3)\al_1 + (-c_1-n-m-3)\al_2,\\
    s_2(s_1s_2)(\l + \rho) - (\mu + \rho) &= (\tfrac{-c_2-m}{2}-n-2)\al_1+(-c_1-c_2-n-m-4)\al_2,\\
    (s_2s_1)^2(\l + \rho) - (\mu + \rho) &= (\tfrac{-c_2-m}{2}-c_1-n-3)\al_1+(-c_1-c_2-n-m-4)\al_2.
\end{align*}
From the equations above and the definition of a Weyl alternation set, it follows that 
\begin{align*}
    1 &\in \A(\l,\mu) \Leftrightarrow \tfrac{c_2-m}{2}+c_1-n \geq 0 \mbox{ and } c_1+c_2-n-m \geq 0,\\
    s_1 &\in \A(\l,\mu) \Leftrightarrow \tfrac{c_2-m}{2}-n-1 \geq 0 \mbox{ and } c_1+c_2-n-m \geq 0, \\ 
    s_2 &\in \A(\l,\mu) \Leftrightarrow \tfrac{c_2-m}{2}+c_1-n \geq 0 \mbox{ and } c_1-n-m-1 \geq 0,\\
    s_2s_1 &\in \A(\l,\mu) \Leftrightarrow \tfrac{c_2-m}{2}-n-1 \geq 0 \mbox{ and } -c_1-n-m-3 \geq 0,\\
    s_1s_2 &\in \A(\l,\mu) \Leftrightarrow \tfrac{-c_2-m}{2}-n-2 \geq 0 \mbox{ and } c_1-n-m-1 \geq 0,\\
    s_1(s_2s_1) &\in \A(\l,\mu) \Leftrightarrow \tfrac{-c_2-m}{2}-c_1-n-3 \geq 0 \mbox{ and } -c_1-n-m-3 \geq 0,\\
    s_2(s_1s_2) &\in \A(\l,\mu) \Leftrightarrow \tfrac{-c_2-m}{2}-n-2 \geq 0 \mbox{ and } -c_1-c_2-n-m-4 \geq 0,\\
    (s_2s_1)^2 &\in \A(\l,\mu) \Leftrightarrow \tfrac{-c_2-m}{2}-c_1-n-3\geq 0 \mbox{ and } -c_1-c_2-n-m-4 \geq 0.
\end{align*}
By Lemma \ref{ref:divlemB2},
intersecting these solution sets on the lattice $\ZZ{\w_1} \oplus 2\ZZ{\w_2}$ produces the desired results.
\end{proof}


\subsection{Lie algebra of type \texorpdfstring{$C_2$}{C2}}
We begin by establishing a divisibility condition necessary for a weight in the fundamental weight lattice to be in the root lattice.
\begin{lemma}\label{ref:divlemC2}
Let $\lambda = c_1\w_1+c_2\w_2$ with $c_1, c_2 \in \ZZ$. Then $\lambda \in \ZZ{\a_1} \oplus \ZZ{\a_2}$ if and only if $2|c_1$.
\end{lemma}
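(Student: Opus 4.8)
The plan is to mimic the structure of Lemma~\ref{ref:divlemB2} exactly, since the only difference between the $B_2$ and $C_2$ cases is the precise way the fundamental weights expand in terms of the simple roots. First I would write $\lambda = c_1\w_1 + c_2\w_2$ in the basis of simple roots using the expressions $\w_1 = \al_1 + \tfrac{1}{2}\al_2$ and $\w_2 = \al_1 + \al_2$ that were recorded for type $C_2$. Substituting gives
\[
\lambda = c_1\!\left(\al_1 + \tfrac{1}{2}\al_2\right) + c_2\!\left(\al_1+\al_2\right) = (c_1+c_2)\al_1 + \left(\tfrac{c_1}{2}+c_2\right)\al_2.
\]
This is the key computation, and it is where the asymmetry with the $B_2$ lemma appears: here the half-integer coefficient sits on $\al_2$ and is governed by $c_1$, whereas in $B_2$ it was governed by $c_2$.

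Next I would read off the membership criterion. Since $c_1,c_2\in\ZZ$, the $\al_1$-coefficient $c_1+c_2$ is automatically an integer, so no condition arises from it. Thus $\lambda\in\ZZ\al_1\oplus\ZZ\al_2$ holds if and only if the $\al_2$-coefficient $\tfrac{c_1}{2}+c_2$ is an integer. Equivalently, $\tfrac{c_1}{2}+c_2 = b$ for some $b\in\ZZ$ precisely when $c_1 = 2(b-c_2)$, i.e.\ precisely when $c_1$ is even. This establishes the claimed equivalence $\lambda\in\ZZ\al_1\oplus\ZZ\al_2 \iff 2\mid c_1$.

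There is essentially no obstacle here: the argument is a one-line linear-algebra verification, and the proof should read almost verbatim like the $B_2$ proof with the roles of $c_1$ and $c_2$ interchanged. The only point requiring a moment of care is making sure the substitution uses the $C_2$ fundamental weights rather than the $B_2$ ones, so that the parity condition lands on $c_1$ (not $c_2$) — this is exactly the content that distinguishes Lemma~\ref{ref:divlemC2} from Lemma~\ref{ref:divlemB2} and is what makes the two $C_2$ alternation-set conditions differ from the $B_2$ ones via the lattice $2\ZZ\w_1 \oplus \ZZ\w_2$ used in the final intersection step.
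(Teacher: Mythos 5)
Your proposal is correct and matches the paper's own proof essentially verbatim: both expand $\lambda = (c_1+c_2)\al_1 + \left(\tfrac{c_1}{2}+c_2\right)\al_2$ using the $C_2$ fundamental weights, observe that the $\al_1$-coefficient is automatically integral, and reduce membership in the root lattice to the parity condition $2\mid c_1$ on the $\al_2$-coefficient. No gaps; the argument is complete.
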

\begin{proof}
Observe that $\lambda = c_1\w_1+c_2\w_2 = (c_1+c_2)\al_1+(\tfrac{c_1}{2}+c_2)\al_2$. Therefore, since $c_1, c_2 \in \ZZ$, then $(c_1+c_2) \in \ZZ$. Similarly,  $(\tfrac{c_1}{2}+c_2) = a$ for some $a \in \ZZ$ if and only if $c_1 = 2(a-c_2)$. Thus, $2|c_1$.
\end{proof}
\begin{table}[H]
    \centering
\begin{tabular}{|l|l|}\hline
    $L_1: c_1+c_2-n-m \geq 0 $&$L_2: \tfrac{c_1-n}{2}+c_2-m \geq 0 $\\\hline
    $L_3: c_2-n-m-1\geq 0 $ & $L_4: \tfrac{c_1-n}{2}-m-1 \geq 0 $\\\hline
    $L_5: \tfrac{-c_1-n}{2}-m-2\geq0 $ & $L_6: -c_2 - n - m - 3 \geq 0 $\\\hline
    $L_7:-c_1 - c_2 - n - m - 4\geq 0 $&$L_8: \tfrac{-c_1-n}{2}-c_2 - m - 3 \geq 0 $\\\hline
\end{tabular}
\caption{Conditions for Theorem \ref{thm:mainc2}.}
    \label{tab:c2}
\end{table}
We are now ready to describe the Weyl alternation sets for the Lie algebra of type $C_2$.
\begin{theorem}[Weyl alternation sets of  $\mathfrak{sp}_{4}(\mathbb{C})$]\label{thm:mainc2}
Let $\lambda = c_1\w_1+c_2\w_2$ for some $c_1,c_2 \in \ZZ$ with $2|c_1$. Fix $\mu = n\w_1+m\w_2$ with $n,m \in \NN$ and $2|n.$ To simplify notation we use inequalities in Table \ref{tab:c2} and let $\neg$ be the negation of an inequality. Then 
\begin{align*}
\A(\l, \mu) &=\begin{cases}    
     \tikz\draw[onectwo, fill] (0,0) circle (2.5pt) 
; \quad \{1\} &\mbox{if } L_1, L_2, \neg L_4, \mbox{ and } \neg L_3 ,\\
   \tikz\draw[twoctwo, fill] (0,0) circle (2.5pt) 
;\quad    \{s_1\} &\mbox{if } L_3, L_2, \neg L_5, \mbox{ and } \neg L_1 ,\\
   \tikz\draw[threectwo, fill] (0,0) circle (2.5pt) 
;\quad   \{s_2\} &\mbox{if } L_1, L_4, \neg L_2, \mbox{ and } \neg L_6 ,\\
      \tikz\draw[fourctwo, fill] (0,0) circle (2.5pt) 
;\quad \{s_2s_1\} &\mbox{if } L_3, L_5, \neg L_2, \mbox{ and } \neg L_7 ,\\
    \tikz\draw[fivectwo, fill] (0,0) circle (2.5pt) 
;\quad  \{s_1s_2\} &\mbox{if } L_6, L_4, \neg L_8, \mbox{ and } \neg L_1 ,\\
    \tikz\draw[sixctwo, fill] (0,0) circle (2.5pt) 
;\quad  \{s_1s_2s_1\} &\mbox{if } L_7, L_5, \neg L_8, \mbox{ and } \neg L_3 ,\\
      \tikz\draw[sevenctwo, fill] (0,0) circle (2.5pt) 
;\quad \{s_2s_1s_2\} &\mbox{if } L_6, L_8, \neg L_4, \mbox{ and } \neg L_7 ,\\
    \tikz\draw[eightctwo, fill] (0,0) circle (2.5pt) 
;\quad  \{(s_2s_1)^2\} &\mbox{if } L_7, L_8, \neg L_5, \mbox{ and } \neg L_6 ,\\
    \tikz\draw[ninectwo, fill] (0,0) circle (2.5pt) 
;\quad  \{1, s_1\} &\mbox{if } L_1, L_3, \neg L_4, \mbox{ and } \neg L_5 ,\\
    \tikz\draw[tenctwo, fill] (0,0) circle (2.5pt) 
;\quad  \{1, s_2\} &\mbox{if } L_2, L_4, \neg L_3, \mbox{ and } \neg L_6 ,\\
    \tikz\draw[elevenctwo, fill] (0,0) circle (2.5pt) 
;\quad  \{s_1, s_2s_1\} &\mbox{if } L_2, L_5, \neg L_1, \mbox{ and } \neg L_7 ,\\
    \tikz\draw[twelvectwo, fill] (0,0) circle (2.5pt) 
;\quad  \{s_2, s_1s_2\} &\mbox{if } L_1, L_6, \neg L_2, \mbox{ and } \neg L_8 ,\\
    \tikz\draw[thirteenctwo, fill] (0,0) circle (2.5pt) 
;\quad  \{s_2s_1, s_1s_2s_1\} &\mbox{if } L_3, L_7, \neg L_2, \mbox{ and } \neg L_8 ,\\
    \tikz\draw[fourteenctwo, fill] (0,0) circle (2.5pt) 
;\quad  \{s_1s_2, s_2s_1s_2\} &\mbox{if } L_4, L_8, \neg L_1, \mbox{ and } \neg L_7 ,\\
     \tikz\draw[fifteenctwo, fill] (0,0) circle (2.5pt) 
;\quad \{s_1s_2s_1, (s_2s_1)^2\} &\mbox{if } L_5, L_8, \neg L_6, \mbox{ and } \neg L_3 ,\\
     \tikz\draw[sixteenctwo, fill] (0,0) circle (2.5pt) 
;\quad \{s_2s_1s_2, (s_2s_1)^2\} &\mbox{if } L_6, L_7, \neg L_5, \mbox{ and } \neg L_4 ,\\
     \tikz\draw[seventeenctwo, fill] (0,0) circle (2.5pt) 
;\quad \{1, s_1, s_2s_1\} &\mbox{if } L_1, L_5, \neg L_4, \mbox{ and } \neg L_7 ,\\
     \tikz\draw[eighteenctwo, fill] (0,0) circle (2.5pt) 
;\quad \{1, s_1, s_2\} &\mbox{if } L_3, L_4, \neg L_5, \mbox{ and } \neg L_6 ,\\
      \tikz\draw[nineteenctwo, fill] (0,0) circle (2.5pt) 
;\quad \{1, s_2, s_1s_2\} &\mbox{if } L_2, L_6, \neg L_3, \mbox{ and } \neg L_8 ,\\
     \tikz\draw[twentyctwo, fill] (0,0) circle (2.5pt) 
;\quad  \{s_2, s_1s_2, s_2s_1s_2\} &\mbox{if } L_1, L_8, \neg L_2, \mbox{ and } \neg L_7 ,\\
     \tikz\draw[twenty-onectwo, fill] (0,0) circle (2.5pt) 
;\quad \{s_1s_2, s_2s_1s_2, (s_2s_1)^2\} &\mbox{if } L_7, L_4, \neg L_5, \mbox{ and } \neg L_1 ,\\
     \tikz\draw[twenty-twoctwo, fill] (0,0) circle (2.5pt) 
;\quad \{s_2s_1s_2, (s_2s_1)^2, s_1s_2s_1\} &\mbox{if } L_6, L_5, \neg L_4, \mbox{ and } \neg L_3 ,\\
     \tikz\draw[twenty-threectwo, fill] (0,0) circle (2.5pt) 
;\quad \{(s_2s_1)^2, s_1s_2s_1, s_2s_1\} &\mbox{if } L_3, L_8, \neg L_2, \mbox{ and } \neg L_6 ,\\
     \tikz\draw[twenty-fourctwo, fill] (0,0) circle (2.5pt)
;\quad \{s_1, s_2s_1, s_1s_2s_1\} &\mbox{if } L_7, L_2, \neg L_8, \mbox{ and } \neg L_1 ,\mbox{ and}\\
      \phantom{\tikz\draw[twenty-fourctwo, fill] (0,0) circle (2.5pt);}\quad \emptyset & \mbox{otherwise}.
\end{cases}
\end{align*}
\end{theorem}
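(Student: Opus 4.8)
The plan is to follow the strategy already used in the proof of Theorem \ref{thm:mainb2}, adapted to the root data of $C_2$. First I would use Table \ref{tab:WeylC2}, together with the stated action $s_i(\w_j)=\w_j-\a_j$ if $i=j$ and $\w_j$ otherwise, to compute for each of the eight elements $\sigma\in W$ the vector $\sigma(\l+\r)-(\mu+\r)$, written in the simple-root basis $\{\a_1,\a_2\}$. With $\l=c_1\w_1+c_2\w_2$ and $\mu=n\w_1+m\w_2$, each such vector has the form $A_\sigma\,\a_1+B_\sigma\,\a_2$ for explicit affine-linear functions $A_\sigma,B_\sigma$ of $c_1,c_2,n,m$. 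As in the $B_2$ case, I expect the sixteen coefficients $\{A_\sigma,B_\sigma\}$ to collapse into exactly the eight distinct expressions recorded as $L_1,\dots,L_8$ in Table \ref{tab:c2}, so that each element's pair of coefficients matches two of these labels.

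The second step is to characterize membership in $\A(\l,\mu)$. Here I would use the fact that, for a rank-two root system, $\wp(\xi)>0$ if and only if $\xi\in\NN\a_1\oplus\NN\a_2$: the simple roots $\a_1,\a_2$ are themselves positive roots, so any nonnegative integral combination of simple roots has positive partition-function value, while conversely every positive root of $C_2$ is a nonnegative integral combination of $\a_1,\a_2$. Thus $\sigma\in\A(\l,\mu)$ precisely when $A_\sigma\geq 0$, $B_\sigma\geq 0$, and the vector lies in the root lattice. Lemma \ref{ref:divlemC2} together with the hypotheses $2\mid c_1$ and $2\mid n$ supplies the lattice condition: these divisibilities force every coefficient $A_\sigma,B_\sigma$ (including those involving $\tfrac{c_1-n}{2}$) to be an integer, so restricting to the sublattice $2\ZZ\w_1\oplus\ZZ\w_2$ reduces membership to the two inequalities alone. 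This yields eight biconditionals $\sigma\in\A(\l,\mu)\Leftrightarrow(\text{two of the }L_i)$, exactly parallel to the displayed list in the $B_2$ proof.

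The final and most delicate step is the combinatorial bookkeeping that turns these eight biconditionals into the twenty-four-case description of the full set. For fixed $\mu$, each inequality $L_i\geq 0$ cuts the $(c_1,c_2)$-lattice along a line, and the eight lines partition the lattice into regions on which the subset $\{\sigma:\sigma\in\A(\l,\mu)\}$ is constant. The subtlety is that a pair such as $\{L_1,L_2\}$ holding does not by itself isolate $\A=\{1\}$, since other elements could also qualify; this is precisely why each case is pinned down by two positive conditions \emph{together with} two negations $\neg L_k$, which carve the boundary lines so that the listed subset is exactly the alternation set and no larger. I would verify exhaustiveness and mutual exclusivity by checking that the arrangement of the eight lines has exactly the twenty-four cells claimed, each stably labeled, with every remaining point forcing incompatible inequalities and hence $\A(\l,\mu)=\emptyset$. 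I expect this region-by-region verification, rather than any single computation, to be the main obstacle, since it requires showing the stated conditions are simultaneously necessary and sufficient across all twenty-four cells.
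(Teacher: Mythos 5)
Your proposal is correct and follows essentially the same route as the paper's proof: compute $\sigma(\lambda+\rho)-(\mu+\rho)$ in the simple-root basis for all eight Weyl group elements (the sixteen coefficients do collapse to the eight expressions $L_1,\dots,L_8$), convert positivity of $\wp$ into the pair of inequalities for each $\sigma$, and then intersect the resulting solution sets on the lattice $2\ZZ\w_1\oplus\ZZ\w_2$ guaranteed by Lemma \ref{ref:divlemC2}. The only difference is one of emphasis: the paper dispatches your ``most delicate step'' in a single sentence (``intersecting these solution sets \dots produces the desired results''), whereas you correctly identify that the region-by-region verification of the twenty-four cases, with the negations $\neg L_k$ excluding extraneous elements, is where the real bookkeeping lies.
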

\begin{proof}
Let $\l = c_1\w_1+c_2\w_2$ for some $c_1, c_2 \in \ZZ$ and $\mu = n\w_1+m\w_2$ with $n,m \in \mathbb{N}$.
\begin{align*}
    1(\l + \rho) - (\mu + \rho) &= (c_1+c_2-n-m)\a_1 + (\tfrac{c_1-n}{2}+c_2-m)\a_2,\\
    s_1(\l + \rho) - (\mu + \rho) &= (c_2-n-m-1)\al_1 + (\tfrac{c_1-n}{2}+c_2-m)\al_2,\\
    s_2(\l + \rho) - (\mu + \rho) &= (c_1+c_2-n-m)\al_1+(\tfrac{c_1-n}{2}-m-1)\al_2,\\
    s_2s_1(\l + \rho) - (\mu + \rho) &= (c_2-n-m-1)\a_1 + (\tfrac{-c_1-n}{2}-m-2)\a_2,\\
    s_1s_2(\l + \rho) - (\mu + \rho) &= (-c_2-n-m-3)\a_1 + (\tfrac{c_1-n}{2}-m-1)\a_2,\\
    s_1(s_2s_1)(\l + \rho) - (\mu + \rho) &= (-c_1-c_2-n-m-4)\al_1 + (\tfrac{-c_1-n}{2}-m-2)\al_2,\\
    s_2(s_1s_2)(\l + \rho) - (\mu + \rho) &= (-c_2 -n -m - 3)\al_1+(\tfrac{-c_1-n}{2}-c_2-m - 3)\al_2,\\
    (s_2s_1)^2(\l + \rho) - (\mu + \rho) &= (-c_1 - c_2 - n -m - 4)\al_1+(\tfrac{-c_1 - n}{2} - c_2 - m - 3)\al_2.
\end{align*}
From the equations above and the definition of a Weyl alternation set, it follows that 
\begin{align*}
    1 &\in \A(\l,\mu) \Leftrightarrow c_1+c_2-n-m \geq 0 \mbox{ and } \tfrac{c_1-n}{2}+c_2-m \geq 0,\\
    s_1 &\in \A(\l,\mu) \Leftrightarrow c_2-n-m-1 \geq 0 \mbox{ and } \tfrac{c_1-n}{2}+c_2-m \geq 0, \\ 
    s_2 &\in \A(\l,\mu) \Leftrightarrow c_1+c_2-n-m \geq 0 \mbox{ and } \tfrac{c_1-n}{2}-m-1 \geq 0,\\
    s_2s_1 &\in \A(\l,\mu) \Leftrightarrow c_2-n-m-1 \geq 0 \mbox{ and } \tfrac{-c_1-n}{2}-m-2 \geq 0,\\
    s_1s_2 &\in \A(\l,\mu) \Leftrightarrow -c_2-n-m-3 \geq 0 \mbox{ and } \tfrac{c_1-n}{2}-m-1 \geq 0,\\
    s_1(s_2s_1) &\in \A(\l,\mu) \Leftrightarrow -c_1 - c_2 - n -m - 4 \geq 0 \mbox{ and } \tfrac{-c_1-n}{2}-m-2 \geq 0,\\
    s_2(s_1s_2) &\in \A(\l,\mu) \Leftrightarrow -c_2-n-m-3\geq 0 \mbox{ and } \tfrac{-c_1 - n}{2} - c_2 - m - 3 \geq 0,\\
    (s_2s_1)^2 &\in \A(\l,\mu) \Leftrightarrow -c_1 - c_2 - n -m - 4\geq 0 \mbox{ and } \tfrac{-c_1 - n}{2} - c_2 - m - 3 \geq 0.
\end{align*}
By Lemma \ref{ref:divlemC2},
intersecting these solution sets on the lattice $2\ZZ{\w_1} \oplus \ZZ{\w_2}$ produces the desired results.
\end{proof}

\subsection{Lie algebra of type \texorpdfstring{$D_2$}{D2}}
We begin by establishing a divisibility condition necessary for a weight in the fundamental weight lattice to be in the root lattice.
\begin{lemma}\label{lemma:divd2}
Let $\lambda = c_1\w_1+c_2\w_2$ with $c_1,c_2 \in \ZZ$. Then $\lambda \in \ZZ{\a_1} \oplus \ZZ{\a_2}$ if and only if $2|c_1$ and $2|c_2$.
\end{lemma}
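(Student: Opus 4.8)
The plan is to follow exactly the template of Lemmas \ref{ref:divlemB2} and \ref{ref:divlemC2}: rewrite $\lambda$ in the simple root basis and read off the divisibility constraints directly. Since in type $D_2$ the fundamental weights are $\w_1 = \tfrac{1}{2}\al_1$ and $\w_2 = \tfrac{1}{2}\al_2$, substituting immediately gives
\[
\lambda = c_1\w_1 + c_2\w_2 = \tfrac{c_1}{2}\al_1 + \tfrac{c_2}{2}\al_2.
\]

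First I would note that $\{\al_1,\al_2\}$ is a basis for the real span of the root lattice, so the expansion of $\lambda$ in this basis is unique; consequently $\lambda \in \ZZ\al_1 \oplus \ZZ\al_2$ holds precisely when both coefficients $\tfrac{c_1}{2}$ and $\tfrac{c_2}{2}$ lie in $\ZZ$. Second, each of these membership conditions is visibly equivalent to a divisibility statement, namely $\tfrac{c_1}{2}\in\ZZ$ if and only if $2|c_1$, and likewise $\tfrac{c_2}{2}\in\ZZ$ if and only if $2|c_2$. Conjoining the two equivalences yields the claimed biconditional.

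Unlike the $B_2$ and $C_2$ cases, where the change of basis mixed $c_1$ and $c_2$ so that one simple-root coefficient automatically landed in $\ZZ$ and only a single divisibility condition survived, here the change of basis is diagonal and both coefficients carry the factor $\tfrac{1}{2}$. Hence I expect no interaction between the two conditions, and the argument is entirely routine with no real obstacle. The only point worth stating carefully is the uniqueness of the simple-root expansion, which is what guarantees that root-lattice membership can be checked one coefficient at a time.
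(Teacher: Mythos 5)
Your proposal is correct and follows essentially the same route as the paper's own proof: expand $\lambda$ in the simple root basis as $\tfrac{c_1}{2}\al_1+\tfrac{c_2}{2}\al_2$ and read off that both coefficients must be integers, i.e.\ $2|c_1$ and $2|c_2$. Your explicit appeal to the uniqueness of the expansion in the basis $\{\al_1,\al_2\}$ is a point the paper leaves implicit, but it is the same argument.
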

\begin{proof}
Observe that $\lambda = c_1\w_1+c_2\w_2 = (\tfrac{c_1}{2})\al_1+(\tfrac{c_2}{2})\al_2$. Therefore, $\lambda \in \ZZ{\a_1} \oplus \ZZ{\a_2}$ precisely when $(\tfrac{c_1}{2})=x$ for some x $\in \ZZ$ and $(\tfrac{c_2}{2})=y$ for some y $\in \ZZ$. Hence, $2|c_1$ and $2|c_2$.
\end{proof}
We are now ready to describe the Weyl alternation sets for the Lie algebra of type $D_2$.
\begin{theorem}[Weyl alternation sets of  $\mathfrak{so}_{4}(\mathbb{C})$]\label{thm:maind2}
Let $\lambda = c_1\w_1+c_2\w_2$ for some $c_1,c_2 \in 2\ZZ$ and fix $\mu = n\w_1+m\w_2$ with $n,m \in 2\mathbb{N}$. Then 
\begin{align*}
\A(\l, \mu) &=\begin{cases}    
      \tikz\draw[onedtwo, fill] (0,0) circle (2.5pt); \quad \{1\} &\mbox{if } \tfrac{c_1-n}{2} \geq 0,\mbox{ and } \tfrac{c_2-m}{2} \geq 0,\\
      \tikz\draw[twodtwo, fill] (0,0) circle (2.5pt); \quad \{s_1\} &\mbox{if } \tfrac{-c_1-n-2}{2} \geq 0 \mbox{ and } \tfrac{c_2-m}{2} \geq 0,\\
       \tikz\draw[threedtwo, fill] (0,0) circle (2.5pt); \quad\{s_2\} &\mbox{if } \tfrac{c_1-n}{2} \geq 0, \mbox{ and } \tfrac{-c_2-m-2}{2} \geq 0,\\
      \tikz\draw[fourdtwo, fill] (0,0) circle (2.5pt); \quad \{s_2s_1\} &\mbox{if } \tfrac{-c_1-n-2}{2} \geq 0, \mbox{ and } \tfrac{-c_2-m-2}{2} \geq 0 \mbox{, and}\\
      \phantom{\tikz\draw[twenty-fourctwo, fill] (0,0) circle (2.5pt);} \quad \emptyset &\mbox{otherwise}.
\end{cases}
\end{align*}
\end{theorem}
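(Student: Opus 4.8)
The plan is to reduce the problem to a direct evaluation of $\sigma(\l+\rho)-(\mu+\rho)$ for each of the four Weyl group elements and to read off when the result lies in $\NN\a_1 \oplus \NN\a_2$. The key simplification, special to type $D_2$, is that $\Phi^{+} = \Delta = \{\a_1, \a_2\}$: the only way to write a weight $\xi$ as a nonnegative integral sum of positive roots is $\xi = a\a_1 + b\a_2$ with $a,b \in \NN$, so $\wp(\xi) \in \{0,1\}$ and $\wp(\xi) > 0$ if and only if $\xi \in \NN\a_1 \oplus \NN\a_2$. Thus, by Definition \ref{def:Weylaltset}, $\sigma \in \A(\l,\mu)$ exactly when both coefficients of $\sigma(\l+\rho)-(\mu+\rho)$, expressed in the basis $\{\a_1,\a_2\}$, are nonnegative integers.

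First I would compute the four differences explicitly. Since $\w_i = \tfrac{1}{2}\a_i$, the action on fundamental weights simplifies to $s_i\w_i = \w_i - \a_i = -\w_i$ and $s_i\w_j = \w_j$ for $i \neq j$, so each generator simply flips the sign of the corresponding coordinate. Writing $\l+\rho = (c_1+1)\w_1 + (c_2+1)\w_2$ and $\mu+\rho = (n+1)\w_1 + (m+1)\w_2$, and using $W = \{1, s_1, s_2, s_2s_1\}$ (the Klein-four group, per Table \ref{tab:WeylD2}), I would obtain $\sigma(\l+\rho)-(\mu+\rho)$ for each $\sigma$ and rewrite it in the simple-root basis via $\w_i = \tfrac{1}{2}\a_i$. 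For instance, $1$ yields $\tfrac{c_1-n}{2}\a_1 + \tfrac{c_2-m}{2}\a_2$, while $s_1$ yields $\tfrac{-c_1-n-2}{2}\a_1 + \tfrac{c_2-m}{2}\a_2$, matching the coefficients in the statement; the remaining two elements $s_2$ and $s_2s_1$ are handled the same way.

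Next I would impose $\wp > 0$ on each difference. The hypotheses $c_1, c_2 \in 2\ZZ$ and $n,m \in 2\NN$, combined with Lemma \ref{lemma:divd2}, guarantee that every coefficient appearing above is an integer, so for each $\sigma$ the condition $\wp(\sigma(\l+\rho)-(\mu+\rho)) > 0$ collapses to the displayed pair of scalar inequalities ``each coefficient $\geq 0$.'' This produces the four listed cases directly.

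The last step, and the only one requiring a genuine (if short) argument rather than bookkeeping, is to verify that the four regions are pairwise disjoint, so that each $\A(\l,\mu)$ is a singleton or empty. Here I would use $n, m \geq 0$: the first-coordinate conditions $\tfrac{c_1-n}{2} \geq 0$ and $\tfrac{-c_1-n-2}{2} \geq 0$ cannot hold simultaneously, since together they force $n \leq -1$; the same holds in the second coordinate. Disjointness shows the cases never overlap, and any $\l$ failing all four sign conditions lands in no region, which justifies the concluding ``$\emptyset$ otherwise.'' I do not anticipate any serious obstacle: the smallness of $W$ (order $4$) together with the coincidence $\Phi^{+} = \Delta$ makes type $D_2$ the most transparent of the four rank-two cases, and the only care needed is the integrality bookkeeping that ties the rational inequalities to membership in the root lattice.
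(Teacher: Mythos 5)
Your proposal is correct and takes essentially the same approach as the paper: compute $\sigma(\l+\rho)-(\mu+\rho)$ in the simple-root basis for each of the four Weyl group elements, translate $\wp>0$ into the pair of coefficient inequalities, and invoke Lemma \ref{lemma:divd2} to handle integrality. Your two additions --- the remark that $\wp\in\{0,1\}$ because $\Phi^{+}=\Delta$ in type $D_2$, and the explicit check that the four regions are pairwise disjoint (so each nonempty $\A(\l,\mu)$ really is a singleton) --- are details the paper leaves implicit in ``intersecting these solution sets,'' and they strengthen rather than alter the argument.
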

\begin{proof}
Let $\l = c_1\w_1+c_2\w_2$ for some $c_1, c_2 \in \ZZ$ and $\mu = n\w_1+m\w_2$ with $n,m \in \mathbb{N}$. Thus
\begin{align*}
    1(\l + \rho) - (\mu + \rho) &= (\tfrac{c_1-n}{2})\a_1 + (\tfrac{c_2-m}{2})\a_2,\\
    s_1(\l + \rho) - (\mu + \rho) &= (\tfrac{-c_1-n-2}{2})\a_1 + (\tfrac{c_2-m}{2})\a_2,\\
    s_2(\l + \rho) - (\mu + \rho) &= (\tfrac{c_1-n}{2})\a_1 + (\tfrac{-c_2-m-2}{2})\a_2,\\
    s_2s_1(\l + \rho) - (\mu + \rho) &= (\tfrac{-c_1-n-2}{2})\a_1 + (\tfrac{-c_2-m-2}{2})\a_2.
\end{align*}
From the equations above and the definition of a Weyl alternation set, it follows that 
\begin{align*}
    1 &\in \A(\l,\mu) \Leftrightarrow \tfrac{c_1-n}{2} \geq 0 \mbox{ and } \tfrac{c_1-m}{2} \geq 0,\\
    s_1 &\in \A(\l,\mu) \Leftrightarrow \tfrac{-c_1-n-2}{2} \geq 0 \mbox{ and } \tfrac{c_1-m}{2} \geq 0, \\ 
    s_2 &\in \A(\l,\mu) \Leftrightarrow \tfrac{c_1-n}{2} \geq 0 \mbox{ and }  \tfrac{-c_2-m-2}{2}  \geq 0,\\ 
    s_2s_1 &\in \A(\l,\mu) \Leftrightarrow \tfrac{-c_1-n-2}{2} \geq 0 \mbox{ and }  \tfrac{-c_2-m-2}{2}\geq 0.  
\end{align*}
By Lemma \ref{lemma:divd2}, intersecting these solution sets on the lattice $2\ZZ{\w_1} \oplus 2\ZZ{\w_2}$ produces the desired results.
\end{proof}

\subsection{Lie algebra of type \texorpdfstring{$G_2$}{G2}}
Given that in the Lie algebra of type $G_2$ the root lattice and fundamental weight lattice are equivalent, we do not have a divisibility condition in this section. 
\begin{table}[H]
    \centering
\begin{tabular}{|l|l|l|}\hline
    $K_1: c_1-n \geq 0$&$K_2: c_2-m \geq 0$&$K_3: -c_1+3c_2-n -1 \geq 0 $\\\hline
    $K_4: c_1-c_2-m-1 \geq 0$&$K_5: 2c_1 - 3c_2 - n - 4 \geq 0 $&$K_6: -c_1 + 2c_2 - m - 2\geq0$\\\hline
    $K_7: -c_1 -n - 10 \geq 0$&$K_8: -c_2 - m -6\geq 0$&$K_9: c_1 - 3c_2 - n - 9 \geq 0$\\\hline
    $K_{10}:-c_1 + c_2 - m - 5 \geq 0$&$K_{11}:-2c_1 + 3c_2 - n - 6 \geq 0$&$K_{12}:c_1 - 2c_2 - m - 4 \geq 0$\\\hline
\end{tabular}
\caption{Conditions for Theorem \ref{thm:maing2}}
    \label{tab:g2}
\end{table}

We are now ready to describe the Weyl alternation sets for the Lie algebra of type $G_2$.
\begin{theorem}[Weyl alternation sets of  $\mathfrak{g}_{2}$]
\label{thm:maing2}
Let $\l = c_1\al_1+c_2\al_2$ for some $c_1, c_2 \in \ZZ$ and fix $\mu = n\al_1+m\al_2$ with $n,m \in \mathbb{N}$. To simplify notation we let $\neg$ be the negation of an inequality in Table \ref{tab:g2}. Then
\begin{longtable}{p{4in}p{4in}}
\tikz\draw[one, fill] (0,0) circle (2.5pt); $\A(\l, \mu) =\{1\}$ &if $K_1$, $K_2$, $\neg K_3$, and $\neg K_4$,
\\
\tikz\draw[two, fill] (0,0) circle (2.5pt) 
; $\A(\l, \mu) =\{s_1\}$ &if  $K_3$, $K_2$, $\neg K_6$, and $\neg K_1$,
\\
\tikz\draw[three, fill] (0,0) circle (2.5pt);
$\A(\l, \mu) =\{s_2\}$ &if  $K_1$, $K_4$, $\neg K_2$,  and  $\neg K_5$,
\\
\tikz\draw[four, fill] (0,0) circle (2.5pt);
$\A(\l, \mu) =\{s_1s_2\}$ &if $K_5$, $K_4$,  $\neg K_{12}$,  and $ \neg K_1$,
\\
\tikz\draw[five, fill] (0,0) circle (2.5pt);
$\A(\l, \mu) =\{s_2s_1\}$ &if $K_3$, $K_6$,   $\neg K_{11}$,  and  $\neg K_2$,
\\
\tikz\draw[six, fill] (0,0) circle (2.5pt);
$\A(\l, \mu) =\{s_1s_2s_1\}$ &if $K_{11}$, $K_6$,  $\neg K_{10}$,  and $\neg K_3$,
\\
\tikz\draw[seven, fill] (0,0) circle (2.5pt) 
; $\A(\l, \mu) =\{s_2s_1s_2\}$ &if $K_5$, $K_{12}$,  $\neg K_4$,  and $\neg K_9$, 
\\
\tikz\draw[eight, fill] (0,0) circle (2.5pt) 
; $\A(\l, \mu) =\{(s_1s_2)^2\}$ &if $K_9$, $K_{12}$,  $\neg K_8$,  and $\neg K_5$,
\\
\tikz\draw[nine, fill] (0,0) circle (2.5pt) 
; $\A(\l, \mu) =\{(s_2s_1)^2\}$ &if $K_{11}$, $K_{10}$, $\neg K_6$,  and $\neg K_7$,
\\
\tikz\draw[ten, fill] (0,0) circle (2.5pt) 
; $\A(\l, \mu) =\{s_1(s_2s_1)^2\}$ &if $K_7$, $K_{10}$, $\neg K_8$,  and $\neg K_{11}$,
\\
\tikz\draw[eleven, fill] (0,0) circle (2.5pt);
$\A(\l, \mu) =\{s_2(s_1s_2)^2\}$ &if $K_9$, $K_8$, $\neg K_{12}$,  and $\neg K_7$,
\\
\tikz\draw[twelve, fill] (0,0) circle (2.5pt) 
; $\A(\l, \mu)=\{(s_1s_2)^3\}$ &if $K_7$, $K_8$, $\neg K_{10}$,  and $\neg K_9$,
\\
\tikz\draw[thirteen, fill] (0,0) circle (2.5pt) 
; $\A(\l, \mu) =\{1, s_1\}$ &if $K_1$, $K_3$, $\neg K_6$,  and $\neg K_4$,
\\
\tikz\draw[fourteen, fill] (0,0) circle (2.5pt) 
; $\A(\l, \mu) =\{1,s_2\}$ &if $K_2$, $K_4$, $\neg K_3$,  and $\neg K_5$,
\\
\tikz\draw[fifteen, fill] (0,0) circle (2.5pt) 
; $\A(\l, \mu) =\{s_1,s_2s_1\}$ &if $K_2$, $K_6$, $\neg K_1$,  and $\neg K_{11}$,
\\
\tikz\draw[sixteen, fill] (0,0) circle (2.5pt) 
; $\A(\l, \mu) =\{s_2, s_1s_2\}$ &if $K_1$, $K_5$, $\neg K_2$,  and $\neg K_{12}$,
\\
\tikz\draw[seventeen, fill] (0,0) circle (2.5pt) 
; $\A(\l, \mu) =\{s_1s_2, s_2s_1s_2\}$ &if $K_4$, $K_{12}$, $\neg K_1$,  and $\neg K_9$,
\\
\tikz\draw[eighteen, fill] (0,0) circle (2.5pt) 
; $\A(\l, \mu) =\{s_2s_1, s_1s_2s_1\}$ &if $K_3$, $K_{11}$, $\neg K_2$,  and $\neg K_{10}$,
\\
\tikz\draw[nineteen, fill] (0,0) circle (2.5pt) 
; $\A(\l, \mu) =\{s_1s_2s_1, (s_2s_1)^2\}$ &if $K_6$, $K_{10}$, $\neg K_3$,  and $\neg K_7$,
\\
\tikz\draw[twenty, fill] (0,0) circle (2.5pt) 
; $\A(\l, \mu) =\{s_2s_1s_2, (s_1s_2)^2\}$ &if $K_5$, $K_9$, $\neg K_4$,  and $\neg K_8$,
\\
\tikz\draw[twenty-one, fill] (0,0) circle (2.5pt) 
; $\A(\l, \mu) =\{(s_1s_2)^2, s_2(s_1s_2)^2\}$ &if  $K_{12}$, $K_8$, $\neg K_5$,  and $\neg K_7$,
\\
\tikz\draw[twenty-two, fill] (0,0) circle (2.5pt) 
; $\A(\l, \mu) =\{(s_2s_1)^2, s_1(s_2s_1)^2\}$ &if $K_{11}$, $K_7$, $\neg K_6$,  and $\neg K_8$,
\\
\tikz\draw[twenty-three, fill] (0,0) circle (2.5pt) 
; $\A(\l, \mu) =\{s_1(s_2s_1)^2, (s_1s_2)^3\}$ &if  $K_{10}$, $K_8$, $\neg K_{11}$,  and $\neg K_9$,
\\
\tikz\draw[twenty-four, fill] (0,0) circle (2.5pt) 
; $\A(\l, \mu) =\{s_2(s_1s_2)^2, (s_1s_2)^3\}$ &if $K_9$, $K_7$, $\neg K_{12}$,  and $\neg K_{10}$,
\\
\tikz\draw[twenty-five, fill] (0,0) circle (2.5pt) 
; $\A(\l, \mu) =\{1,s_1,s_2\}$ &if $K_3$, $K_4$, $\neg K_6$, and $\neg K_5$,
\\
\tikz\draw[twenty-six, fill] (0,0) circle (2.5pt) 
; $\A(\l, \mu) =\{1,s_1, s_2s_1\}$ &if $K_1$, $K_6$, $\neg K_4$, and $\neg K_{11}$,
\\
\tikz\draw[twenty-seven, fill] (0,0) circle (2.5pt) 
; $\A(\l, \mu) =\{1, s_2, s_1s_2\}$ &if $K_2$, $K_5$, $\neg K_3$,  and $\neg K_{12}$, 
\\
\tikz\draw[twenty-eight, fill] (0,0) circle (2.5pt) 
; $\A(\l, \mu) =\{s_1,s_2s_1,s_1s_2s_1\}$ &if $K_2$, $K_{11}$, $\neg K_{10}$,  and $\neg K_1$,
\\
\tikz\draw[twenty-nine, fill] (0,0) circle (2.5pt) 
; $\A(\l, \mu) =\{s_2s_1, (s_2s_1)^2, s_1s_2s_1\}$ &if $K_3$, $K_{10}$, $\neg K_2$, and $\neg K_7$,
\\
\tikz\draw[thirty, fill] (0,0) circle (2.5pt) 
; $\A(\l, \mu) =\{(s_2s_1)^2, s_1s_2s_1, s_1(s_2s_1)^2\}$ &if $K_7$, $K_6$, $\neg K_8$,  and $\neg K_3$,
\\
\tikz\draw[thirty-one, fill] (0,0) circle (2.5pt) 
; $\A(\l, \mu) =\{(s_2s_1)^2, (s_1s_2)^3, s_1(s_2s_1)^2\}$ &if$K_{11}$, $K_8$, $\neg K_6$,  and $\neg K_9$,
\\
\tikz\draw[thirty-two, fill] (0,0) circle (2.5pt) 
; $\A(\l, \mu) =\{(s_1s_2)^3, s_1(s_2s_1)^2, s_2(s_1s_2)^2\}$ &if $K_9$, $K_{10}$, $\neg K_{12}$,  and $\neg K_{11}$,
\\
\tikz\draw[thirty-three, fill] (0,0) circle (2.5pt) 
; $\A(\l, \mu) =\{(s_1s_2)^3, s_2(s_1s_2)^2, (s_1s_2)^2\}$ &if $K_7$, $K_{12}$, $\neg K_{10}$,  and $\neg K_5$,
\\
\tikz\draw[thirty-four, fill] (0,0) circle (2.5pt) 
; $\A(\l, \mu) =\{(s_1s_2)^2, s_2(s_1s_2)^2, s_2s_1s_2\}$ &if $K_8$, $K_5$, $\neg K_4$, and $\neg K_7$ ,
\\
\tikz\draw[thirty-five, fill] (0,0) circle (2.5pt) 
; $\A(\l, \mu) =\{(s_1s_2)^2, s_1s_2, s_2s_1s_2\}$ &if $K_9$, $K_4$, $\neg K_8$, and $\neg K_1$,
\\
\tikz\draw[thirty-six, fill] (0,0) circle (2.5pt) 
; $\A(\l, \mu) =\{s_2, s_1s_2,s_2s_1s_2\}$ &if $K_1$, $K_{12}$, $\neg K_9$,  and $\neg K_2$,
 \\
\tikz\draw[thirty-seven, fill] (0,0) circle (2.5pt) 
; $\A(\l, \mu) =\{1,s_1,s_2,s_1s_2\}$ & if $K_3$, $K_5$, $\neg K_6$, and $\neg K_{12}$, 
\\
\tikz\draw[thirty-eight, fill] (0,0) circle (2.5pt); 
$\A(\l, \mu) =\{1,s_1,s_2,s_2s_1\}$ &if $K_4$, $K_6$, $\neg K_5$,  and $\neg K_{11}$,
\\
\tikz\draw[thirty-nine, fill] (0,0) circle (2.5pt) 
; $\A(\l, \mu) =\{1,s_1,s_2s_1,s_1s_2s_1\}$ & if $K_1$, $K_{11}$, $\neg K_4$,  and $\neg K_{10}$,
\\
\tikz\draw[forty, fill] (0,0) circle (2.5pt) 
; $\A(\l, \mu) =\{s_1,s_2s_1, (s_2s_1)^2, s_1s_2s_1\}$ &if $K_2$, $K_{10}$, $\neg K_1$, and $\neg K_7$,
\\
\tikz\draw[forty-one, fill] (0,0) circle (2.5pt) 
; $\A(\l, \mu) =\{s_2s_1,(s_2s_1)^2, s_1s_2s_1, s_1(s_2s_1)^2\}$ &if $K_3$, $K_7$, $\neg K_2$, and $\neg K_8$,
\\
\tikz\draw[forty-two, fill] (0,0) circle (2.5pt) 
; $\A(\l, \mu) =\{ (s_2s_1)^2, (s_1s_2)^3,s_1s_2s_1,s_1(s_2s_1)^2\}$ &if $K_8$, $K_6$, $\neg K_9$,  and $\neg K_3$,
\\
\tikz\draw[forty-three, fill] (0,0) circle (2.5pt) 
; $\A(\l, \mu) =\{(s_2s_1)^2, (s_1s_2)^3, s_1(s_2s_1)^2, s_2(s_1s_2)^2\}$ &if $K_{11}$, $K_9$, $\neg K_6$,  and $\neg K_{12}$,
\\
\tikz\draw[forty-four, fill] (0,0) circle (2.5pt) 
; $\A(\l, \mu) =\{(s_1s_2)^3, (s_1s_2)^2, s_1(s_2s_1)^2, s_2(s_1s_2)^2\}$ &if $K_{10}$, $K_{12}$, $\neg K_{11}$,  and $\neg K_5$,
\\
\tikz\draw[forty-five, fill] (0,0) circle (2.5pt) 
; $\A(\l, \mu) =\{(s_1s_2)^3, (s_1s_2)^2, s_2(s_1s_2)^2, s_2s_1s_2\}$ &if $K_7$, $K_5$, $\neg K_{10}$, and $\neg K_4$,
\\
\tikz\draw[forty-six, fill] (0,0) circle (2.5pt) 
; $\A(\l, \mu) =\{(s_1s_2)^2, s_1s_2, s_2(s_1s_2)^2, s_2s_1s_2\}$ &if $K_8$, $K_4$, $\neg K_7$, and $\neg K_1$,
\\
\tikz\draw[forty-seven, fill] (0,0) circle (2.5pt) 
; $\A(\l, \mu) =\{s_2, (s_1s_2)^2, s_1s_2, s_2s_1s_2\}$ &if $K_1$, $K_9$, $\neg K_2$, and $\neg K_8$,
\\
\tikz\draw[forty-eight, fill] (0,0) circle (2.5pt) 
; $\A(\l, \mu) =\{1, s_2, s_1s_2, s_2s_1s_2\}$ &if $K_2$, $K_{12}$, $\neg K_3$,  and $\neg K_9$,
 \\
\tikz\draw[forty-nine, fill] (0,0) circle (2.5pt) 
; $\A(\l, \mu) =\{1,s_1,s_2,s_2s_1,s_1s_2\}$ &if $K_6$, $K_5$, $\neg K_{11}$,  and $\neg K_{12}$,
\\
\tikz\draw[fifty, fill] (0,0) circle (2.5pt) 
; $\A(\l, \mu) =\{1, s_1, s_2, s_2s_1, s_1s_2s_1\}$ &if $K_4$, $K_{11}$, $\neg K_{10}$,  and $\neg K_5$,
\\
\tikz\draw[fifty-one, fill] (0,0) circle (2.5pt) 
; $\A(\l, \mu) =\{1, s_1, s_2s_1, (s_2s_1)^2, s_1s_2s_1\}$ &if $K_1$, $K_{10}$, $\neg K_4$, and $\neg K_7$,
\\
\tikz\draw[fifty-two, fill] (0,0) circle (2.5pt) 
; $\A(\l, \mu) =\{s_1, s_2s_1, (s_2s_1)^2, s_1s_2s_1, s_1(s_2s_1)^2\}$ &if $K_2$, $K_7$, $\neg K_1$,  and $\neg K_8$,
\\
\tikz\draw[fifty-three, fill] (0,0) circle (2.5pt) 
; $\A(\l, \mu) =\{s_2s_1, (s_2s_1)^2, (s_1s_2)^3, s_1s_2s_1, s_1(s_2s_1)^2\}$ &if $K_3$, $K_8$, $\neg K_2$, and  $\neg K_9$,
\\
\tikz\draw[fifty-four, fill] (0,0) circle (2.5pt) 
; $\A(\l, \mu) =\{(s_2s_1)^2, (s_1s_2)^3, s_1s_2s_1, s_1(s_2s_1)^2, s_2(s_1s_2)^2\}$ &if $K_6$, $K_9$, $\neg K_3$,  and $\neg K_{12}$,
\\
\tikz\draw[fifty-five, fill] (0,0) circle (2.5pt) 
; $\A(\l, \mu) =\{(s_2s_1)^2, (s_1s_2)^3, (s_1s_2)^2, s_1(s_2s_1)^2, s_2(s_1s_2)^2\}$ &if $K_{11}$, $K_{12}$, $\neg K_6$,  and $\neg K_5$,
\\
\tikz\draw[fifty-six, fill] (0,0) circle (2.5pt) 
; $\A(\l, \mu) =\{(s_1s_2)^3, (s_1s_2)^2, s_1(s_2s_1)^2, s_2(s_1s_2)^2, s_2s_1s_2\}$ &if $K_{10}$, $K_5$, $\neg K_{11}$, and $\neg K_4$,
\\
\tikz\draw[fifty-seven, fill] (0,0) circle (2.5pt) 
; $\A(\l, \mu) =\{(s_1s_2)^3, (s_1s_2)^2, s_1s_2, s_2(s_1s_2)^2, s_2s_1s_2\}$ &if $K_4$, $K_7$, $\neg K_{10}$, and $\neg K_1$,
\\
\tikz\draw[fifty-eight, fill] (0,0) circle (2.5pt);
$\A(\l, \mu) =\{s_2, (s_1s_2)^2, s_1s_2, s_2(s_1s_2)^2, s_2s_1s_2\}$ &if $K_1$, $K_8$, $\neg K_2$, and $\neg K_7$,
\\
\tikz\draw[fifty-nine, fill] (0,0) circle (2.5pt);
 $\A(\l, \mu) =\{1, s_2, (s_1s_2)^2, s_1s_2, s_2s_1s_2\}$ &if $K_9$, $K_2$, $\neg K_8$, and $\neg K_3$,
\\
\tikz\draw[sixty, fill] (0,0) circle (2.5pt);
$\A(\l, \mu)=\{1, s_1, s_2, s_1s_2, s_2s_1s_2\}$ &if $K_{12}$, $K_3$, $\neg K_6$, and $\neg K_9$,  and
\\
\phantom{\tikz\draw[twenty-fourctwo, fill] (0,0) circle (2.5pt);} $\A(\l, \mu)\,=\,\emptyset$ & otherwise.
\end{longtable}
\end{theorem}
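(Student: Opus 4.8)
The plan is to follow the template already used for types $B_2$, $C_2$, and $D_2$: apply each of the twelve Weyl group elements to $\lambda+\rho$, subtract $\mu+\rho$, express the result in the basis $\{\alpha_1,\alpha_2\}$, and record when both coefficients are nonnegative. Since for $G_2$ the root lattice and the fundamental weight lattice coincide, there is no divisibility obstruction: every $\sigma(\lambda+\rho)-(\mu+\rho)$ already lies in $\ZZ\alpha_1\oplus\ZZ\alpha_2$, so $\wp(\sigma(\lambda+\rho)-(\mu+\rho))>0$ if and only if both its $\alpha_1$- and $\alpha_2$-coefficients are nonnegative. Writing $\lambda+\rho=(c_1+5)\alpha_1+(c_2+3)\alpha_2$ and $\mu+\rho=(n+5)\alpha_1+(m+3)\alpha_2$, I would use Table~\ref{tab:WeylG2} to compute all twelve differences; for instance $1(\lambda+\rho)-(\mu+\rho)=(c_1-n)\alpha_1+(c_2-m)\alpha_2$ and $s_1(\lambda+\rho)-(\mu+\rho)=(-c_1+3c_2-n-1)\alpha_1+(c_2-m)\alpha_2$.

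First I would assemble the twelve membership criteria. Reading off the coefficients, each $\sigma\in W$ lies in $\A(\lambda,\mu)$ exactly when a specific pair of the inequalities $K_1,\dots,K_{12}$ from Table~\ref{tab:g2} both hold. Concretely the correspondence is $1\leftrightarrow(K_1,K_2)$, $s_1\leftrightarrow(K_3,K_2)$, $s_2\leftrightarrow(K_1,K_4)$, $s_2s_1\leftrightarrow(K_3,K_6)$, $s_1s_2\leftrightarrow(K_5,K_4)$, $s_1s_2s_1\leftrightarrow(K_{11},K_6)$, $s_2s_1s_2\leftrightarrow(K_5,K_{12})$, $(s_2s_1)^2\leftrightarrow(K_{11},K_{10})$, $(s_1s_2)^2\leftrightarrow(K_9,K_{12})$, $s_1(s_2s_1)^2\leftrightarrow(K_7,K_{10})$, $s_2(s_1s_2)^2\leftrightarrow(K_9,K_8)$, and $(s_2s_1)^3\leftrightarrow(K_7,K_8)$. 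The key structural observation is that each inequality $K_i$ appears in exactly two of these pairs, so the hyperplane $K_i=0$ is the shared boundary between the two \emph{wedges} (intersections of two half-planes) on which the corresponding elements contribute.

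Next I would assemble the regions. For a fixed dominant $\mu$, as $\lambda$ ranges over the lattice the twelve wedges are arranged cyclically with the dihedral symmetry of $W$, and $\A(\lambda,\mu)$ is constant on each cell of the resulting line arrangement, equal to the set of elements whose wedge contains that cell. To certify a given entry of the theorem---say $\A(\lambda,\mu)=\{s_1\}$---I would impose the two defining inequalities of $s_1$ (namely $K_3$ and $K_2$) and exclude every other element. Because the only wedges adjacent to that of $s_1$ are those of $1$ and $s_2s_1$, which share with it the boundaries $K_2=0$ and $K_3=0$ respectively, excluding them reduces to imposing $\neg K_1$ and $\neg K_6$, recovering exactly the stated conditions $K_3,K_2,\neg K_6,\neg K_1$. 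The same adjacency analysis produces the negated inequalities in each of the sixty nonempty cases, while the complement of their union gives the empty case.

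The hard part will be the finite but extensive combinatorial verification that these sixty cases are pairwise disjoint and jointly exhaust the lattice---equivalently, that the adjacency structure of the twelve wedges is exactly as described and that wedges overlap only along the predicted boundaries. I would organize this by grouping the cases according to $|\A(\lambda,\mu)|$, following the block structure of the statement, and then check the consistency of the shared boundary inequalities between successive blocks. The Weyl alternation diagram constructed in Section~\ref{sec:diagrams} serves as the geometric bookkeeping device that makes this case analysis manageable and guards against omitting or double-counting a region.
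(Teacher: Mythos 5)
Your proposal is correct and follows essentially the same route as the paper: you compute $\sigma(\lambda+\rho)-(\mu+\rho)$ in the $\{\alpha_1,\alpha_2\}$ basis for all twelve Weyl group elements, obtain exactly the same twelve membership criteria (your pairing of each $\sigma$ with a pair $(K_i,K_j)$ matches the paper's list, noting $(s_2s_1)^3=(s_1s_2)^3$), and then intersect these solution sets over the lattice $\ZZ\alpha_1\oplus\ZZ\alpha_2$ to obtain the sixty cases. Your added observation that each $K_i$ bounds exactly two wedges, and the resulting adjacency bookkeeping, is a helpful elaboration of the intersection step that the paper itself compresses into a single closing sentence.
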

\begin{proof}
Let $\l = c_1\al_1+c_2\al_2$ for some $c_1, c_2 \in \ZZ$ and $\mu = n\al_1+m\al_2$ with $n,m \in \mathbb{N}$. Thus
\begin{align*}
    1(\l + \rho) - (\mu + \rho) &= (c_1 - n)\a_1 + (c_2 - m)\a_2,\\
    s_1(\l + \rho) - (\mu + \rho) &= (-c_1+3c_2-n-1)\al_1 + (c_2-m)\al_2,\\
    s_2(\l + \rho) - (\mu + \rho) &= (c_1-n)\al_1+(c_1-c_2-m-1)\al_2,\\
    s_2s_1(\l + \rho) - (\mu + \rho) &= (-c_1 + 3c_2 - n - 1)\a_1 + (-c_1 + 2c_2 - m - 2)\a_2,\\
    s_1s_2(\l + \rho) - (\mu + \rho) &= (2c_1 -3c_2 - n - 4)\a_1 + (c_1 - c_2 - m - 1)\a_2,\\
    s_1(s_2s_1)(\l + \rho) - (\mu + \rho) &= (-2c_1+3c_2-n-6)\al_1 + (-c_1+2c_2-m-2)\al_2,\\
    s_2(s_1s_2)(\l + \rho) - (\mu + \rho) &= (2c_1-3c_2-n-4)\al_1+(c_1-2c_2-m-4)\al_2,\\
    (s_2s_1)^2(\l + \rho) - (\mu + \rho) &= (-2c_1+3c_2-n-6)\al_1+(-c_1+c_2-m-5)\al_2,\\
    (s_1s_2)^2(\l + \rho) - (\mu + \rho) &= (c_1-3c_2-n-9)\al_1+(c_1-2c_2-m-4)\al_2,\\
    s_1(s_2s_1)^2(\l + \rho) - (\mu + \rho) &= (-c_1 - n - 10)\al_1+(-c_1 + c_2-m-5)\al_2,\\
    s_2(s_1s_2)^2(\l + \rho) - (\mu + \rho) &= (c_1-3c_2-n-9)\al_1+(-c_2-m-6)\al_2,\\
    (s_1s_2)^3(\lambda + \rho) - (\mu + \rho) &= (-c_1-n-10)\al_1 + (-c_2-m-6)\al_2.
\end{align*}
From the equations above and Definition \ref{def:Weylaltset}, it follows that 
\begin{align*}
    1 &\in \A(\l,\mu) \Leftrightarrow c_1 - n \geq 0 \mbox{ and } c_2 - m \geq 0,\\
    s_1 &\in \A(\l,\mu) \Leftrightarrow -c_1 + 3c_2 - n - 1 \geq 0 \mbox{ and } c_2 - m \geq 0, \\ 
    s_2 &\in \A(\l,\mu) \Leftrightarrow c_1-n \geq 0 \mbox{ and } c_1-c_2-m-1 \geq 0,\\
    s_2s_1 &\in \A(\l,\mu) \Leftrightarrow -c_1 + 3c_2 - n - 1 \geq 0 \mbox{ and } -c_1 + 2c_2 - m - 2 \geq 0,\\
    s_1s_2 &\in \A(\l,\mu) \Leftrightarrow 2c_1 -3c_2 - n - 4 \geq 0 \mbox{ and } c_1 - c_2 - m - 1 \geq 0,\\
    s_1(s_2s_1) &\in \A(\l,\mu) \Leftrightarrow -2c_1+3c_2-n-6 \geq 0 \mbox{ and } -c_1+2c_2-m-2 \geq 0,\\
    s_2(s_1s_2) &\in \A(\l,\mu) \Leftrightarrow 2c_1-3c_2-n-4 \geq 0 \mbox{ and } c_1-2c_2-m-4 \geq 0,\\
    (s_2s_1)^2 &\in \A(\l,\mu) \Leftrightarrow -2c_1+3c_2-n-6\geq 0 \mbox{ and } -c_1+c_2-m-5 \geq 0,\\
    (s_1s_2)^2 &\in \A(\l,\mu) \Leftrightarrow c_1-3c_2-n-9 \geq 0 \mbox{ and } c_1-2c_2-m-4 \geq 0,\\
    s_1(s_2s_1)^2 &\in \A(\l,\mu) \Leftrightarrow -c_1 - n - 10 \geq 0 \mbox{ and } -c_1 + c_2-m-5 \geq 0,\\
    s_2(s_1s_2)^2 &\in \A(\l,\mu) \Leftrightarrow c_1-3c_2-n-9 \geq 0 \mbox{ and } -c_2-m-6 \geq 0, \\
    (s_1s_2)^3 &\in \A(\l,\mu) \Leftrightarrow -c_1-n-10 \geq 0 \mbox{ and } -c_2-m-6 \geq 0.
\end{align*}
Intersecting these solution sets on the $\ZZ{\al_1} \oplus \ZZ{\al_2}$ lattice produces the desired results.
\end{proof}

\section{Weyl alternation diagrams}\label{sec:diagrams}
Theorems \ref{thm:mainb2}, \ref{thm:mainc2}, \ref{thm:maind2}, and \ref{thm:maing2} establish inequalities which describe when certain elements of the Weyl group appear in the Weyl alternation set $\A(\lambda,\mu)$.
From a geometric point of view, we are finding the regions of a plane in which $\sigma(\lambda + \rho)-(\mu+\rho)$ is a nonnegative integral linear combination of the simple roots. 
\begin{figure}[H]%
    \centering
    \subfloat[ $B_2$]{{\includegraphics[width=1.5in]{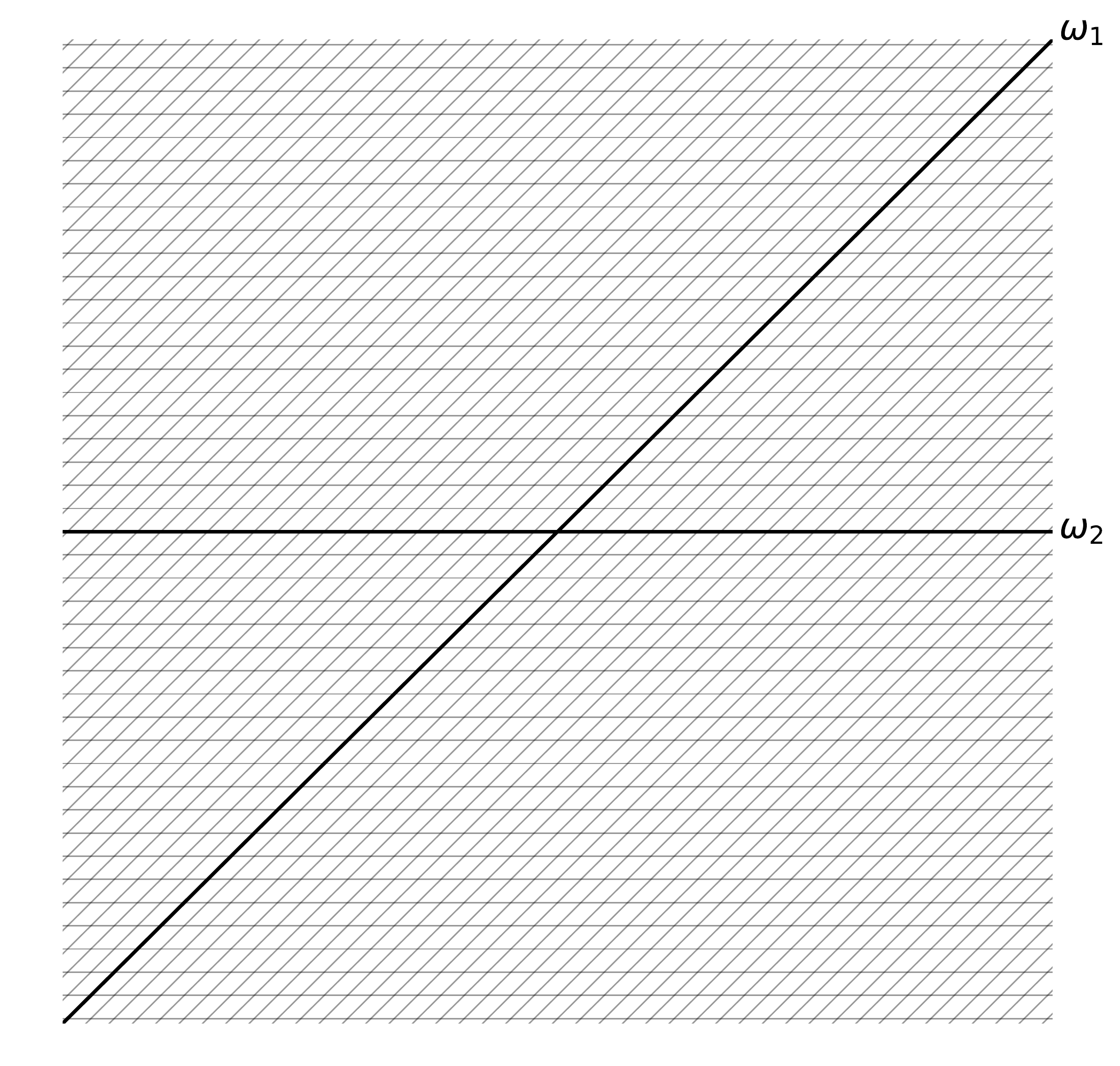}}
    \label{subfig:b2_grid}
    }%
    \hfill
    \subfloat[$C_2$]{{\includegraphics[width=1.5in]{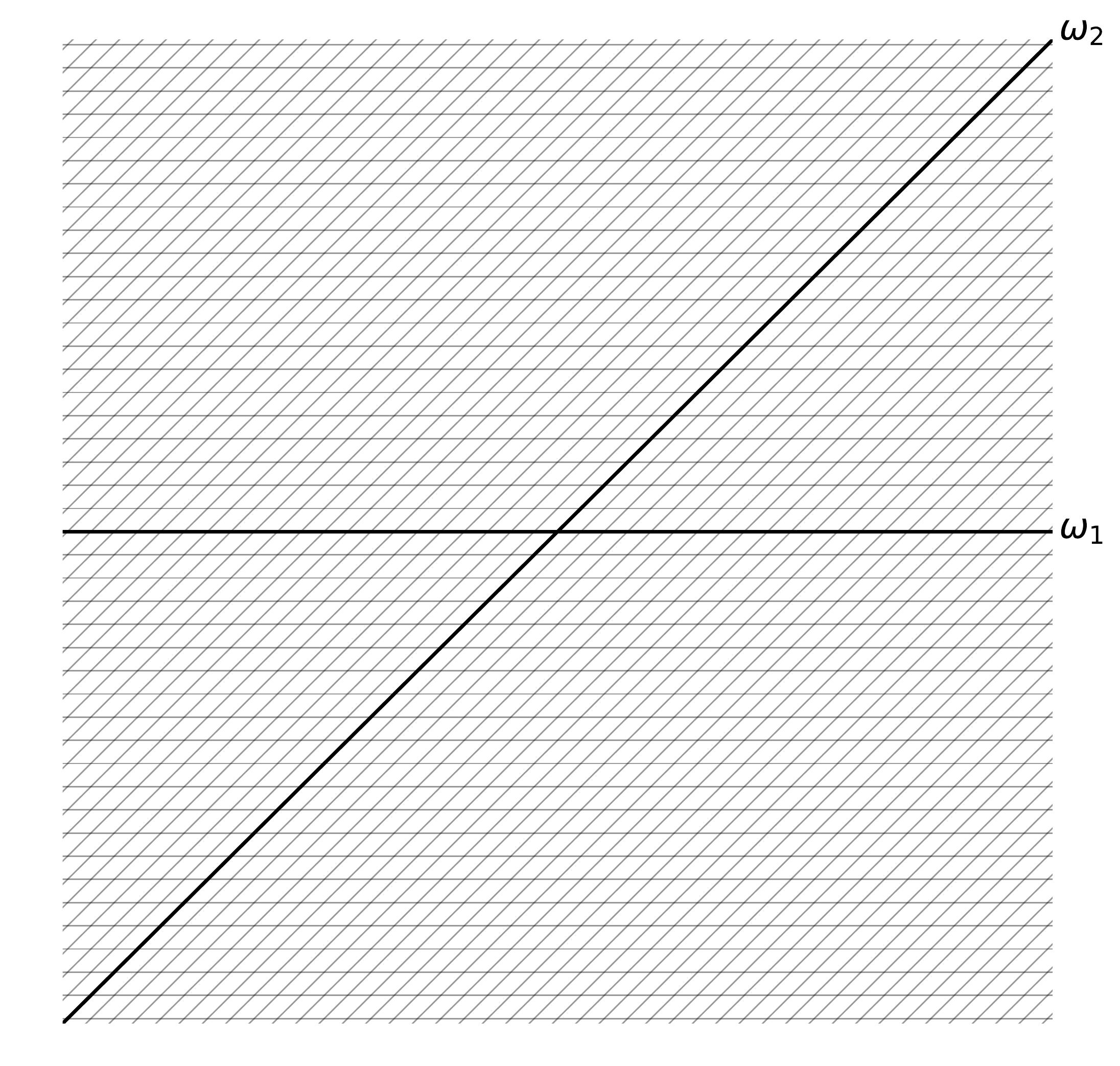} }
    \label{subfig:c2_grid}
    }
    \hfill
    \subfloat[$D_2$]{{\includegraphics[width=1.5in]{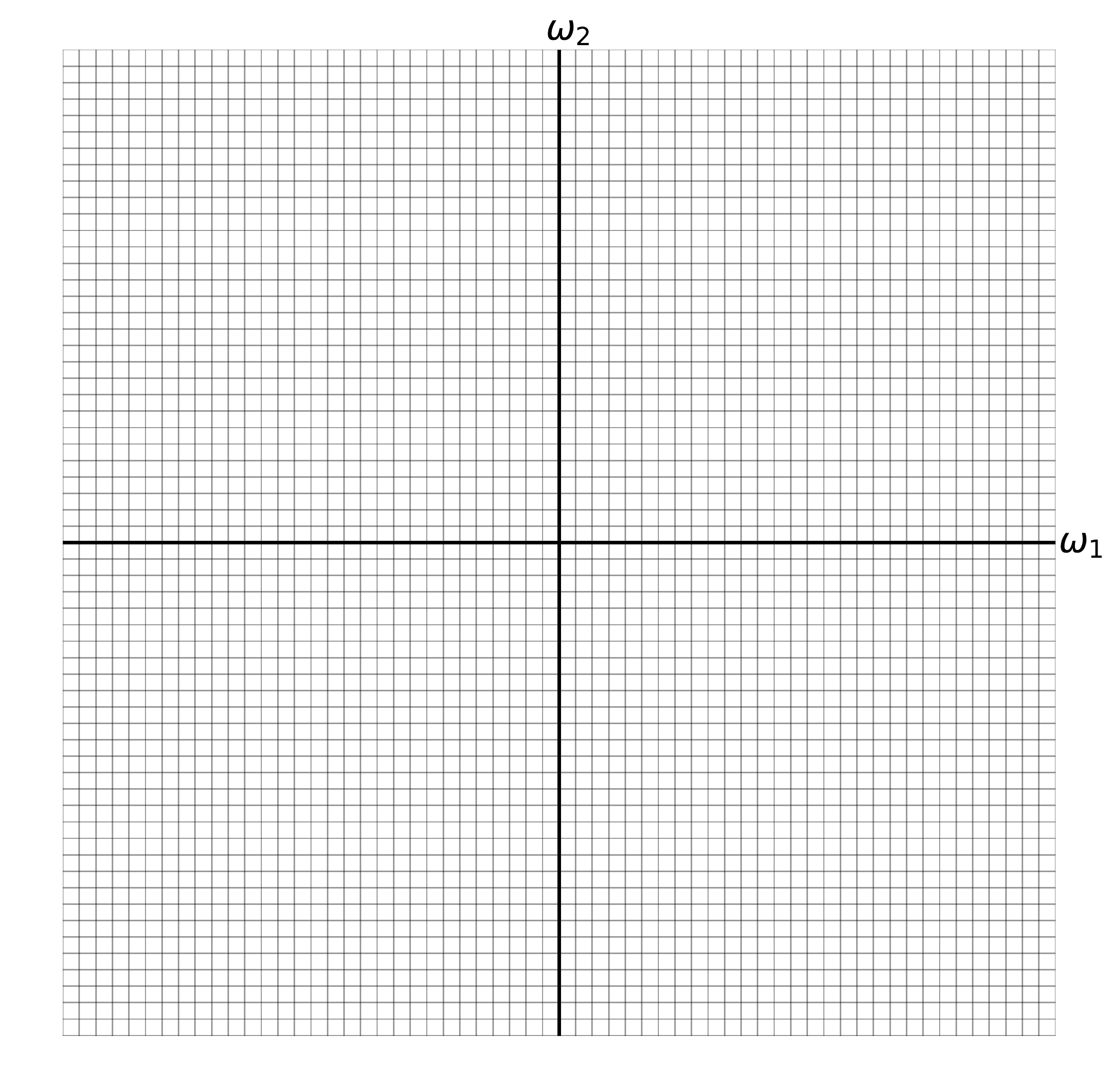}}
    \label{subfig:d2_grid}
    }%
    \hfill
    \subfloat[$G_2$]{{\includegraphics[width=1.5in]{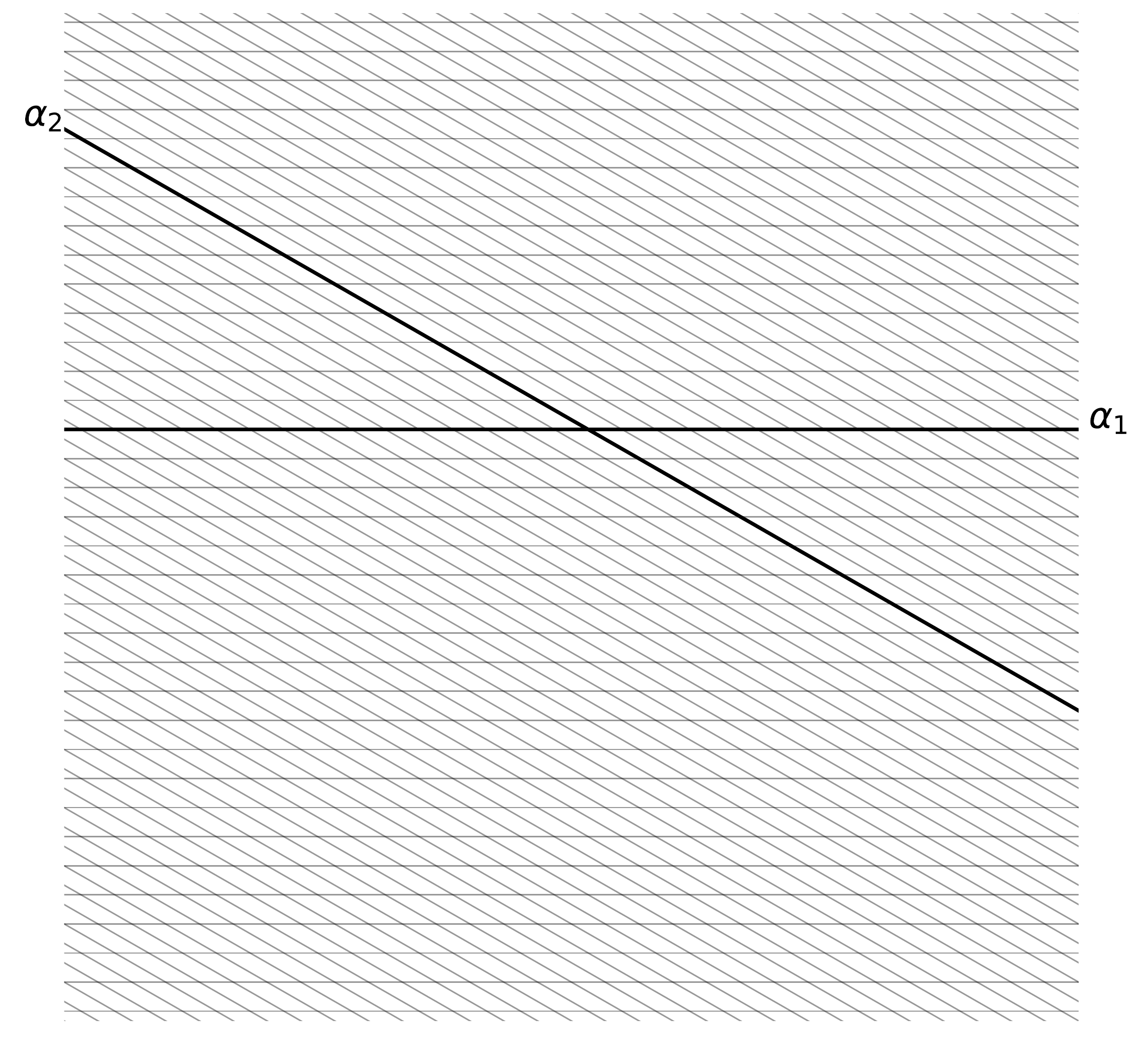} }
    \label{subfig:g2_grid}
    }%
    \caption{The fundamental weight lattice of Lie algebras of type $B_2$, $C_2$, $D_2$, and $G_2$.}
    \label{fig:grids}
\end{figure}
The construction of Weyl alternation diagrams entails graphing the corresponding linear inequalities found in Section \ref{sec:altsets} on the fundamental weight lattice of each respective Lie algebra we have considered. These lattices are  illustrated in 
Figure \ref{fig:grids}. Thus, the figures are created in the same way one would shade the solution sets of linear inequalities in $\mathbb{R}^2$. We then use the assigned distinct colors for each non-empty Weyl alternation set (i.e. solution set) from Theorems \ref{thm:mainb2}, \ref{thm:mainc2}, \ref{thm:maind2}, and \ref{thm:maing2} to precisely describe these regions. This allows us to present a multicolored diagram which provides a visual representation of the support of Kostant's partition function for Lie algebras of type $B_2$, $C_2$, $D_2$, and $G_2$. 
Before we present the Weyl alternation diagrams we define a particular subset of the diagram, which allows us to highlight some of the symmetry within the diagram. This definition first appeared in \cite{HLM}.
\begin{definition}
An \textit{empty region} on the lattice $\mathbb{Z}{\w_1} \oplus \mathbb{Z}{\w_2}$ is a set of lattice points such that every point $(\lambda,\mu)$ satisfies $A(\lambda,\mu) = \emptyset$.
\end{definition}
\subsection{Lie algebra of type \texorpdfstring{$B_2$}{B2}}
For each $\sigma\in W$, we plot the conditions in Table~\ref{tab:WeylB2} by placing a solid colored dot on the integral weights for which $\sigma(\lambda+\rho)-(\mu+\rho) \in \mathbb{N}\a_1 \oplus \mathbb{N}\a_2$. In Figure \ref{fig:b2_single_elements}, we let $\mu=0$ and present the corresponding region for each Weyl group element. Note that changing $\mu$ will only translate the solution sets. In what follows we describe how the Weyl diagrams change as we vary the weight $\mu$.
\begin{figure}[H]%
    \centering
    \subfloat[$\sigma = 1$]{{\includegraphics[width=1.55in]{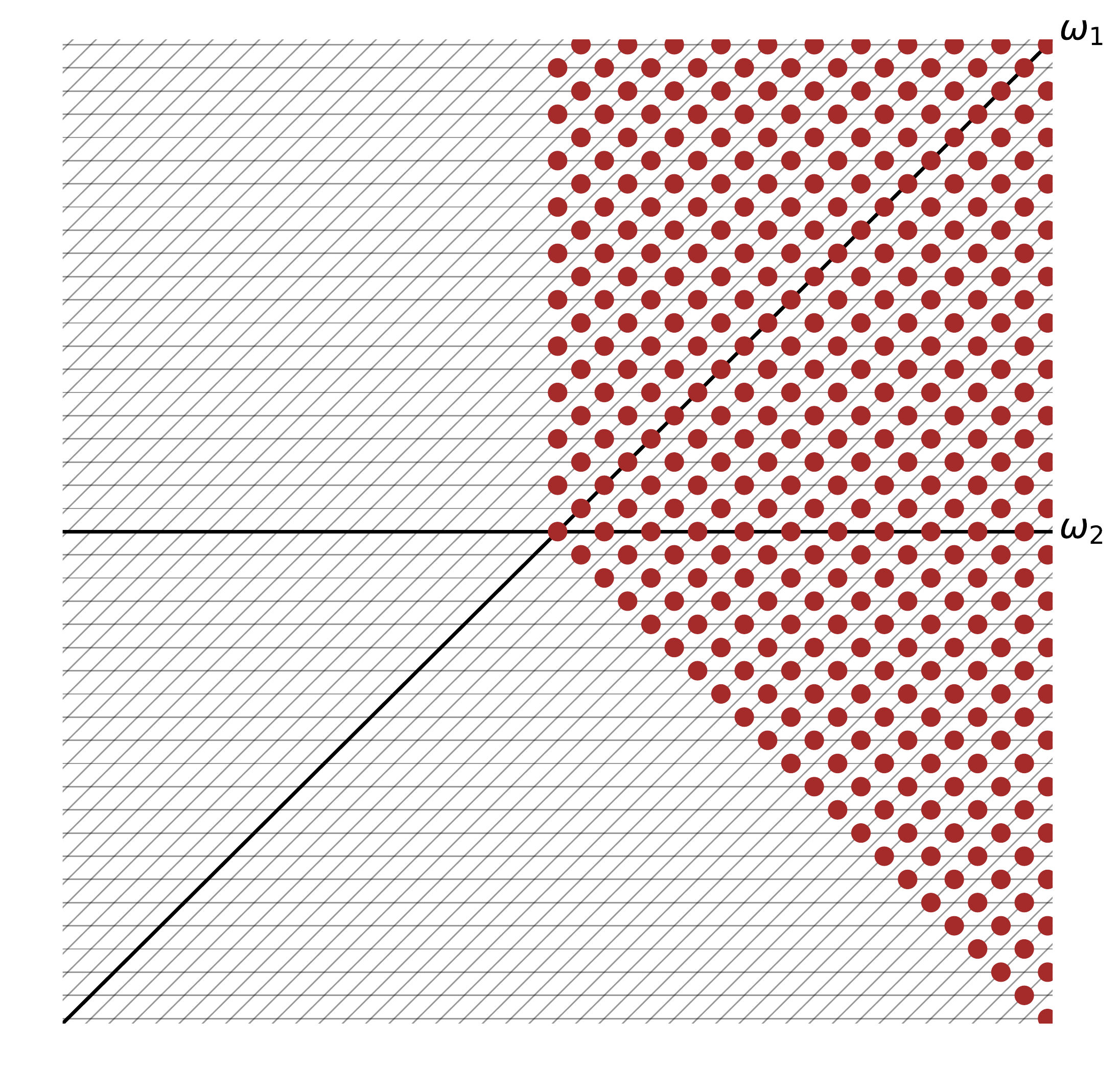}}}
    \hfill
    \subfloat[$\sigma = s_1$]{{\includegraphics[width=1.55in]{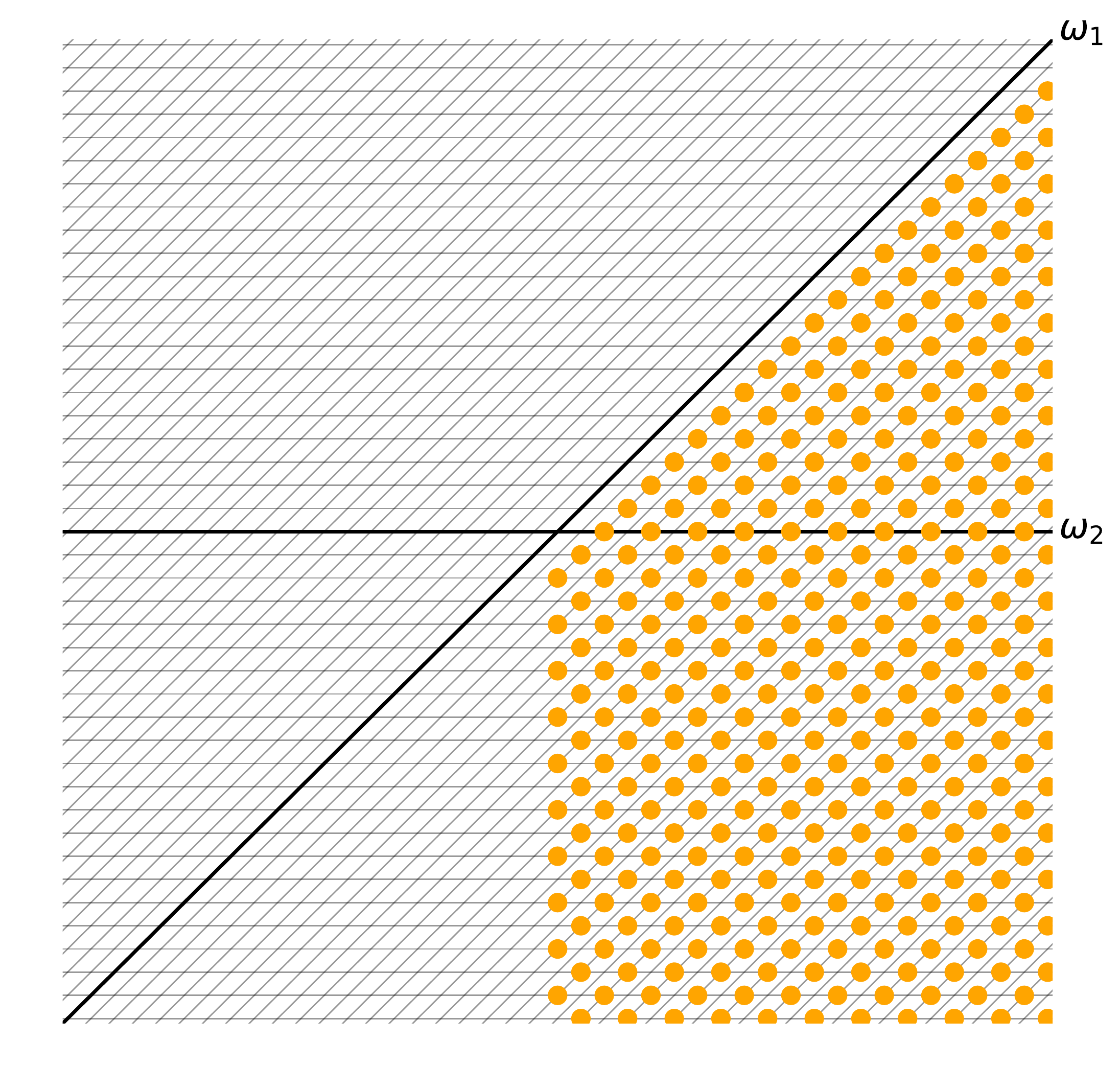} }}
    \hfill
    \subfloat[$\sigma = s_2$]{{\includegraphics[width=1.55in]{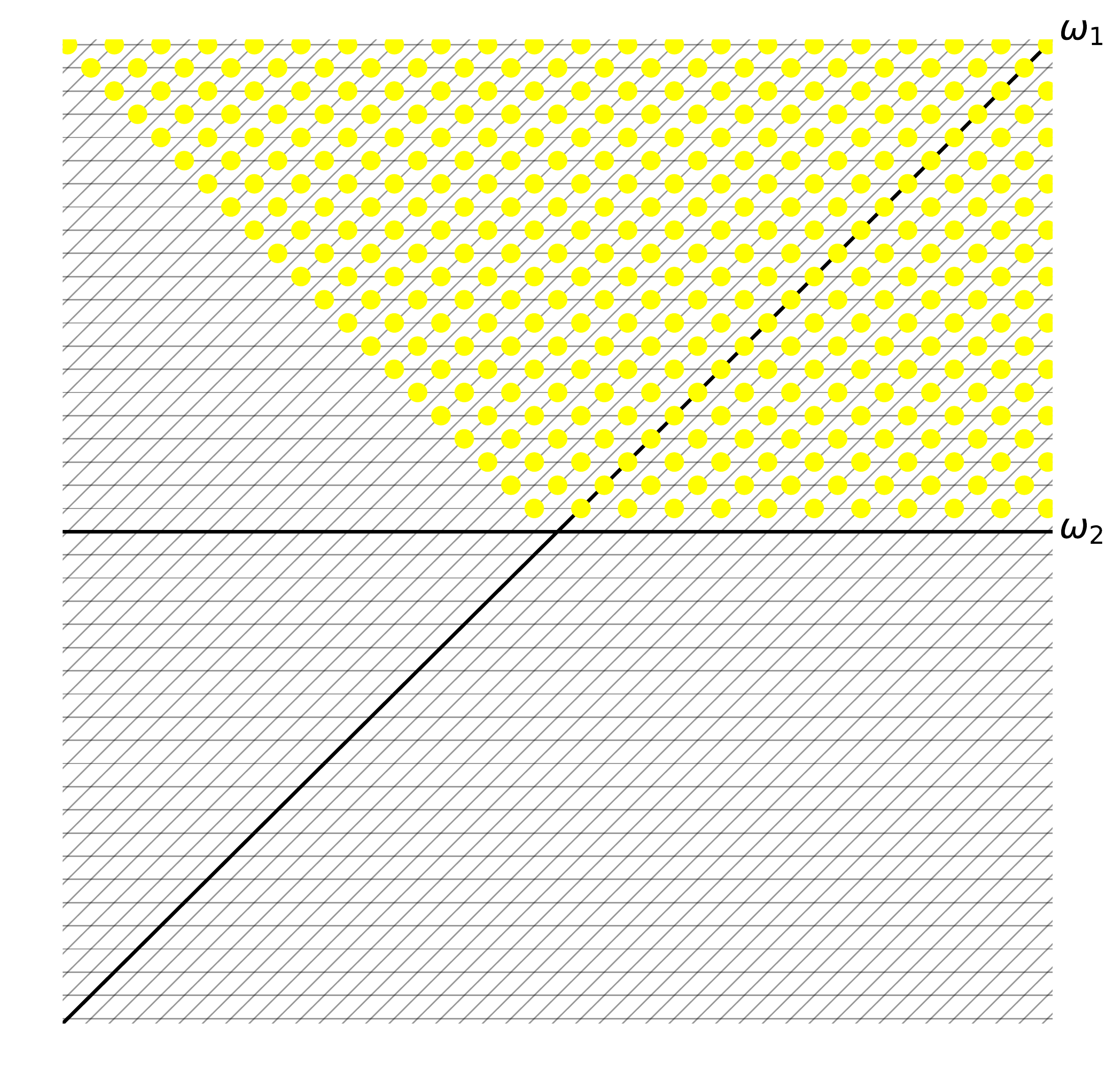} }}
    \hfill
    \subfloat[$\sigma = s_2s_1$]{{\includegraphics[width=1.55in]{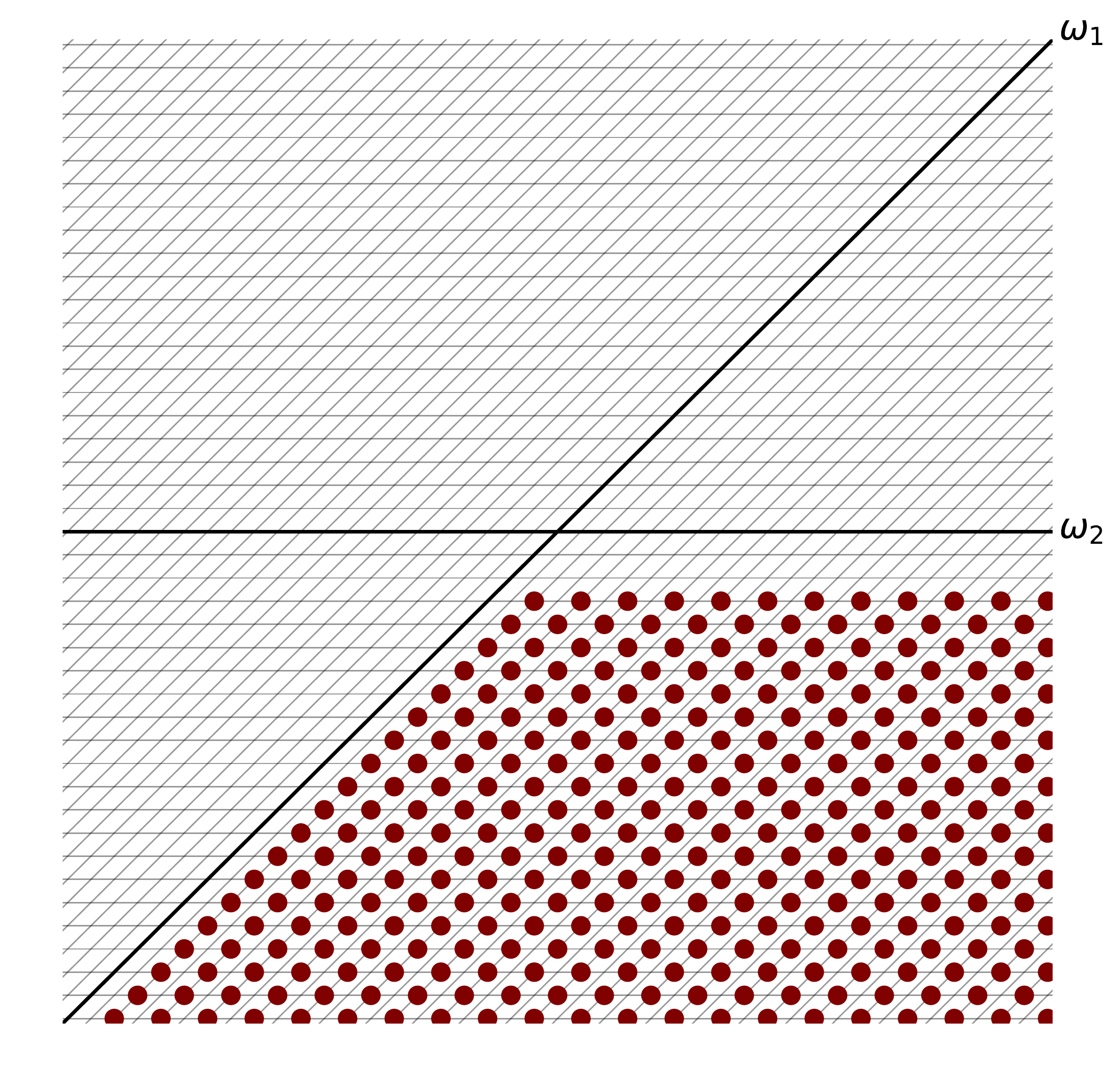} }}
    \\
    \subfloat[$\sigma = s_1s_2$]{{\includegraphics[width=1.55in]{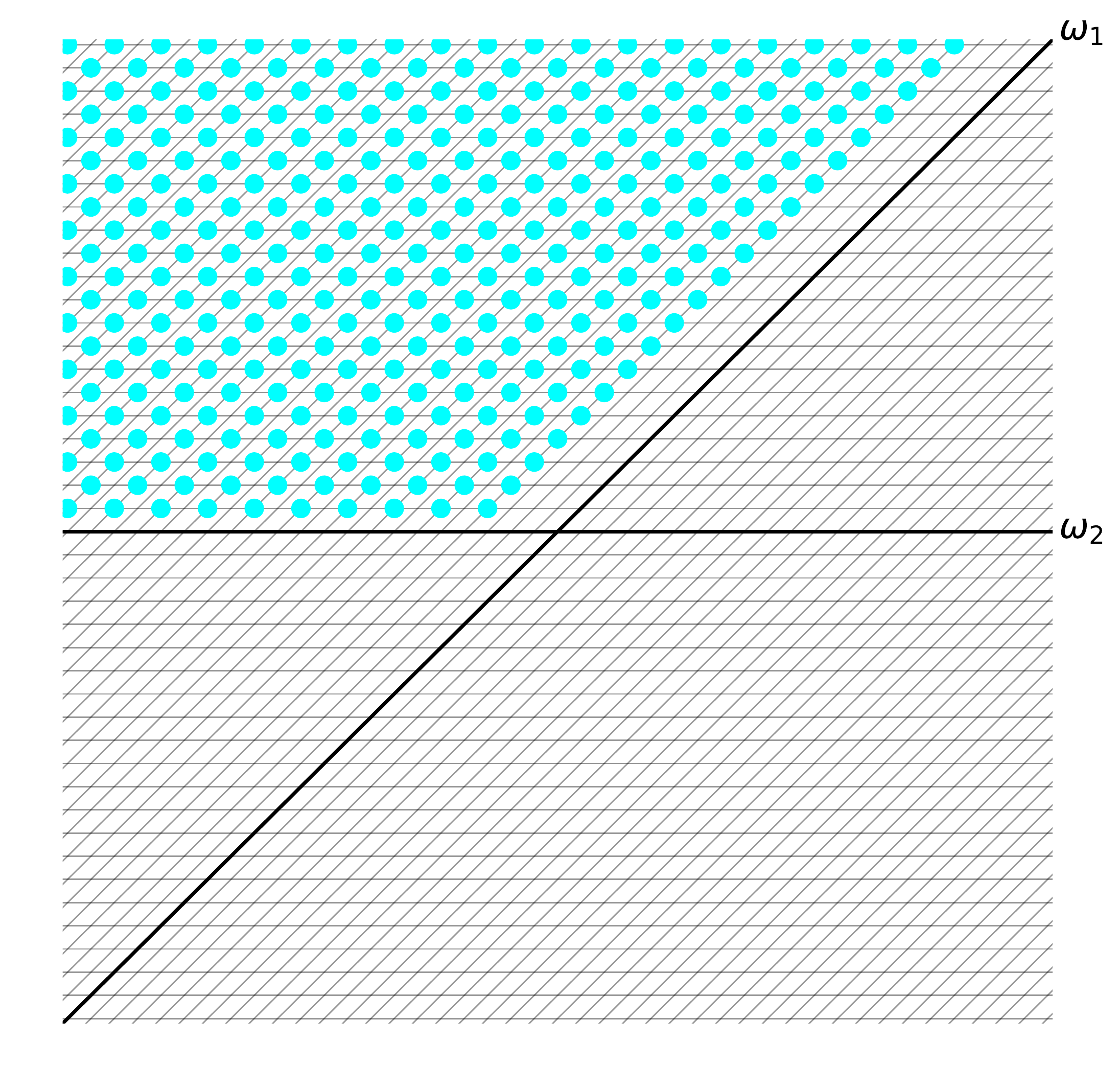} }}
    \hfill
    \subfloat[$\sigma = s_1s_2s_1$]{{\includegraphics[width=1.55in]{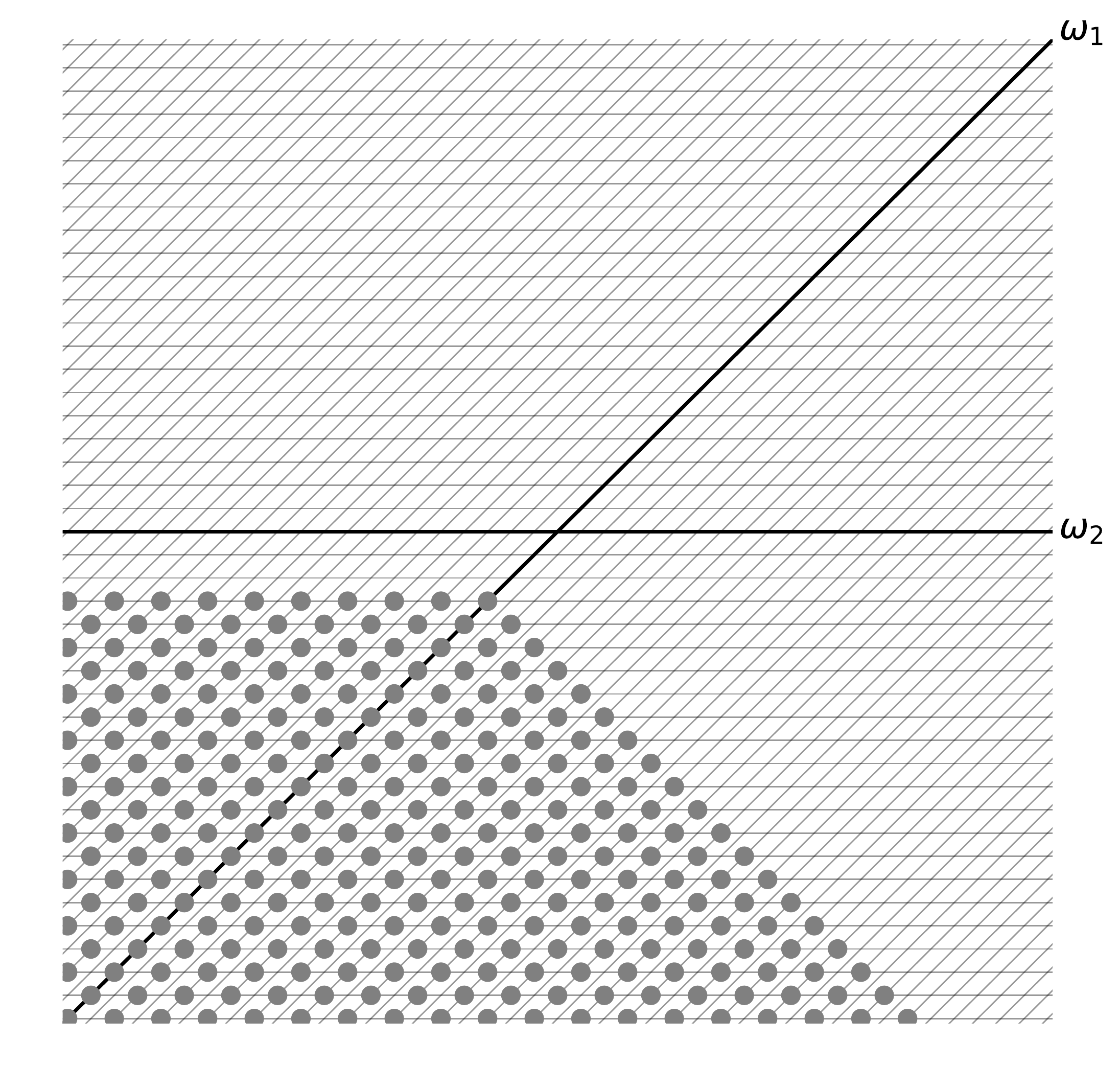} }}
    \hfill
    \subfloat[$\sigma = s_2s_1s_2$]{{\includegraphics[width=1.55in]{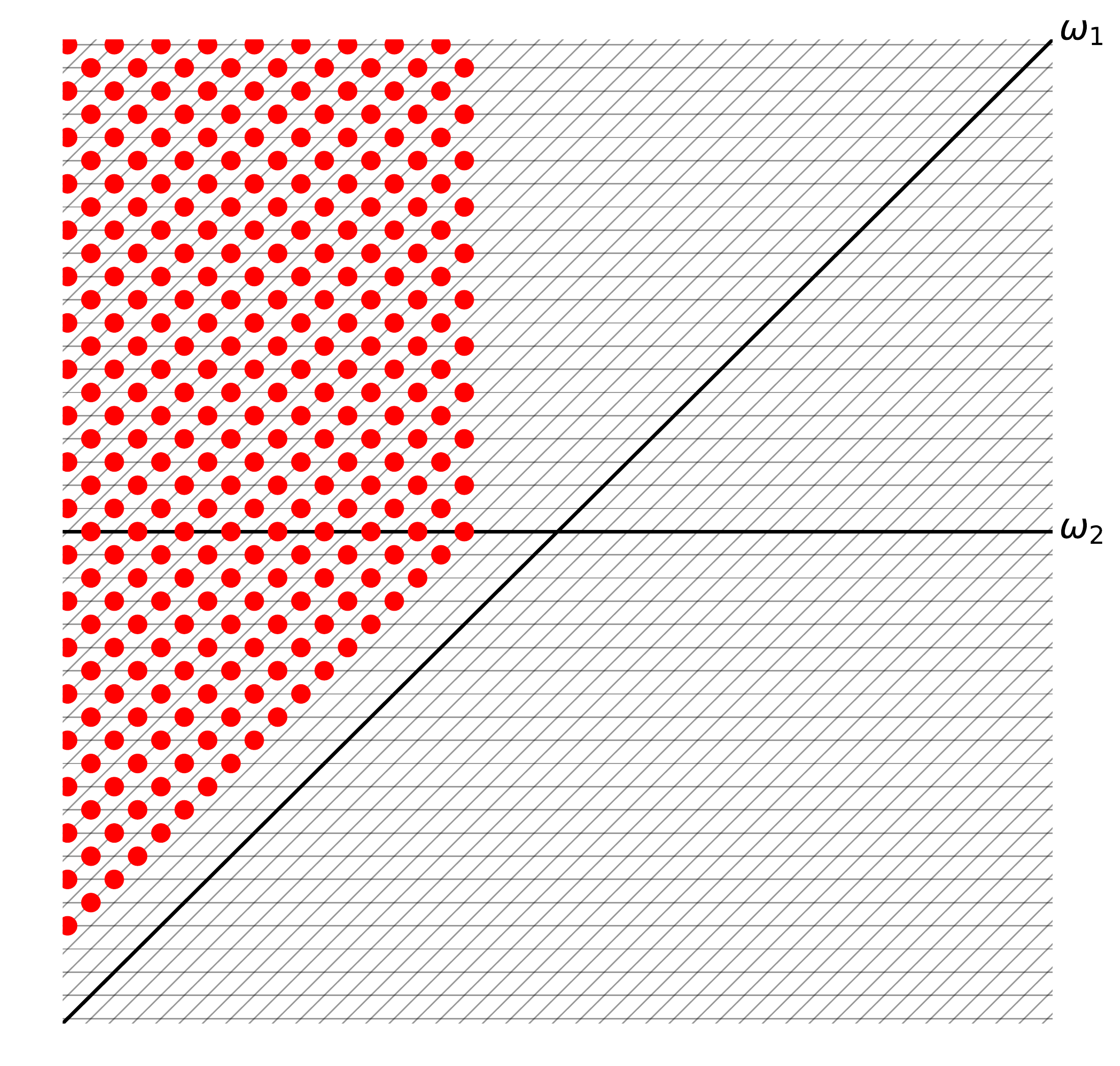} }}
    \hfill
    \subfloat[$\sigma = (s_2s_1)^2$]{{\includegraphics[width=1.55in]{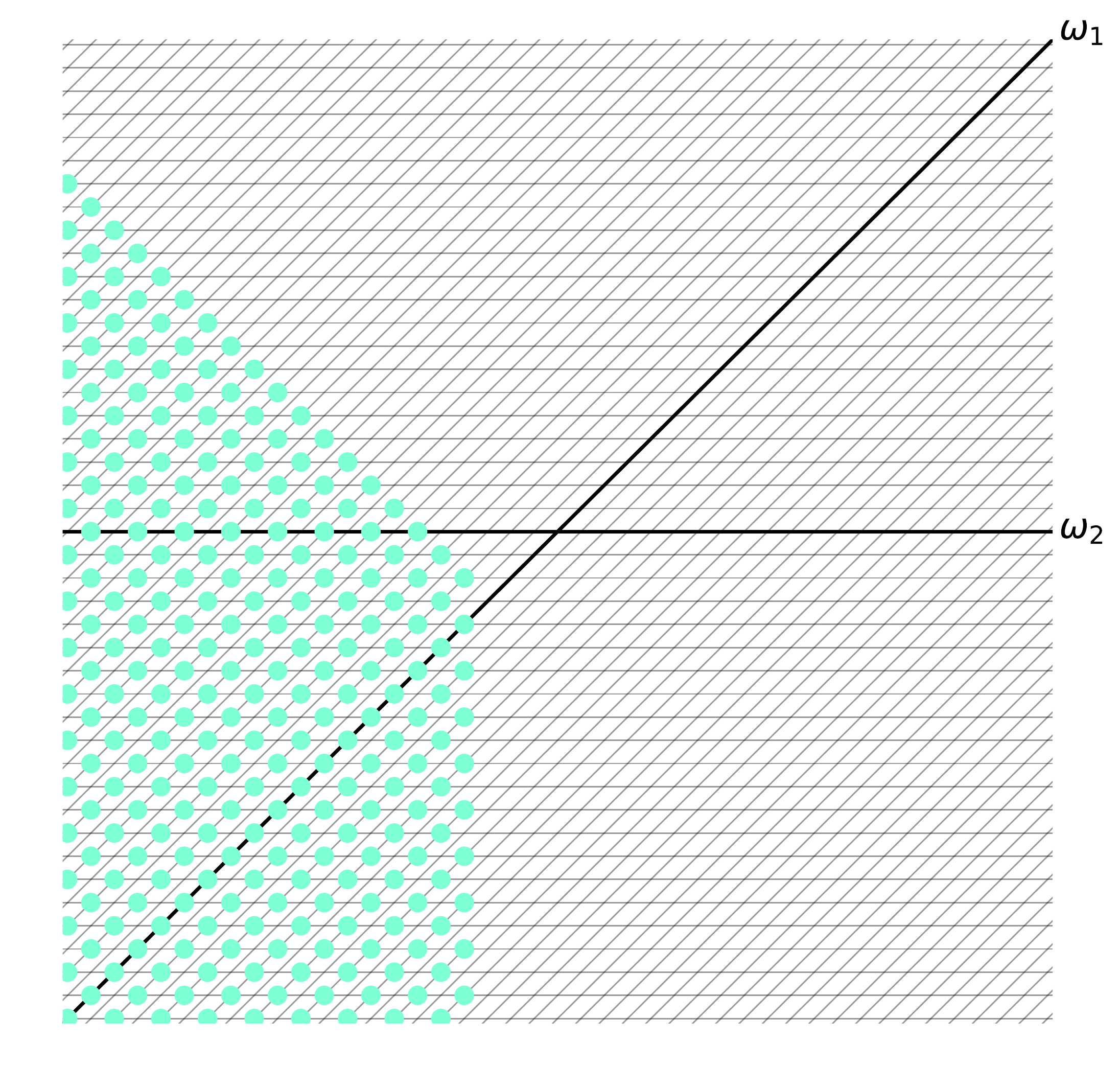} }}
    \caption{Solution sets to linear inequalities corresponding to the Lie algebra of type $B_2$.}
    \label{fig:b2_single_elements}
\end{figure}

\subsubsection{\normalfont{\textbf{Case}} \texorpdfstring{$\mu = n\w_1$}{mu equals n omega 1}}
Figures \ref{subfig:b2_n1}-\ref{subfig:b2_n4} illustrate the Weyl alternation diagrams for $\mu = n\w_1$ such that $n = 1,2,3,4$. We observe that the empty region takes the shape of a square oriented so that a vertex points up. We also note that as $n$ increases from $1$ to $4$, the length of the edges of the square also increases.
\begin{figure}[H]%
    \centering
    \subfloat[$\mu = \w_1$]{{\includegraphics[width=1.5in]{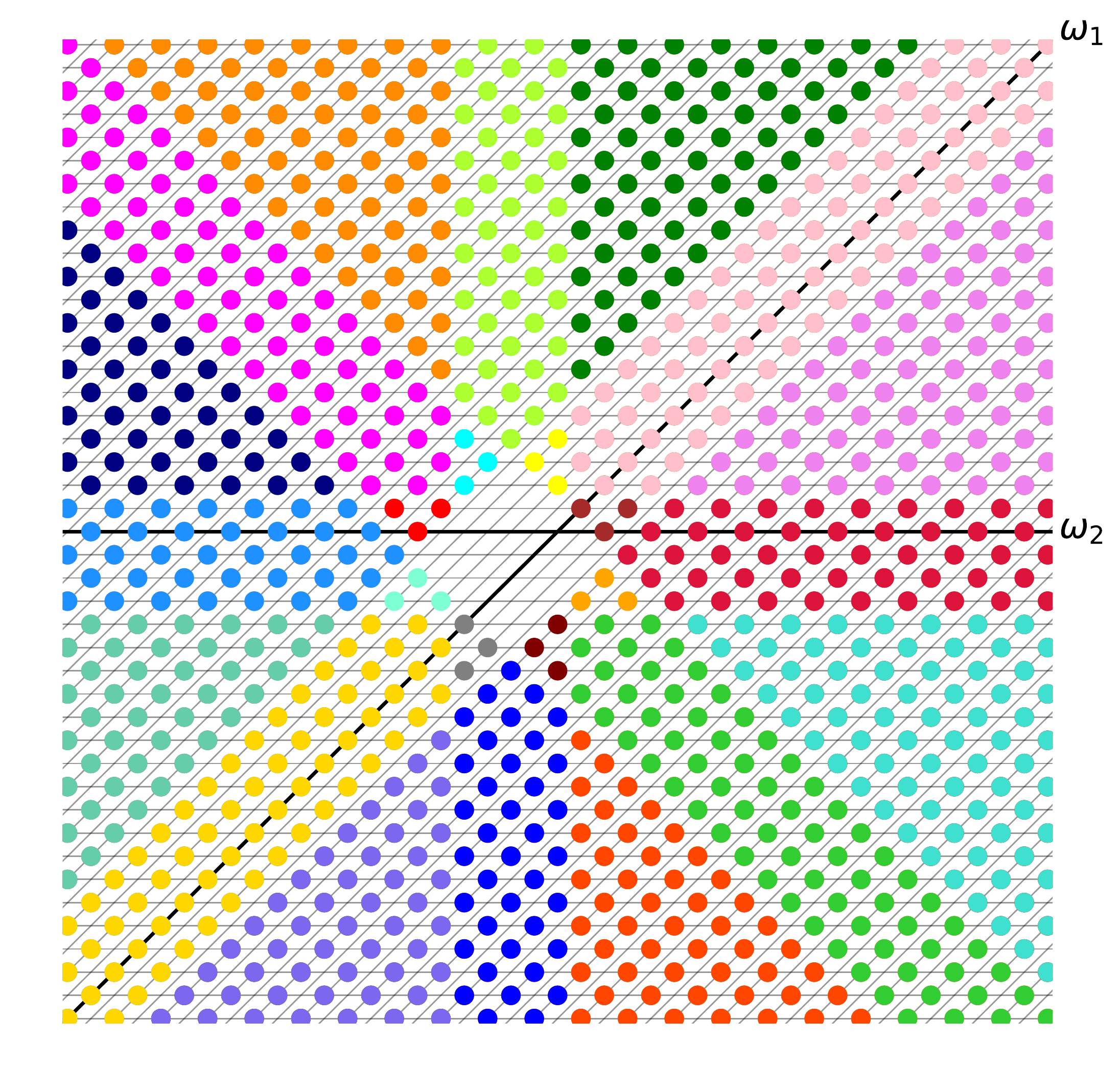} }
    \label{subfig:b2_n1}
    }
    \hfill
    \subfloat[$\mu = 2\w_1$]{{\includegraphics[width=1.5in]{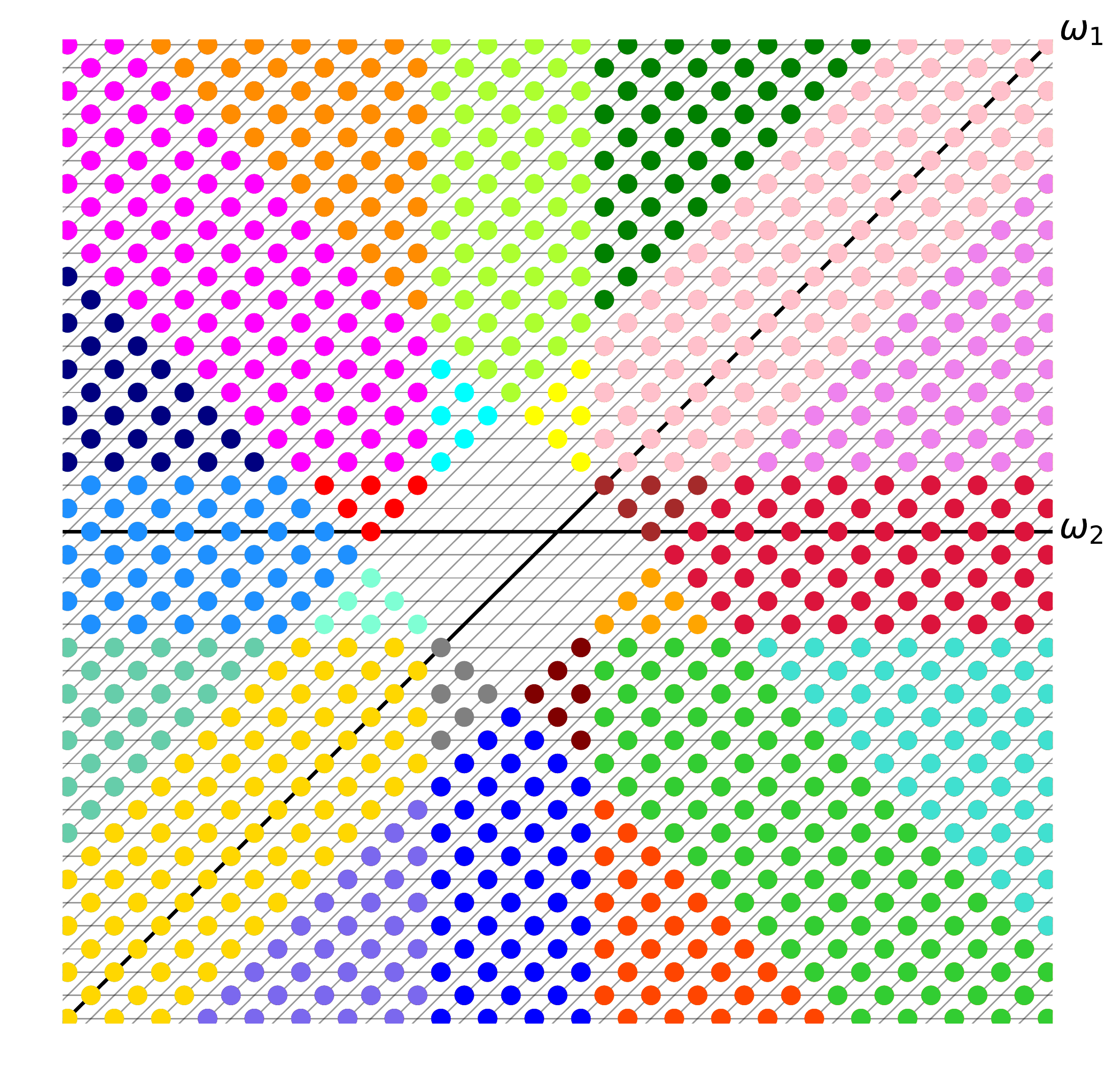} }
    \label{subfig:b2_n2}
    }
    \hfill
    \subfloat[$\mu = 3\w_1$]{{\includegraphics[width=1.5in]{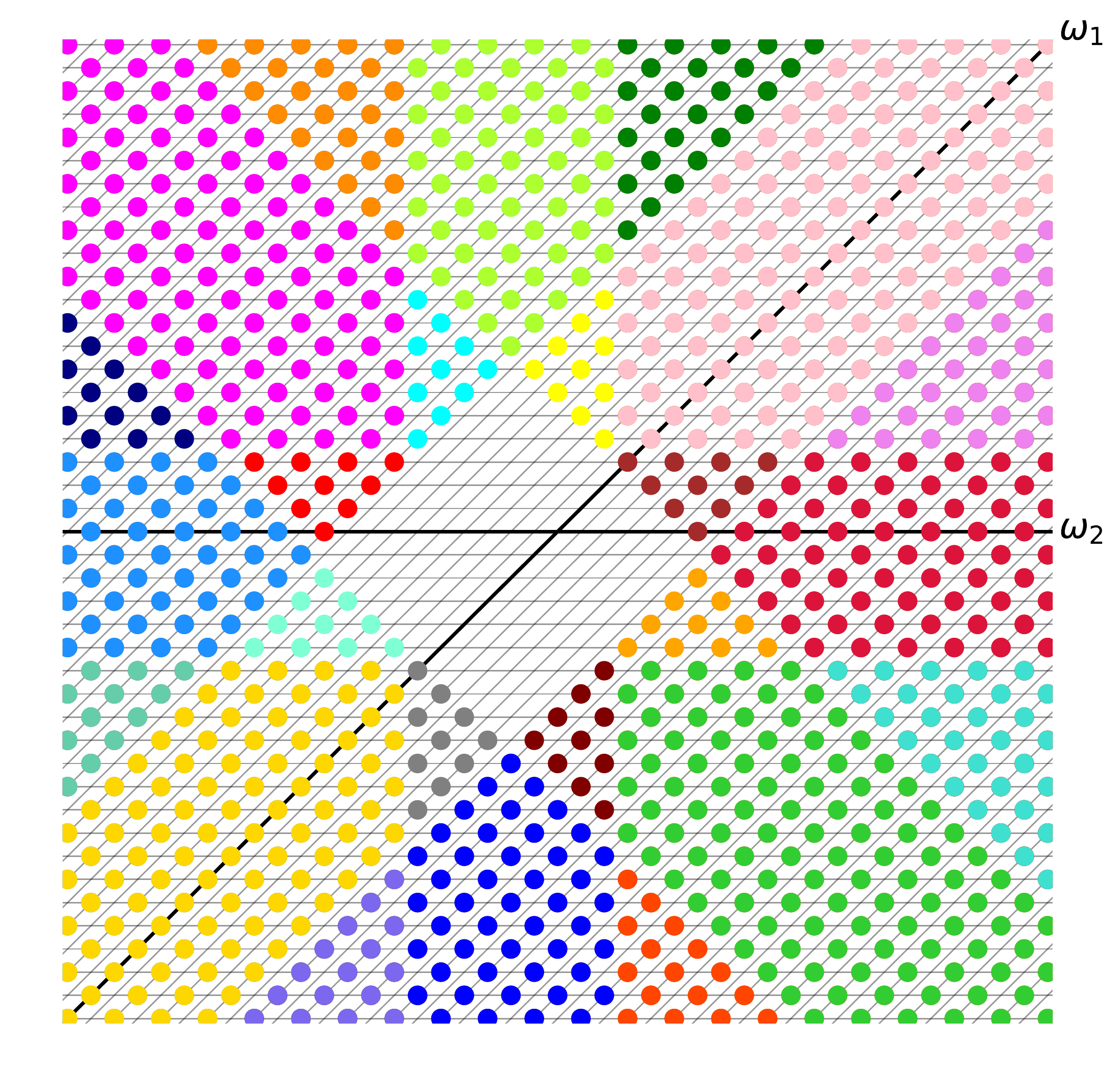} }
    \label{subfig:b2_n3}
    }
    \hfill
    \subfloat[$\mu = 4\w_1$]{{\includegraphics[width=1.5in]{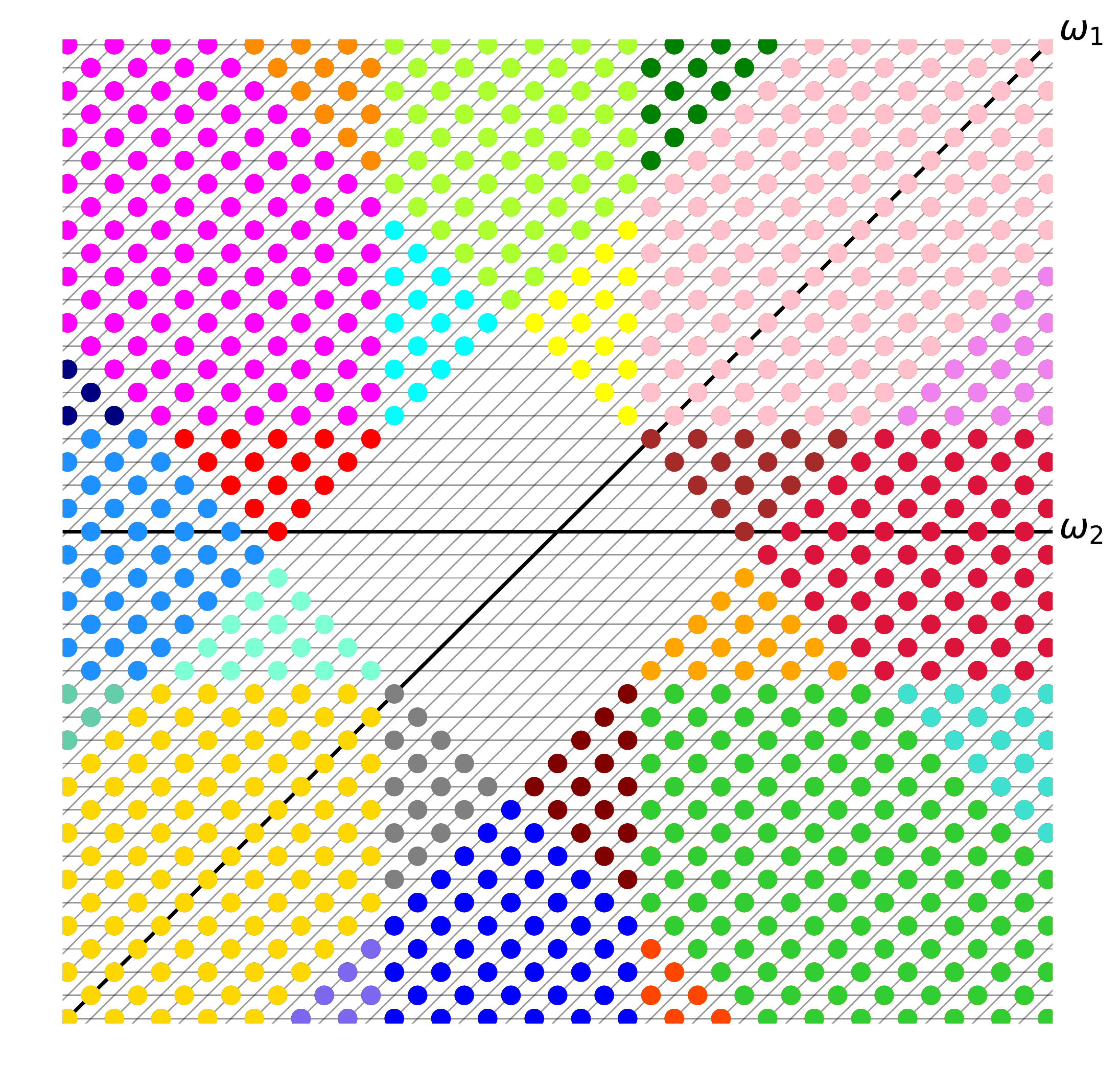} }
    \label{subfig:b2_n4}
    }
    \caption{Weyl alternation diagrams for the Lie algebra of type $B_2$ with $\mu=n\w_1$.}
    \label{fig:b2_mu_n}
\end{figure}

\subsubsection{\normalfont{\textbf{Case}} \texorpdfstring{$\mu = m\w_2$}{mu equals m omega 2}} 
Figures \ref{subfig:b2_m2}-\ref{subfig:b2_m8} illustrate the Weyl alternation diagrams for $\mu = m\w_2$ such that $m = 2,4,6,8$. We observe that the empty region is in the shape of a square oriented so that an edge is on top. We also note that as $m$ increases from $2$ to $8$, the length of the edges of the square in the center also increases.
\begin{figure}[H]%
    \centering
    \subfloat[$\mu = 2\w_2$]{{\includegraphics[width=1.5in]{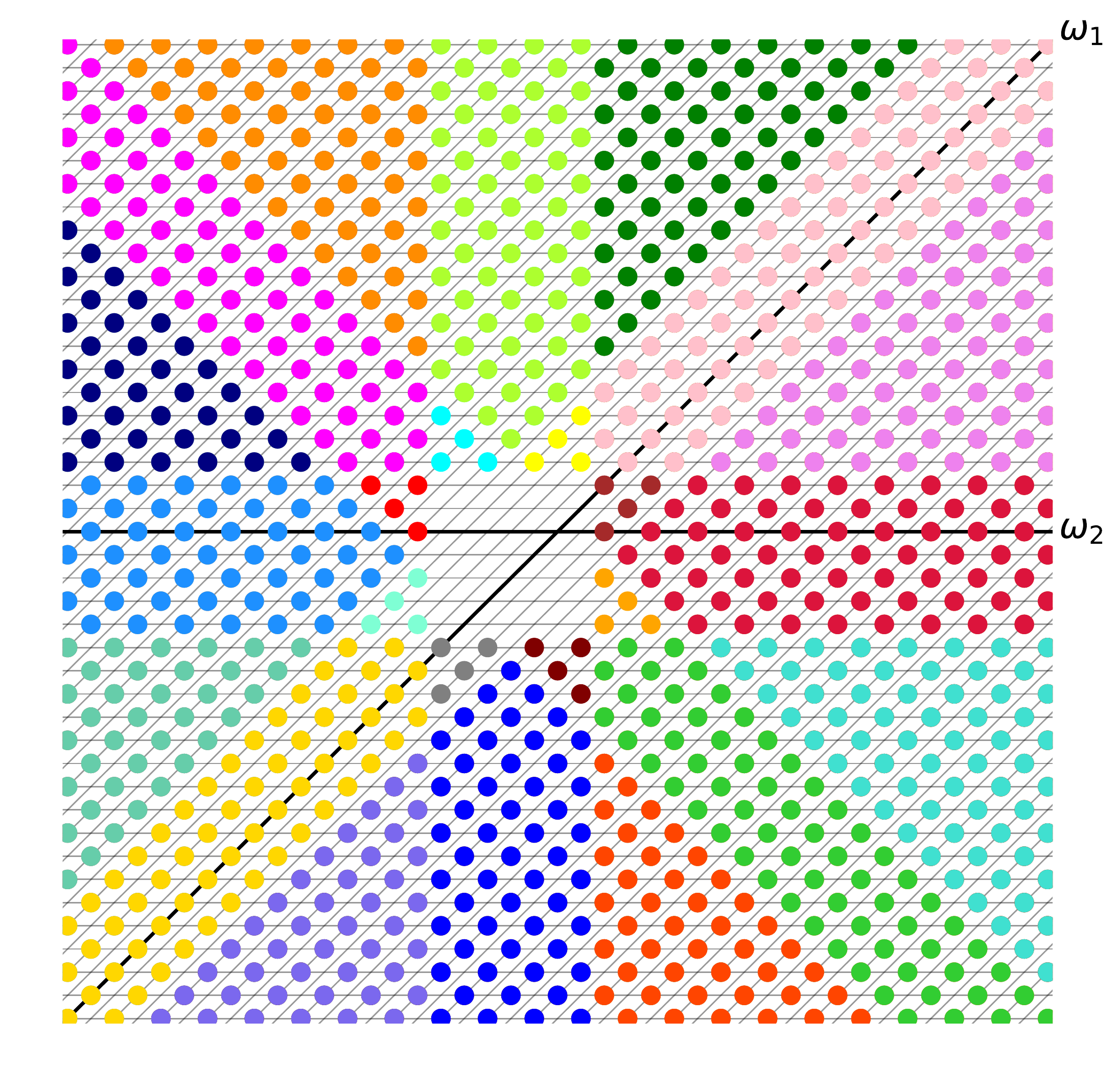}}
    \label{subfig:b2_m2}
    }%
    \hfill
    \subfloat[$\mu = 4\w_2$]{{\includegraphics[width=1.5in]{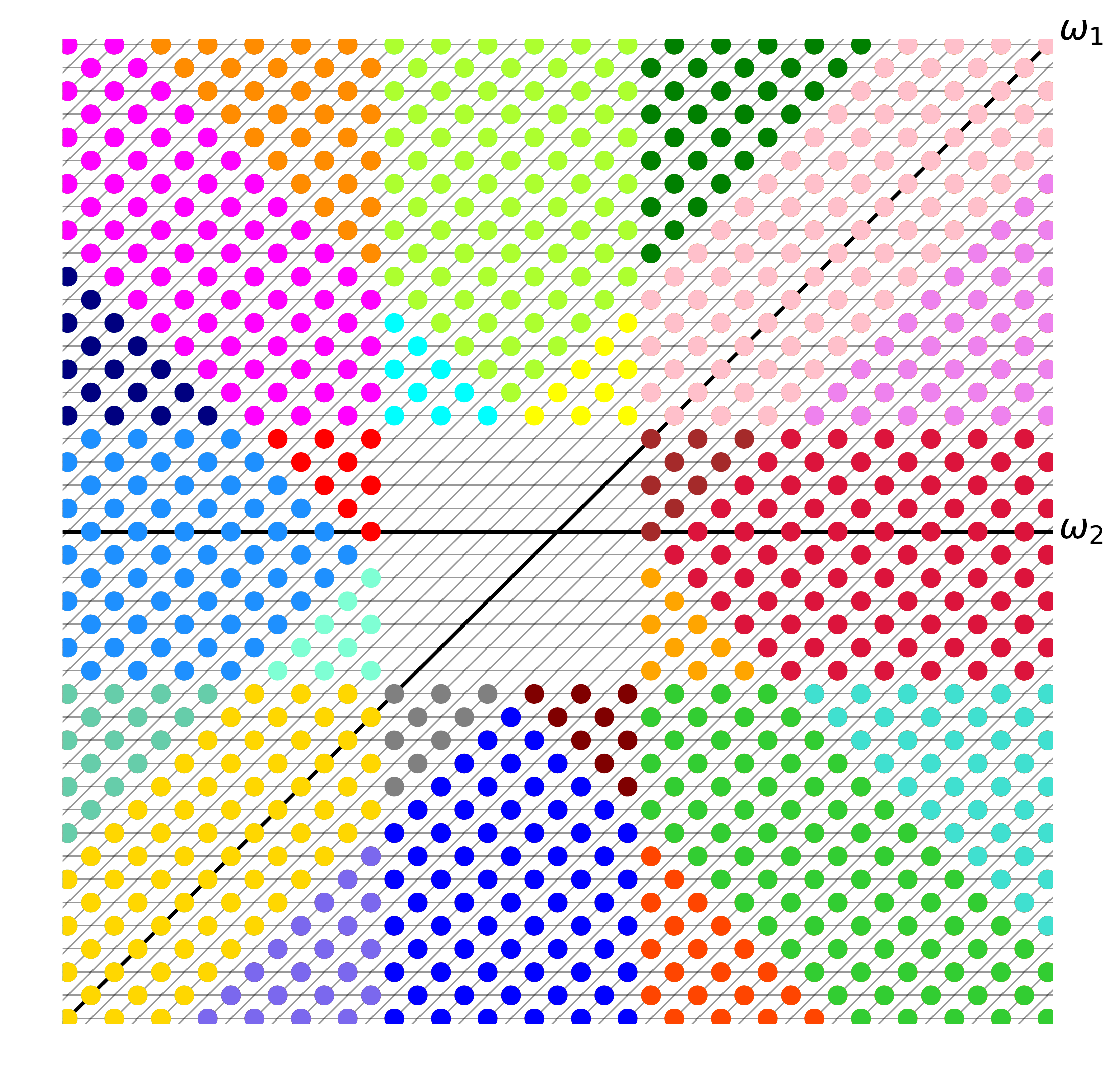} }
    \label{subfig:b2_m4}
    }
    \hfill
    \subfloat[$\mu = 6\w_2$]{{\includegraphics[width=1.5in]{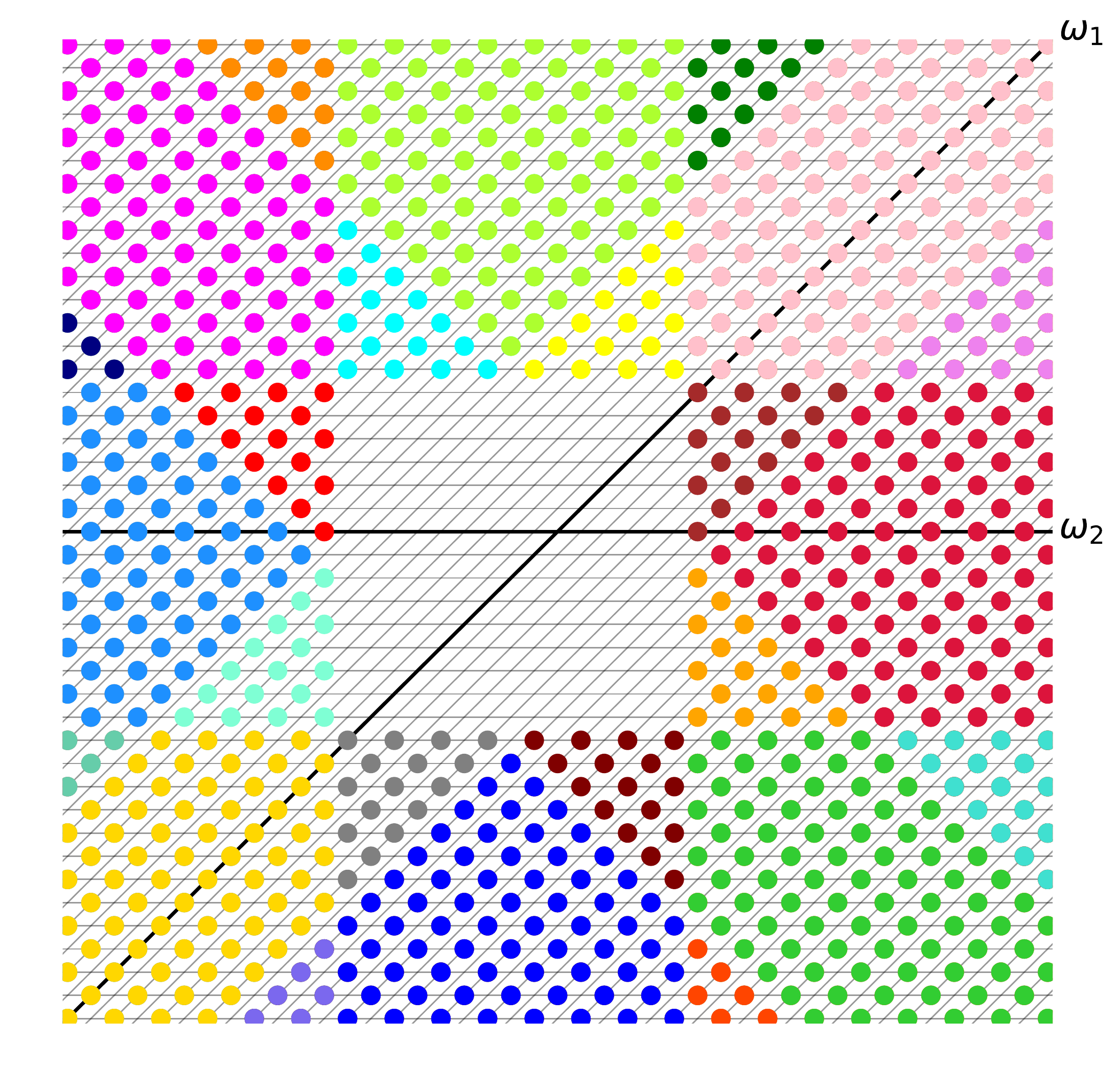}}
    \label{subfig:b2_m6}
    }%
    \hfill
    \subfloat[$\mu = 8\w_2$]{{\includegraphics[width=1.5in]{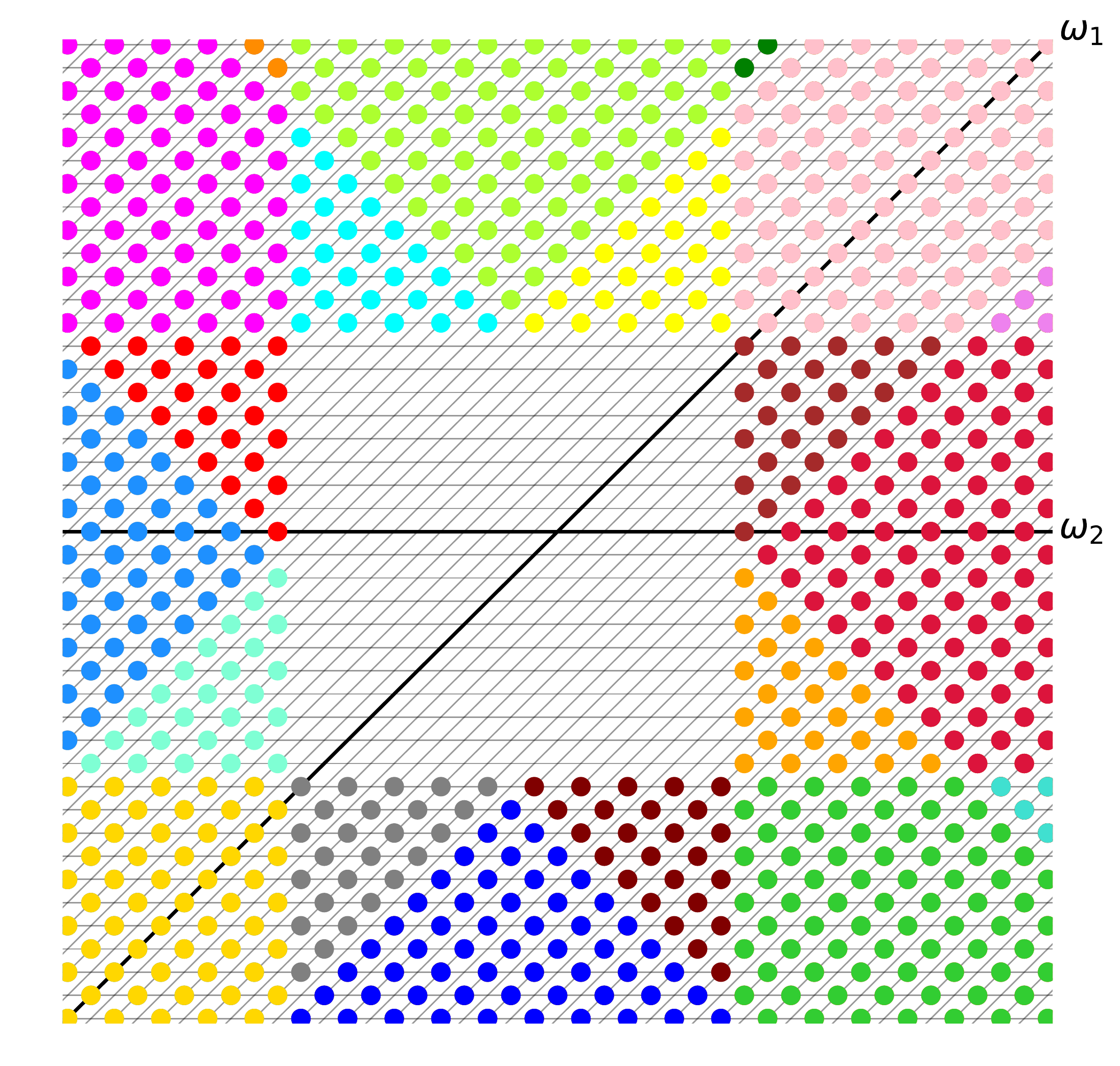} }
    \label{subfig:b2_m8}
    }
    \caption{Weyl alternation diagrams for the Lie algebra of type $B_2$ with $\mu=m\w_2$.}
    \label{fig:b2_mu_m}
\end{figure}
\subsubsection{\normalfont{\textbf{Case}} \texorpdfstring{$\mu = n\w_1 + m\w_2$}{mu equals n omega 1 plus m omega 2}}
Figures \ref{subfig:b2_n1m2}-\ref{subfig:b2_n3m2} illustrate the Weyl alternation diagrams for $\mu = n\w_1 + m\w_2$. We observe that the empty region is in the shape of an 8-pointed star. Additionally, as $n$ and $m$ both increase so does the size of the star in the center.
\begin{figure}[H]%
    \centering
    \subfloat[$\mu = \w_1 + 2\w_2$]{{\includegraphics[width=1.5in]{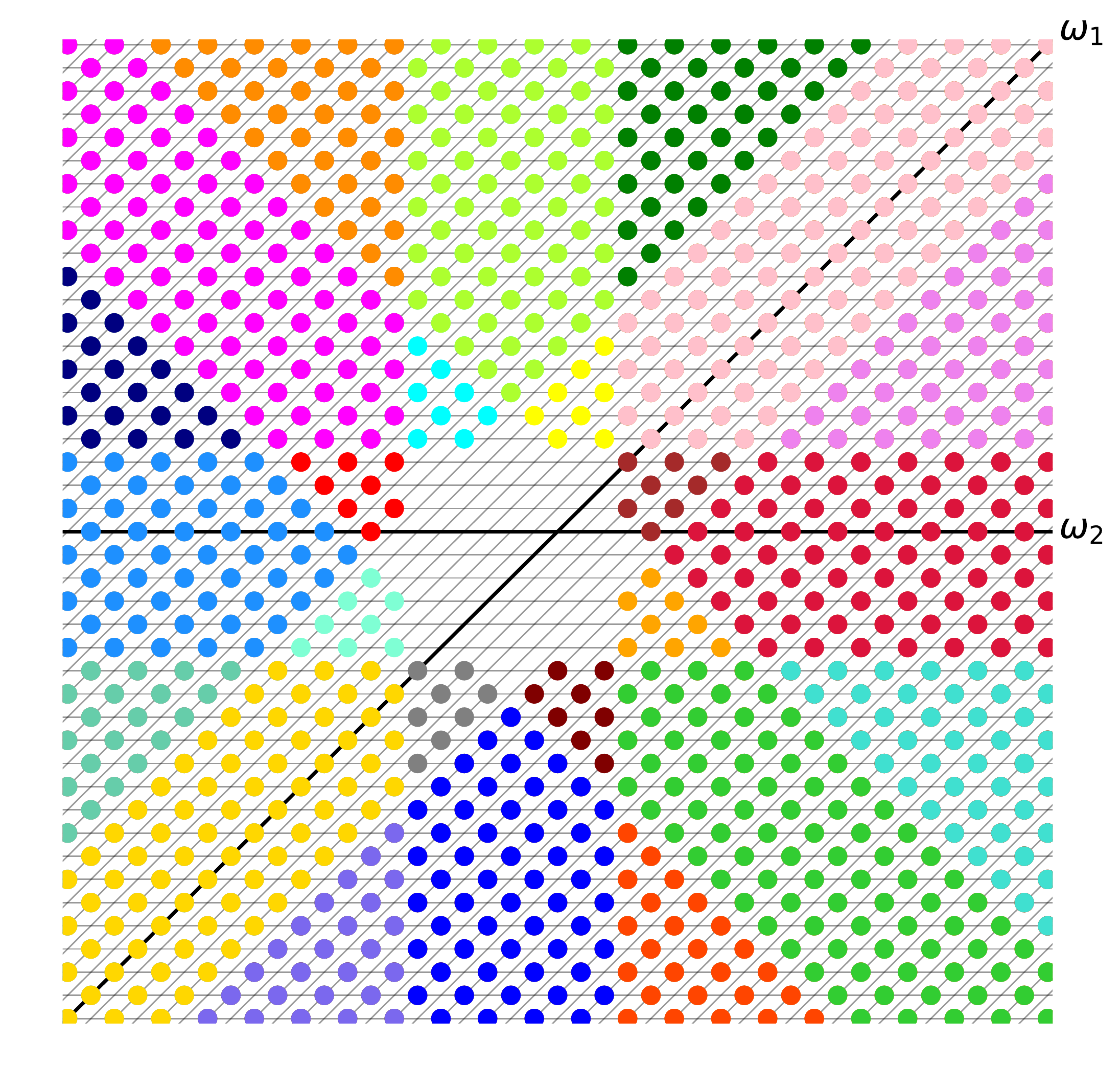}}
    \label{subfig:b2_n1m2}
    }
    \hfill
    \subfloat[$\mu = 2\w_1 + 2\w_2$]{{\includegraphics[width=1.5in]{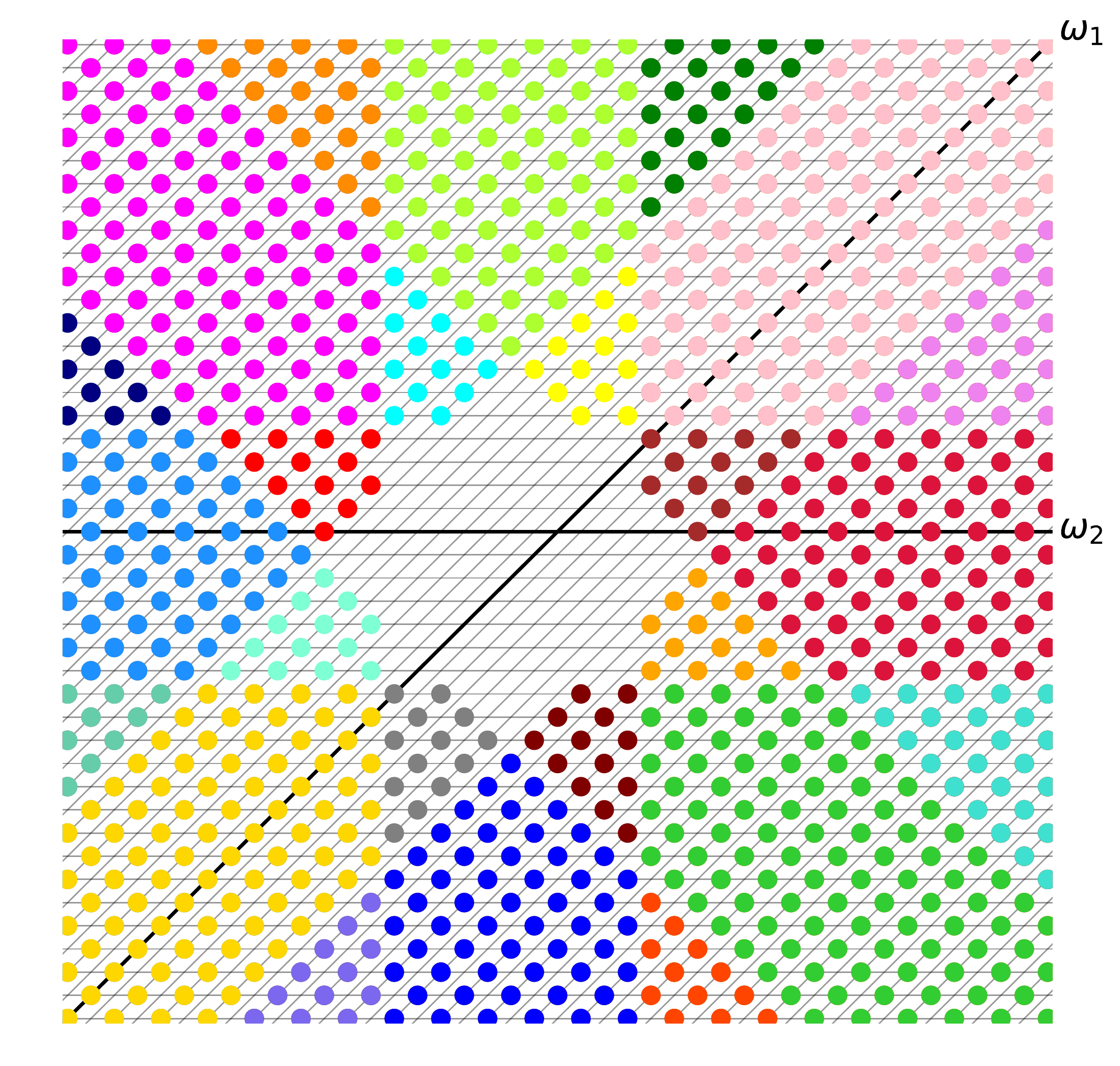} }
    \label{subfig:b2_n2m2}
    }
    \hfill
    \subfloat[$\mu = 2\w_1+4\w_2$]{{\includegraphics[width=1.5in]{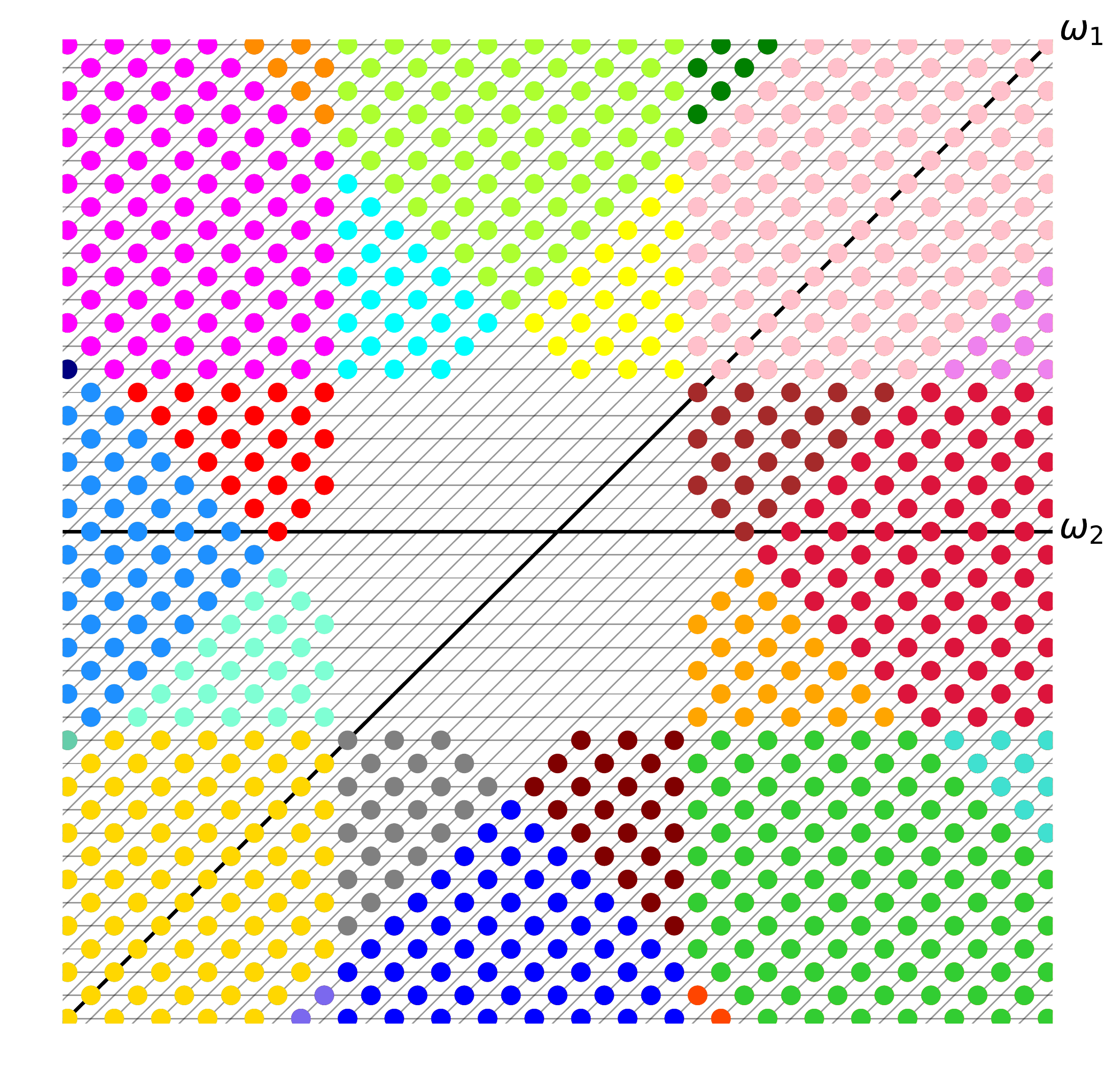}}
    \label{subfig:b2_n2m4}
    }
    \hfill
    \subfloat[$\mu = 3\w_1 + 2\w_2$]{{\includegraphics[width=1.5in]{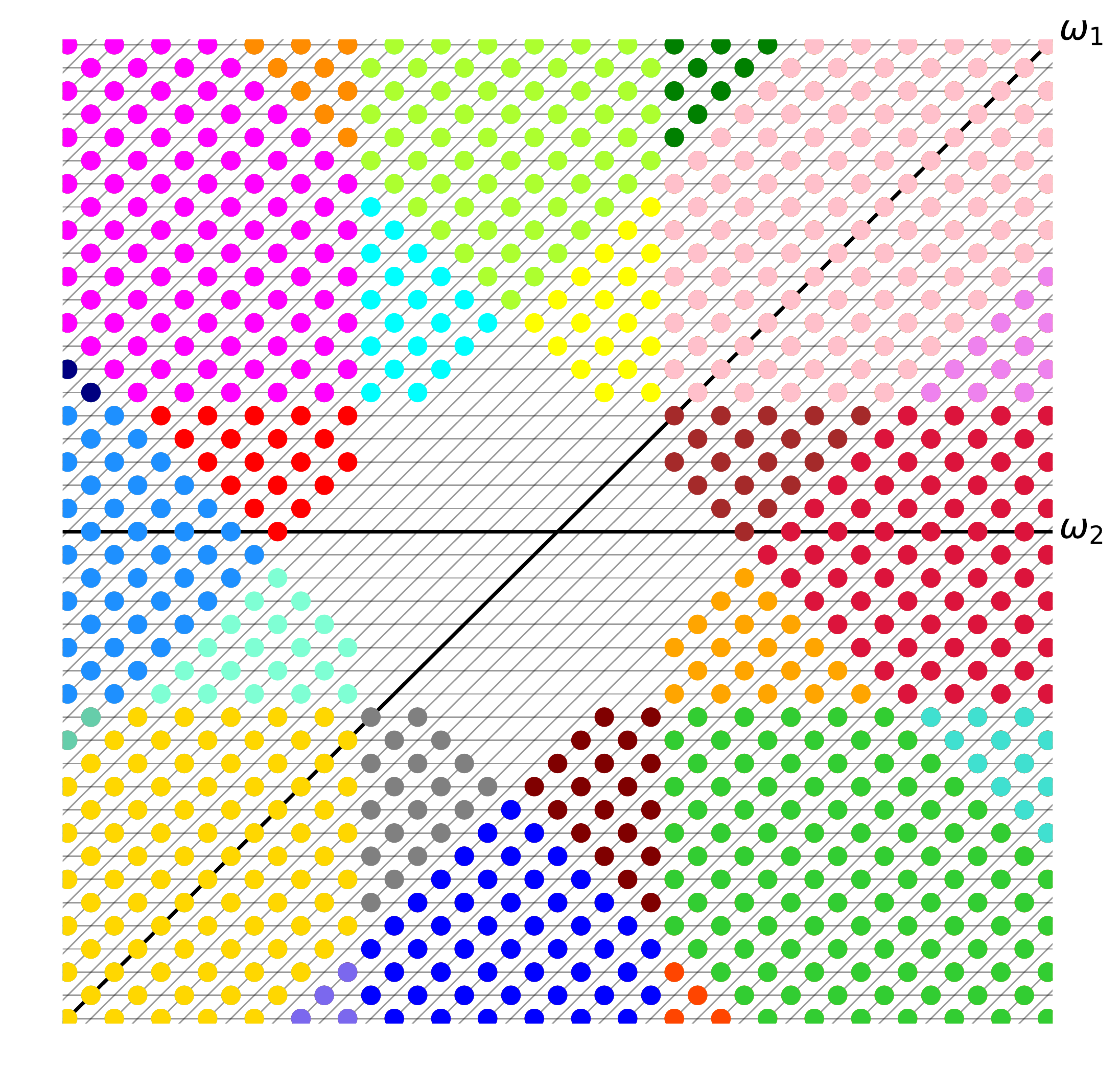}}
    \label{subfig:b2_n3m2}
    }
    \caption{Weyl alternation diagrams for the Lie algebra of type $B_2$ with \mbox{$\mu=n\w_1+m\w_2$.}}
    \label{fig:b2_mu_nm}
\end{figure}
\begin{figure}[H]%
\centering
\resizebox{2in}{!}{
\includegraphics{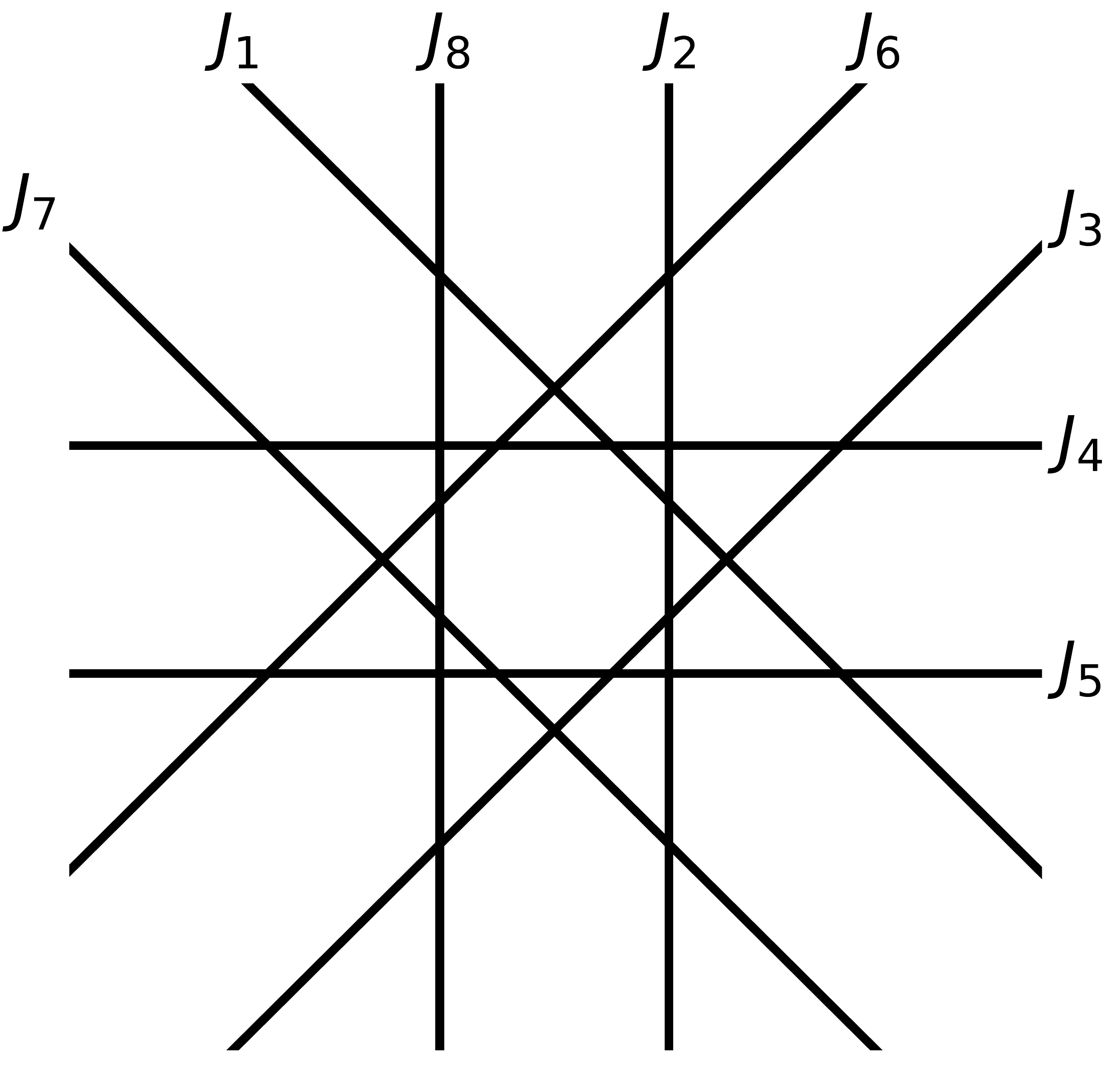}
}
\caption{Set of linear inequalities determining the boundaries of the Weyl alternation sets for the Lie algebra of type $B_2$.}
\label{fig:b2_ineqs}
\end{figure}

To explain the shapes that form in the empty region of each Weyl alternation diagram for the Lie algebra of type $B_2$ we turn to Figure \ref{fig:b2_ineqs}. From Theorem \ref{thm:mainb2}, we notice that all inequalities depend on $n$ and $m$ which simply shift the inequalities, but never change the direction of the line. This means that Figure \ref{fig:b2_ineqs} is a good representation of the inequalities formed.

\begin{figure}[H]%
\centering
\subfloat[Edge on top.]{
\includegraphics[width=4cm]{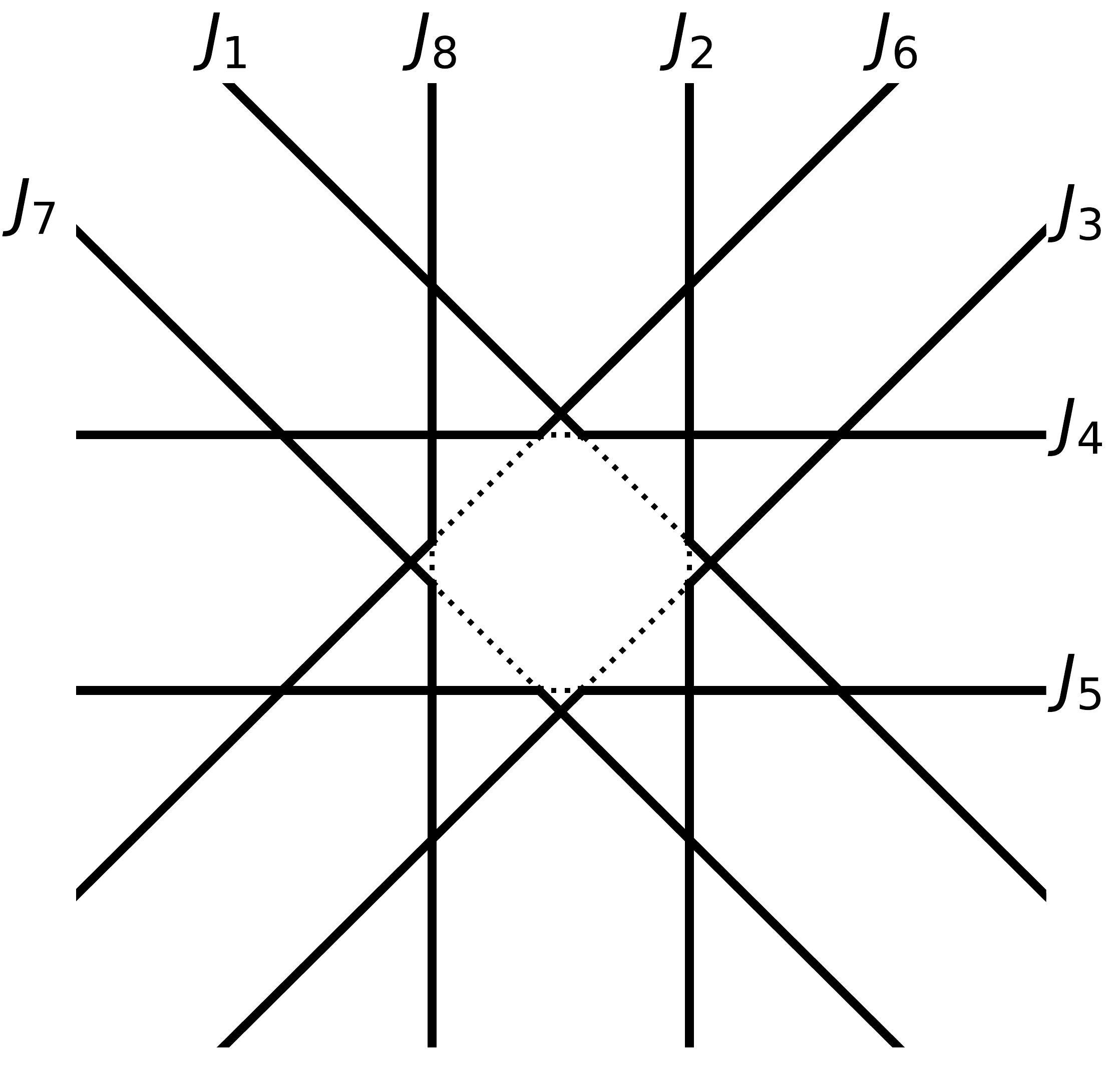}
\label{subfig:b2_edge_case}
}
\quad
\subfloat[Vertex on top.]{
\includegraphics[width=4cm]{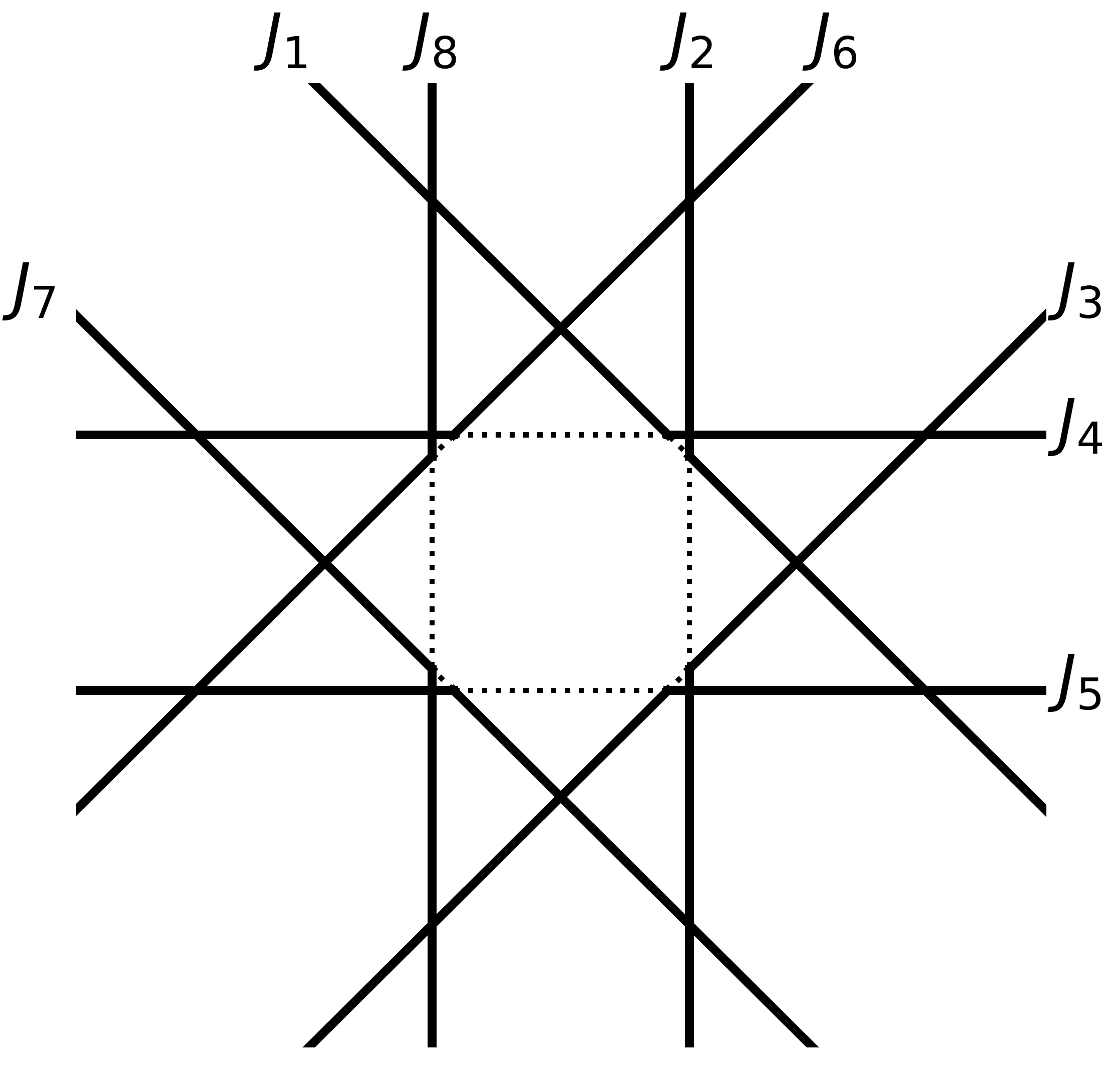}
\label{subfig:b2_vertex_case}
}
\quad
\subfloat[8-pointed star.]{
\includegraphics[width=4cm]{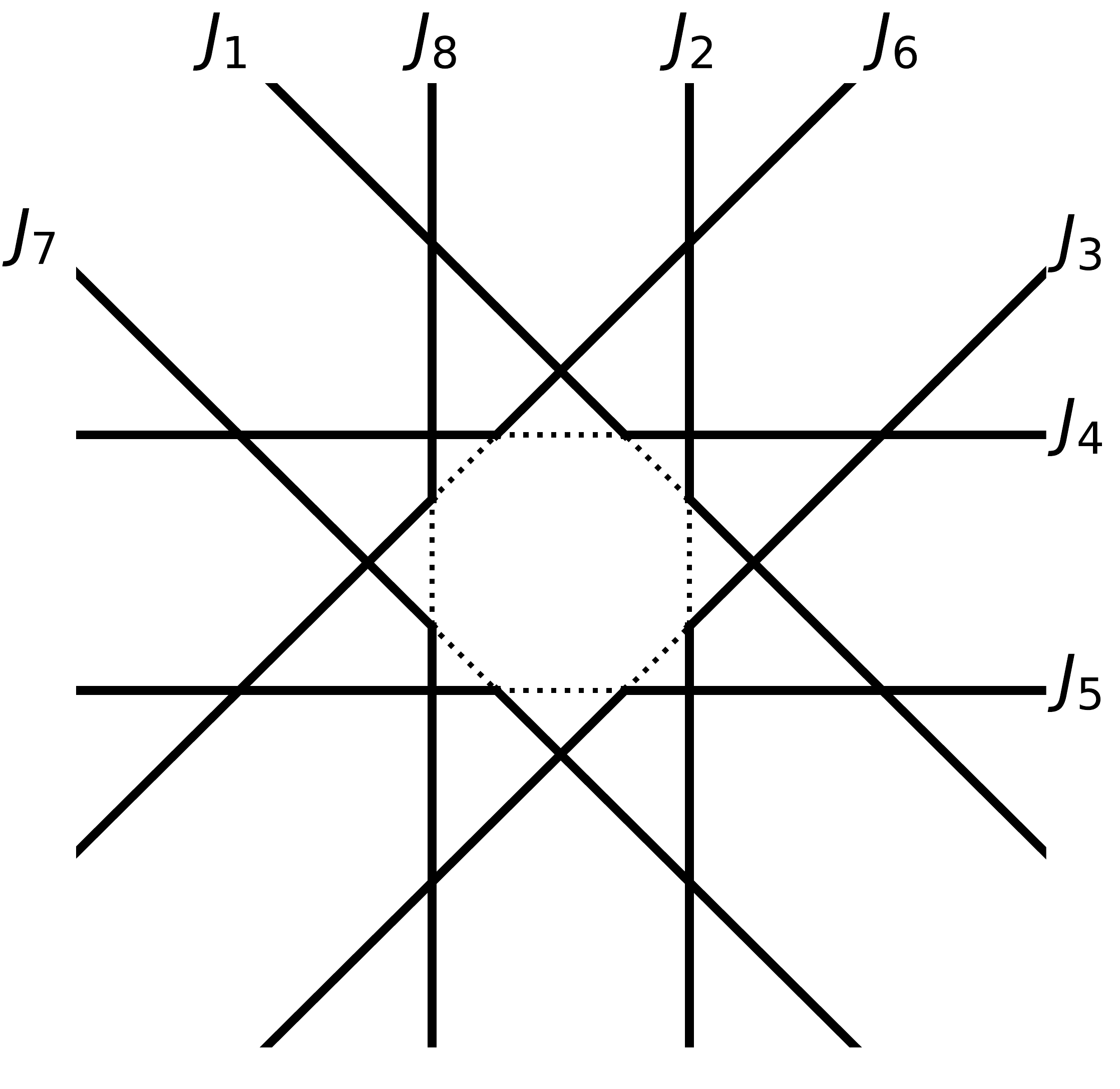}
\label{subfig:b2_star_case}
}
\caption{Different formations of the empty region for the Lie algebra of type $B_2$.}
\label{fig:b2_cases}
\end{figure}

From Figure \ref{subfig:b2_edge_case}, observe that the empty region becomes a square with an edge on top if and only if inequalities $J_1$ and $J_6$ intersect at or below $J_4$ and also the inequalities $J_2$ and $J_4$ intersect strictly above $J_1$. This occurs exactly when $\mu = m\w_2$ such that $m \in 2\NN$. Notice, in Figure \ref{subfig:b2_edge_case}, inequalities $J_1$ and $J_6$ intersect at a point where the divisibility condition is not satisfied, thus the intersection occurs at $J_4$ as desired. Similarly, the empty region becomes a square with a vertex pointing up, depicted in Figure \ref{subfig:b2_vertex_case}, when the inequalities $J_1$ and $J_6$ intersect strictly above $J_4$ and also the inequalities $J_2$ and $J_4$ intersect at or below $J_1$. This occurs exactly when $\mu = n\w_1$ such that $n \in \NN.$ Notice, in Figure \ref{subfig:b2_vertex_case}, inequalities $J_2$ and $J_4$ intersect at a point where the divisibility condition is not satisfied, thus the intersection occurs at $J_1$ as desired. The empty region takes the shape of an 8-pointed star if and only if the inequalities $J_1$ and $J_6$ intersect strictly above $J_4$ and the inequalities $J_2$ and $J_4$ intersect strictly above $J_1$. This is depicted in Figure \ref{subfig:b2_star_case}. Namely, this occurs when $\mu = n\w_1+m\w_2$ such that $n,m\in\NN$ and $2|m$.
\subsection{Lie algebra of type \texorpdfstring{$C_2$}{C2}}
For each $\sigma\in W$, we plot the conditions in Table~\ref{tab:WeylC2} by placing a solid colored dot on the integral weights for which $\sigma(\lambda+\rho)-(\mu+\rho) \in \mathbb{N}\a_1 \oplus \mathbb{N}\a_2$. In Figure \ref{fig:c2_single_elements}, we let $\mu=0$ and present the corresponding region for each Weyl group element. Note that changing $\mu$ will only translate the solution sets. In what follows we describe how the Weyl diagrams change as we vary the weight $\mu$.

\begin{figure}[H]%
    \centering
    \subfloat[$\sigma = 1$]{{\includegraphics[width=1.5in]{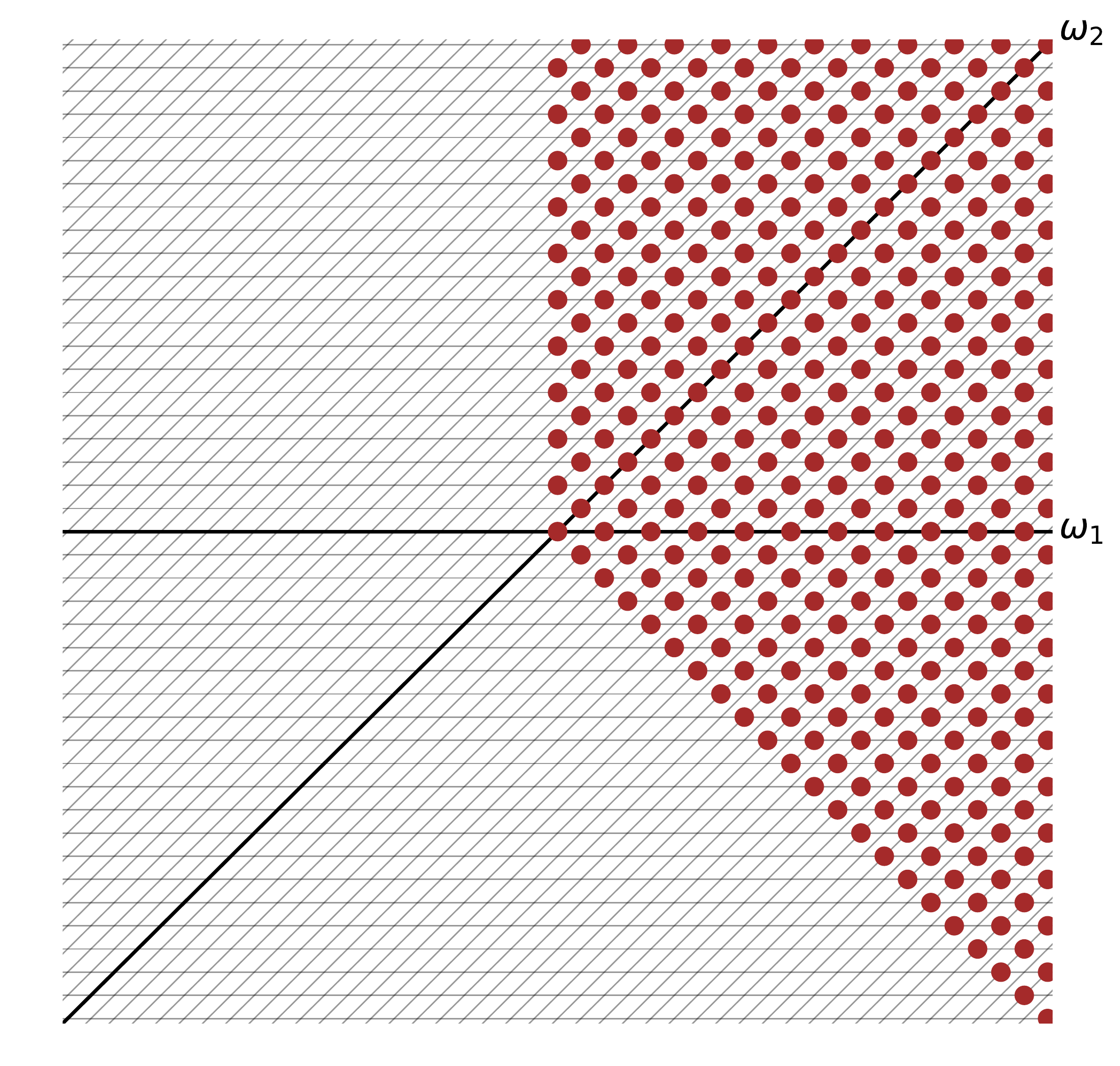}}}
    \hfill
    \subfloat[$\sigma = s_1$]{{\includegraphics[width=1.5in]{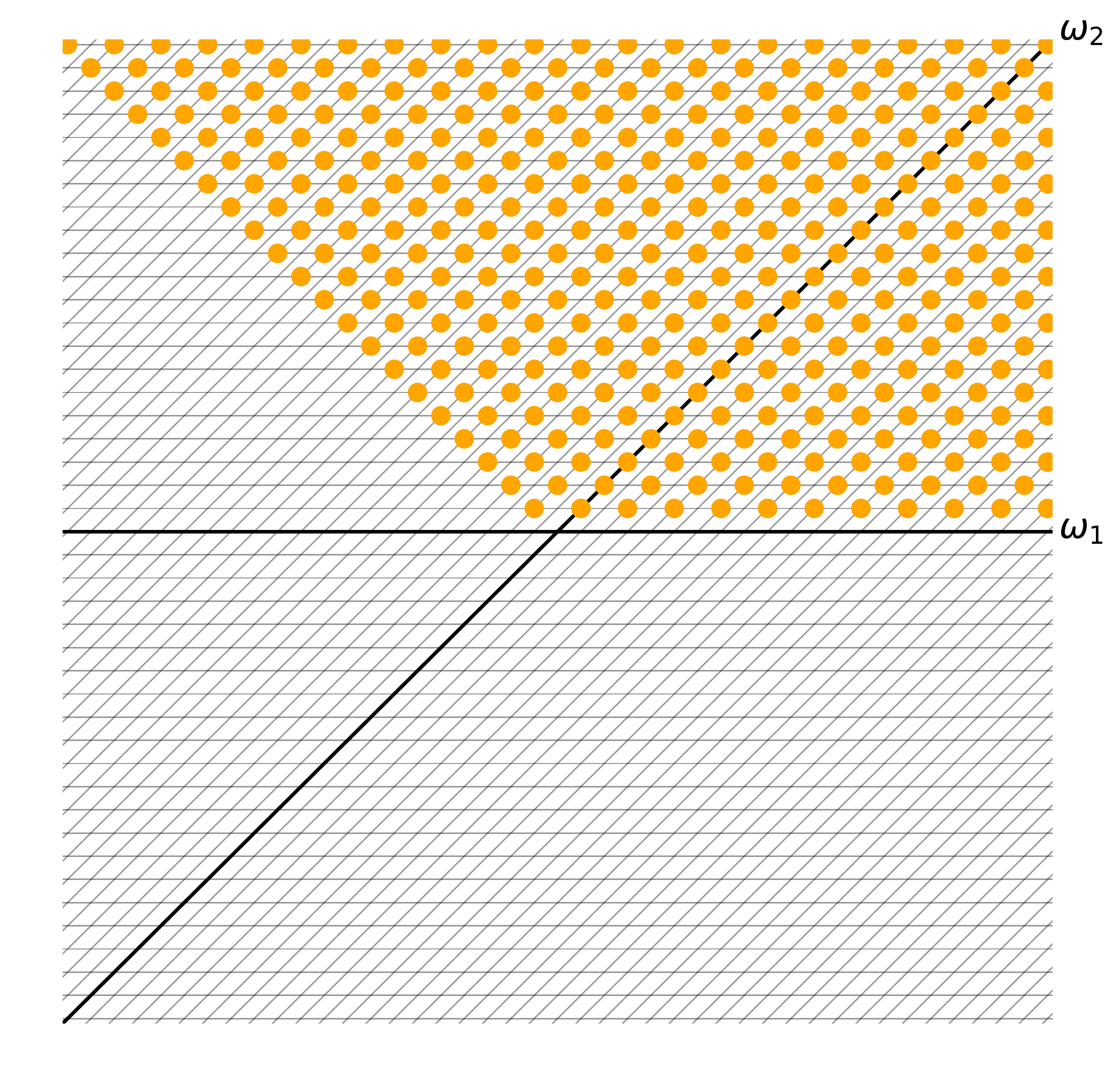} }}
    \hfill
    \subfloat[$\sigma = s_2$]{{\includegraphics[width=1.5in]{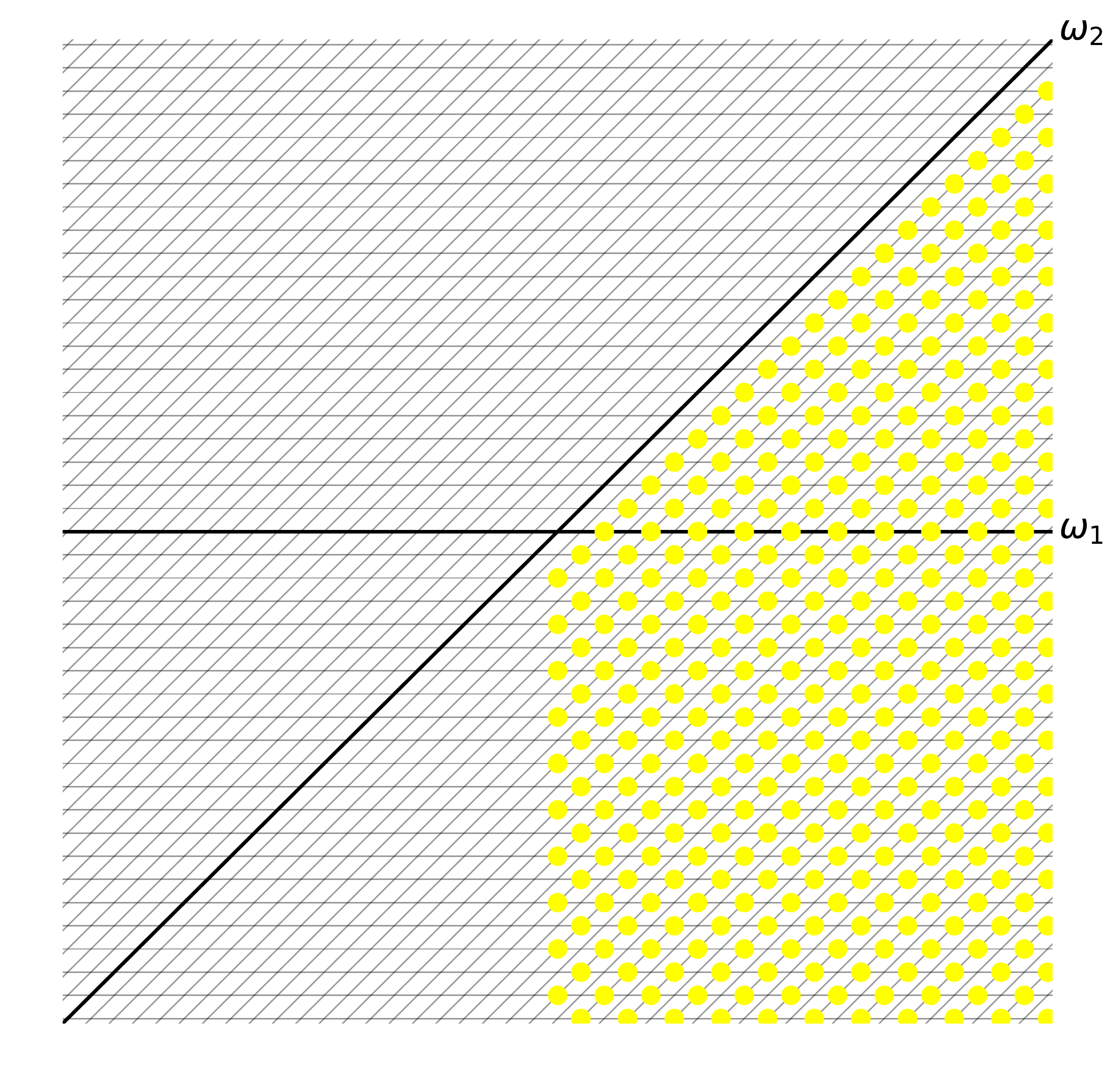} }}
    \hfill
    \subfloat[$\sigma = s_2s_1$]{{\includegraphics[width=1.5in]{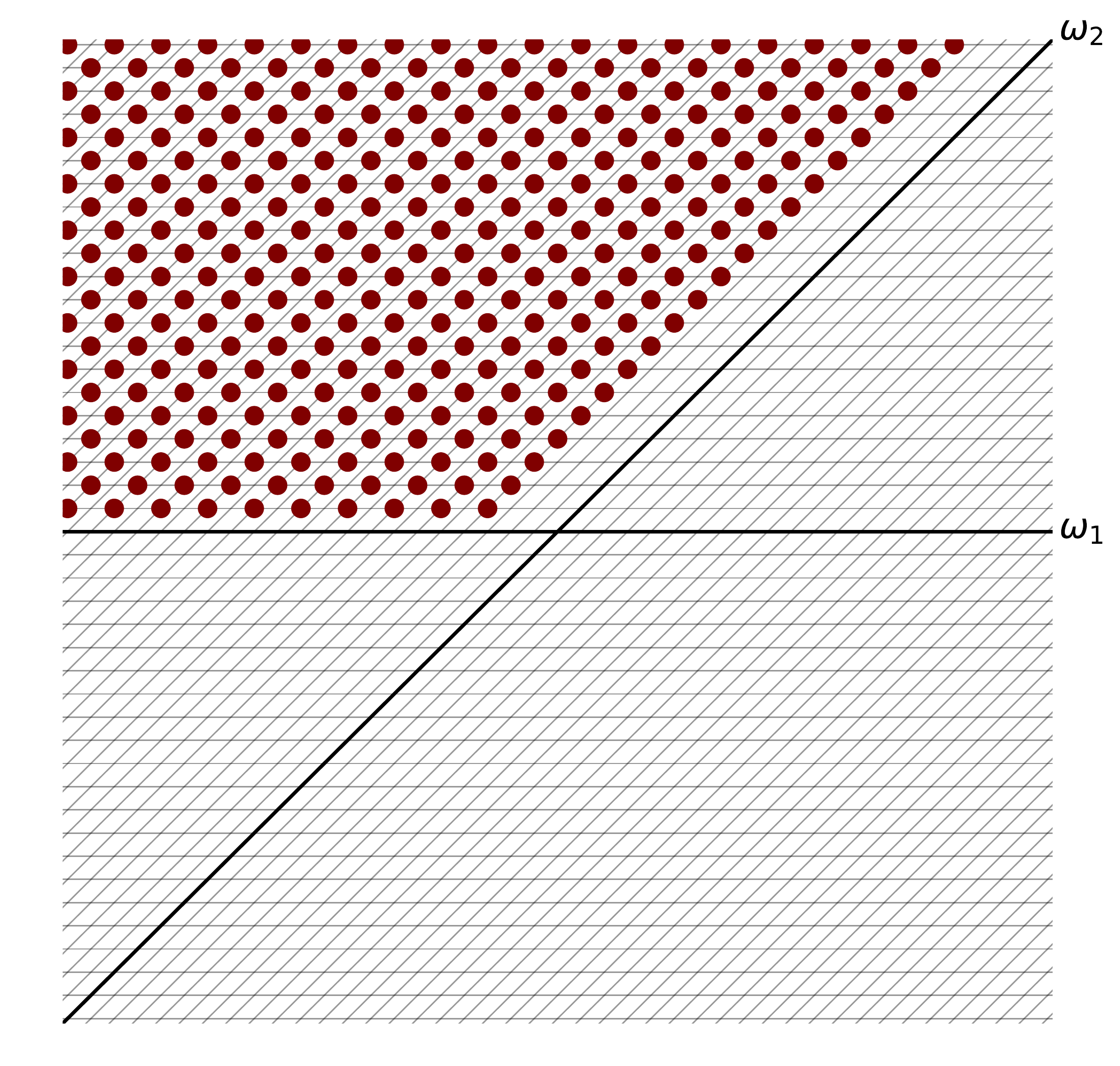} }}\\
    \subfloat[$\sigma = s_1s_2$]{{\includegraphics[width=1.5in]{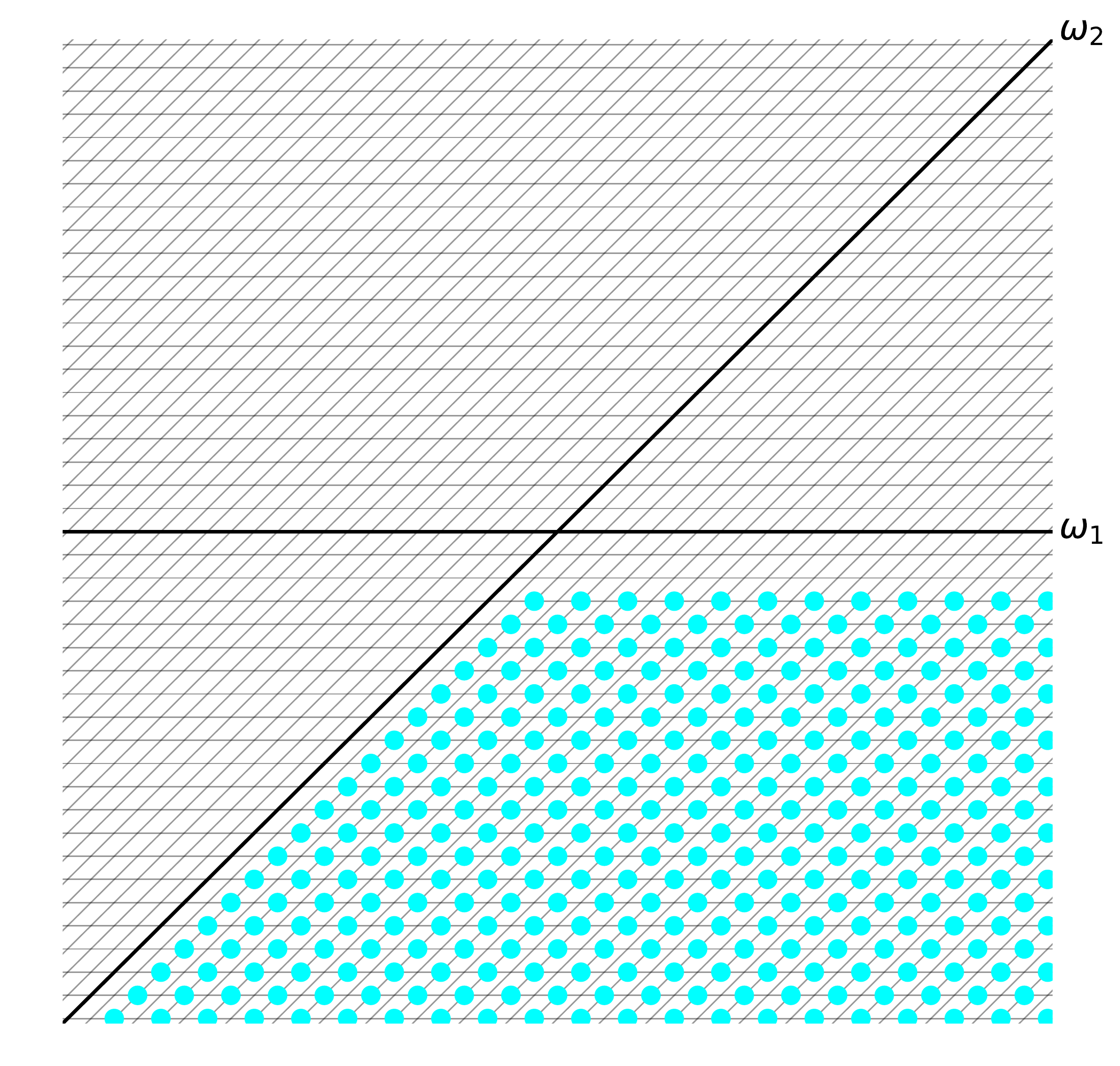} }}
    \hfill
    \subfloat[$\sigma = s_1s_2s_1$]{{\includegraphics[width=1.5in]{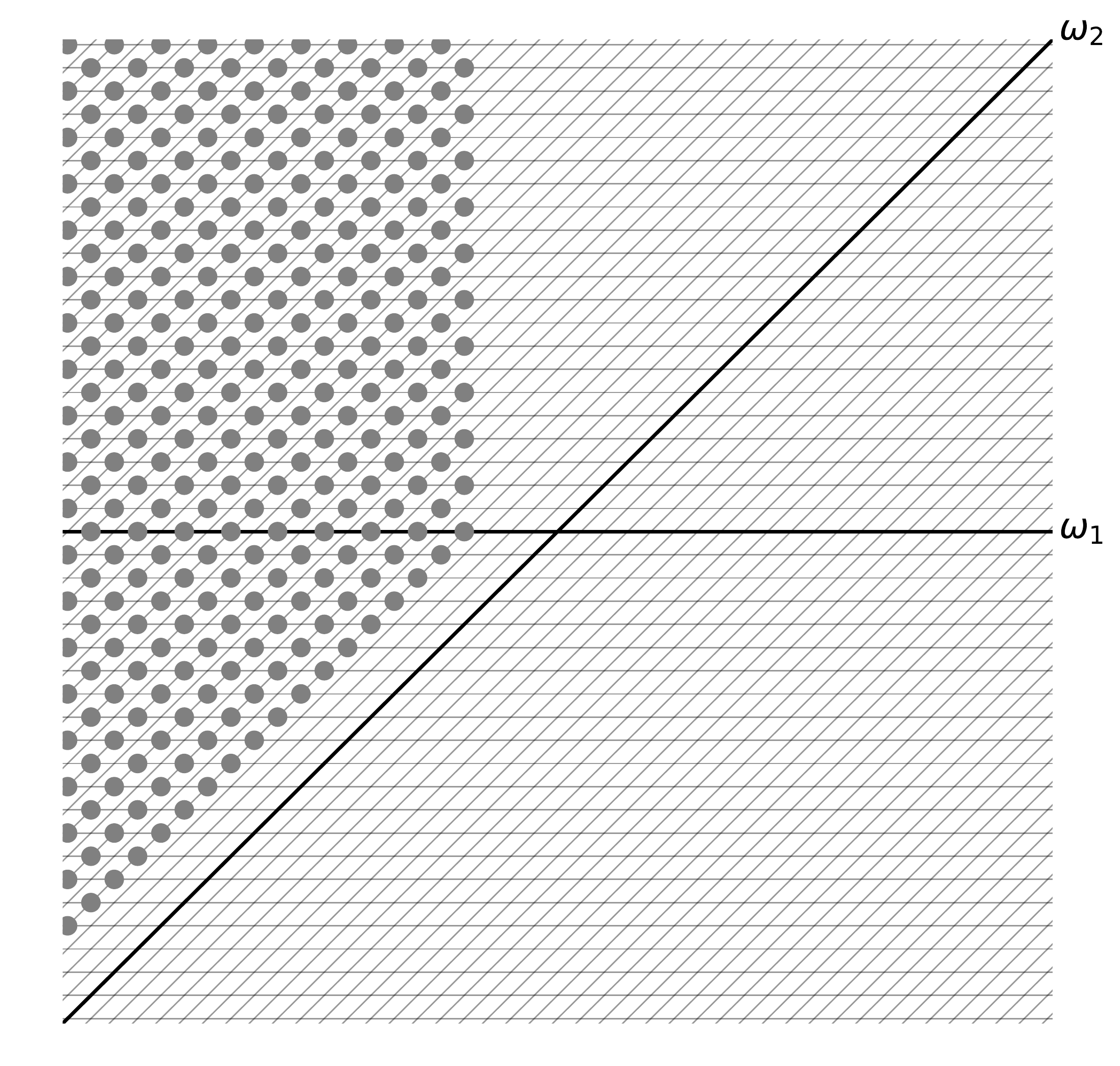} }}
    \hfill
    \subfloat[$\sigma = s_2s_1s_2$]{{\includegraphics[width=1.5in]{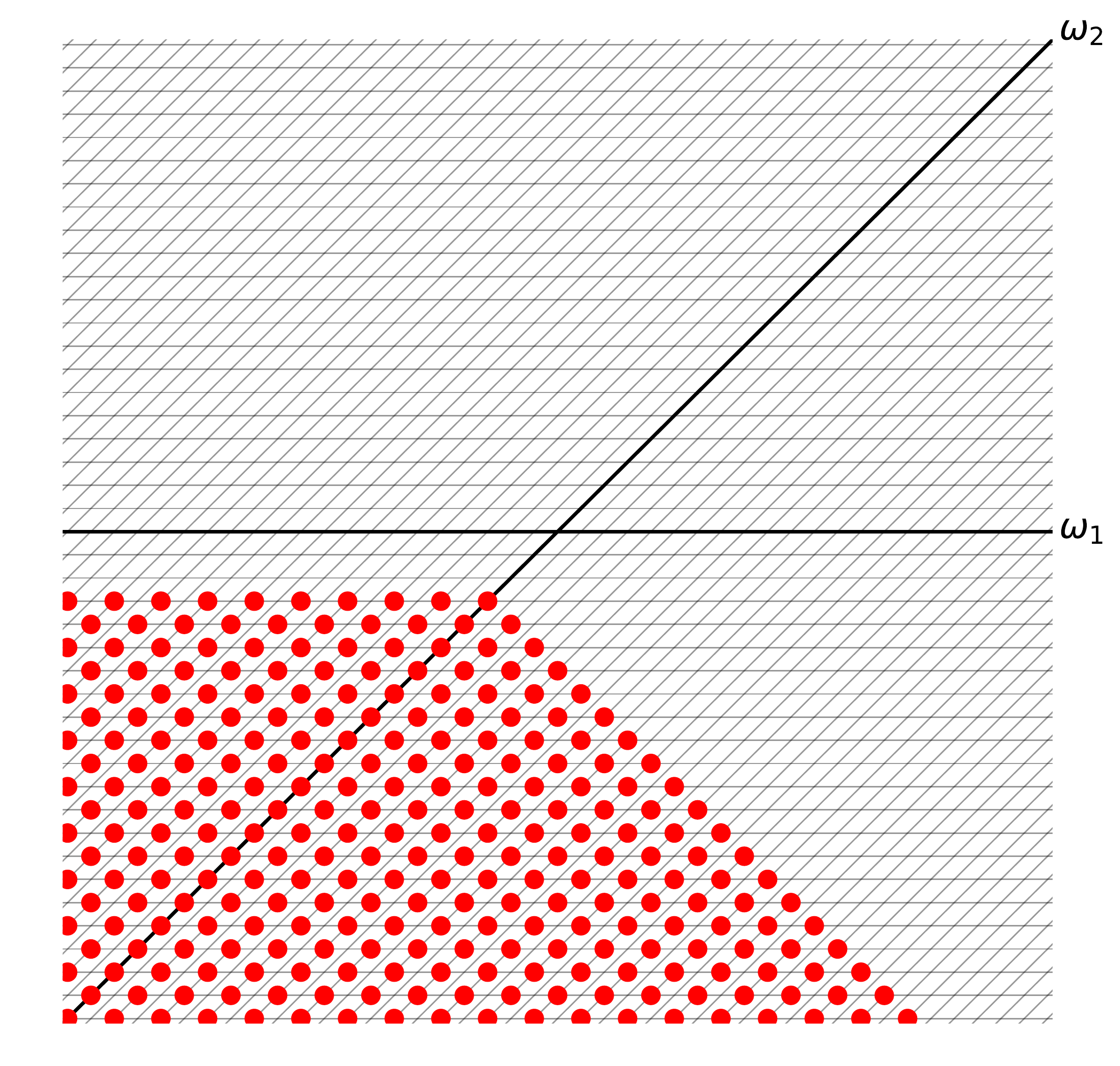} }}
    \hfill
    \subfloat[$\sigma = (s_2s_1)^2$]{{\includegraphics[width=1.5in]{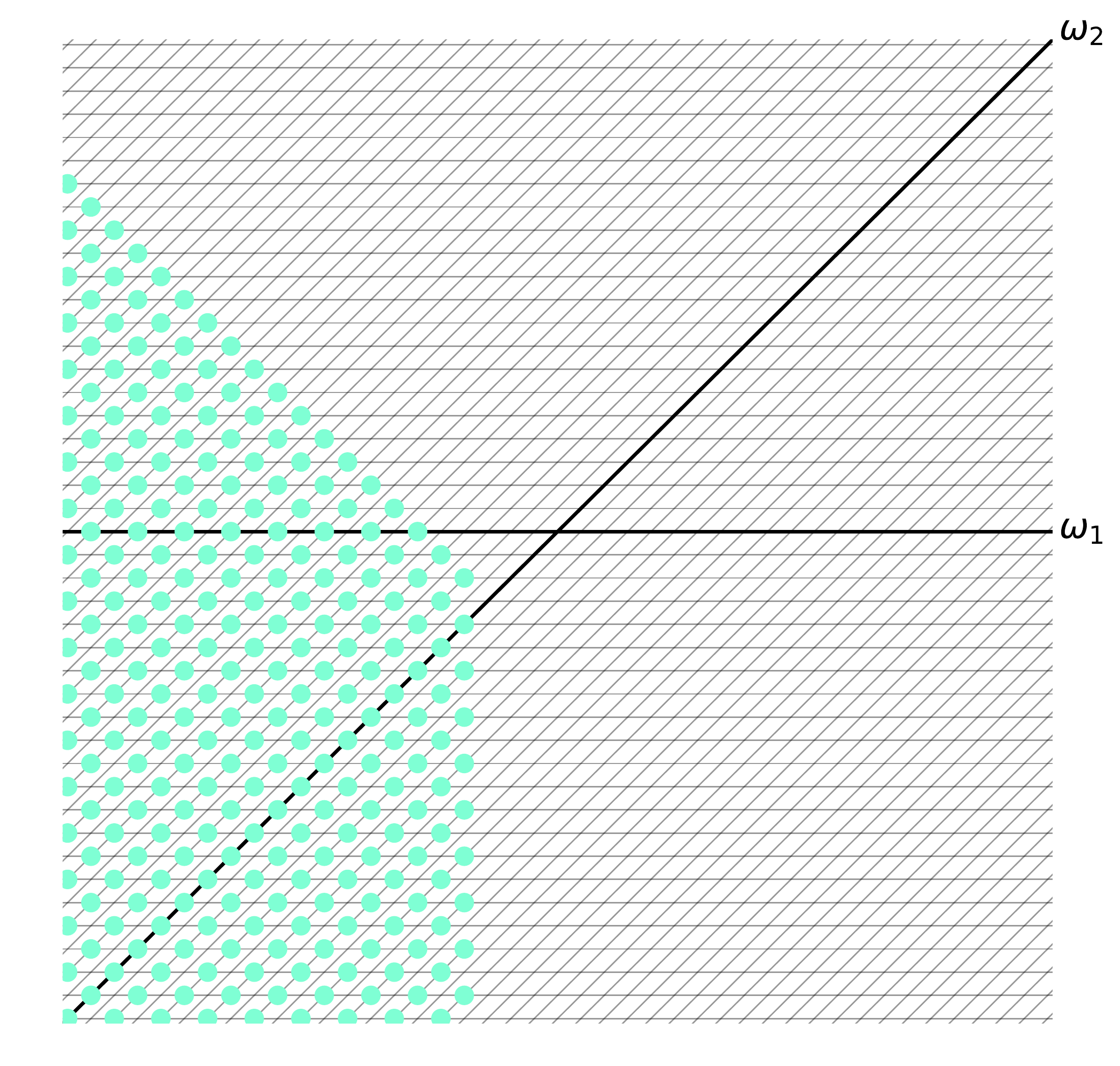} }}
    \caption{Solution sets to linear inequalities corresponding to the Lie algebra of type $C_2$.}
    \label{fig:c2_single_elements}
\end{figure}

\subsubsection{\normalfont{\textbf{Case}} \texorpdfstring{$\mu = n\w_1$}{mu equals n omega 1}}
Figures \ref{subfig:c2_n2}-\ref{subfig:c2_n8} illustrate the Weyl alternation diagrams for $\mu = n\w_1$ such that $n = 2,4,6,8$. We observe that the empty region is in the shape of a square with an edge on top. We also note that as $n$ increases from $2$ to $8$, the length of the edges of the square in the center also increases.
\begin{figure}[H]%
    \centering
    \subfloat[$\mu = 2\w_1$]{{\includegraphics[width=1.5in]{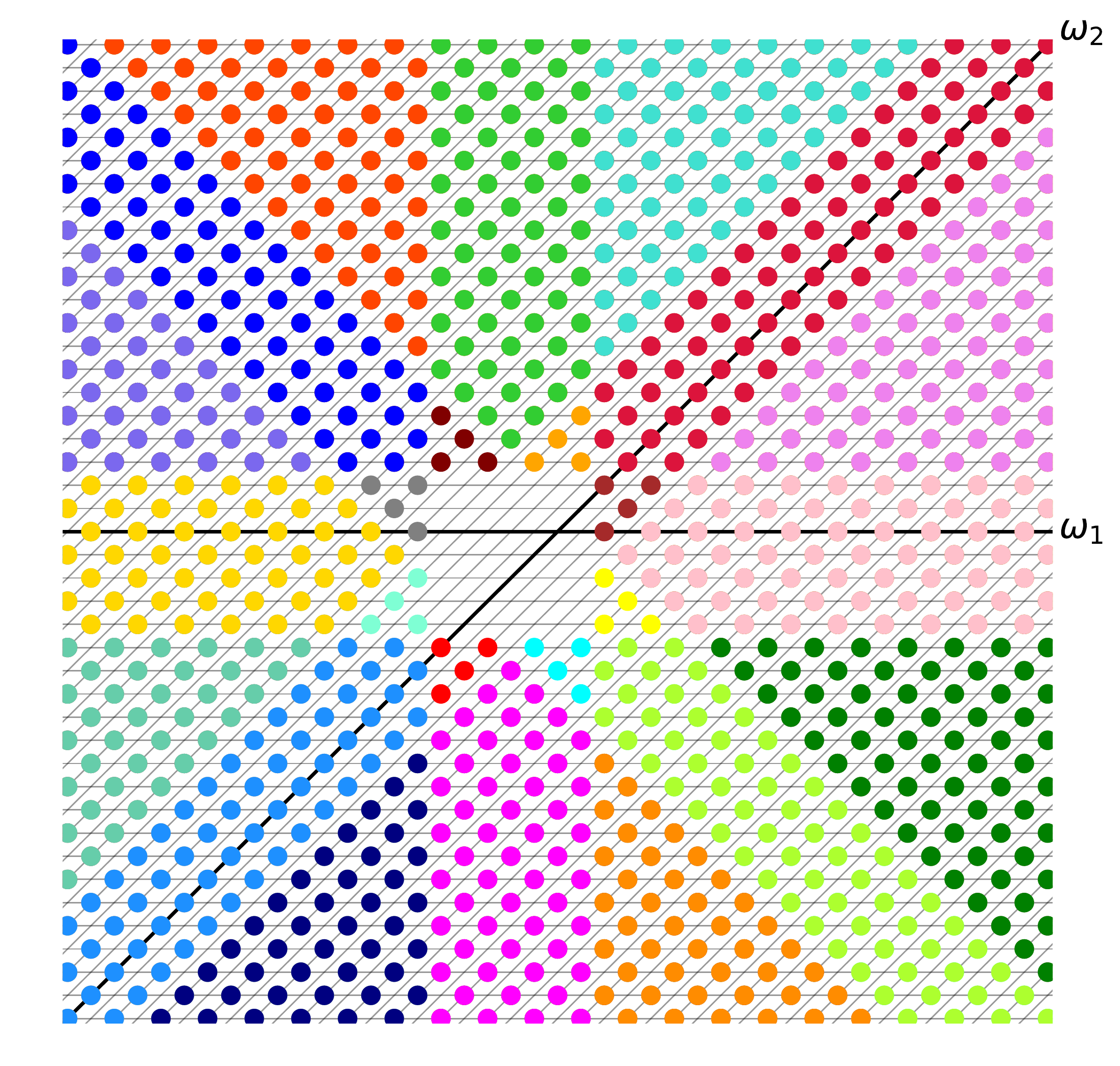} }
    \label{subfig:c2_n2}
    }
    \hfill
    \subfloat[$\mu = 4\w_1$]{{\includegraphics[width=1.5in]{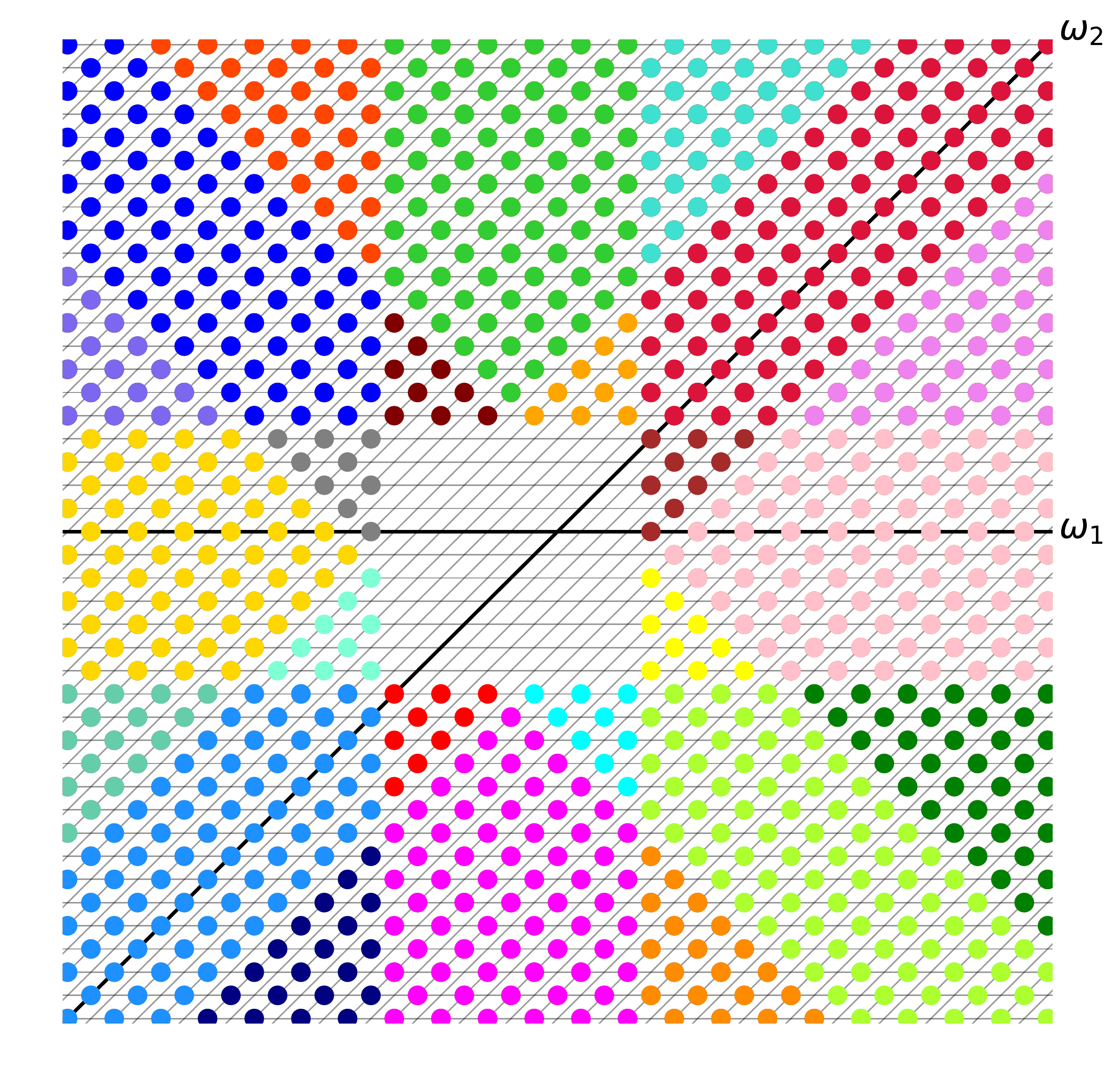} }
    \label{subfig:c2_n4}
    }
    \hfill
    \subfloat[$\mu = 6\w_1$]{{\includegraphics[width=1.5in]{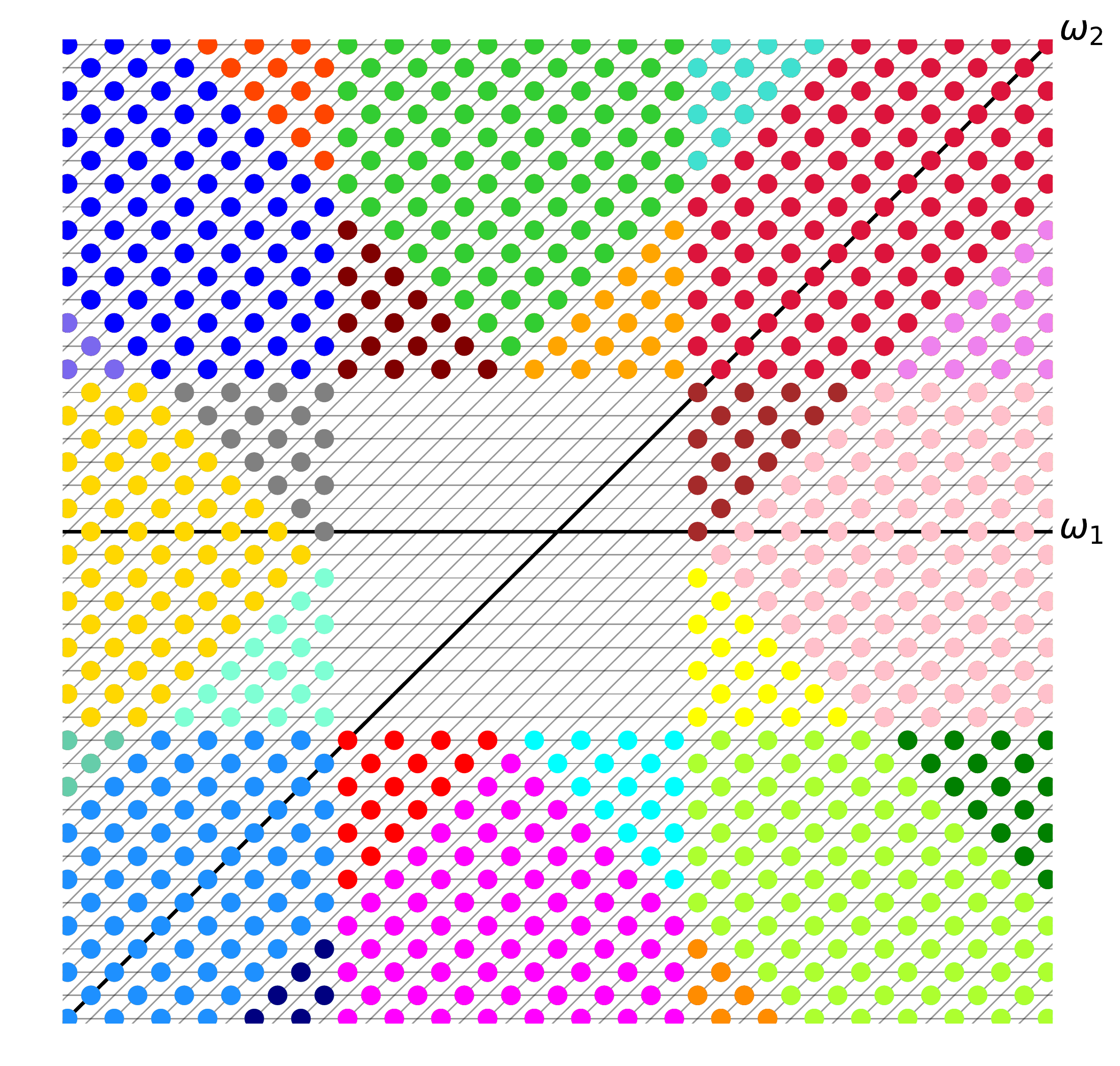} }
    \label{subfig:c2_n6}
    }
    \hfill
    \subfloat[$\mu = 8\w_1$]{{\includegraphics[width=1.5in]{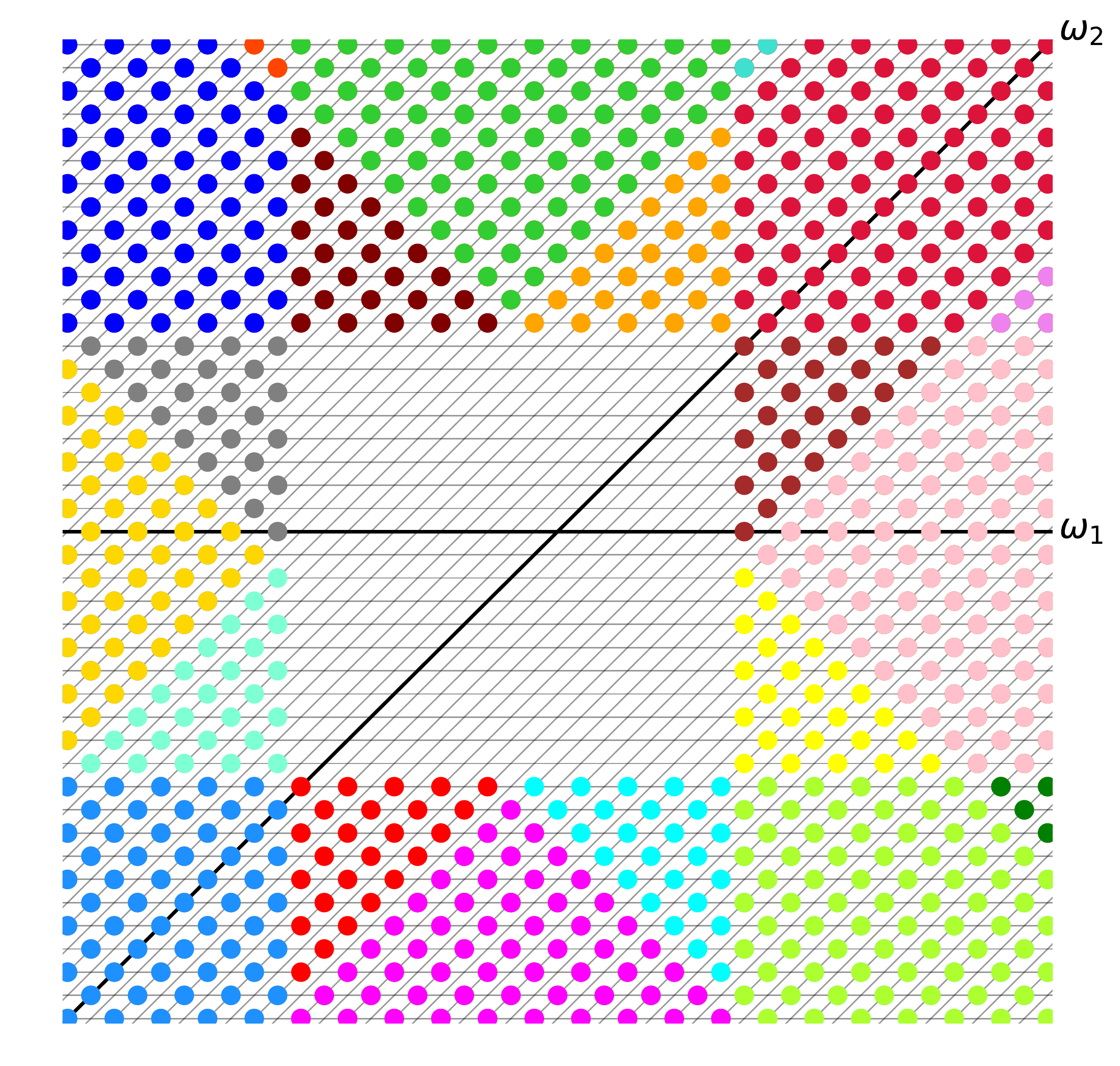} }
    \label{subfig:c2_n8}
    }
    \caption{Weyl alternation diagrams for the Lie algebra of type $C_2$ with $\mu=n\w_1$.}
    \label{fig:c2_mu_n}
\end{figure}
\subsubsection{\normalfont{\textbf{Case}} \texorpdfstring{$\mu = m\w_2$}{mu equals m omega 2}}
Figures \ref{subfig:c2_m1}-\ref{subfig:c2_m4} illustrate the Weyl alternation diagrams for $\mu = m\w_2$ such that $m = 1,2,3,4$. We observe that the empty region is in the shape of a square with a vertex on top. We also note that as $m$ increases from $1$ to $4$, the length of the edges of the square in the center also increases.
\begin{figure}[H]%
    \centering
    \subfloat[$\mu = \w_2$]{{\includegraphics[width=1.5in]{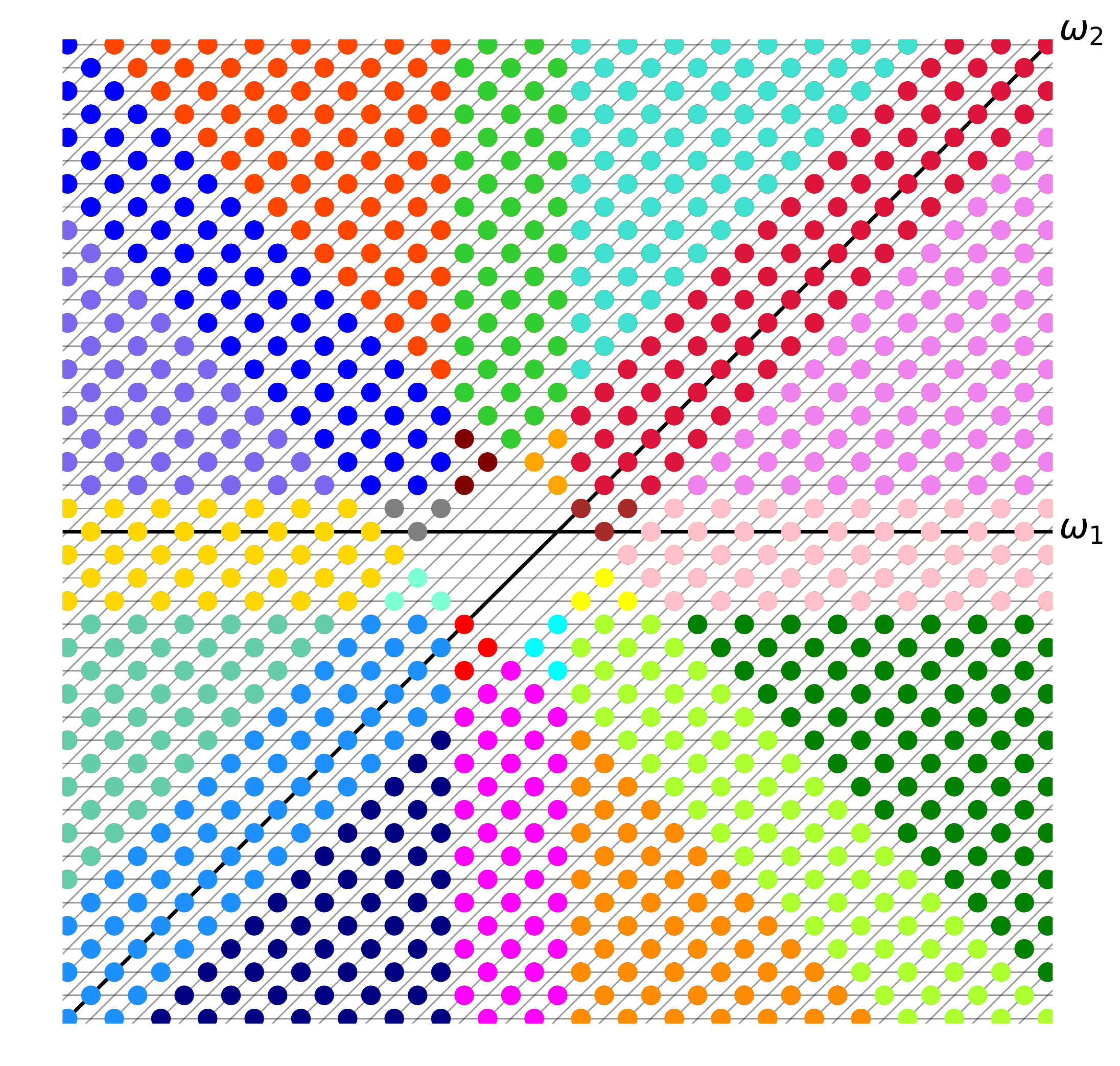}}
    \label{subfig:c2_m1}
    }%
    \hfill
    \subfloat[$\mu = 2\w_2$]{{\includegraphics[width=1.5in]{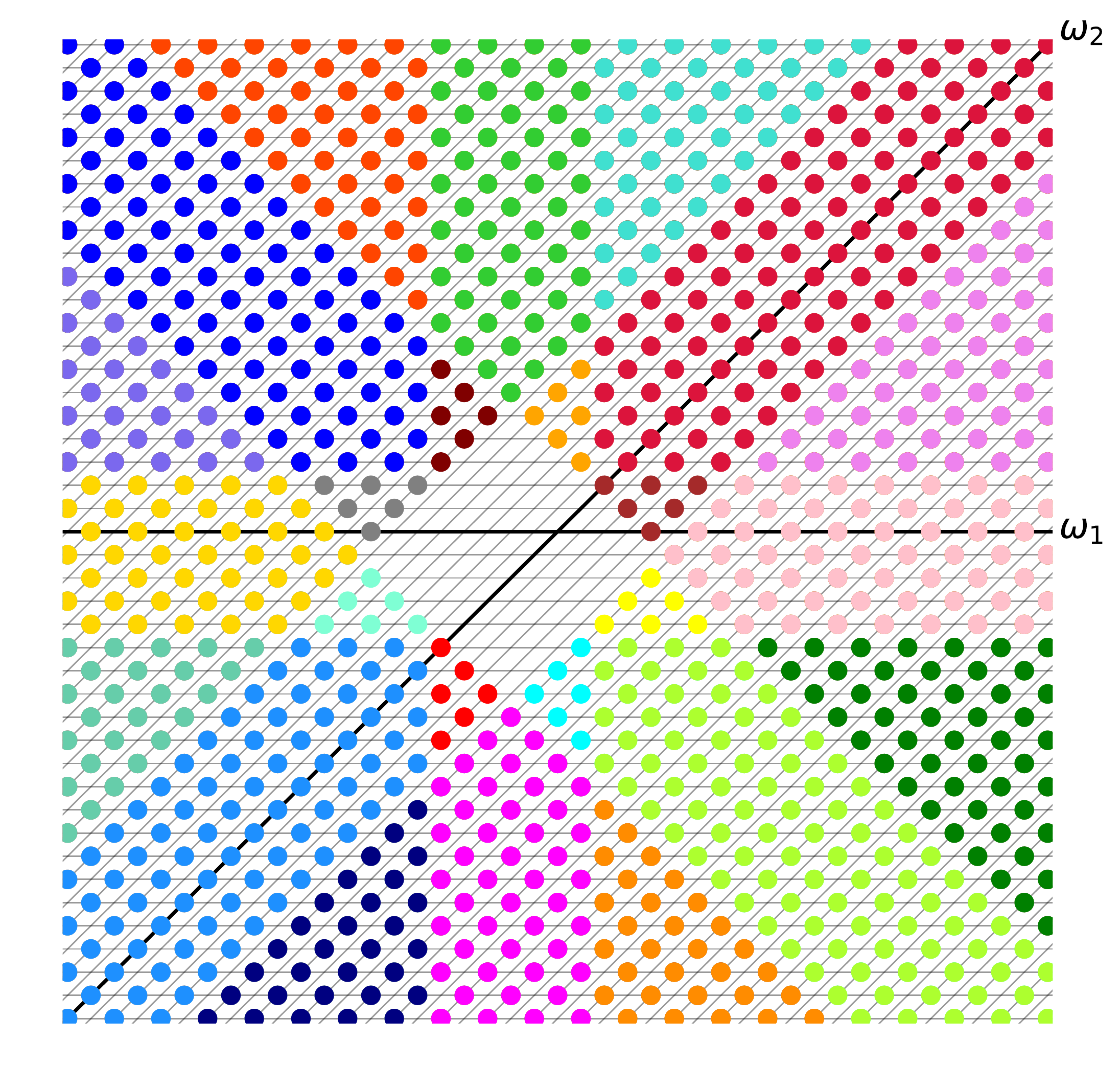} }
    \label{subfig:c2_m2}
    }
    \hfill
    \subfloat[$\mu = 3\w_2$]{{\includegraphics[width=1.5in]{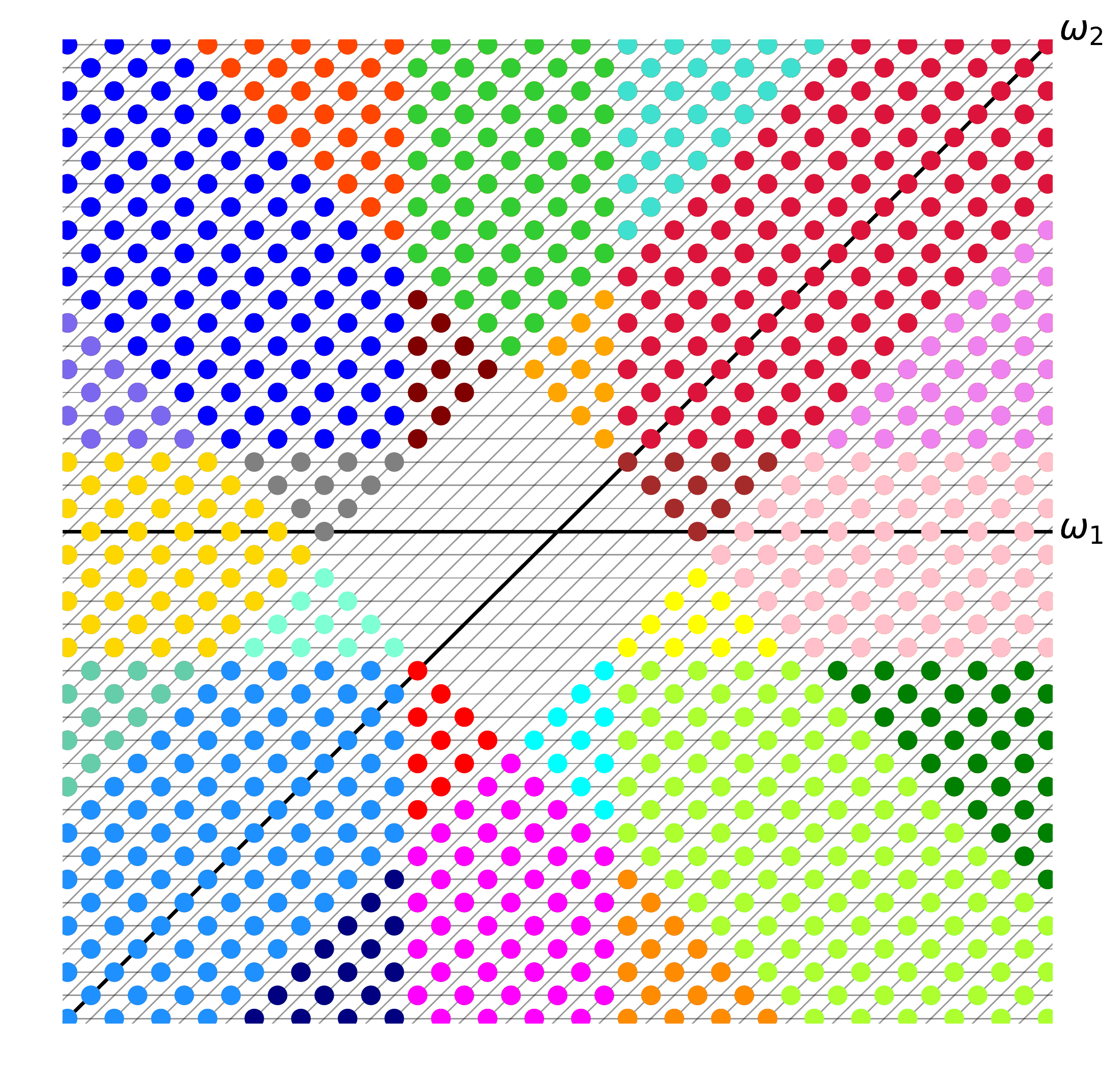}}
    \label{subfig:c2_m3}
    }%
    \hfill
    \subfloat[$\mu = 4\w_2$]{{\includegraphics[width=1.5in]{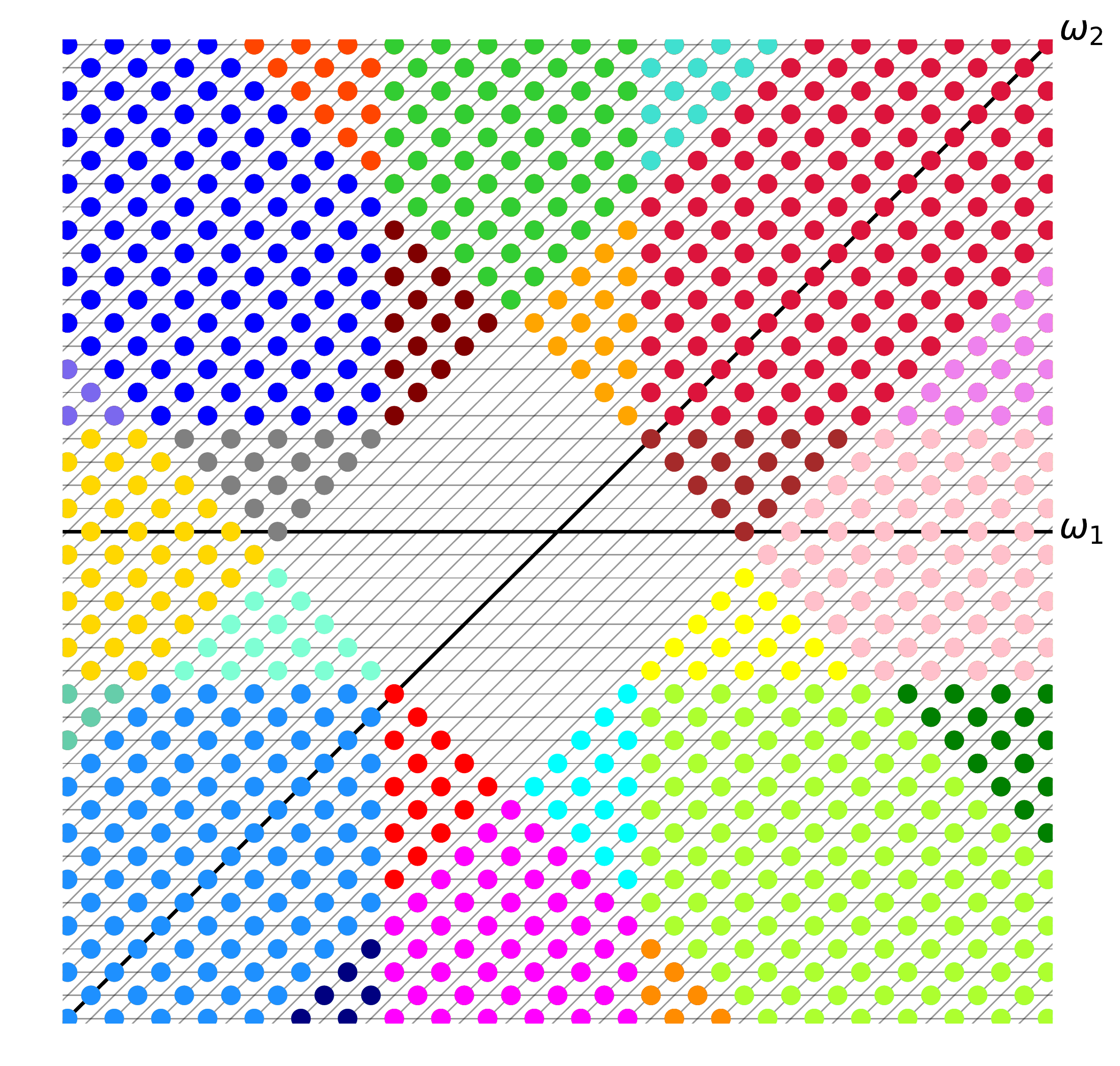} }
    \label{subfig:c2_m4}
    }
    \caption{Weyl alternation diagrams for the Lie algebra of type $C_2$ with $\mu=m\w_2$.}
    \label{fig:c2_mu_m}
\end{figure}
\subsubsection{\normalfont{\textbf{Case}} \texorpdfstring{$\mu = n\w_1 + m\w_2$}{mu equals n omega 1 plus m omega 2}}
Figures \ref{subfig:c2_n2m1}-\ref{subfig:c2_n2m3} illustrate the Weyl alternation diagrams for $\mu = n\w_1 + m\w_2$ such that $\mu$ is a positive integral linear combination of the fundamental weights. We observe that the empty region is in the shape of an 8-pointed star. Additionally, as $n$ and $m$ both increase, so does the size of the star in the center.
\begin{figure}[H]%
    \centering
    \subfloat[$\mu = 2\w_1 + \w_2$]{{\includegraphics[width=1.5in]{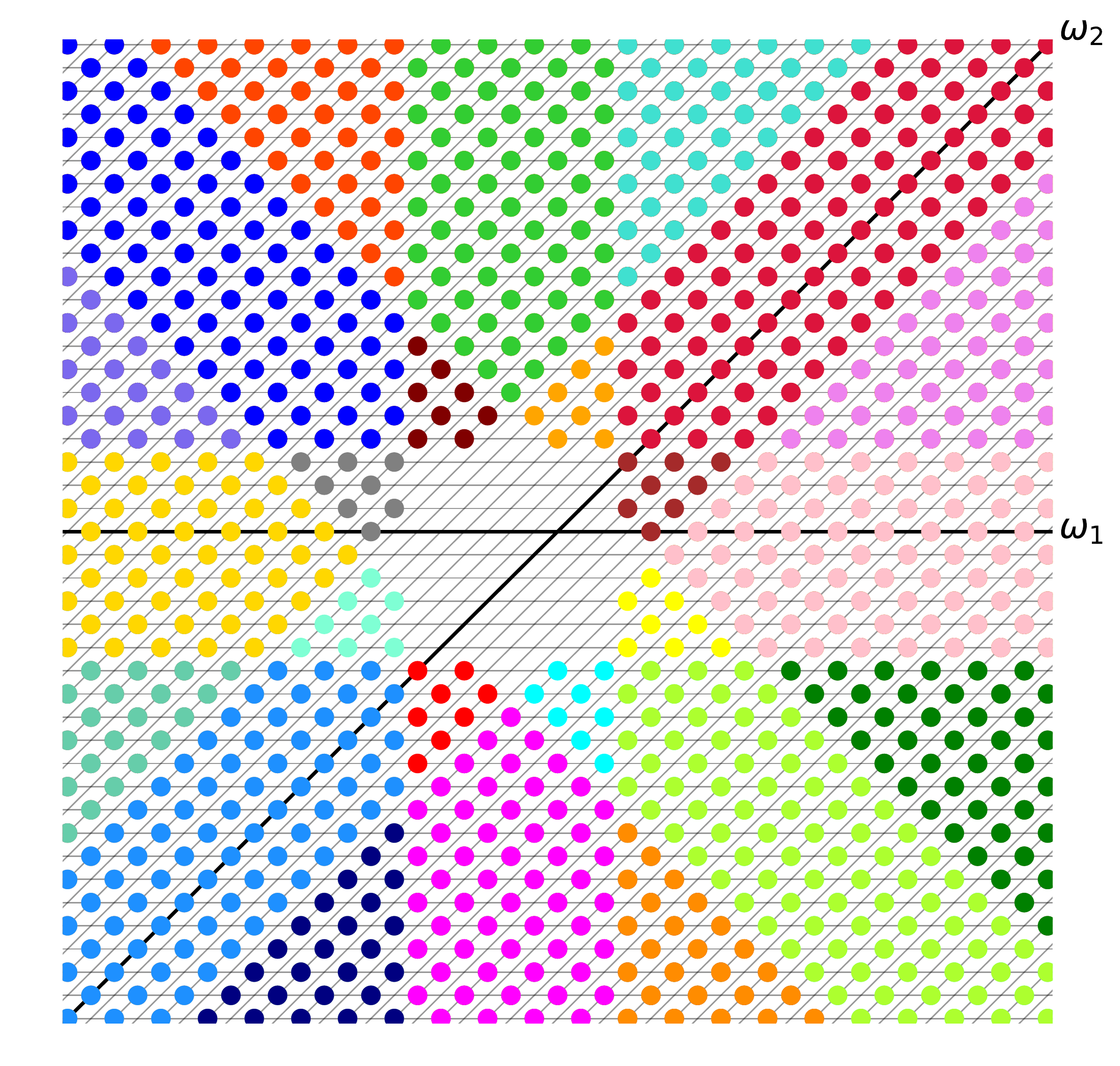}}
    \label{subfig:c2_n2m1}
    }
    \hfill
    \subfloat[$\mu = 2\w_1 + 2\w_2$]{{\includegraphics[width=1.5in]{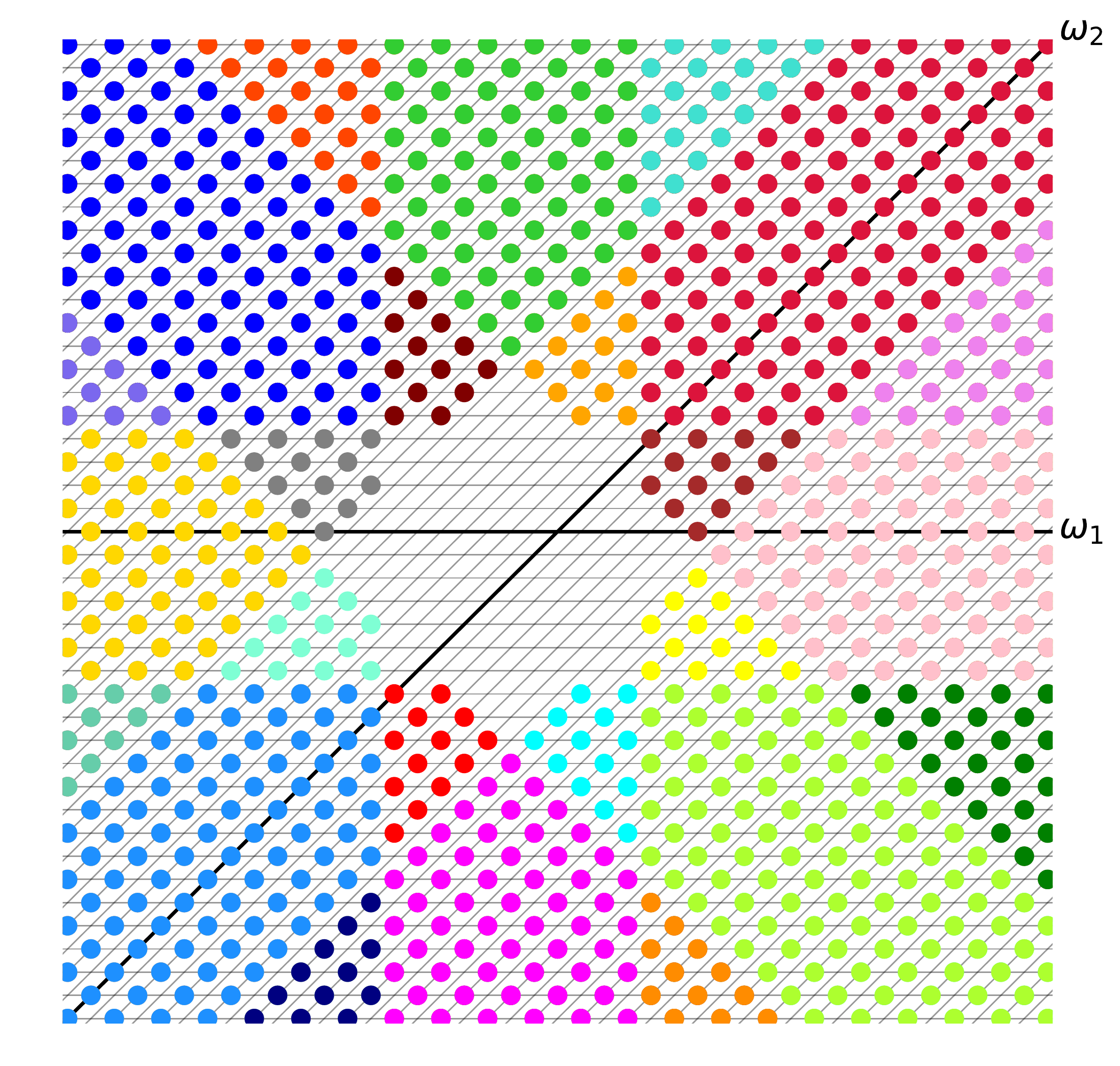} }
    \label{subfig:c2_n2m2}
    }
    \hfill
    \subfloat[$\mu = 4\w_1+2\w_2$]{{\includegraphics[width=1.5in]{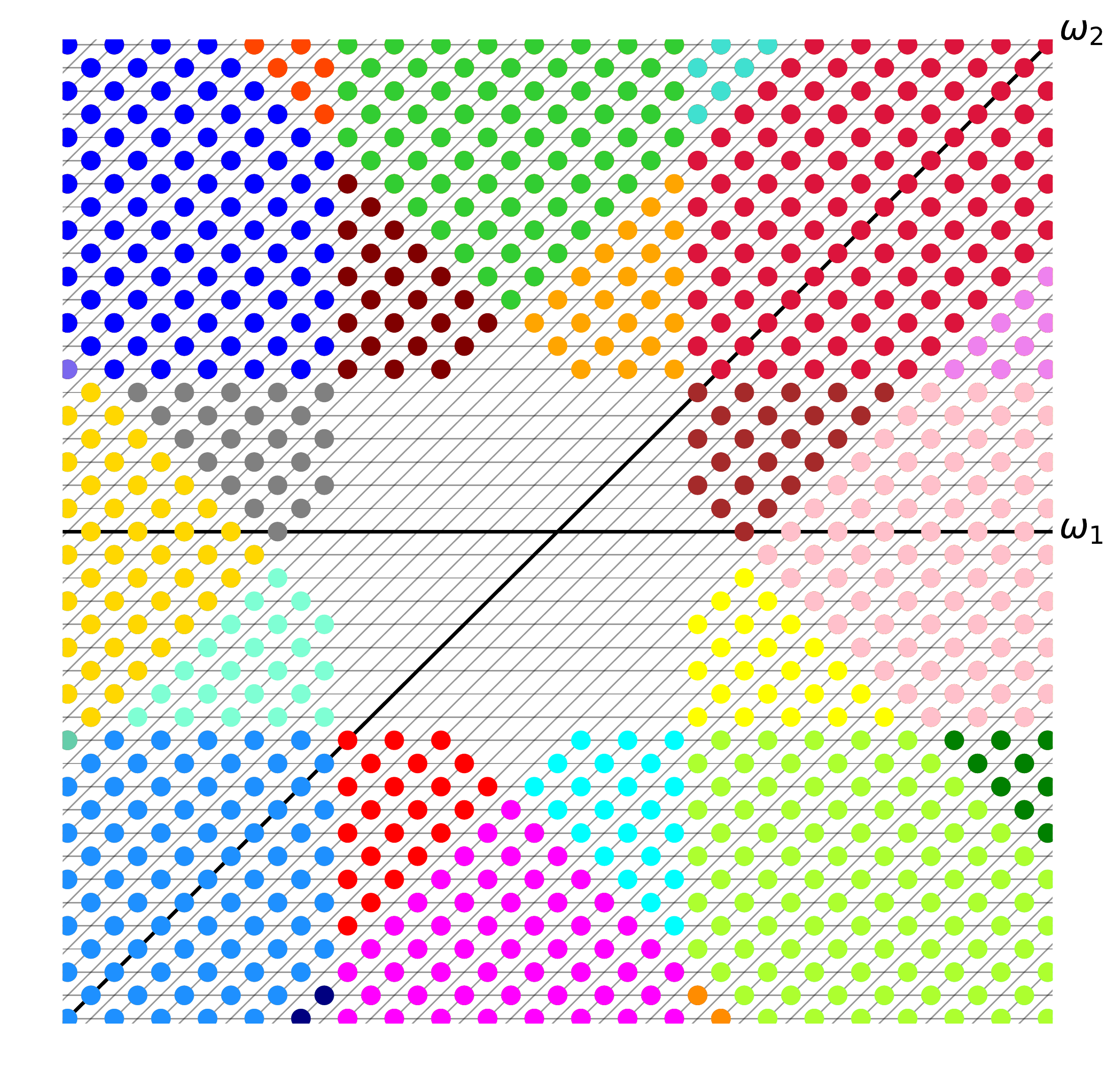}}
    \label{subfig:c2_n4m2}
    }
    \hfill
    \subfloat[$\mu = 2\w_1+3\w_2$]{{\includegraphics[width=1.5in]{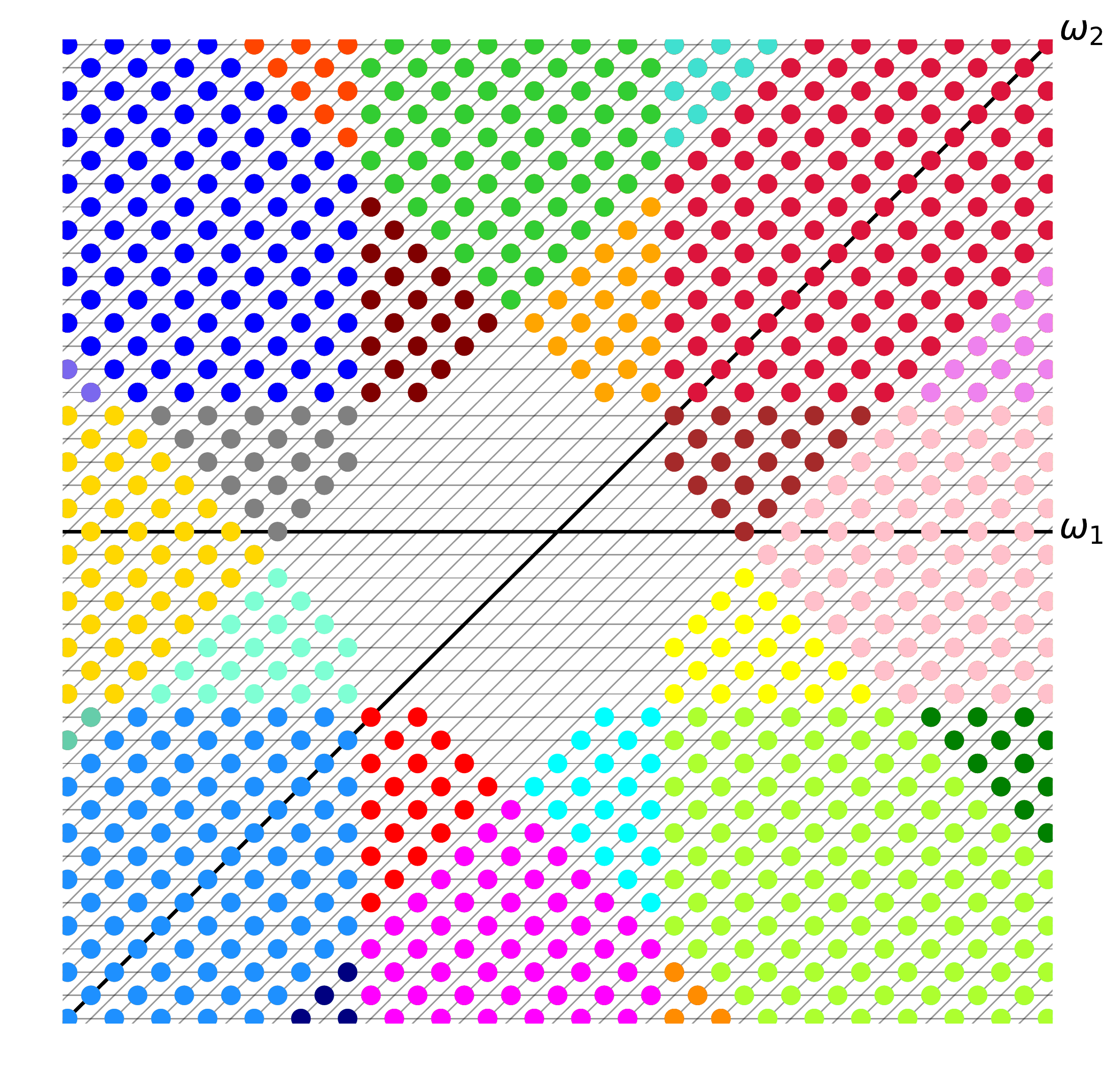}}
    \label{subfig:c2_n2m3}
    }
    \caption{Weyl alternation diagrams for the Lie algebra of type $C_2$ with \mbox{$\mu=n\w_1+m\w_2$.}}
    \label{fig:c2_mu_nm}
\end{figure}
 
\begin{figure}[H]%
\centering
\resizebox{2in}{!}{
\includegraphics{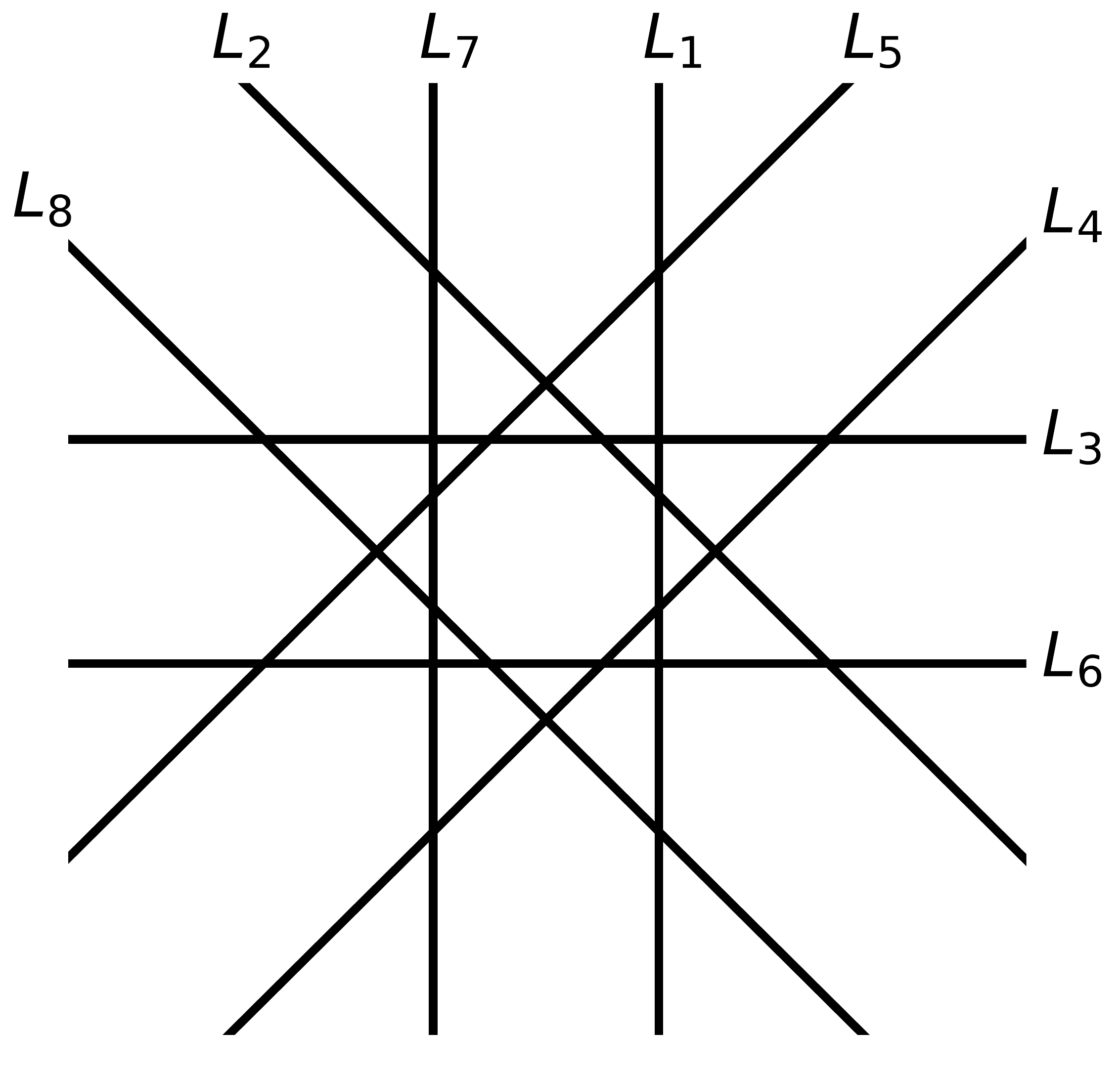}
}
\caption{Set of linear inequalities determining the boundaries of the Weyl alternation sets for the Lie algebra of type $C_2$.}
\label{fig:c2_ineqs}
\end{figure}

To explain the shapes that form in the empty region of each Weyl alternation diagram for the Lie algebra of type $C_2$ we turn to Figure \ref{fig:c2_ineqs}. From Theorem \ref{thm:mainc2}, we notice that all inequalities depend on $n$ and $m$ which simply shift the inequalities, but never change the direction of the line. This means that Figure \ref{fig:c2_ineqs} is a good representation of the inequalities formed.


\begin{figure}[H]%
\centering
\subfloat[Edge on top.]{
\includegraphics[width=4cm]{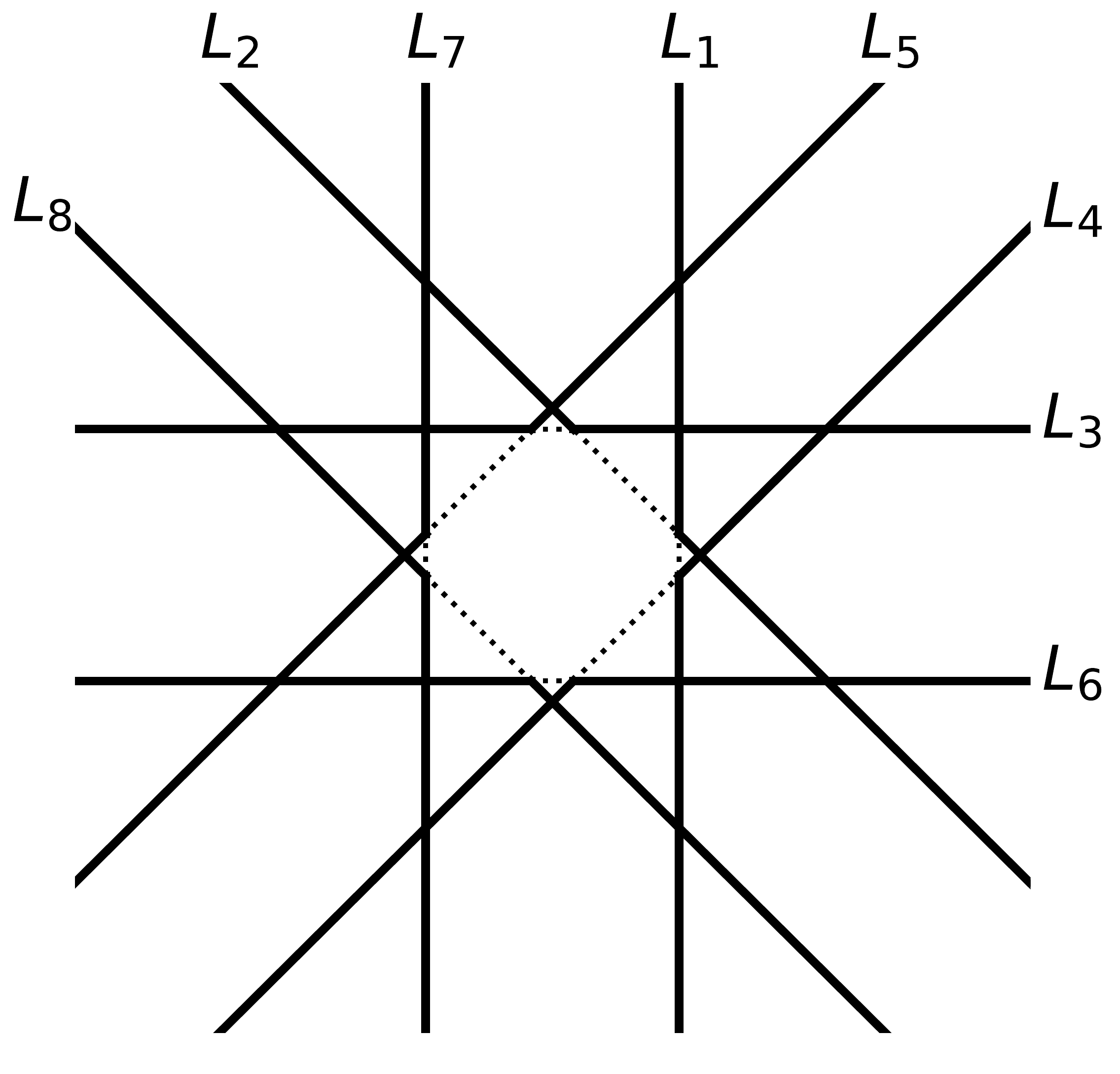}\label{subfig:c2_edgeontop}
}
\quad
\subfloat[Vertex on top.]{
\includegraphics[width=4cm]{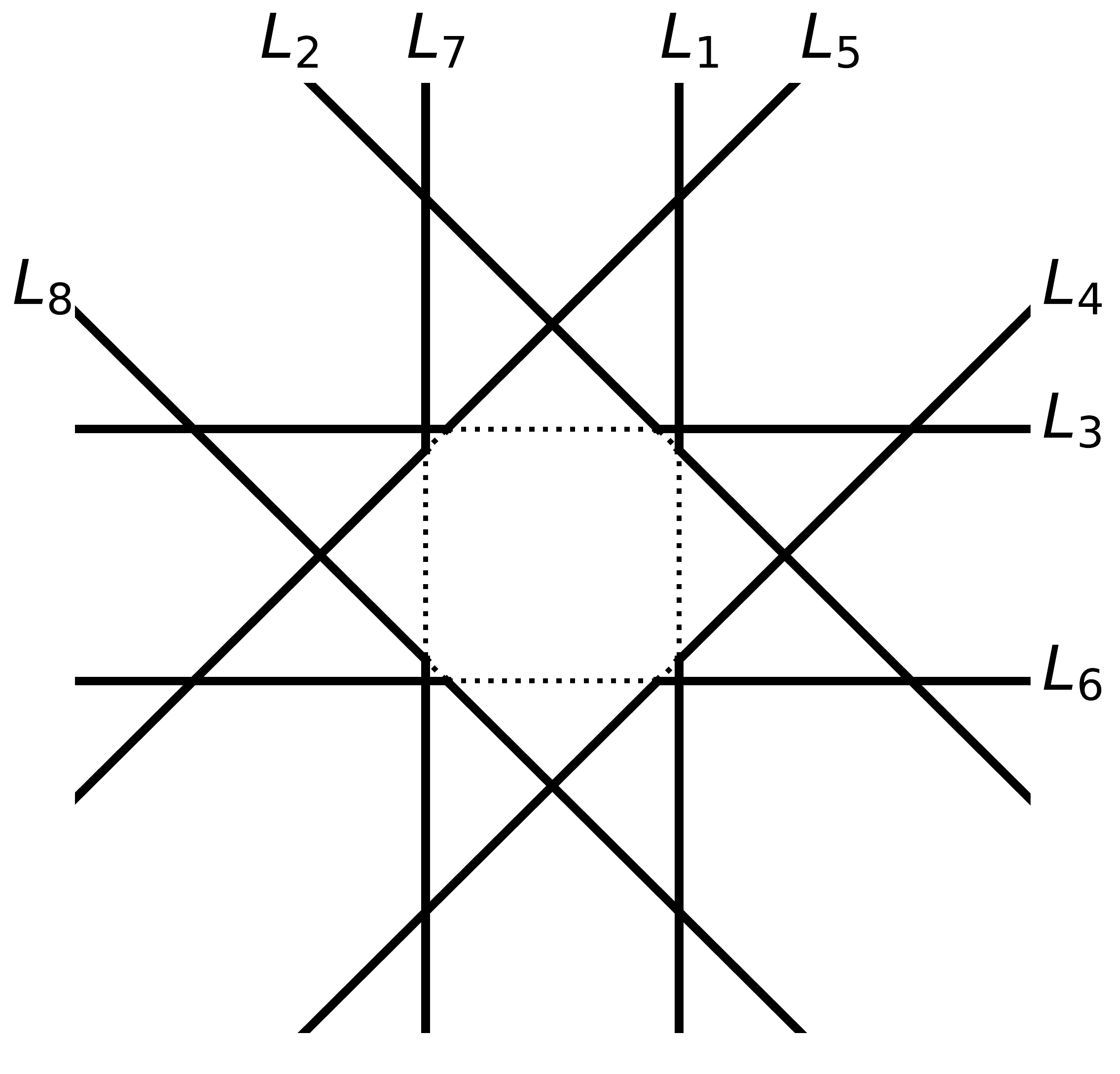}\label{subfig:c2_vertexontop}
}
\quad
\subfloat[8-pointed star.]{
\includegraphics[width=4cm]{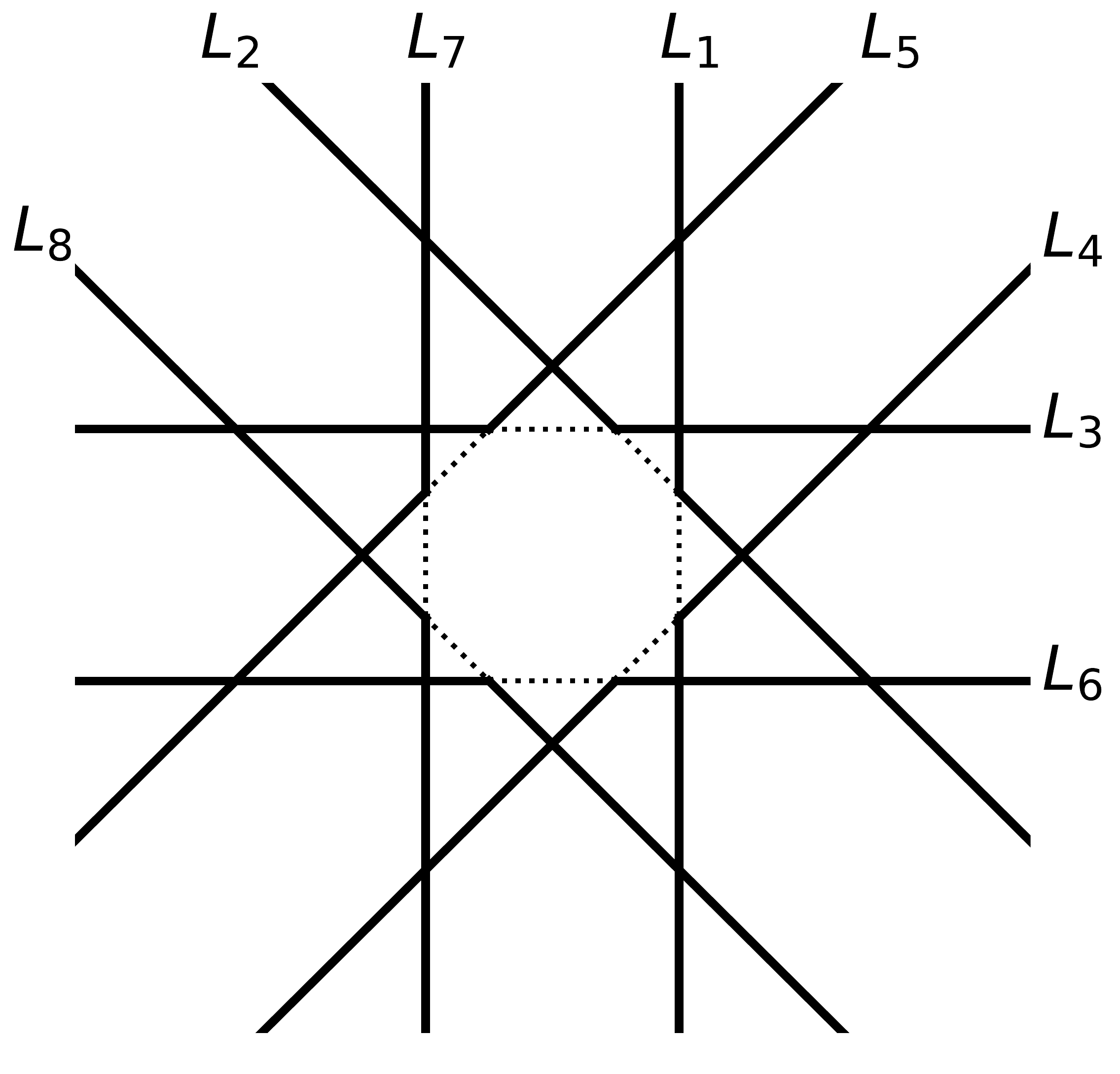}\label{subfig:c2_8pointedstar}
}
\caption{Different formations of the empty region for the Lie algebra of type $C_2$.}
\label{fig:c2_cases}
\end{figure}

From Figure \ref{subfig:c2_edgeontop}, observe that the empty region becomes a square with an edge on top if and only if the inequalities $L_2$ and $L_5$ intersect at or below $L_3$ and the inequalities $L_1$ and $L_3$ intersect strictly above $L_2$. This occurs exactly when $\mu = n\w_1$ such that $n \in 2\NN$. Notice, in Figure \ref{subfig:c2_edgeontop}, inequalities $L_2$ and $L_5$ intersect at a point where the divisibility condition is not satisfied, thus the intersection occurs at $L_3$ as desired. Similarly, the empty region becomes a square with a vertex pointing up, as depicted in Figure \ref{subfig:c2_vertexontop}, when the inequalities $L_2$ and $L_5$ intersect strictly above $L_3$ and the inequalities $L_1$ and $L_3$ intersect at or below $L_2$. This occurs exactly when $\mu = m\w_2$ such that $m \in \NN.$ Notice, in Figure \ref{subfig:c2_vertexontop}, inequalities $L_1$ and $L_3$ intersect at a point where the divisibility condition is not satisfied, thus the intersection occurs at $L_2$ as desired. The empty region takes the shape of an 8-pointed star if and only if the inequalities $L_2$ and $L_5$ intersect strictly above $L_3$ and the inequalities $L_1$ and $L_3$ intersect strictly above $L_2$. This is depicted in Figure \ref{subfig:c2_8pointedstar}. Namely, this occurs when $\mu = n\w_1+m\w_2$ such that $n,m\in\NN$ and $2|n$.
\subsection{Lie algebra of type \texorpdfstring{$D_2$}{D2}}
For each $\sigma\in W$, we plot the conditions in Table~\ref{tab:WeylD2} by placing a solid colored dot on the integral weights for which $\sigma(\lambda+\rho)-(\mu+\rho) \in \mathbb{N}\a_1 \oplus \mathbb{N}\a_2$. In Figure \ref{fig:d2_single_elements}, we let $\mu=0$ and present the corresponding region for each Weyl group element. Note that changing $\mu$ will only translate the solution sets. In what follows we describe how the Weyl diagrams change as we vary the weight $\mu$.
\begin{figure}[H]%
    \centering
    \subfloat[$\sigma = 1$]{{\includegraphics[width=1.5in]{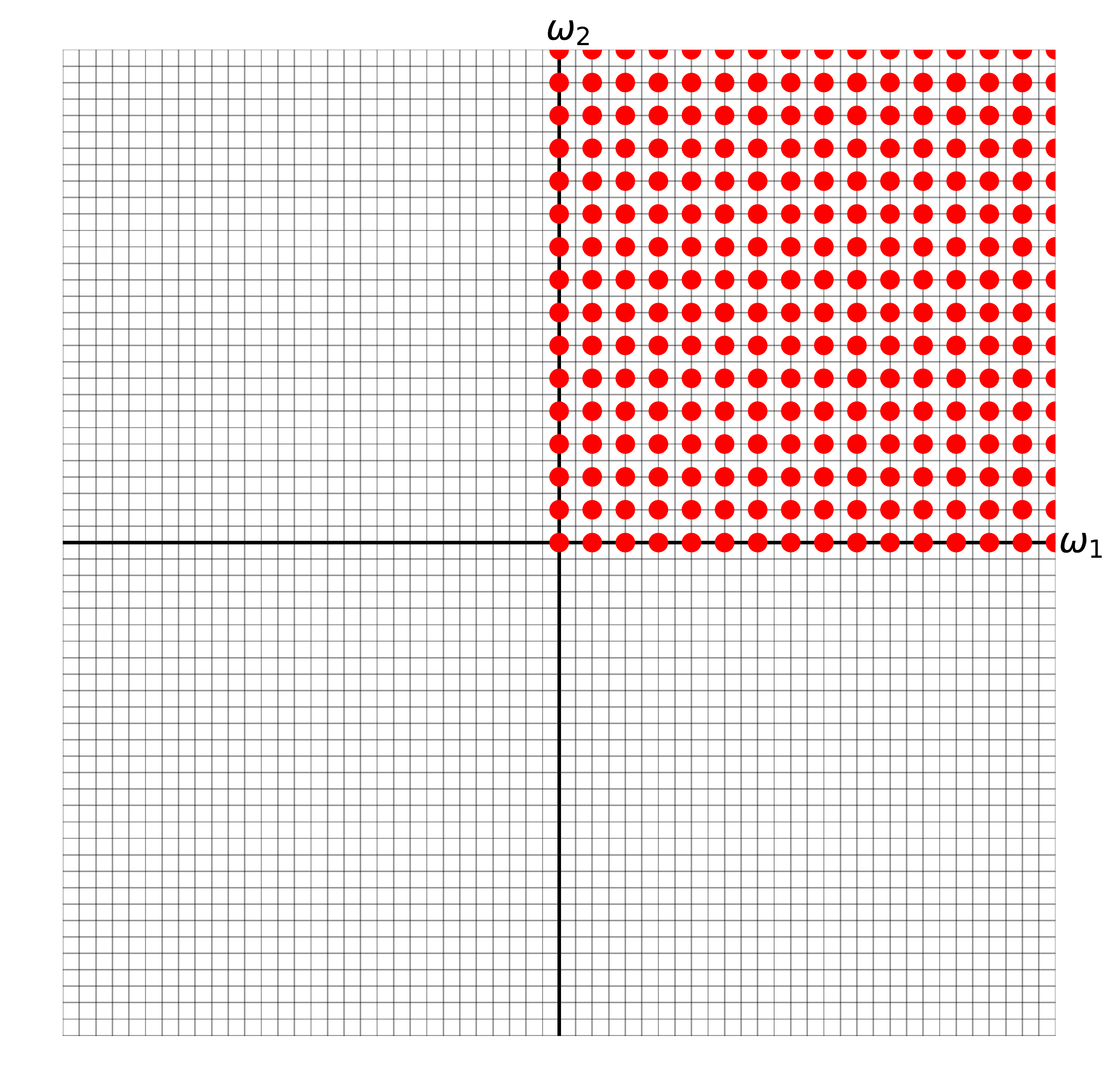}}}
    \hfill
    \subfloat[$\sigma = s_1$]{{\includegraphics[width=1.5in]{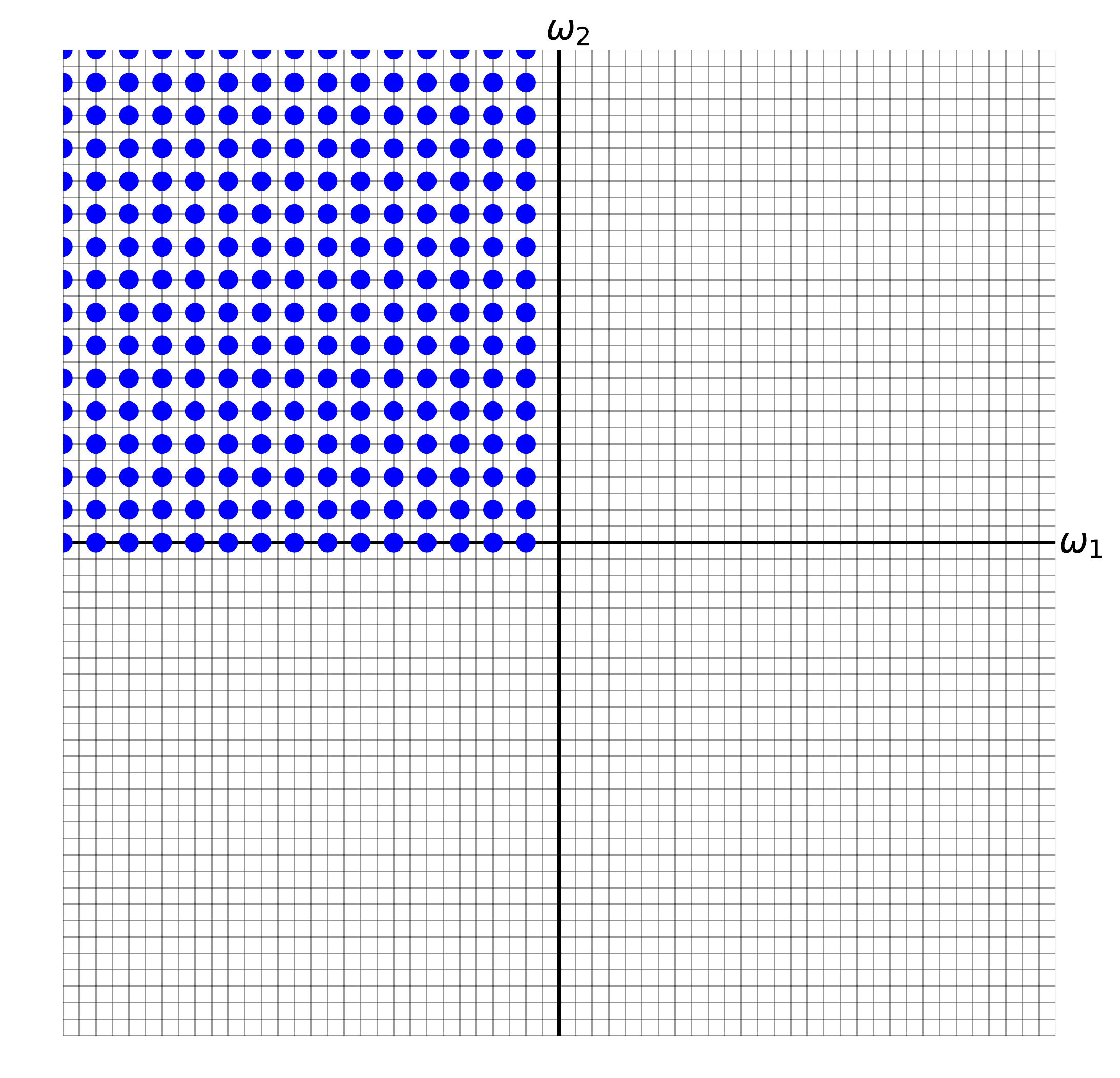} }}
    \hfill
    \subfloat[$\sigma = s_2$]{{\includegraphics[width=1.5in]{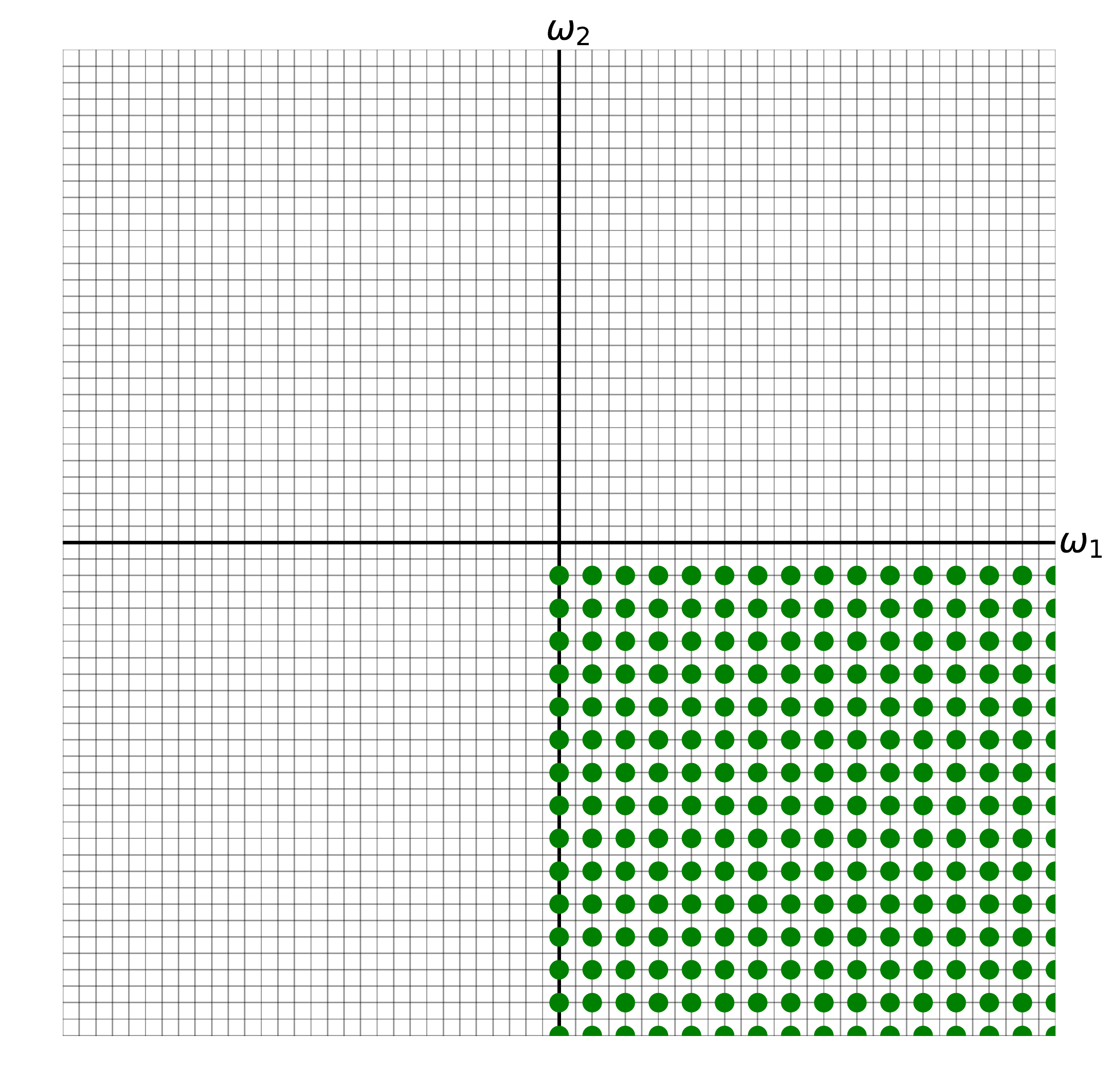} }}
    \hfill
    \subfloat[$\sigma = s_2s_1$]{{\includegraphics[width=1.5in]{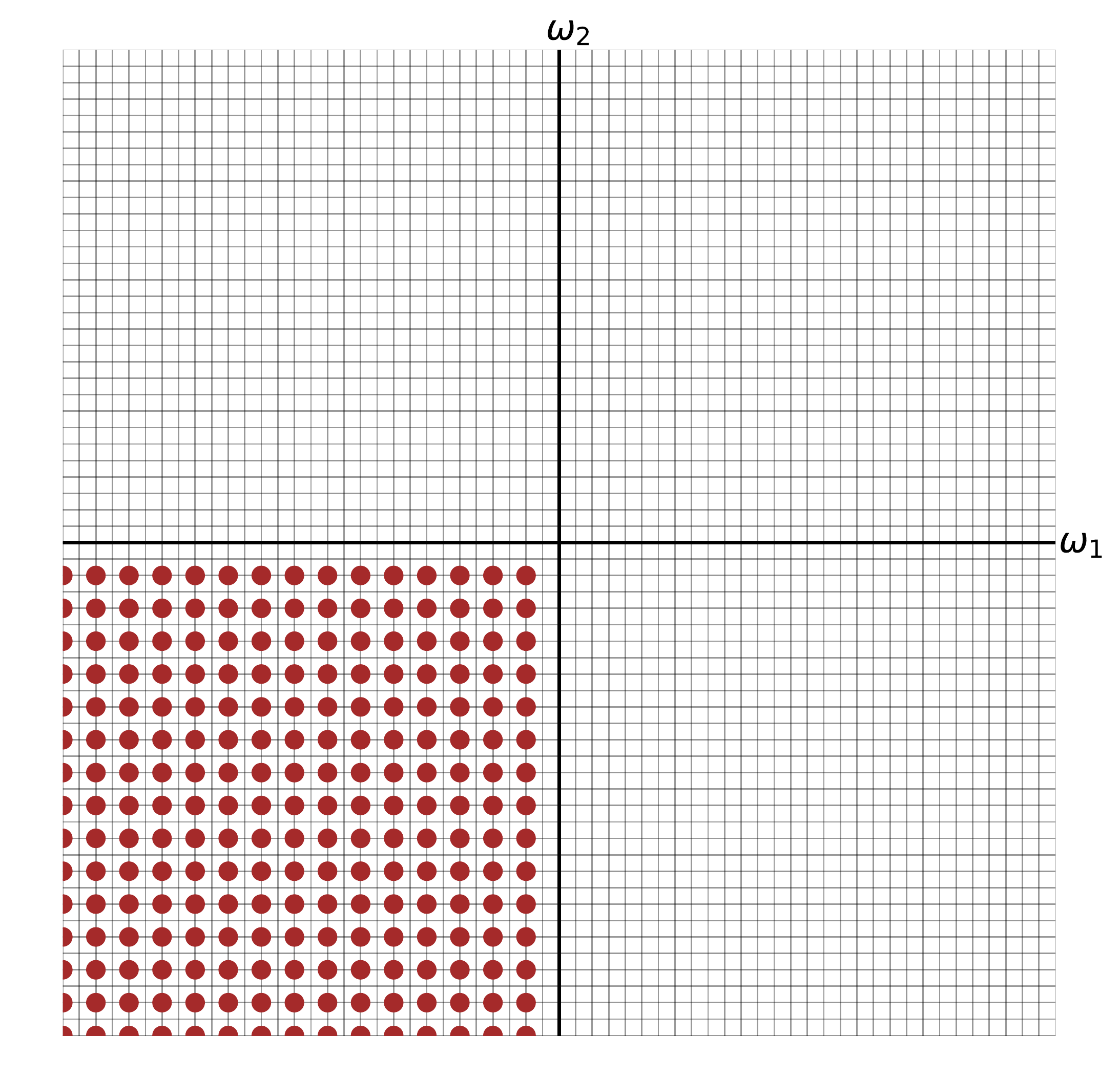} }}
    \caption{Solution sets to linear inequalities corresponding to the Lie algebra of type $D_2$.}
    \label{fig:d2_single_elements}
\end{figure}
\subsubsection{\normalfont{\textbf{Case}} \texorpdfstring{$\mu = n\w_1$}{mu equals n omega 1}}
Figures \ref{subfig:d2_n2}-\ref{subfig:d2_n8} illustrate the Weyl alternation diagrams for $\mu = n\w_1$ such that $n = 2,4,6, 8$. We observe that the empty region is a vertical rectangle that stretches infinitely. We also note that as $n$ increases from $2$ to $8$, the width of the rectangle in the center also increases. 

\begin{figure}[H]%
    \centering
    \subfloat[$\mu = 2\w_1$]{{\includegraphics[width=1.5in]{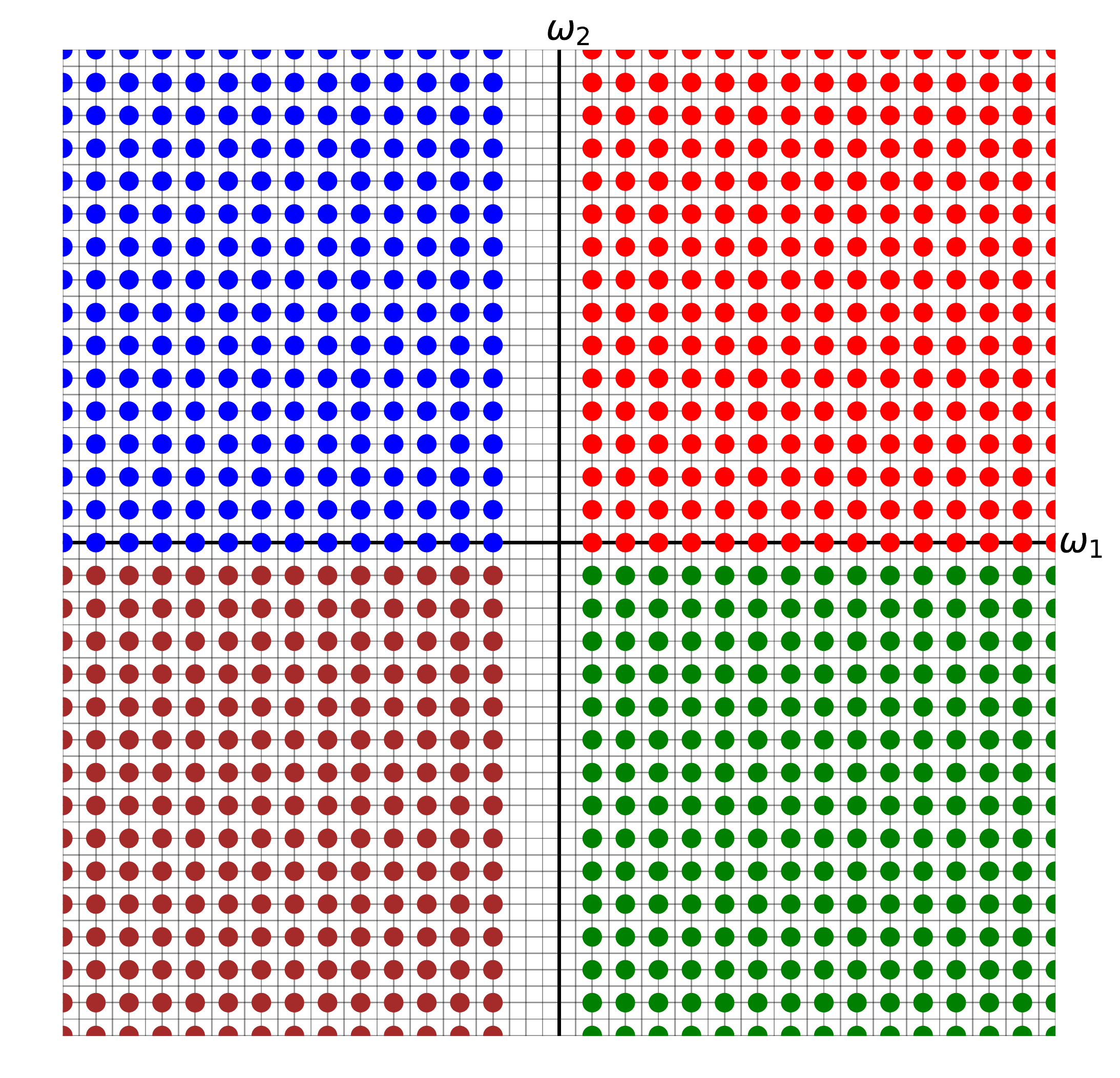} }
    \label{subfig:d2_n2}
    }
    \hfill
    \subfloat[$\mu = 4\w_1$]{{\includegraphics[width=1.5in]{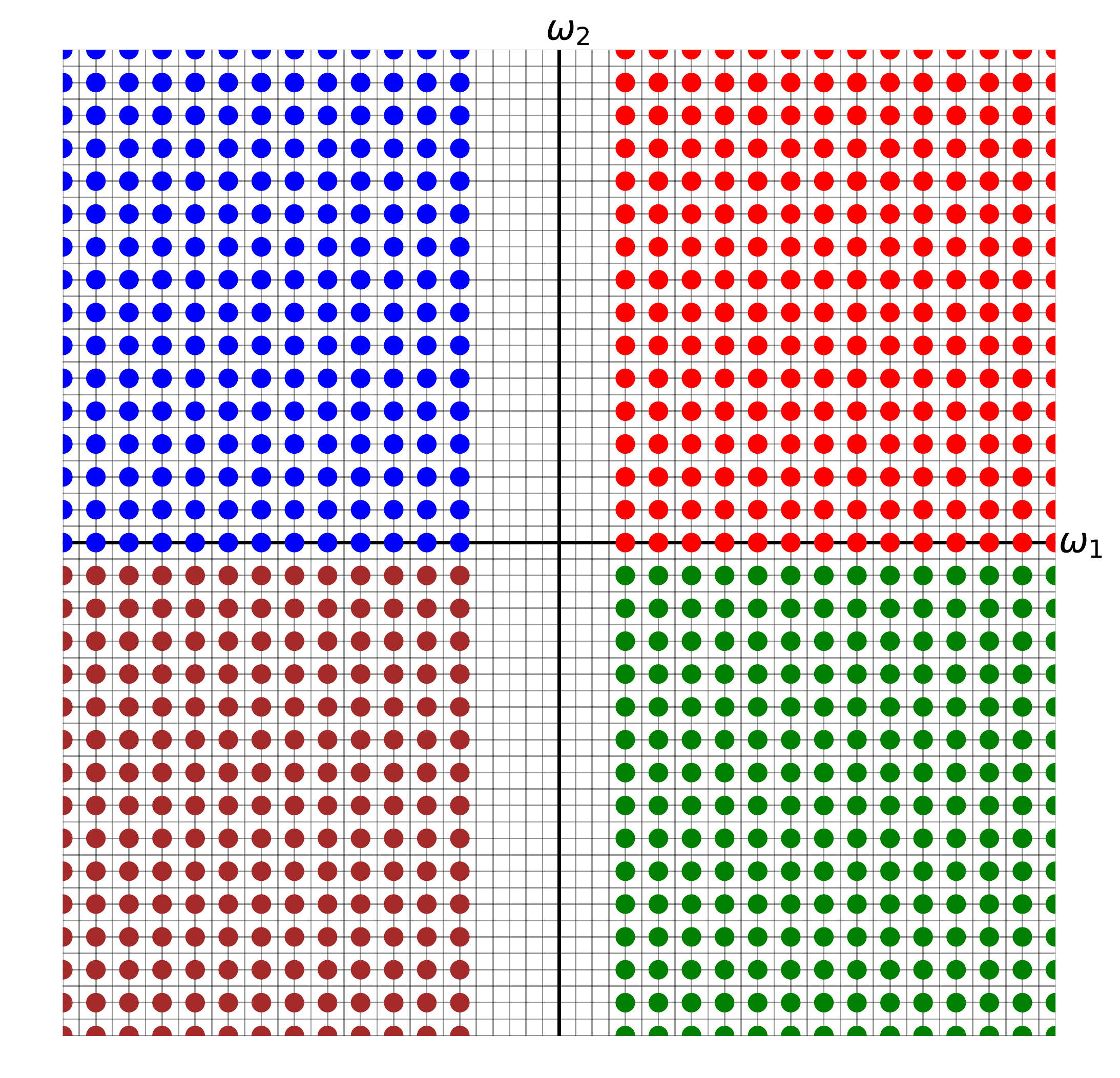} }
    \label{subfig:d2_n4}
    }
    \hfill
    \subfloat[$\mu = 6\w_1$]{{\includegraphics[width=1.5in]{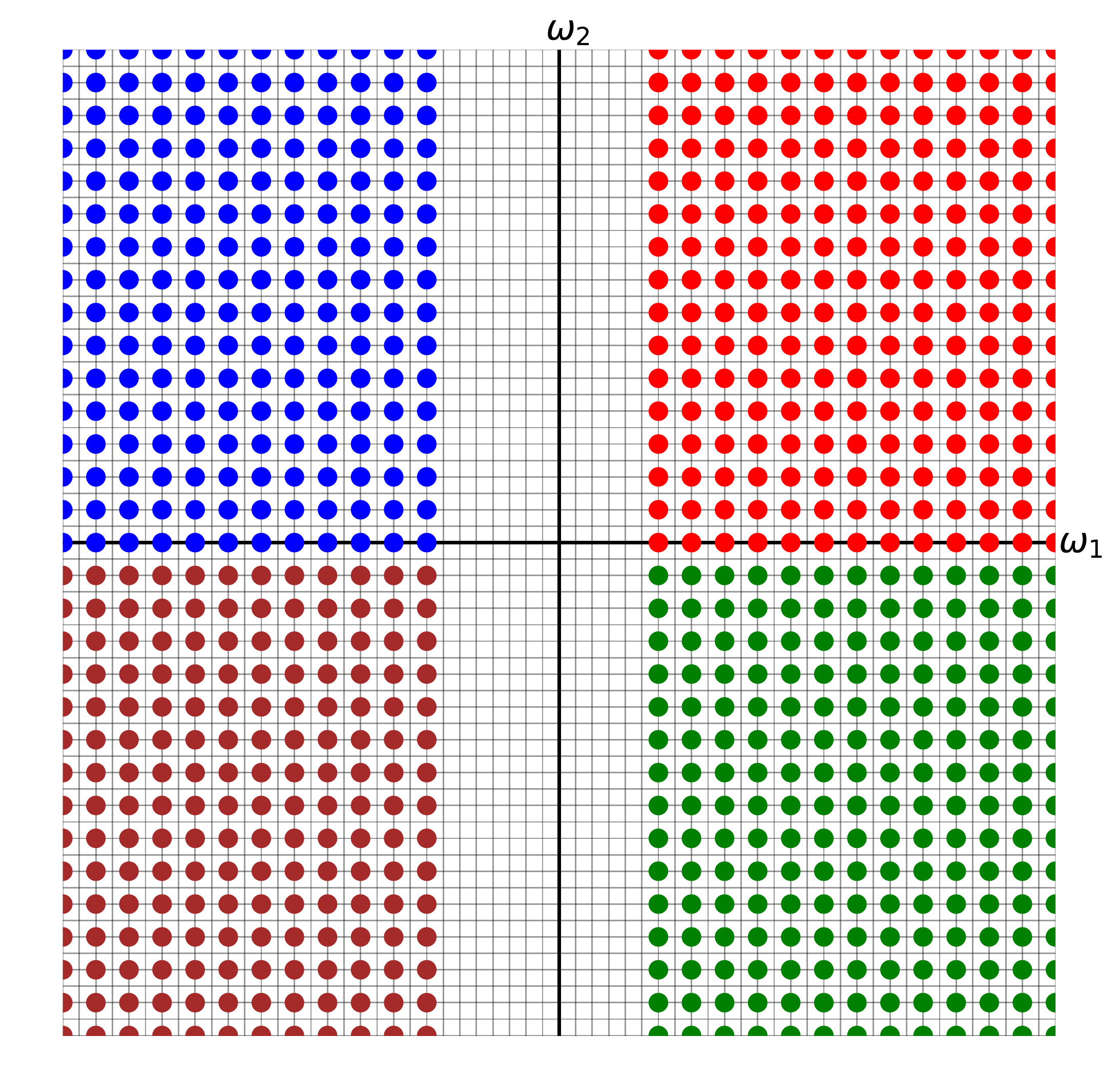} }
    \label{subfig:d2_n6}
    }
    \hfill
    \subfloat[$\mu = 8\w_1$]{{\includegraphics[width=1.5in]{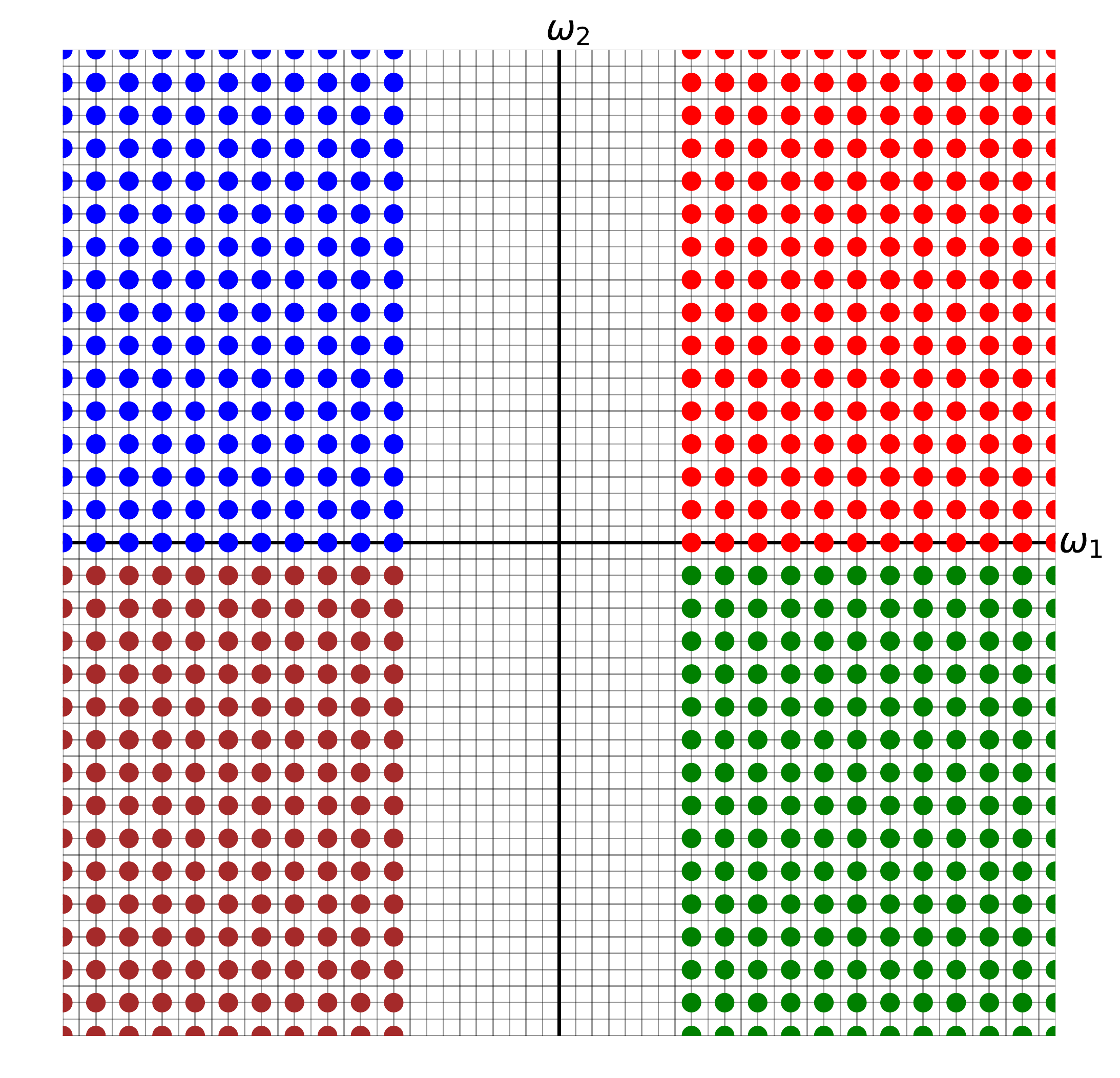} }
    \label{subfig:d2_n8}
    }
    \caption{Weyl alternation diagrams for the Lie algebra of type $D_2$ with $\mu=n\w_1$.}
    \label{fig:d2_mu_n}
\end{figure}
\subsubsection{\normalfont{\textbf{Case}} \texorpdfstring{$\mu = m\w_2$}{mu equals m omega 2}}
Figures \ref{subfig:d2_m2}-\ref{subfig:d2_m8} illustrate the Weyl alternation diagrams for $\mu = m\w_2$ such that $m = 2,4,6, 8$. We observe that the empty region is a horizontal rectangle that stretches out infinitely. We also note that as $m$ increases from $2$ to $8$, the width of the rectangle in the center also increases. 
\begin{figure}[H]%
    \centering
    \subfloat[$\mu = 2\w_2$]{{\includegraphics[width=1.5in]{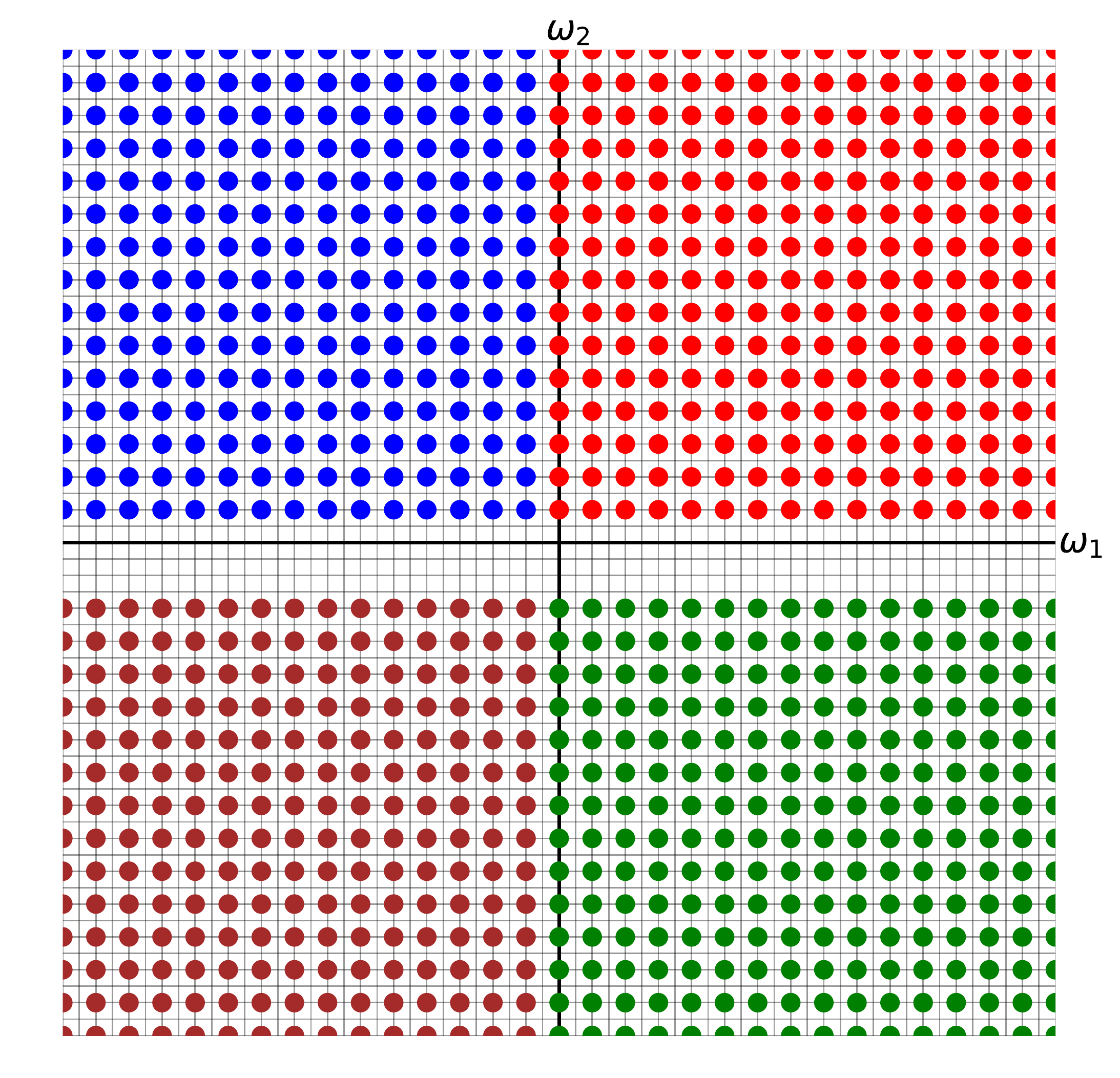} }
    \label{subfig:d2_m2}
    }
    \hfill
    \subfloat[$\mu = 4\w_2$]{{\includegraphics[width=1.5in]{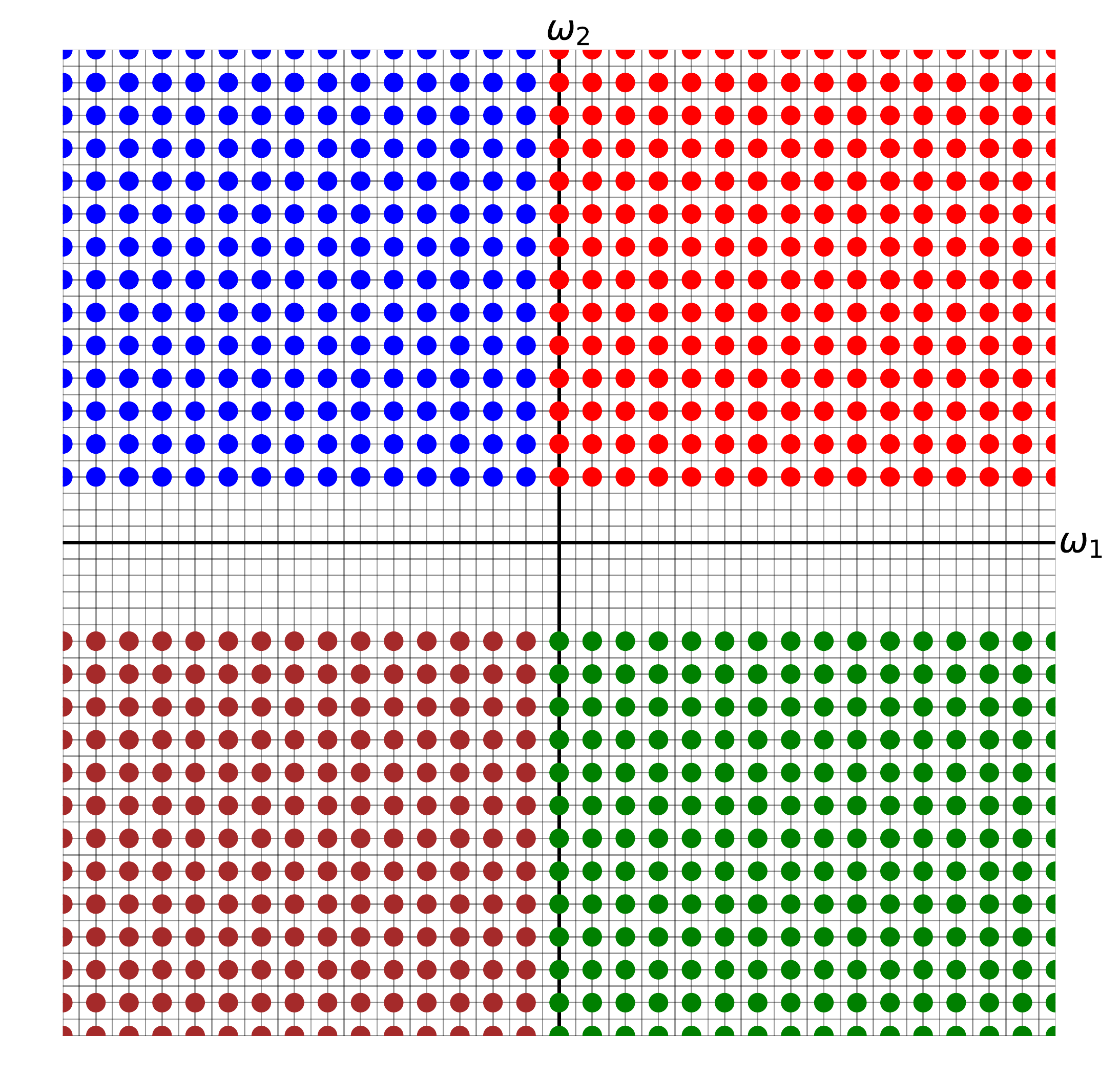} }
    \label{subfig:d2_m4}
    }
    \hfill
    \subfloat[$\mu = 6\w_2$]{{\includegraphics[width=1.5in]{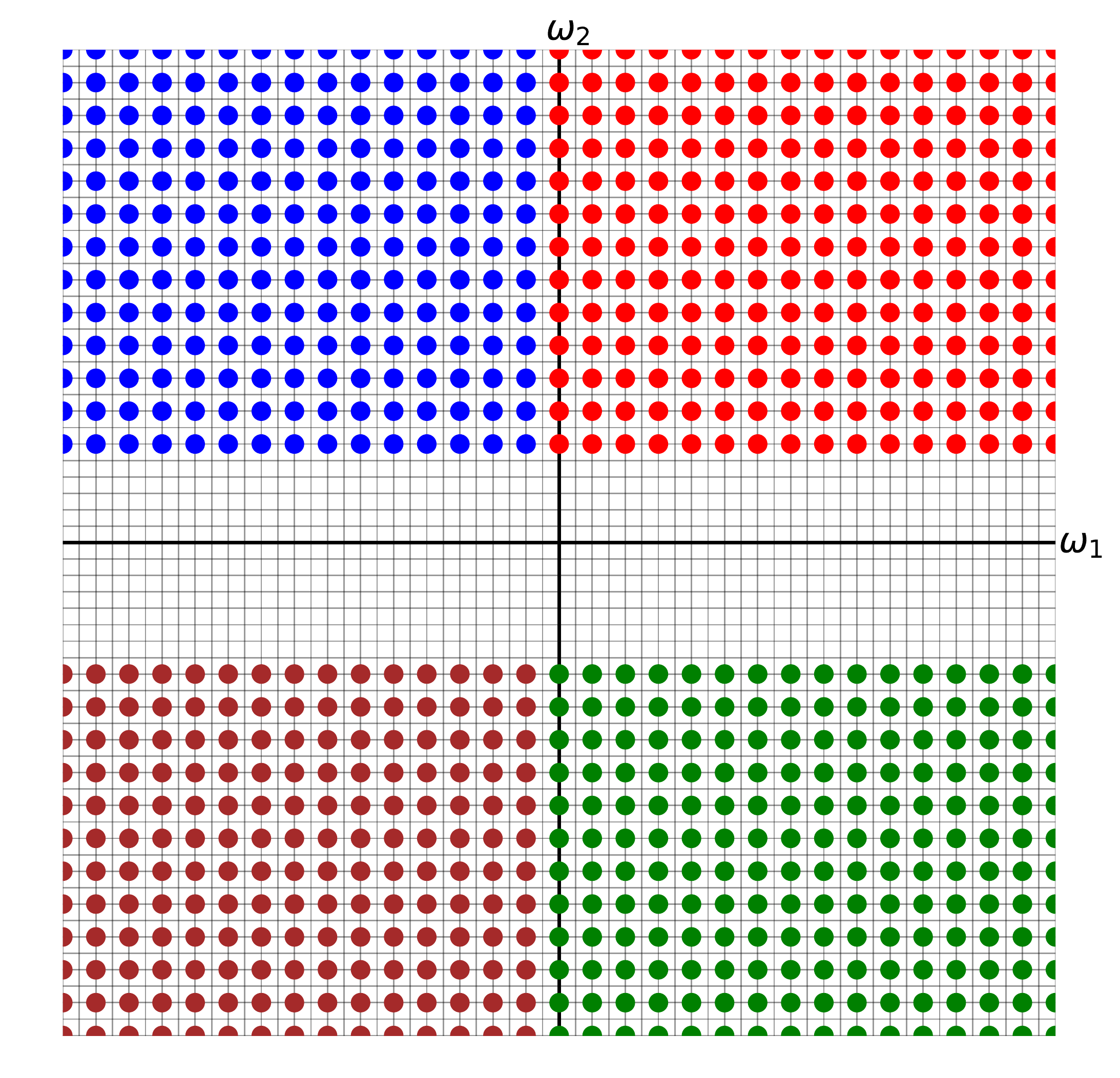} }
    \label{subfig:d2_m6}
    }
    \hfill
    \subfloat[$\mu = 8\w_2$]{{\includegraphics[width=1.5in]{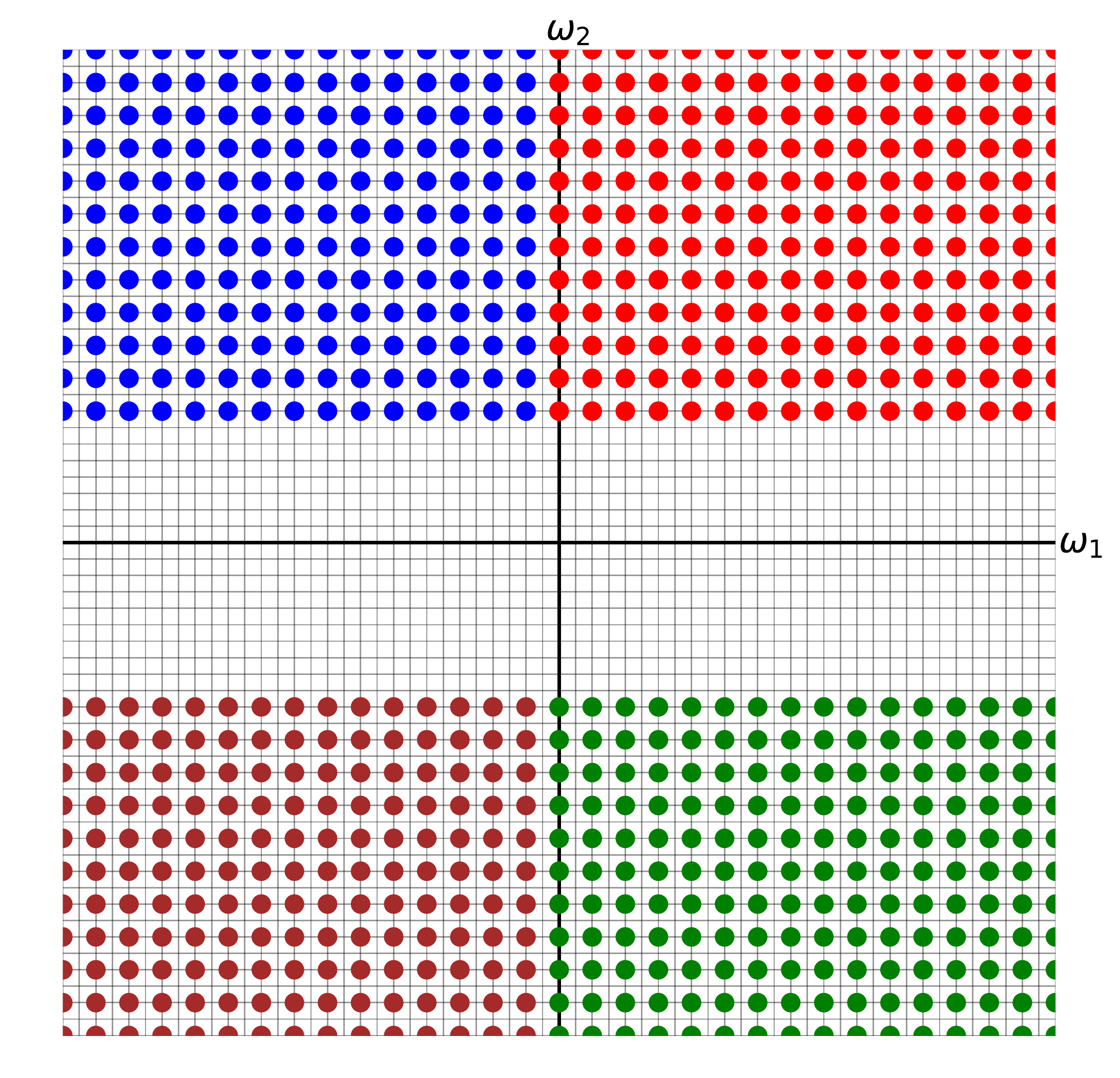} }
    \label{subfig:d2_m8}
    }
    \caption{Weyl alternation diagrams for the Lie algebra of type $D_2$ with $\mu=m\w_2$.}
    \label{fig:d2_mu_m}
\end{figure}
\subsubsection{\normalfont{\textbf{Case}} \texorpdfstring{$\mu = n\w_1 + m\w_2$}{mu equals n omega 1 plus omega 2}}
Figures \ref{subfig:d2_n2m2}-\ref{subfig:d2_n2m4} illustrate the Weyl alternation diagrams for $\mu = n\w_1 + m\w_2$. We observe that the empty region is in the shape of a cross that extends infinitely. 
\begin{figure}[H]%
    \centering
    \subfloat[$\mu = 2\w_1 + 2\w_2$]{{\includegraphics[width=1.5in]{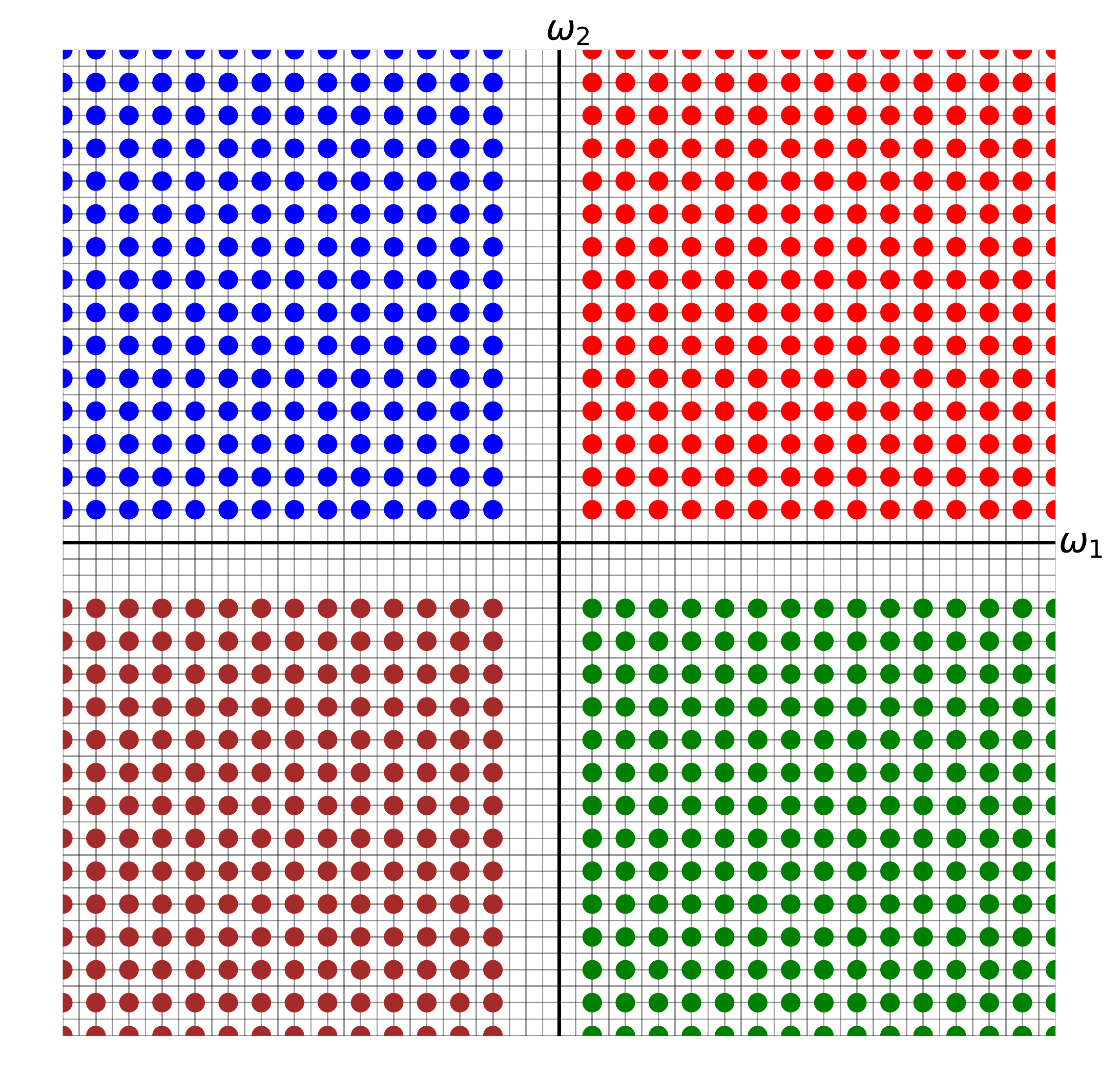}}
    \label{subfig:d2_n2m2}
    }
    \hfill
    \subfloat[$\mu = 4\w_1 + 4\w_2$]{{\includegraphics[width=1.5in]{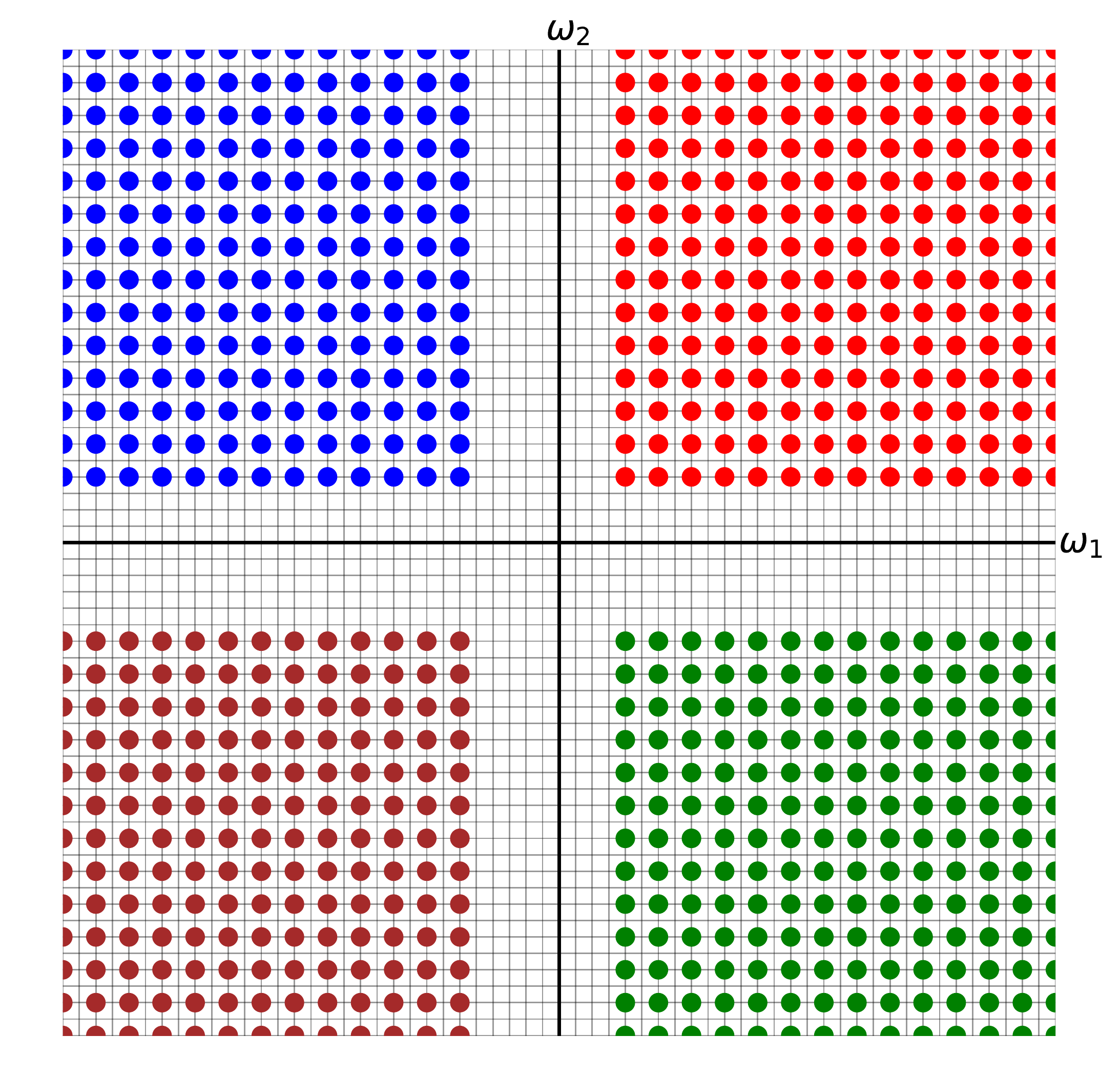} }
    \label{subfig:d2_n4m4}
    }
    \hfill
    \subfloat[$\mu = 4\w_1+2\w_2$]{{\includegraphics[width=1.5in]{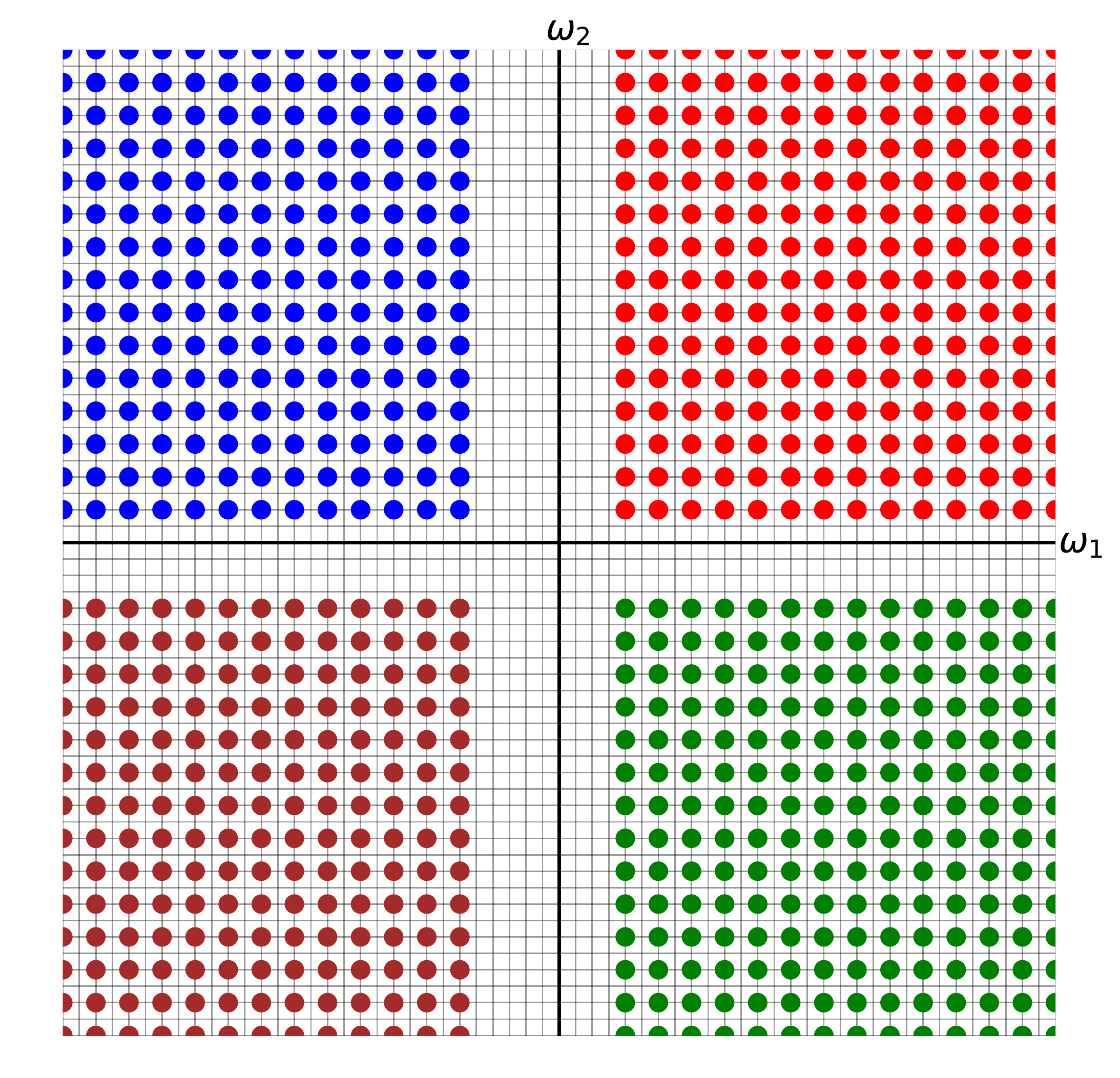}}
    \label{subfig:d2_n4m2}
    }
    \hfill
    \subfloat[$\mu = 2\w_1+4\w_2$]{{\includegraphics[width=1.5in]{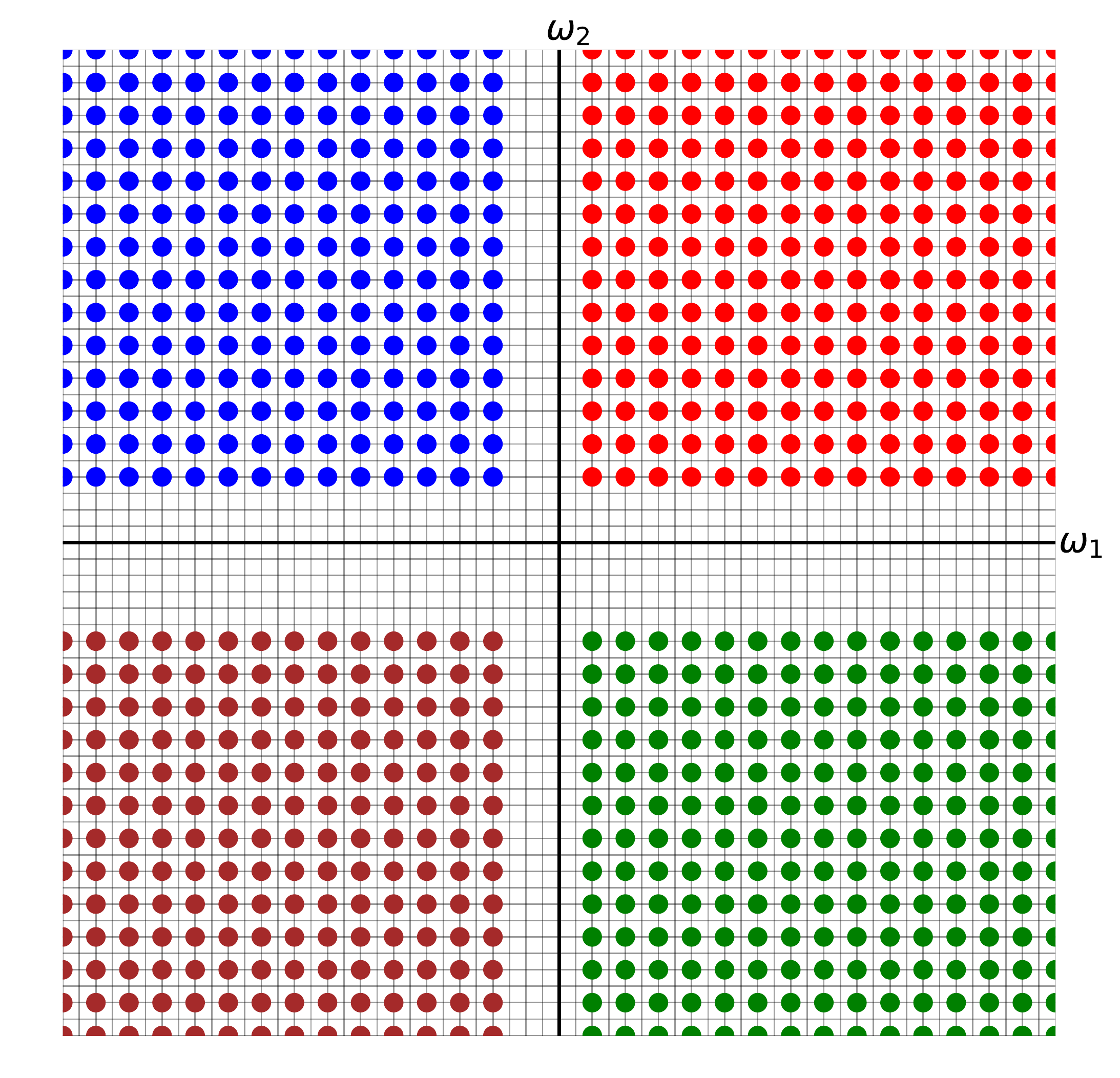}}
    \label{subfig:d2_n2m4}
    }
    \caption{Weyl alternation diagrams for the Lie algebra of type $D_2$ with \mbox{$\mu=n\w_1+m\w_2$.}}
    \label{fig:d2_mu_nm}
\end{figure}
\begin{center}
\begin{figure}[H]%
\resizebox{1.75in}{!}{
\begin{tikzpicture}[scale=.5]
\draw [thick, <->] (-1, -4) -- (-1, 4);
\draw [thick, <->] (1, -4) -- (1, 4);
\draw [thick, <->] (-4, -1) -- (4, -1);
\draw [thick, <->] (-4, 1) -- (4, 1);
\node at (1.9, 4.5) {$\tfrac{c_1 - n}{2} \geq 0$};
\node at (-2.0, 4.5) {$\tfrac{-c_1 - n - 2}{2} \geq 0$};
\node at (3.5, 1.6) {$\tfrac{c_2 - m}{2} \geq 0$};
\node at (3.5, -0.4) {$\tfrac{-c_2 - m - 2}{2} \geq 0$};
\end{tikzpicture}
}
\caption{Set of linear inequalities for determining the boundaries of the Weyl alternation sets for the Lie algebra of type $D_2$.}
\label{fig:d2_ineqs}
\end{figure}
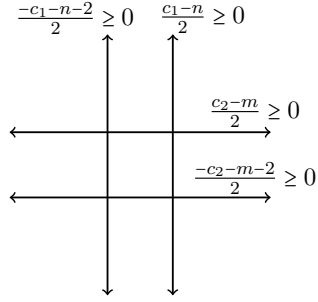
\end{center}
To explain the shapes that form in the empty region of each Weyl alternation diagram for the Lie algebra of type $D_2$ we turn to Figure \ref{fig:d2_ineqs}. From Theorem \ref{thm:maind2}, we notice that all inequalities depend on $n$ and $m$ which simply shift the inequalities, but never change the direction of the line. This means that Figure \ref{fig:d2_ineqs} is a good representation of the inequalities formed. 
\begin{center}
\begin{figure}[H]%
\subfloat[Vertical Strip]{
\begin{tikzpicture}[scale=.4]
\draw [thick, <->] (-1, -4) -- (-1, 4);
\draw [thick, <->] (1, -4) -- (1, 4);
\draw [thick, <-] (-4, 0) -- (-1, 0);
\draw [thick, <-] (4, 0) -- (1, 0);
\draw [dashed, -] (-1, 0) -- (1, 0);
\node at (2.5, 4.75) {$\tfrac{c_1 - n}{2} \geq 0$};
\node at (-3.0, 4.75) {$\tfrac{-c_1 - n - 2}{2} \geq 0$};
\node at (5.5, 1) {$\tfrac{c_2 - m}{2} = \tfrac{-c_2 - m - 2}{2} \geq 0$};
\end{tikzpicture}
    \label{subfig:d2_vertical}
    }
    \quad
    \subfloat[Horizontal Strip]{
\begin{tikzpicture}[scale=.4]
\draw [thick, <-] (0, -4) -- (0, -1);
\draw [thick, <-] (0, 4) -- (0, 1);
\draw [dashed, -] (0, -1) -- (0, 1);
\draw [thick, <->] (-4, -1) -- (4, -1);
\draw [thick, <->] (-4, 1) -- (4, 1);
\node at (0, 4.75) {$\tfrac{-c_1 - n - 2}{2} = \tfrac{c_1 - n}{2} \geq 0$};
\node at (3.5, 1.65) {$\tfrac{c_2 - m}{2} \geq 0$};
\node at (3.5, -0.25) {$\tfrac{-c_2 - m - 2}{2} \geq 0$};
\end{tikzpicture}
    \label{subfig:d2_horizontal}
    }
    \quad
    \subfloat[Cross]{
 \begin{tikzpicture}[scale=.4]
\draw [thick, <-] (-1, -4) -- (-1, -1);
\draw [dashed, -] (-1,-1) -- (-1,1);
\draw[thick, ->] (-1, 1) -- (-1,4);
\draw [thick, <-] (1, -4) -- (1, -1);
\draw [dashed, -] (1,-1) -- (1,1);
\draw[thick, ->] (1, 1) -- (1,4);
\draw[thick, <-] (-4,1) -- (-1, 1);
\draw[dashed, -] (-1,1) -- (1,1);
\draw [thick, ->] (1, 1) -- (4, 1);
\draw[thick, <-] (-4,-1) -- (-1, -1);
\draw[dashed, -] (-1,-1) -- (1,-1);
\draw [thick, ->] (1,-1) -- (4,-1);
\node at (3, 4.75) {$\tfrac{c_1 - n}{2} \geq 0$};
\node at (-3.0, 4.75) {$\tfrac{-c_1 - n - 2}{2} \geq 0$};
\node at (3.5, 1.65) {$\tfrac{c_2 - m}{2} \geq 0$};
\node at (3.5, -0.25) {$\tfrac{-c_2 - m - 2}{2} \geq 0$};
\end{tikzpicture}
\label{subfig:d2_cross}
}
\caption{Different formation of the empty region for the Lie algebra of type $D_2$.}
\label{fig:d2_cases}
\end{figure}
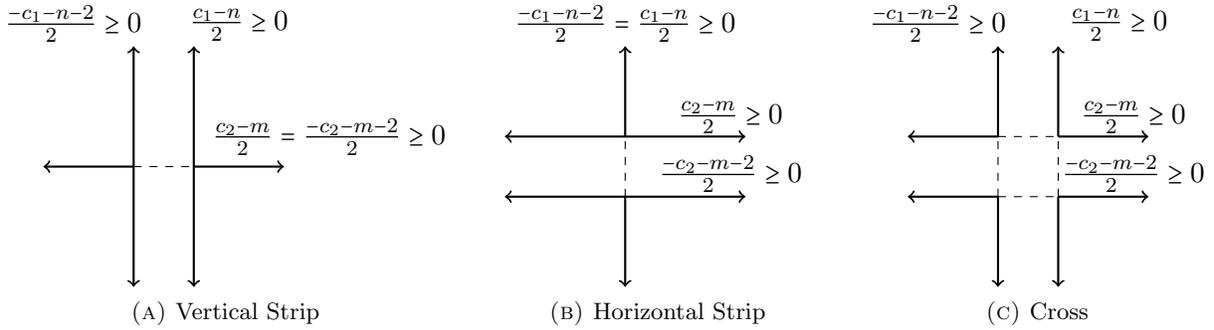
\end{center}
From Figure \ref{subfig:d2_vertical}, observe that the empty region becomes a vertical strip when the inequalities $\tfrac{-c_1-n-2}{2} \geq 0$ and $\tfrac{c_1-n}{2} \geq 0$ do not intersect and the inequalities $\tfrac{c_2-m}{2} \geq 0$ and $\tfrac{-c_2-m-2}{2} \geq 0$ intersect. This occurs exactly when $\mu = n\w_1$ such that $n \in 2\NN.$ Similarly, depicted in Figure \ref{subfig:d2_horizontal}, the empty region becomes a horizontal strip when the inequalities $\tfrac{c_2-m}{2} \geq 0$ and $\tfrac{-c_2-m-2}{2} \geq 0$ do not intersect and the inequalities $\tfrac{-c_1-n-2}{2} \geq 0$ and $\tfrac{c_1-n}{2} \geq 0$ intersect. This occurs exactly when $\mu = m\w_2$ such that $m \in 2\NN$. The empty region takes the shape of a cross (Figure \ref{subfig:d2_cross}) that stretches infinitely if and only if the inequalities $\tfrac{-c_1-n-2}{2} \geq 0$ and $\tfrac{c_1-n}{2} \geq 0$ do not intersect and $\tfrac{c_2-m}{2} \geq 0$ and $\tfrac{-c_2-m-2}{2} \geq 0$ do not intersect. Namely, this occurs when $\mu = n\w_1+m\w_2$ such that $n,m\in 2\NN$.
\subsection{Lie algebra of type \texorpdfstring{$G_2$}{G2}}
Now let us look at the Lie algebra of type $G_2$. This time there is no divisibility condition required, as the root lattice and fundamental weight lattice are equivalent. Consequently, we take the inequalities given in Table \ref{tab:g2} and plot them individually on the root lattice, as depicted in Figure \ref{subfig:g2_grid}, and whose solution sets we highlight in Figure \ref{fig:g2_single_elements}. Note that in Figure \ref{fig:g2_single_elements} we set $\mu=0$ and present the corresponding region for each Weyl group element. We remark that changing $\mu$ only translates these solution sets. In what follows we describe how the Weyl diagrams change as we vary the weight $\mu$.

\begin{figure}[H]%
    \centering
    \subfloat[$\sigma = 1$]{{\includegraphics[width=1.5in]{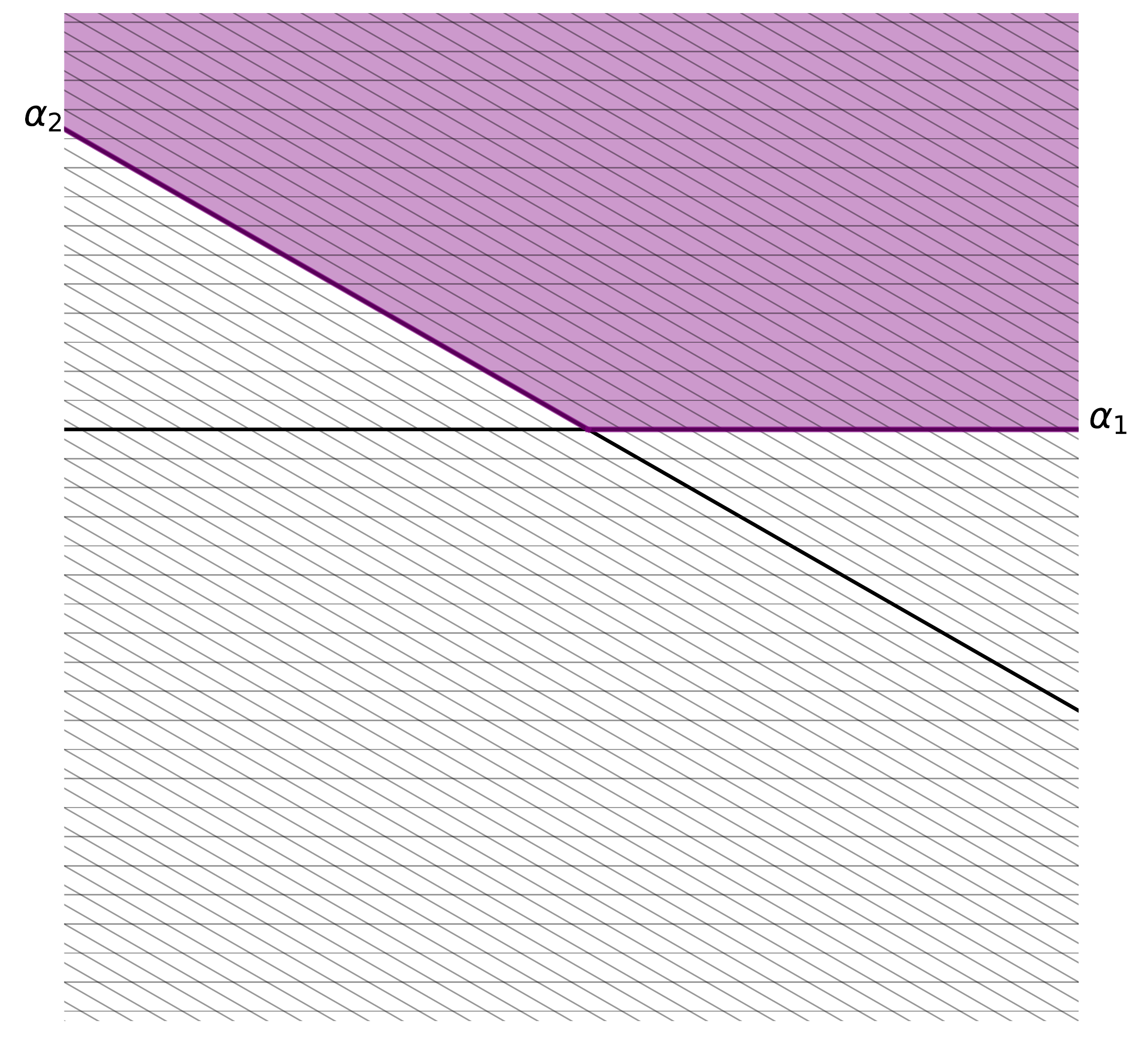}}}%
    \hfill
    \subfloat[$\sigma = s_1$]{{\includegraphics[width=1.5in]{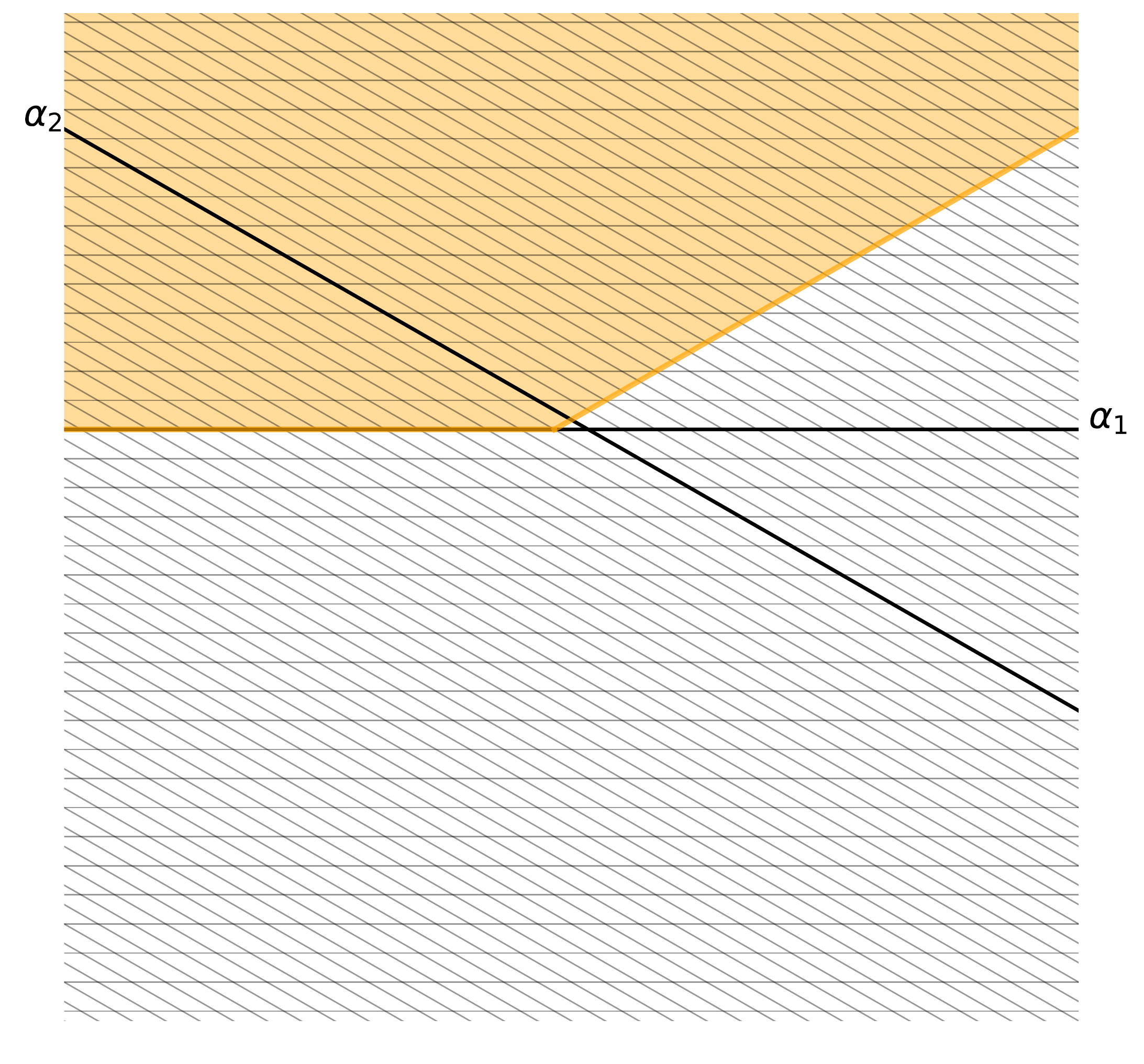} }}
    \hfill
    \subfloat[$\sigma = s_2$]{{\includegraphics[width=1.5in]{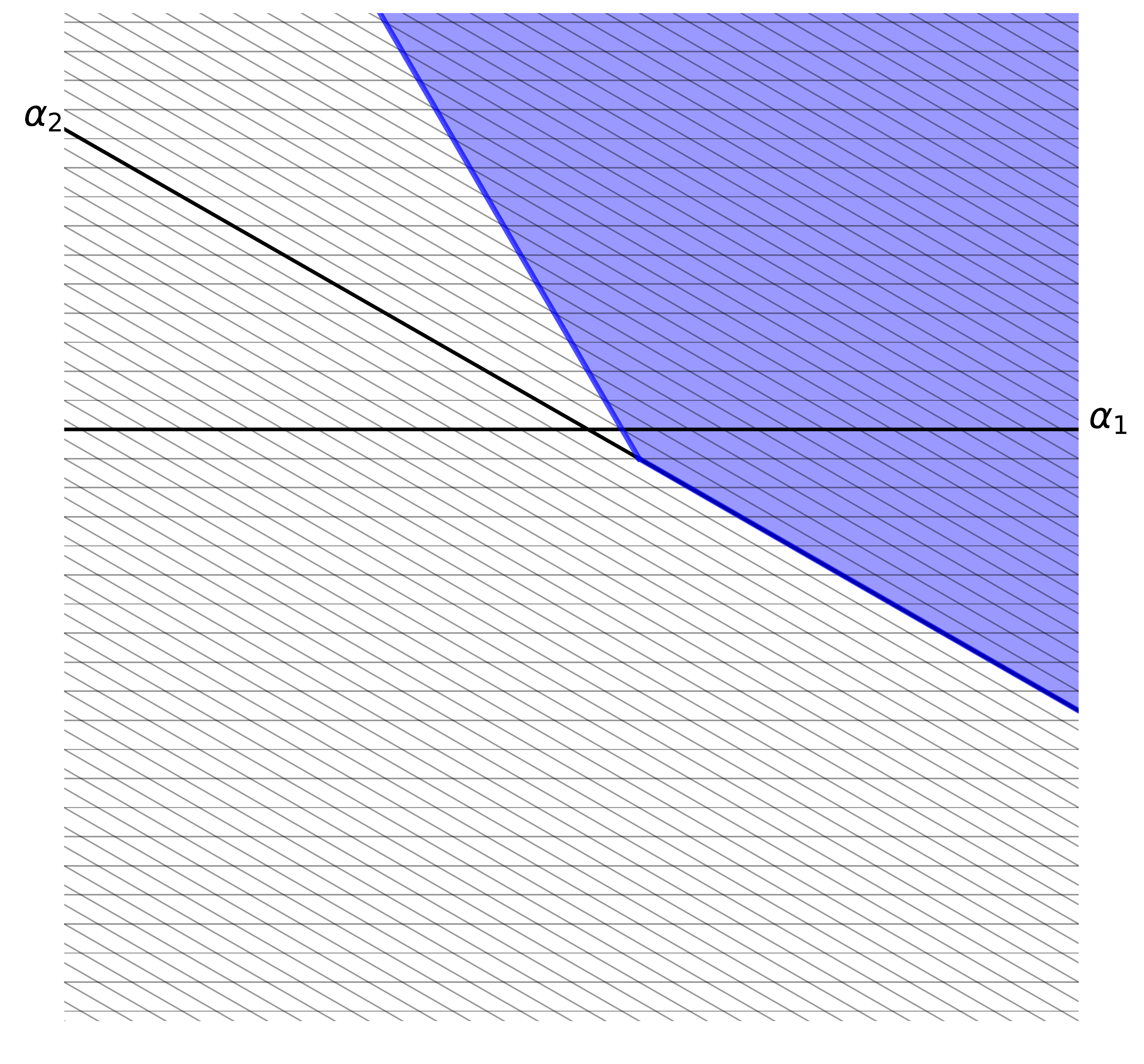} }}
    \hfill
    \subfloat[$\sigma = s_2s_1$]{{\includegraphics[width=1.5in]{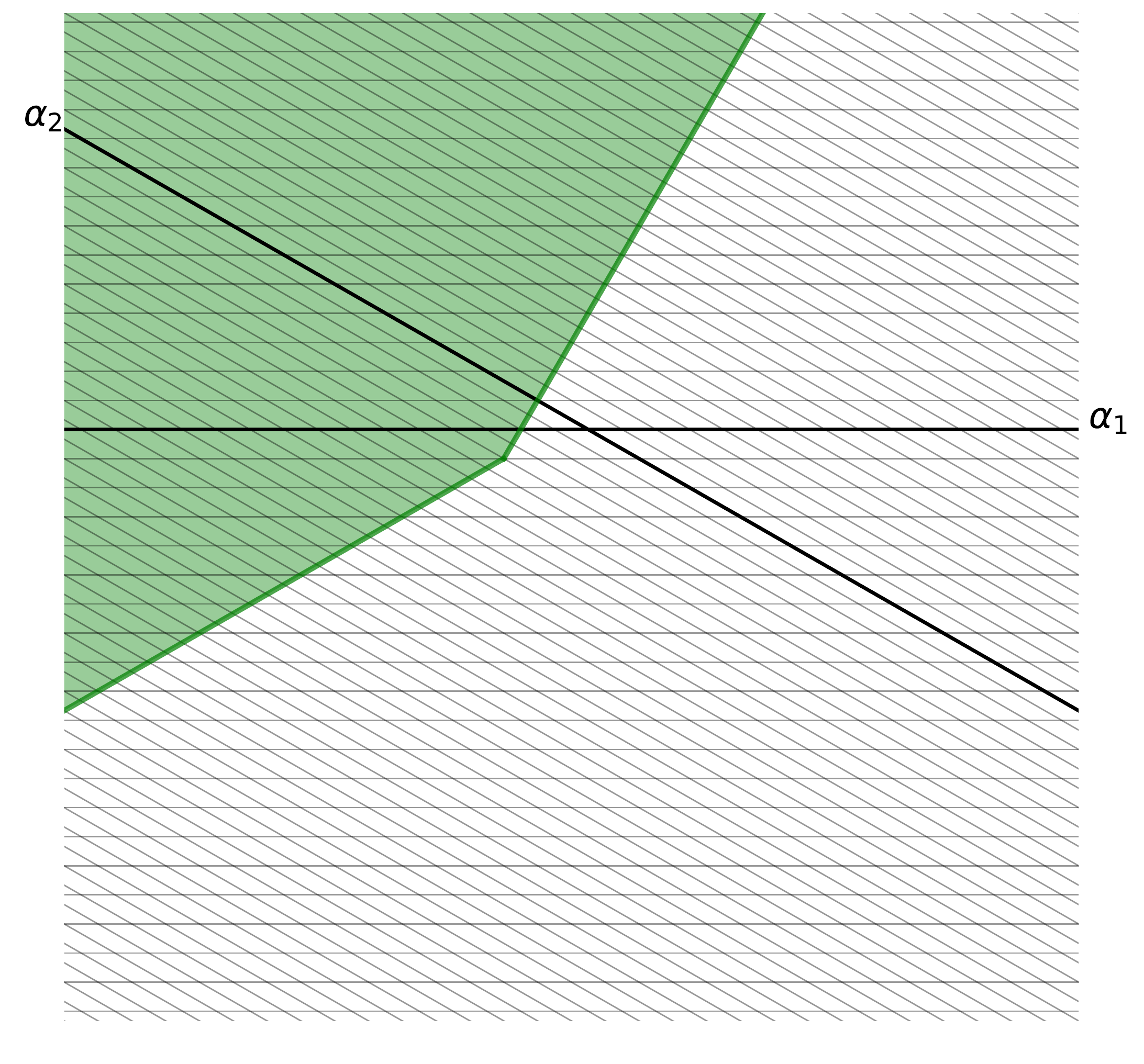} }}\\
    \subfloat[$\sigma = s_1s_2$]{{\includegraphics[width=1.5in]{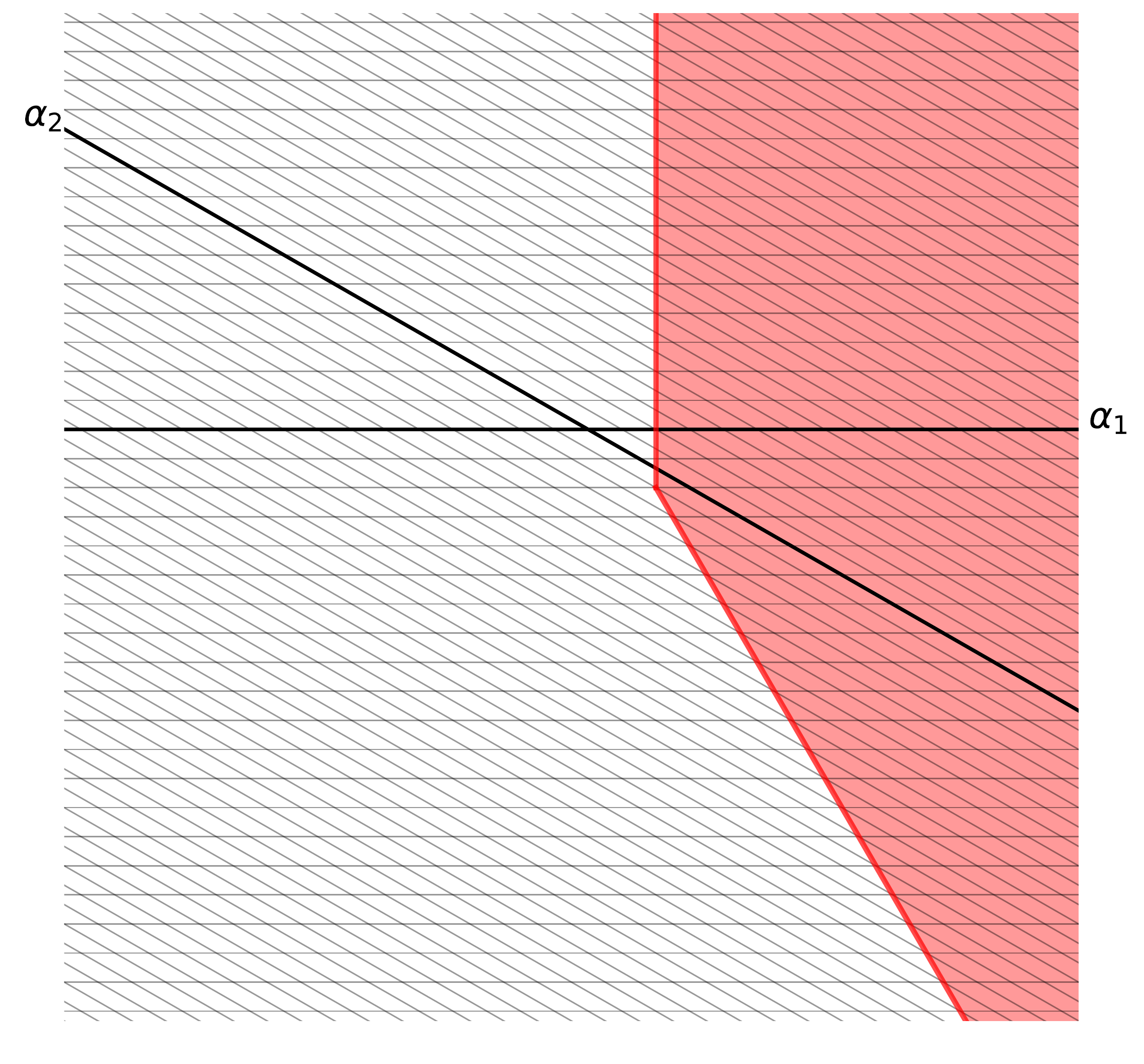} }}
    \hfill
    \subfloat[$\sigma = s_1s_2s_1$]{{\includegraphics[width=1.5in]{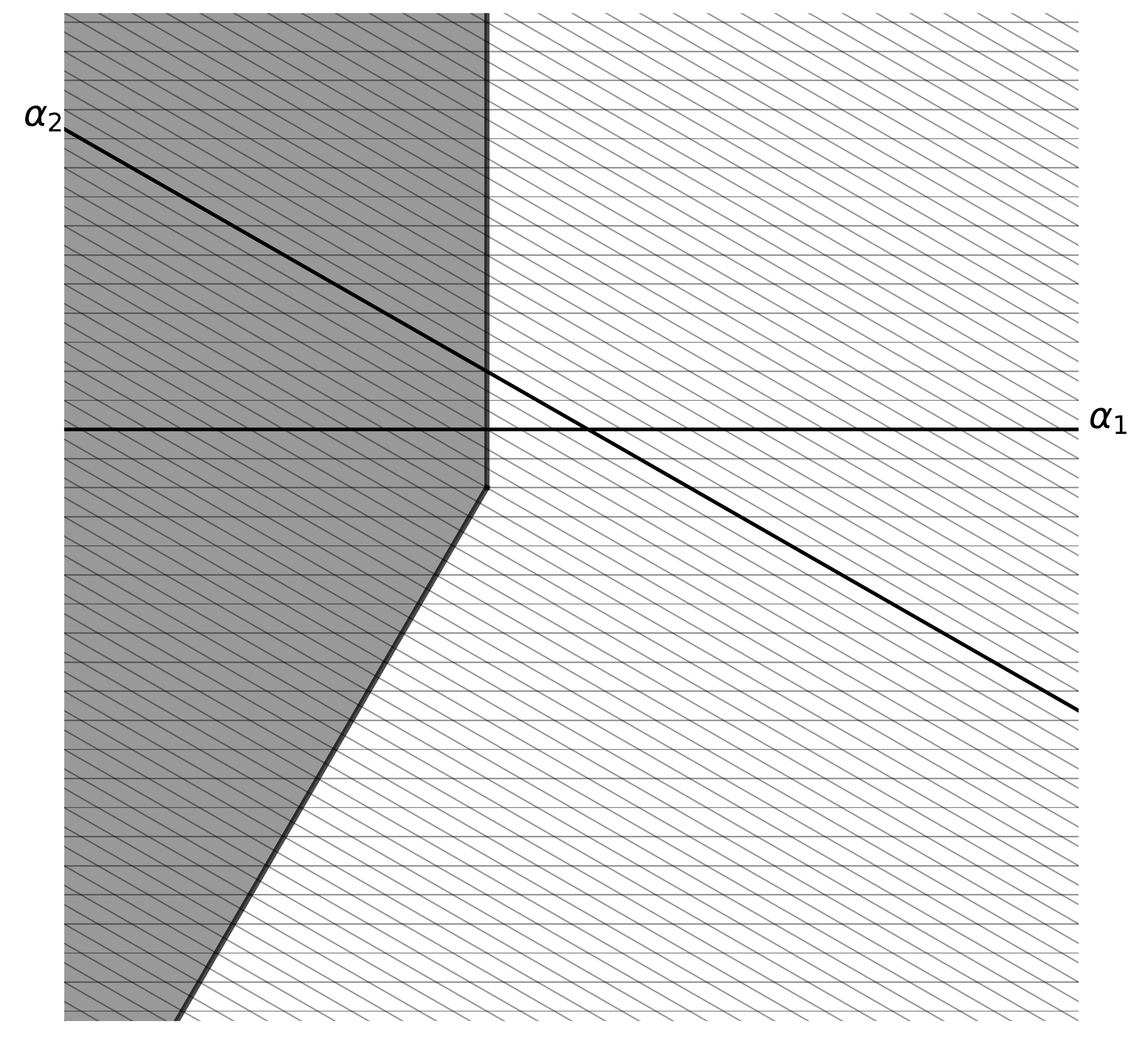} }}
    \hfill
    \subfloat[$\sigma = s_2s_1s_2$]{{\includegraphics[width=1.5in]{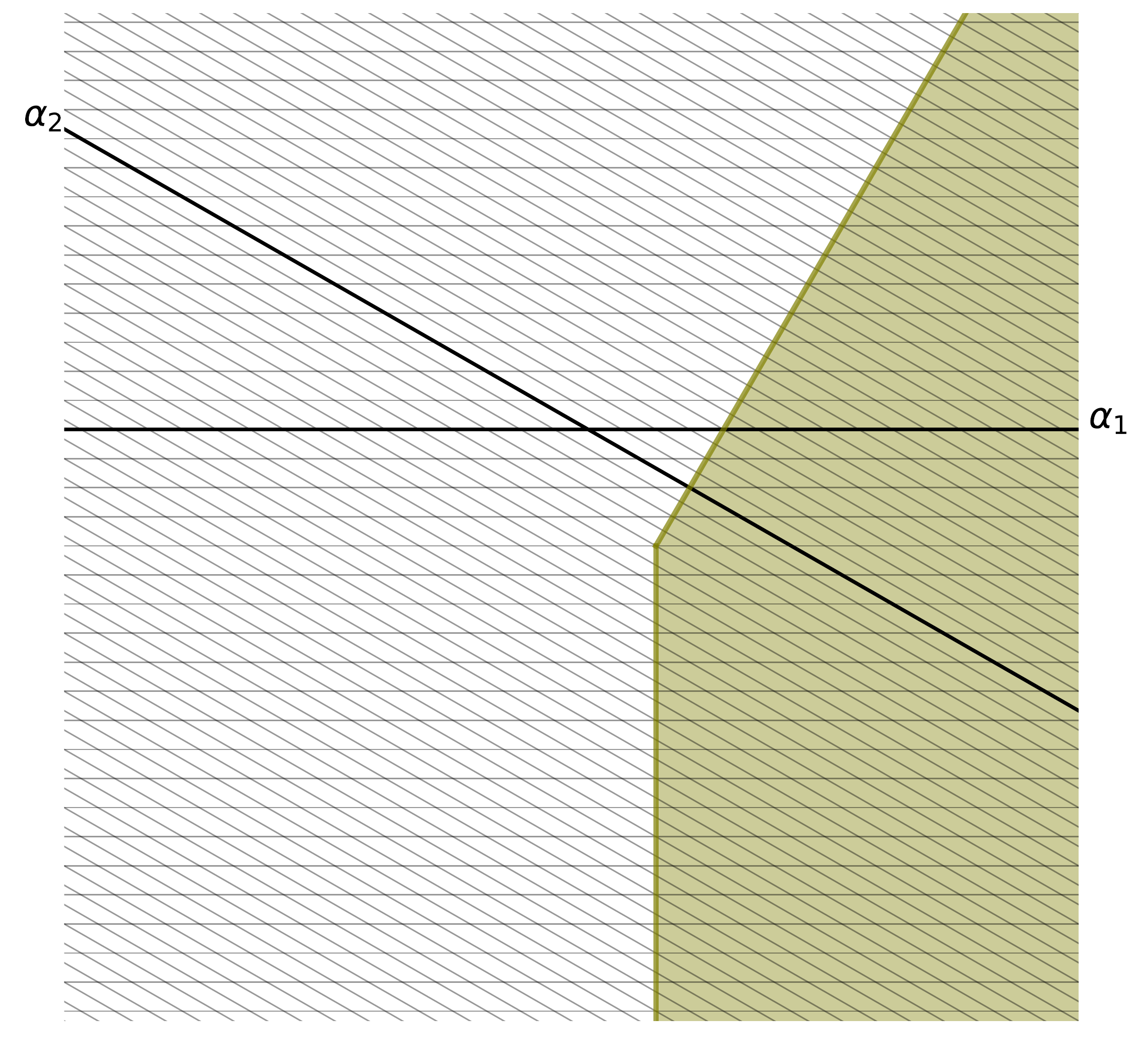} }}%
    \hfill
    \subfloat[$\sigma = (s_2s_1)^2$]{{\includegraphics[width=1.5in]{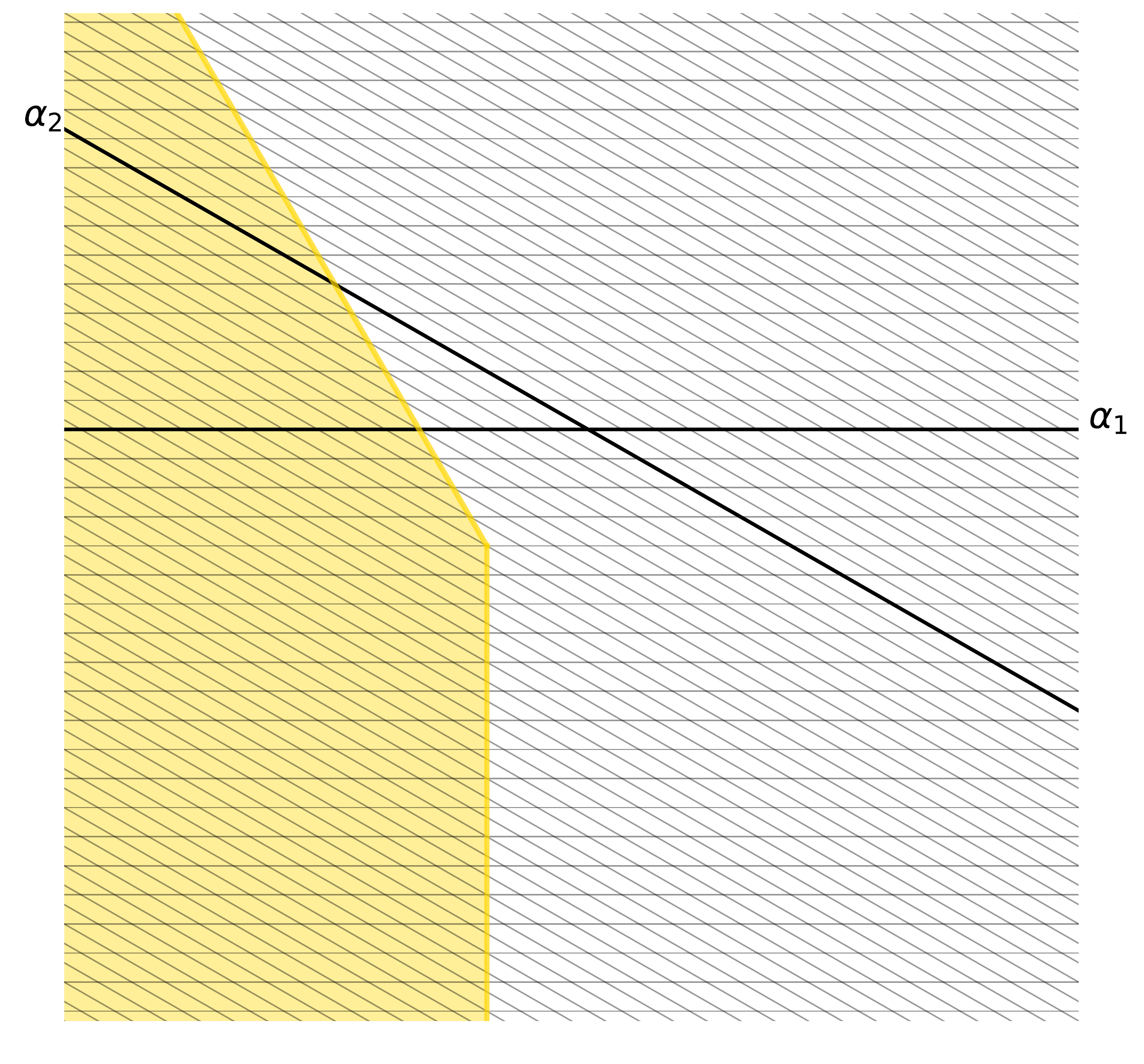} }}
    \\
    \subfloat[$\sigma = (s_1s_2)^2$]{{\includegraphics[width=1.5in]{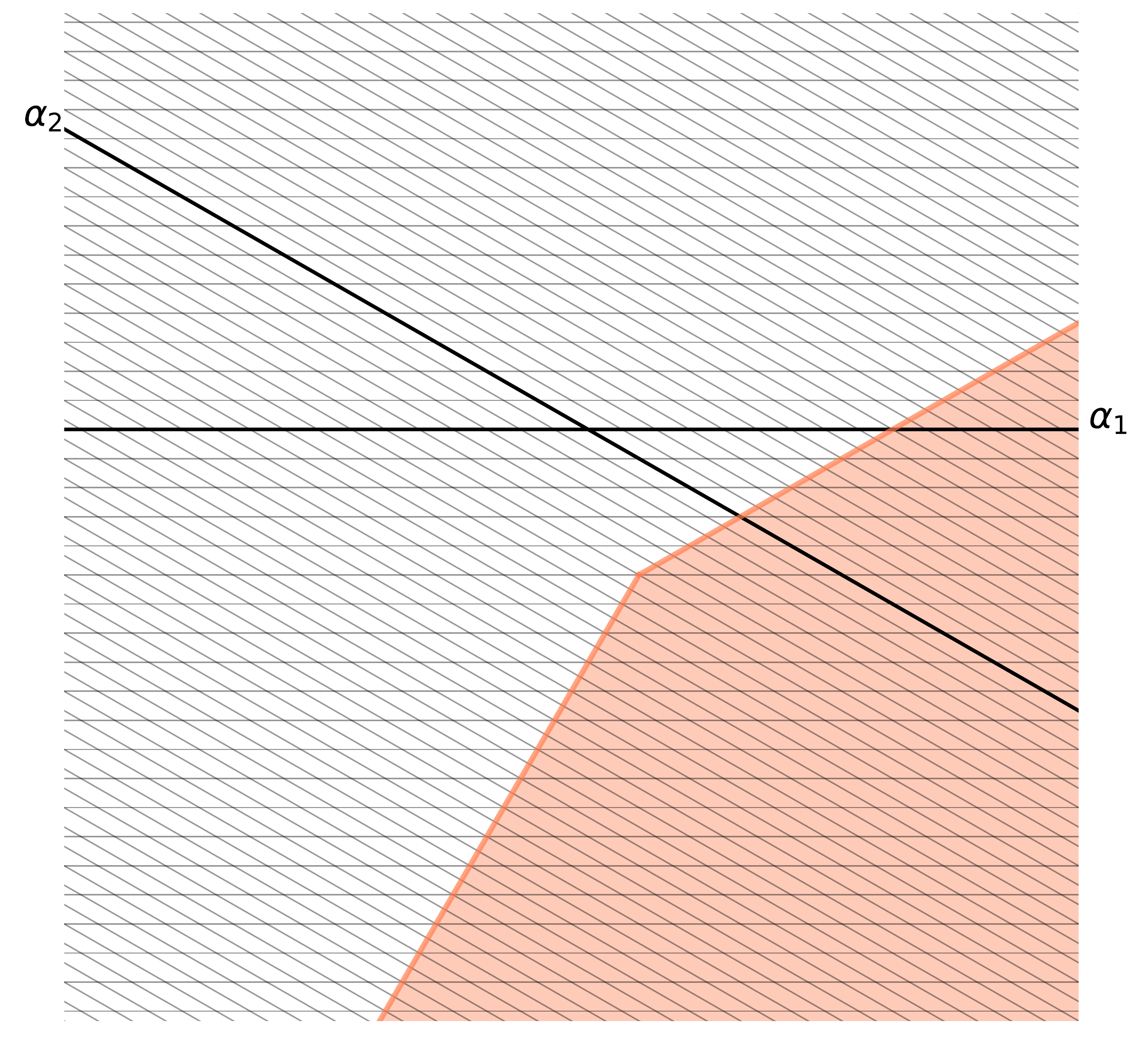} }}
    \hfill
    \subfloat[$\sigma = s_1(s_2s_1)^2$]{{\includegraphics[width=1.5in]{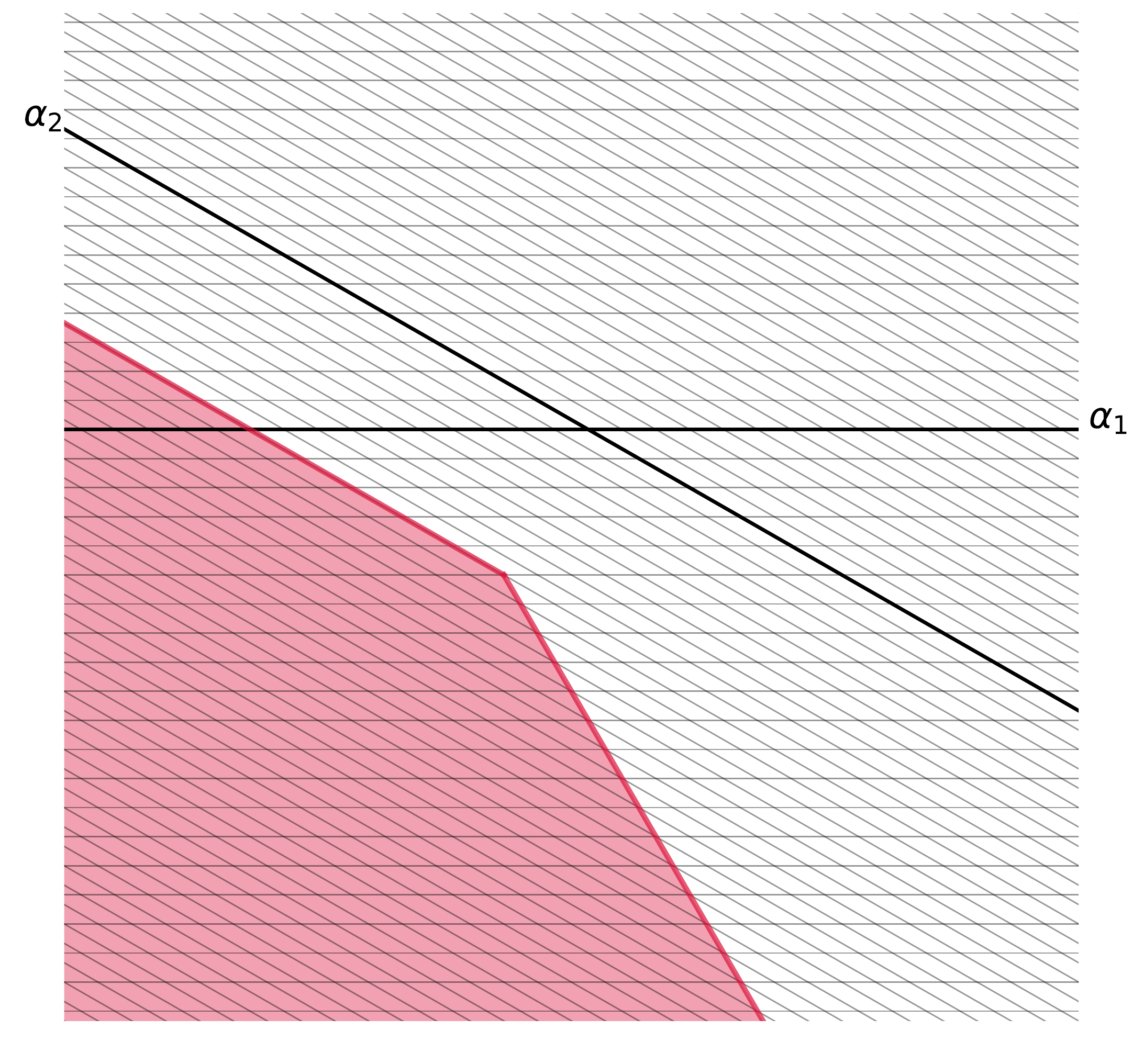} }}%
    \hfill
    \subfloat[$\sigma = s_2(s_1s_2)^2$]{{\includegraphics[width=1.5in]{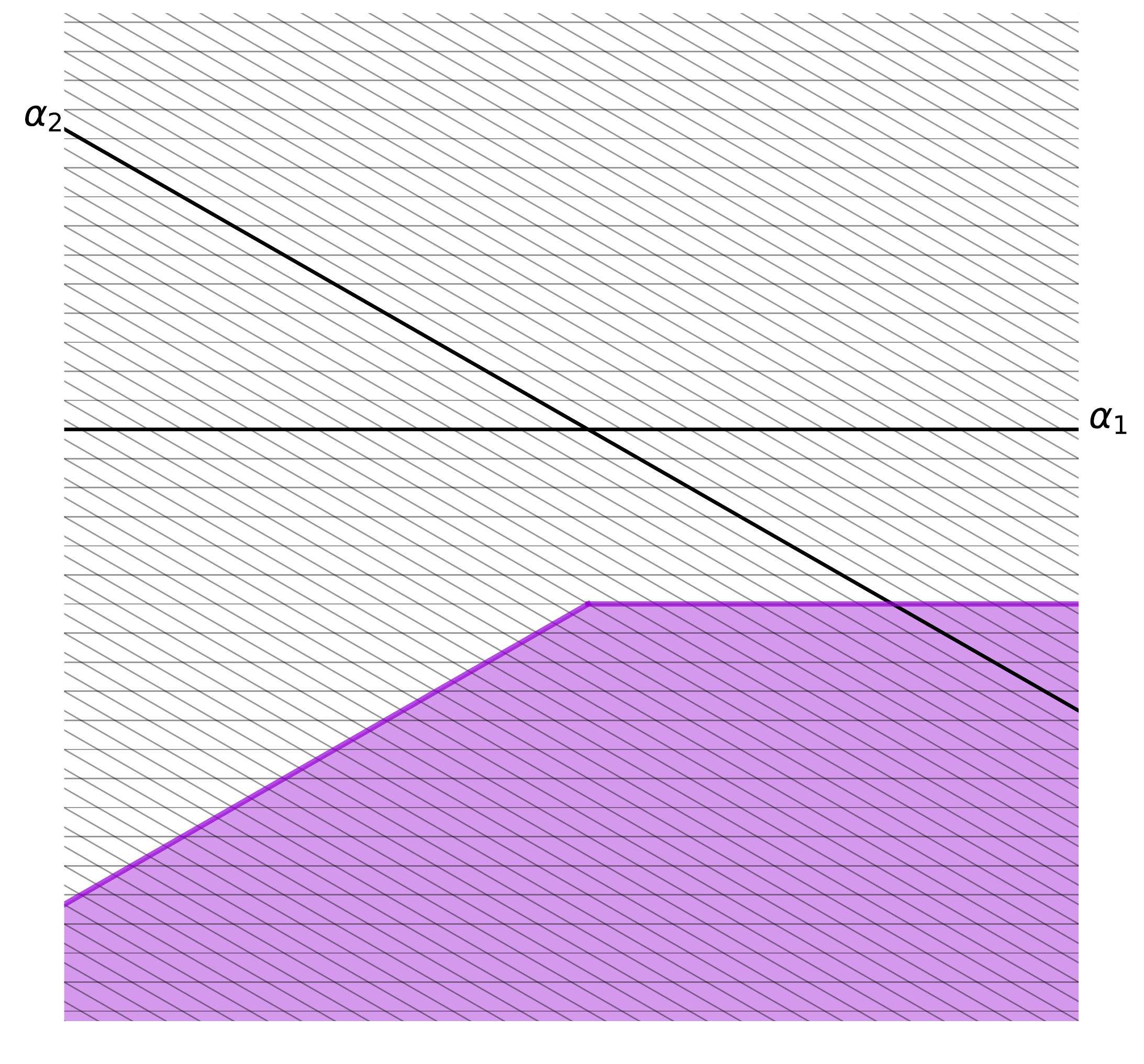} }}%
    \hfill
    \subfloat[$\sigma = (s_1s_2)^3$]{{\includegraphics[width=1.5in]{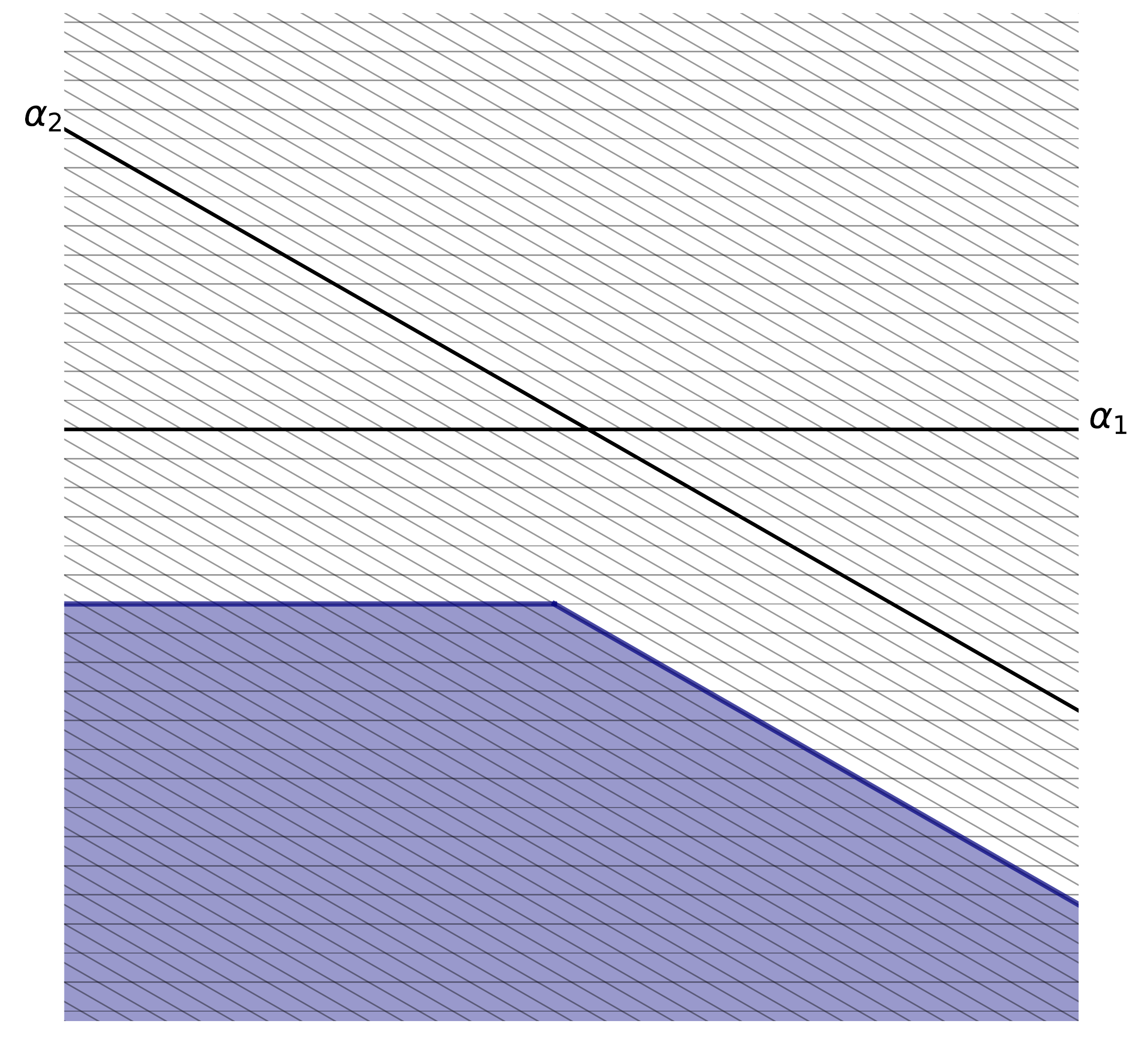} }}%
    \caption{Solution sets to linear inequalities corresponding to $\fg_2$.}
    \label{fig:g2_single_elements}
\end{figure}

In Figure \ref{fig:g2_single_elements}, we graphed the pairs of inequalities for single elements of $\A(\l,0)$ and shaded in their corresponding solution sets. Observe that changing $\mu$ would only result in translations to these solution sets. In the following sections we describe how the Weyl diagram changes as we vary the weight $\mu$.

\subsubsection{\normalfont{\textbf{Case}} \texorpdfstring{$\mu = n\al_1$}{mu equals n alpha 1}}
Figures \ref{subfig:g2_n1}-\ref{subfig:g2_n4} illustrate the Weyl alternation diagrams for $\mu = n\alpha_1$ such that $n = 1,2,3,4$. We observe that the empty region is a hexagon with a vertex pointing up. We also note that as $n$ increases from 1 to 4, the length of the edges of the hexagon in the center also increases. 
\begin{figure}[H]%
    \centering
    \subfloat[$\mu = \al_1$]{{\includegraphics[width=1.5in]{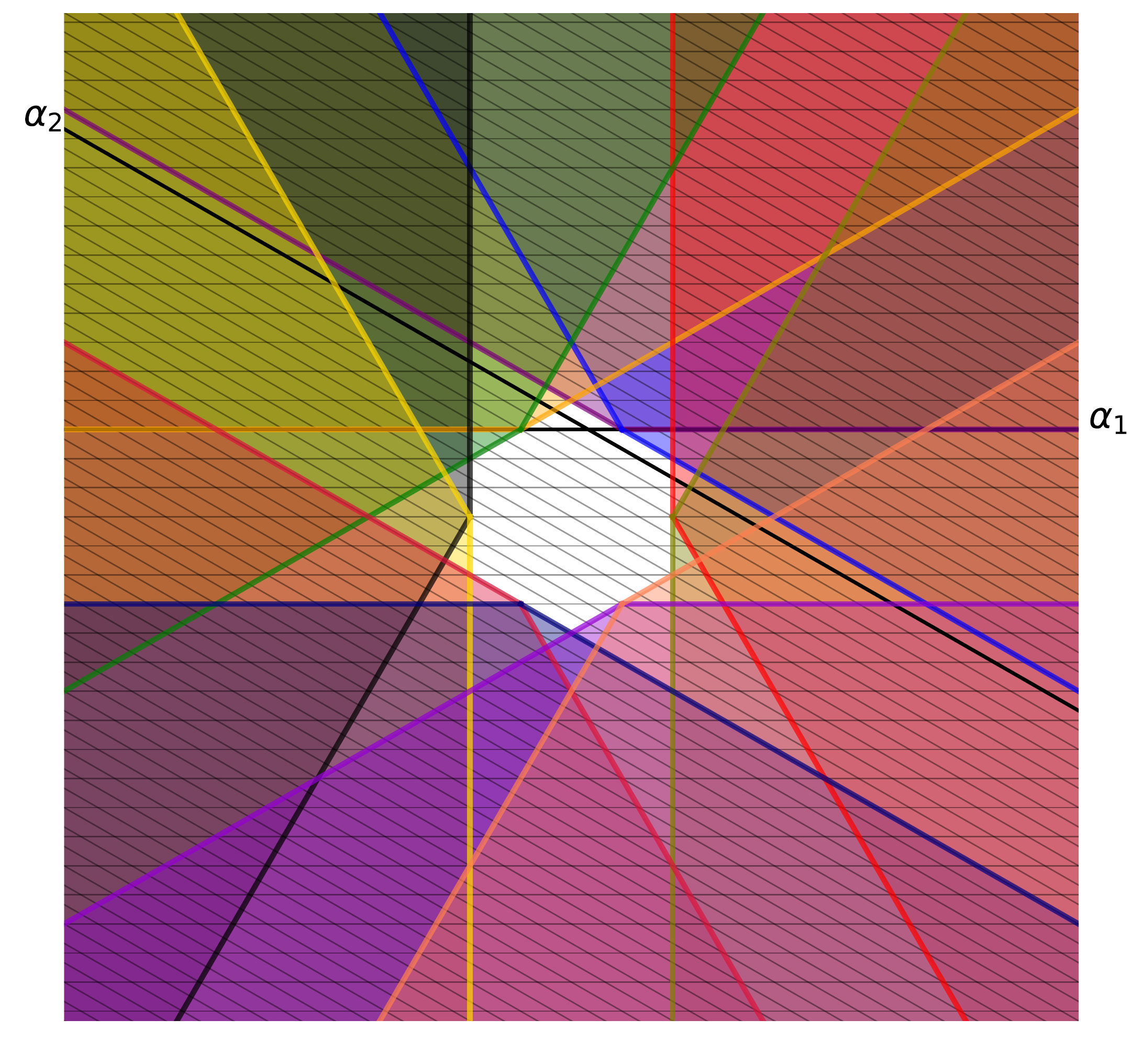}}
    \label{subfig:g2_n1}
    }%
    \hfill
    \subfloat[$\mu = 2\al_1$]{{\includegraphics[width=1.5in]{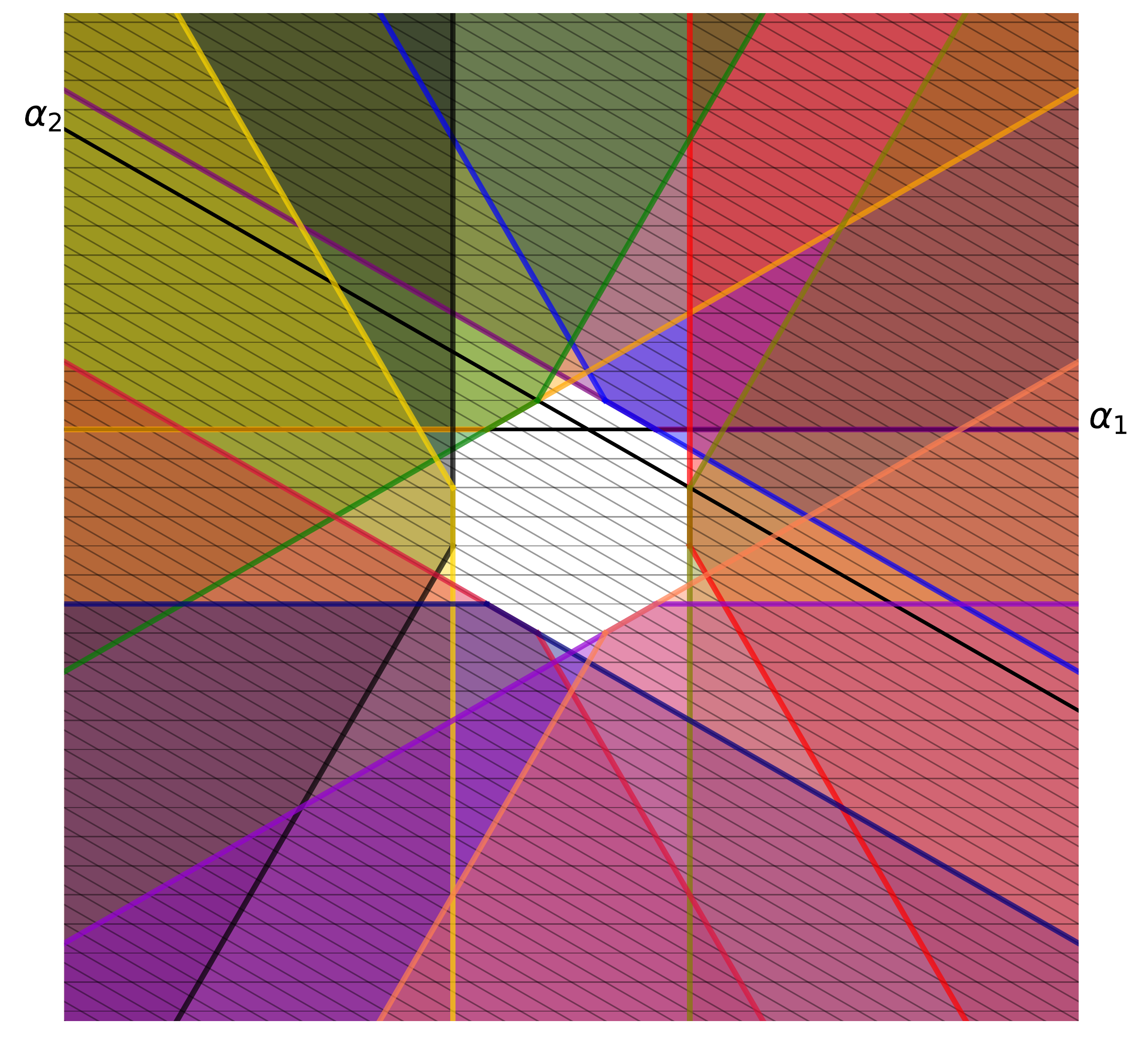} }
    \label{subfig:g2_n2}
    }
    \hfill
    \subfloat[$\mu = 3\al_1$]{{\includegraphics[width=1.5in]{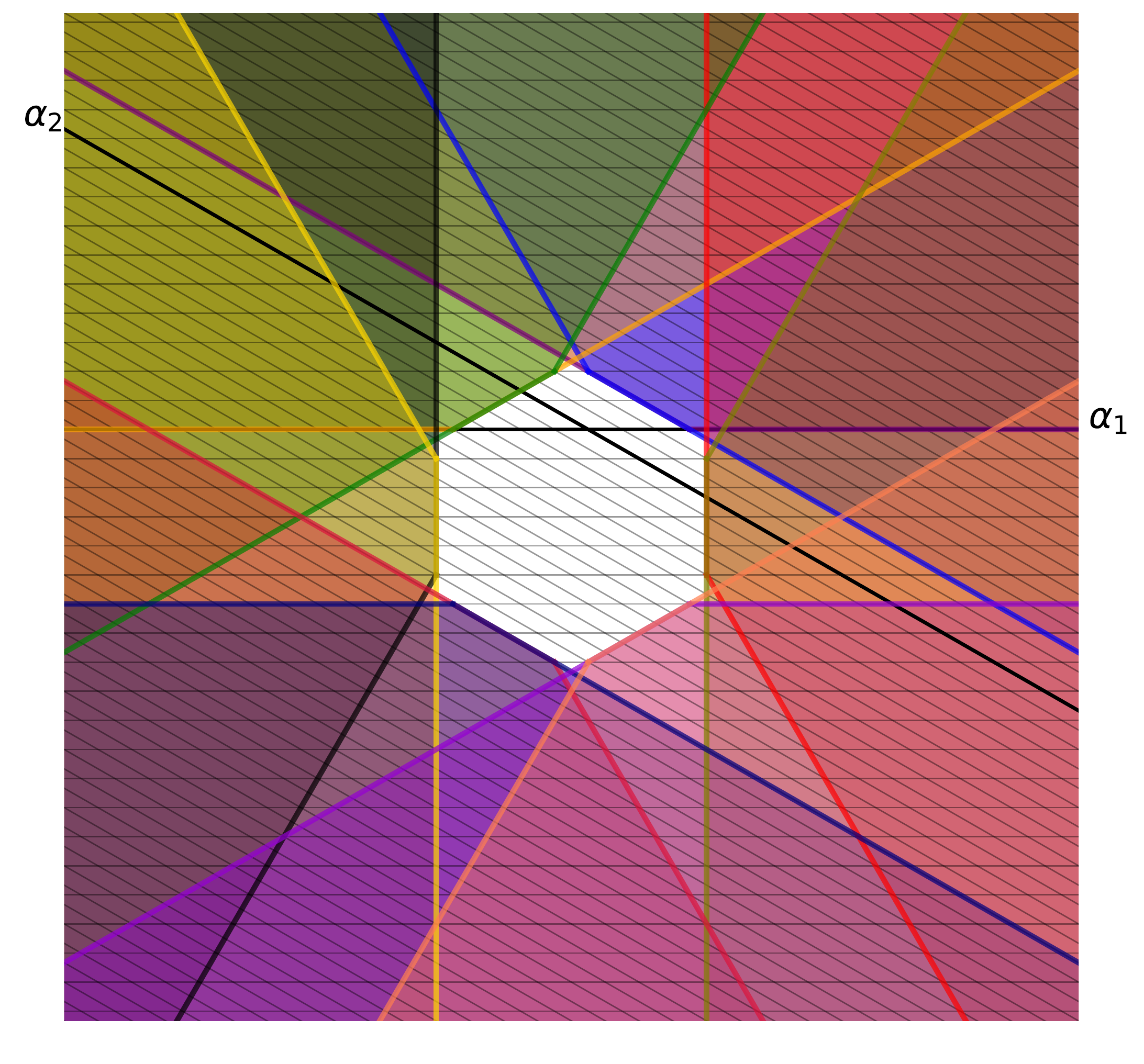}}
    \label{subfig:g2_n3}
    }%
    \hfill
    \subfloat[$\mu = 4\al_1$]{{\includegraphics[width=1.5in]{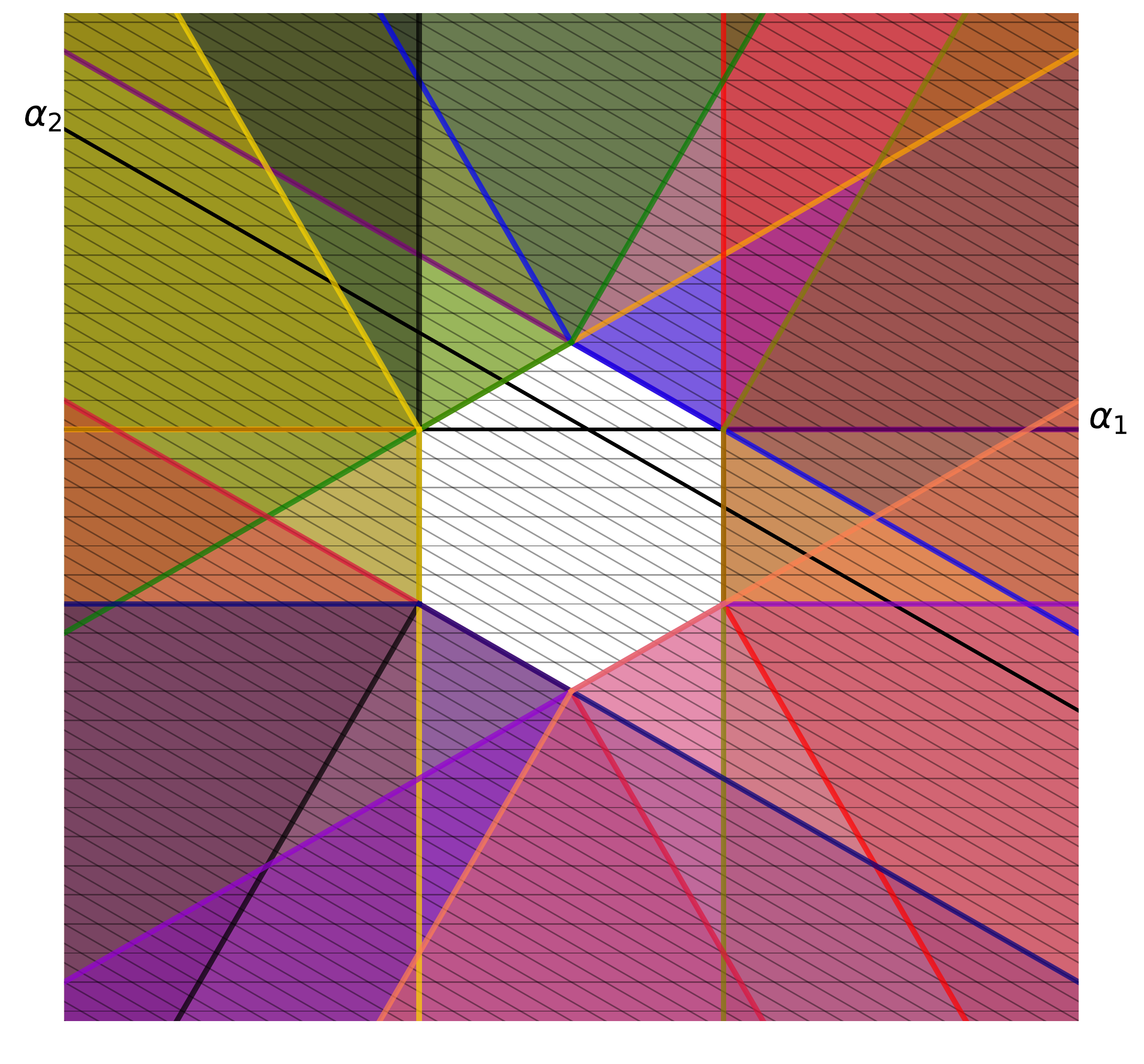} }
    \label{subfig:g2_n4}
    }
    \caption{Weyl alternation diagrams of $\fg_2$ with $\mu=n\al_1$.}
    \label{fig:g2_mu_n}
\end{figure}
\subsubsection{\normalfont{\textbf{Case}} \texorpdfstring{$\mu = m\al_2$}{mu equals m alpha 2}}
Figures \ref{subfig:g2_m1}-\ref{subfig:g2_m4} give a geometric representation of the Weyl alternation diagrams for $\mu = m\al_2$ with $m= 1,2,3,4$. Observe that the empty region takes a hexagonal shape with an edge on top. We also note that as $m$ increases from 1 to 4, the length of the edges of the hexagon also increases.
\begin{figure}[H]%
    \centering
    \subfloat[$\mu = \al_2$]{{\includegraphics[width=1.5in]{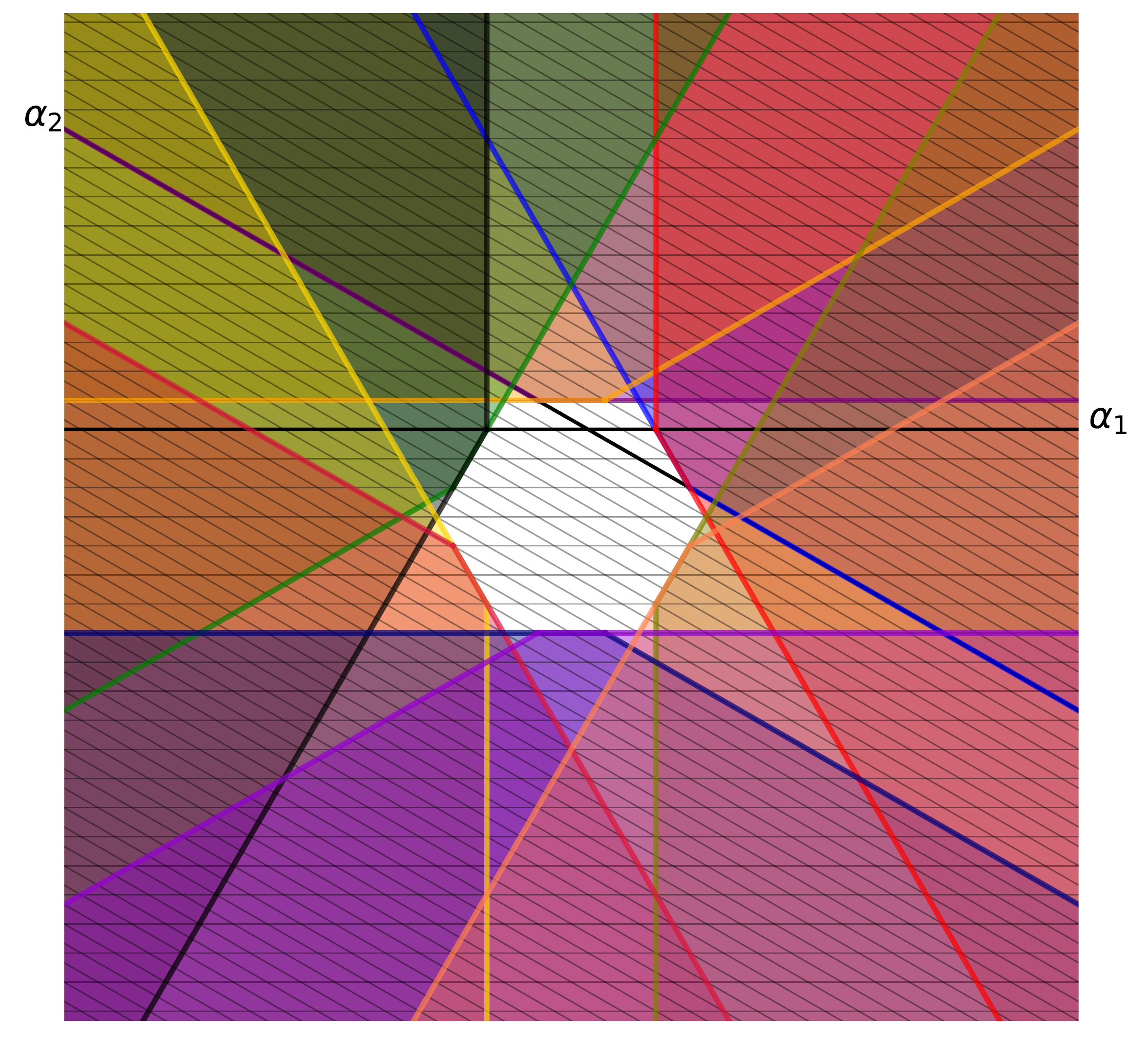}}
    \label{subfig:g2_m1}
    }%
    \hfill
    \subfloat[$\mu = 2\al_2$]{{\includegraphics[width=1.5in]{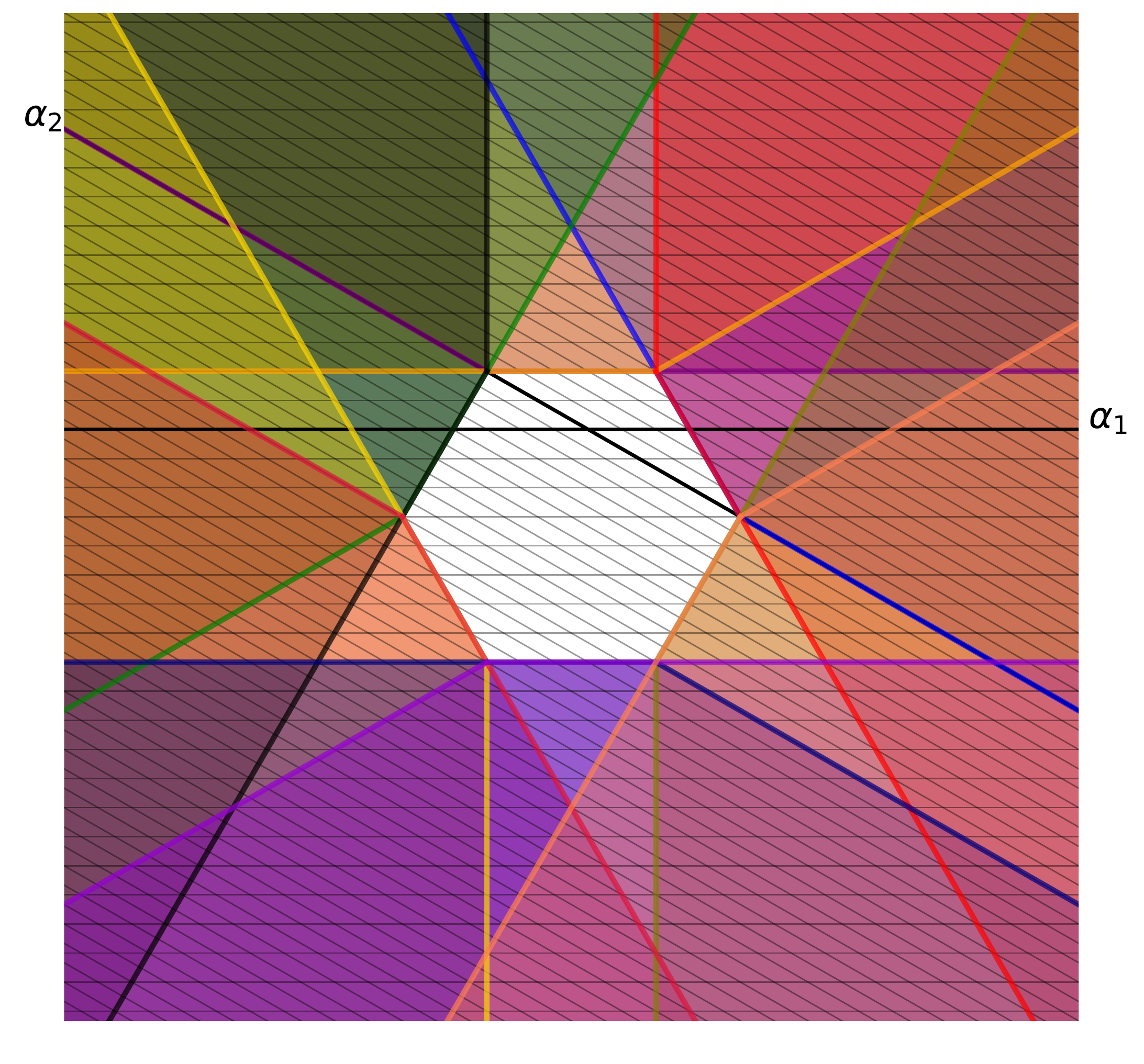} }
    \label{subfig:g2_m2}
    }
    \hfill
    \subfloat[$\mu = 3\al_2$]{{\includegraphics[width=1.5in]{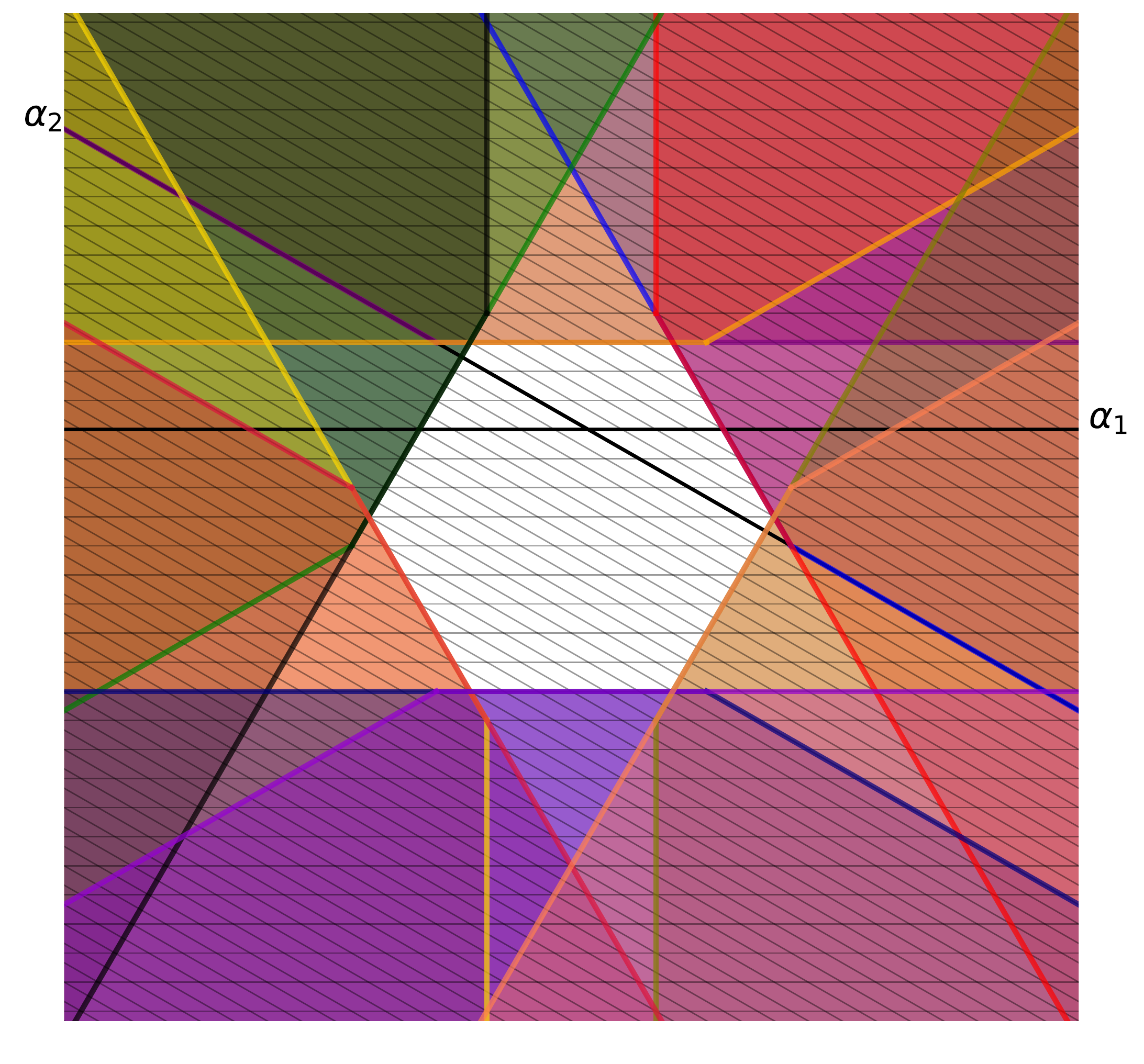}}
    \label{subfig:g2_m3}
    }%
    \hfill
    \subfloat[$\mu = 4\al_2$]{{\includegraphics[width=1.5in]{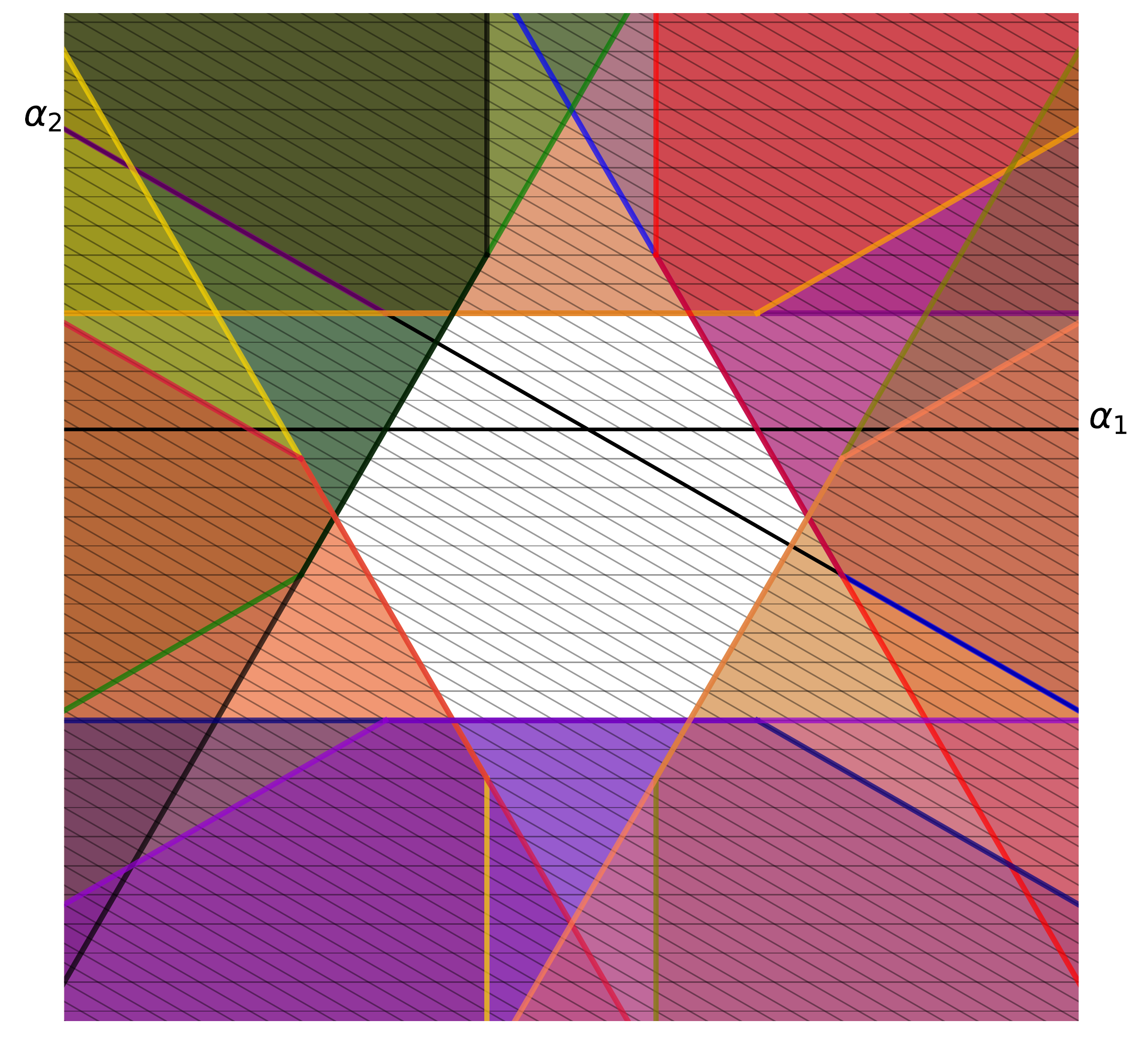} }
    \label{subfig:g2_m4}
    }
    \caption{Weyl alternation diagrams of $\fg_2$ with $\mu=m\al_2$.}
    \label{fig:g2_mu_m}
\end{figure}

\subsubsection{\normalfont{\textbf{Case}} \texorpdfstring{$\mu = n\al_1+m\al_2$}{mu equals n alpha 1 plus m alpha 2}}
First, we assume that $n$ and $m$ satisfy the inequalities $2n+1 > 3m$ and $2m+1 \leq n$ or $2n+1 \leq 3m$ and $2m+1 > n$.
Figures \ref{subfig:g2_n1m1}-\ref{subfig:g2_n3m1} give a geometric representation of the Weyl alternation diagrams. We observe that the empty region is a either a hexagon with a vertex pointing up or a hexagon with an edge on top. We also note that as $n, m$ increase, the empty region covers a larger area. 
\begin{figure}[H]%
    \centering
    \subfloat[$\mu = \al_1 + \al_2$]{{\includegraphics[width=1.5in]{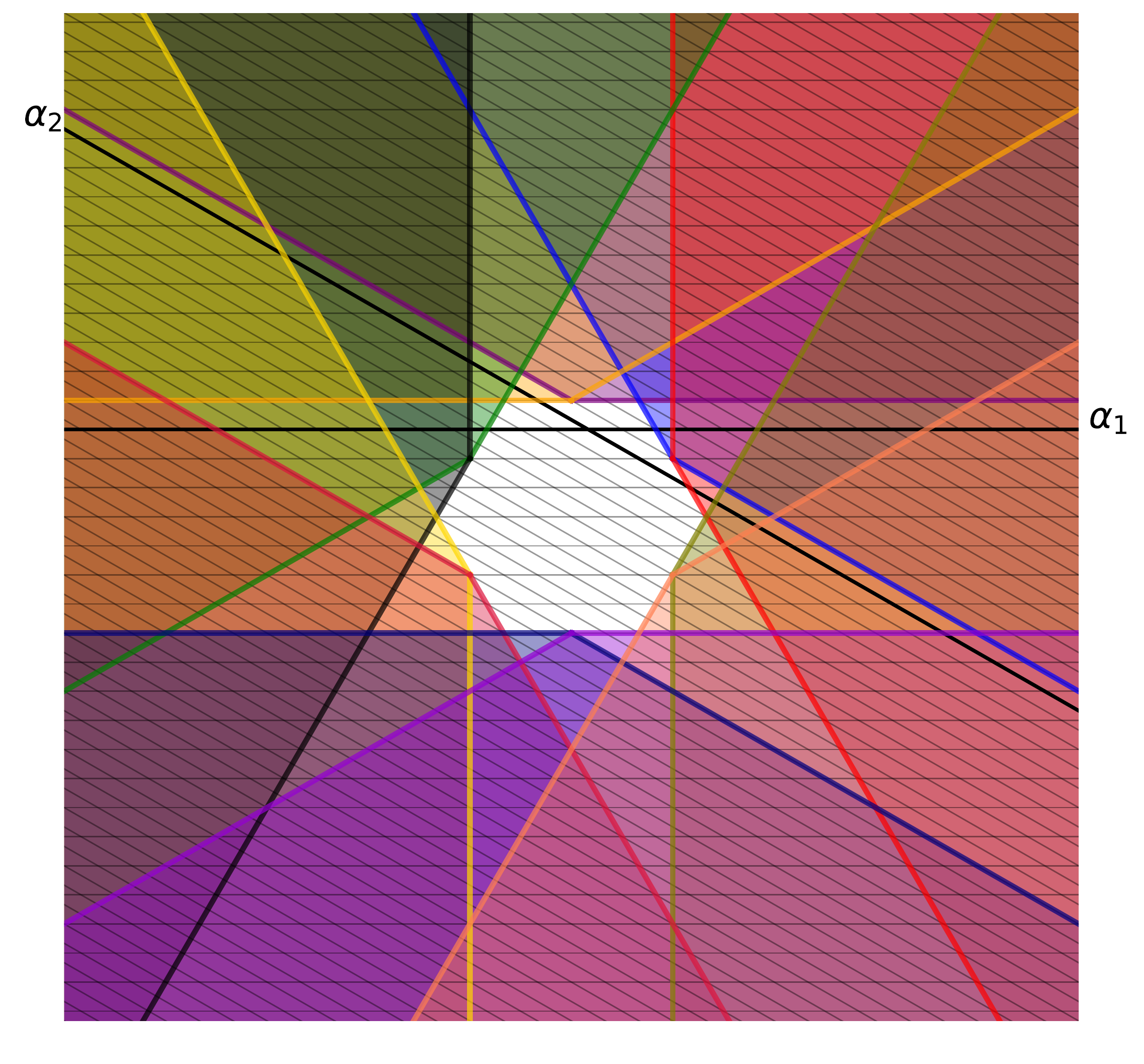}}
    \label{subfig:g2_n1m1}
    }
    \hfill
    \subfloat[$\mu = 2\al_1 + 2\al_2$]{{\includegraphics[width=1.5in]{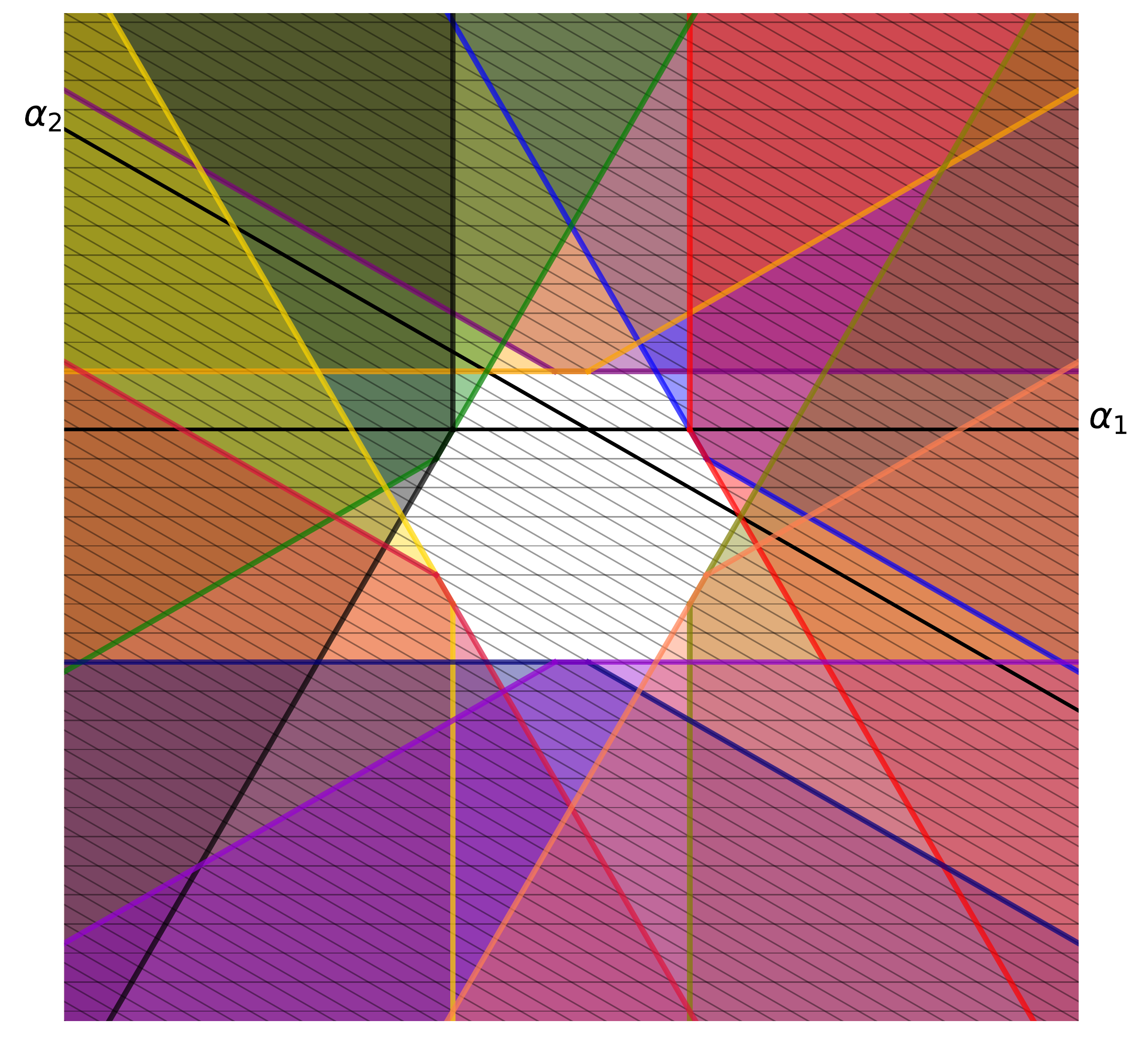} }
    \label{subfig:g2_n2m2}
    }
    \hfill
    \subfloat[$\mu = 1\al_1+2\al_2$]{{\includegraphics[width=1.5in]{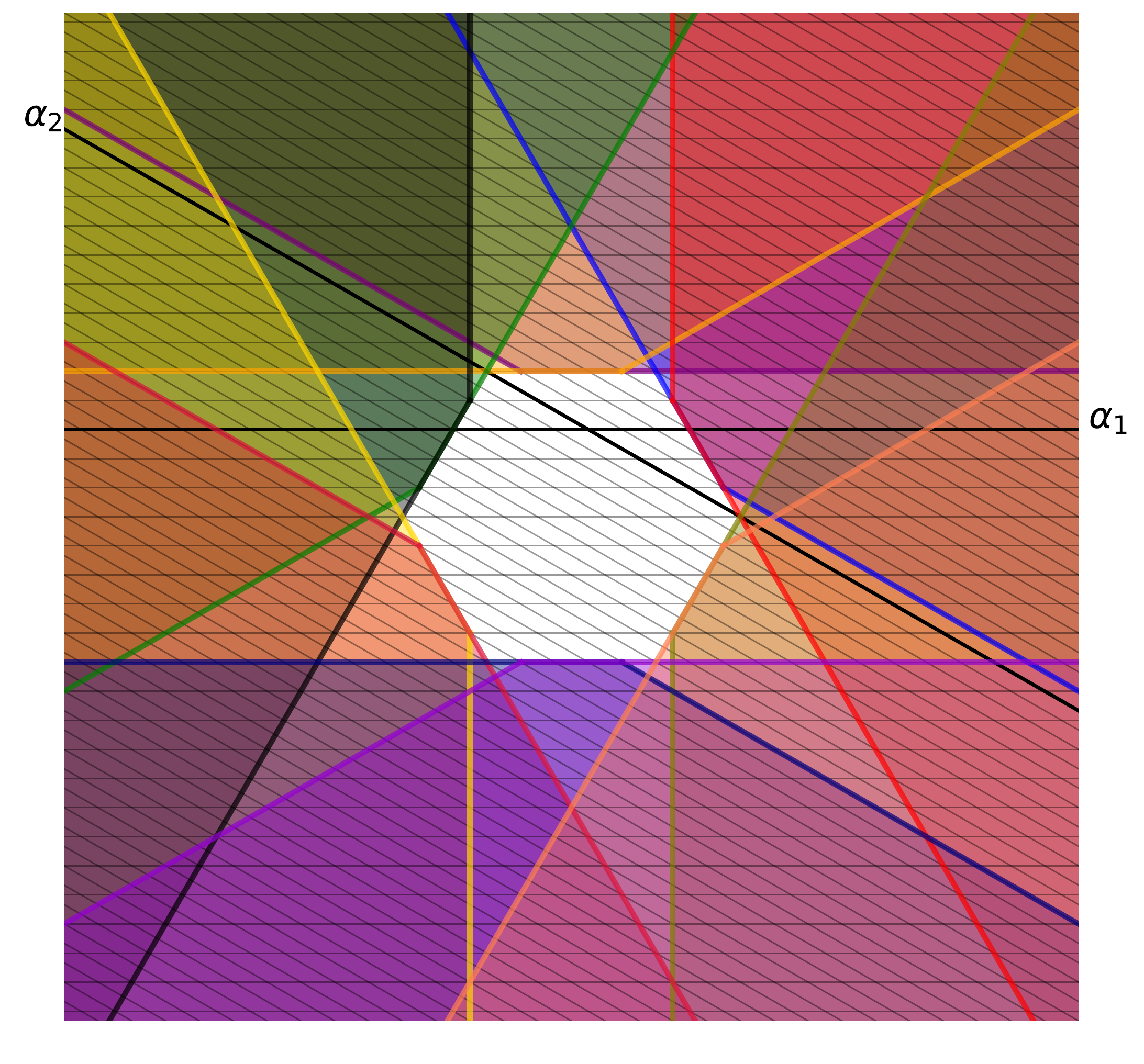}}
    \label{subfig:g2_n1m2}
    }
    \hfill
    \subfloat[$\mu = 3\al_1+1\al_2$]{{\includegraphics[width=1.5in]{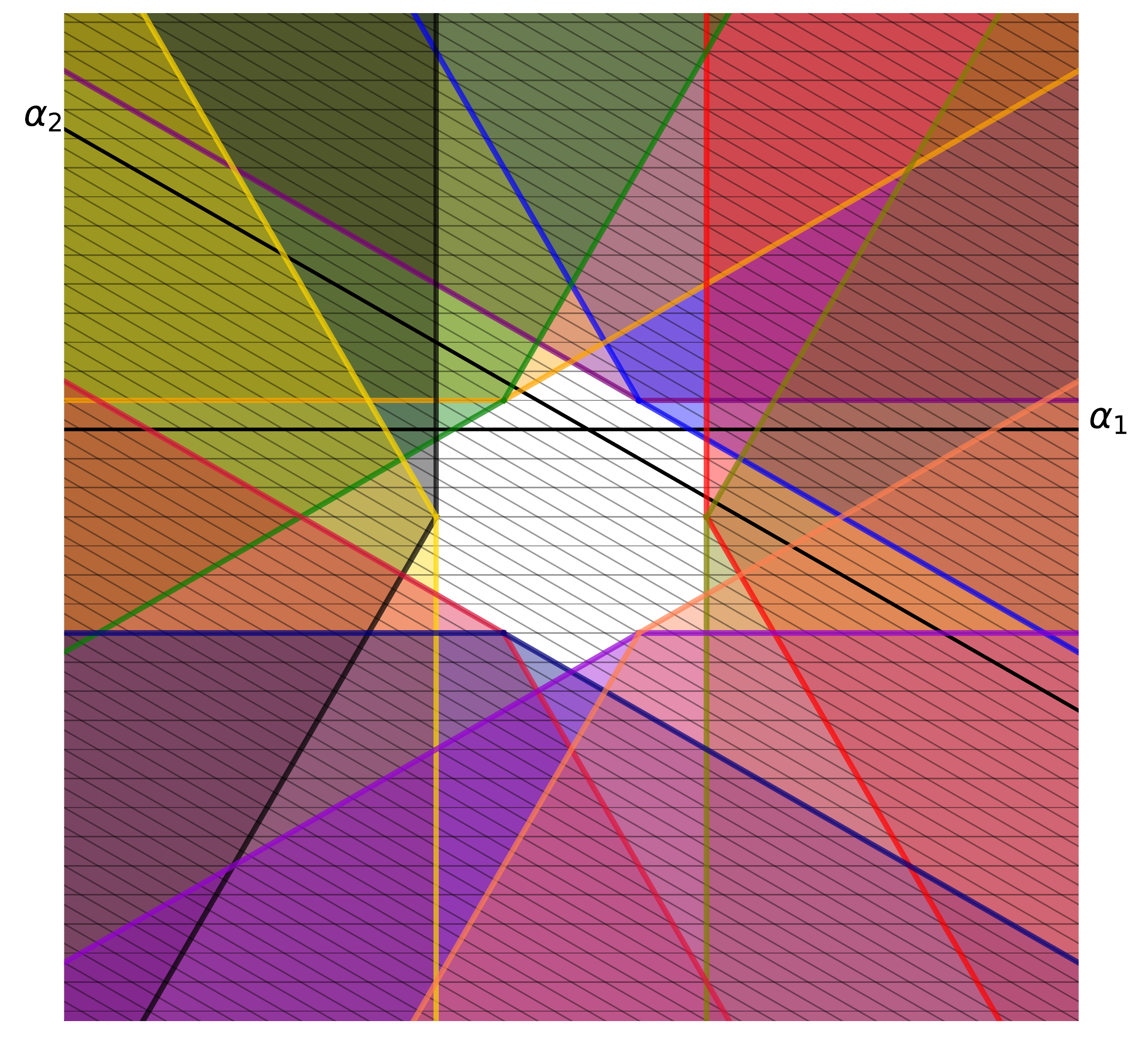} }
    \label{subfig:g2_n3m1}
    }
    \caption{Weyl alternation diagrams of $\fg_2$ where $\mu=n\al_1+m\al_2$ with $2n+1 > 3m$ and $2m+1 \leq n$ or $2n+1 \leq 3m$ and $2m+1 > n$.}
    \label{fig:g2_mu_nm}
\end{figure}
Now assume that $n$ and $m$ must satisfy the inequalities $2n+1 > 3m$ and $2m+1 > n$.
Figures \ref{subfig:g2_n2m1}-\ref{subfig:g2_n4m2} give a geometric representation of the Weyl alternation diagrams. We observe that the empty region is a 12-pointed star. We also note that as $n, m$ increase, the empty region covers a larger area. 
\begin{figure}[H]%
    \centering
    \subfloat[$\mu = 2\al_1 + \al_2$
    ]{{\includegraphics[width=1.5in]{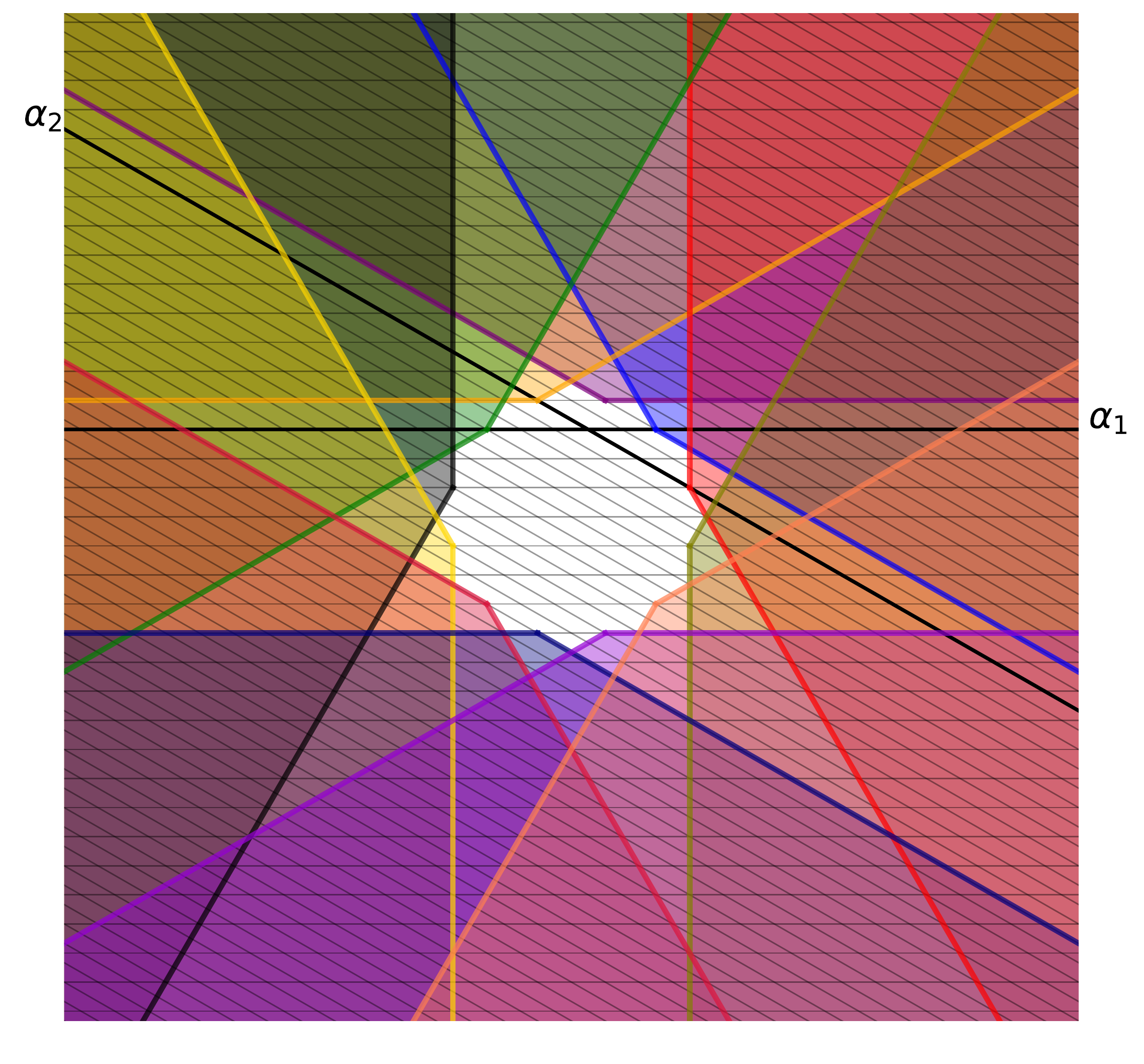}}
    \label{subfig:g2_n2m1}}
    \hfill
    \subfloat[$\mu = 3\al_1 + 2\al_2$
    ]{{\includegraphics[width=1.5in]{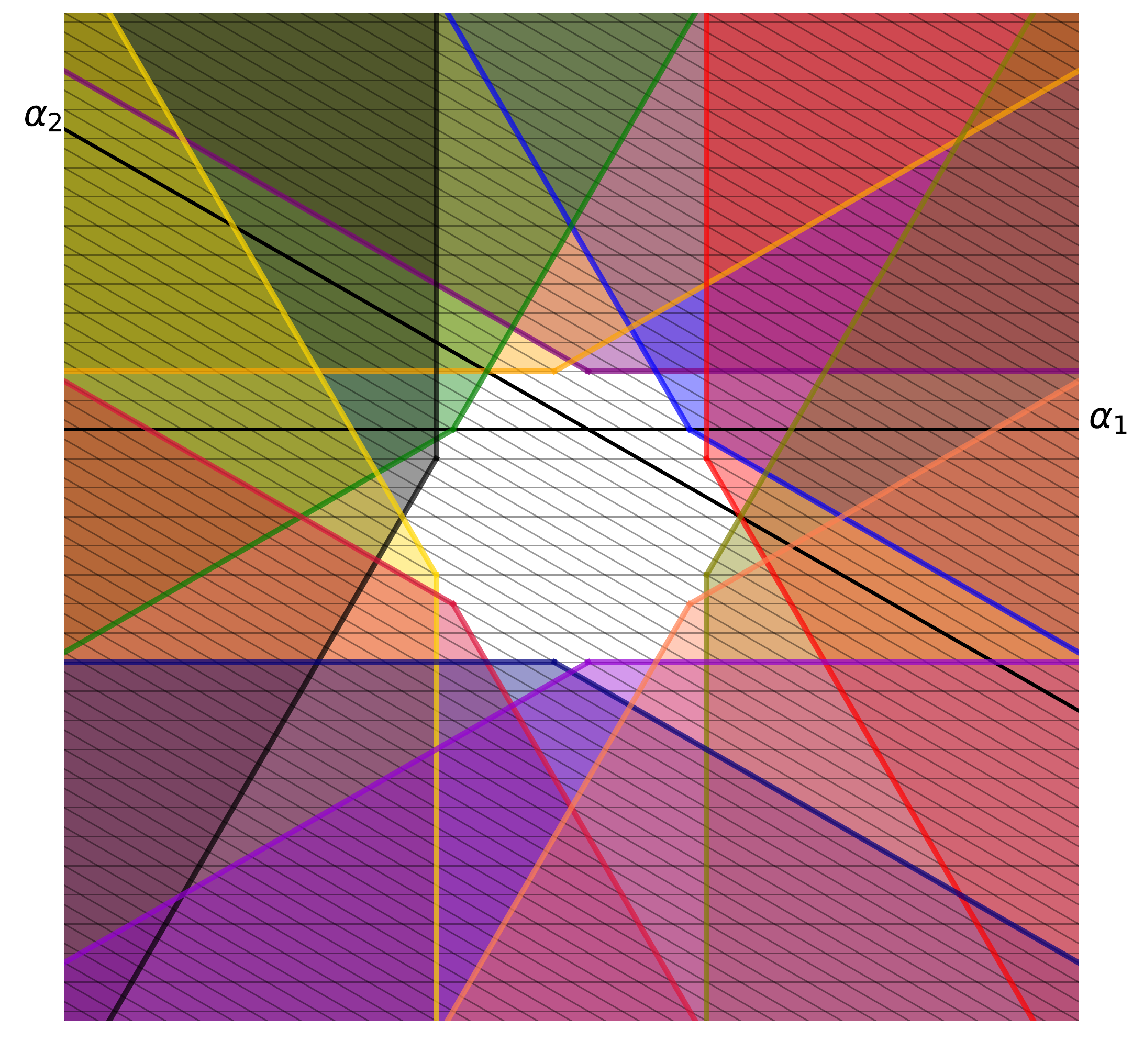} }}
    \hfill
    \subfloat[$\mu = 5\al_1 + 3\al_2$
    ]{{\includegraphics[width=1.5in]{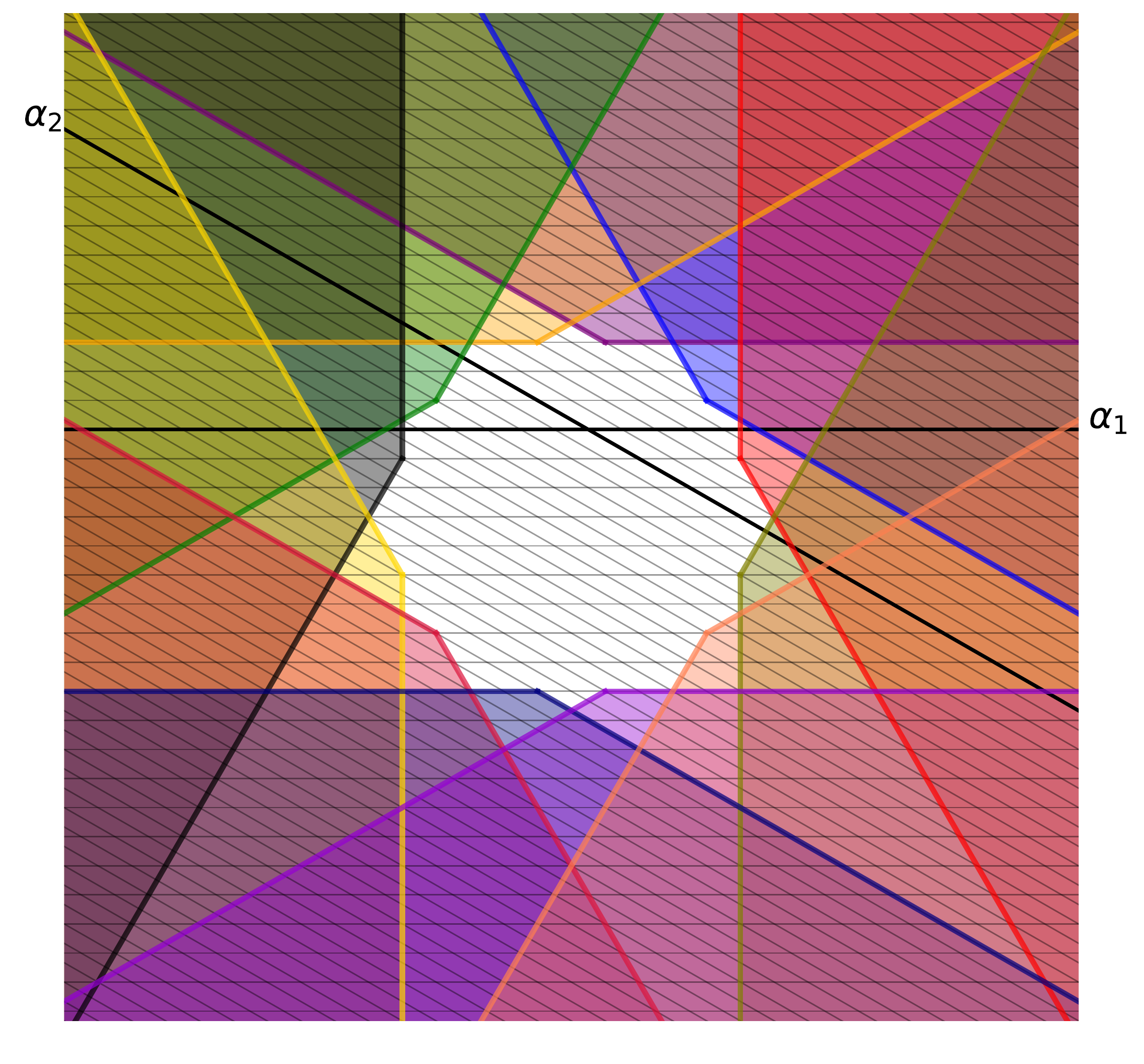} }}
    \hfill
    \subfloat[$\mu = 4\al_1 + 2\al_2$
    ]{{\includegraphics[width=1.5in]{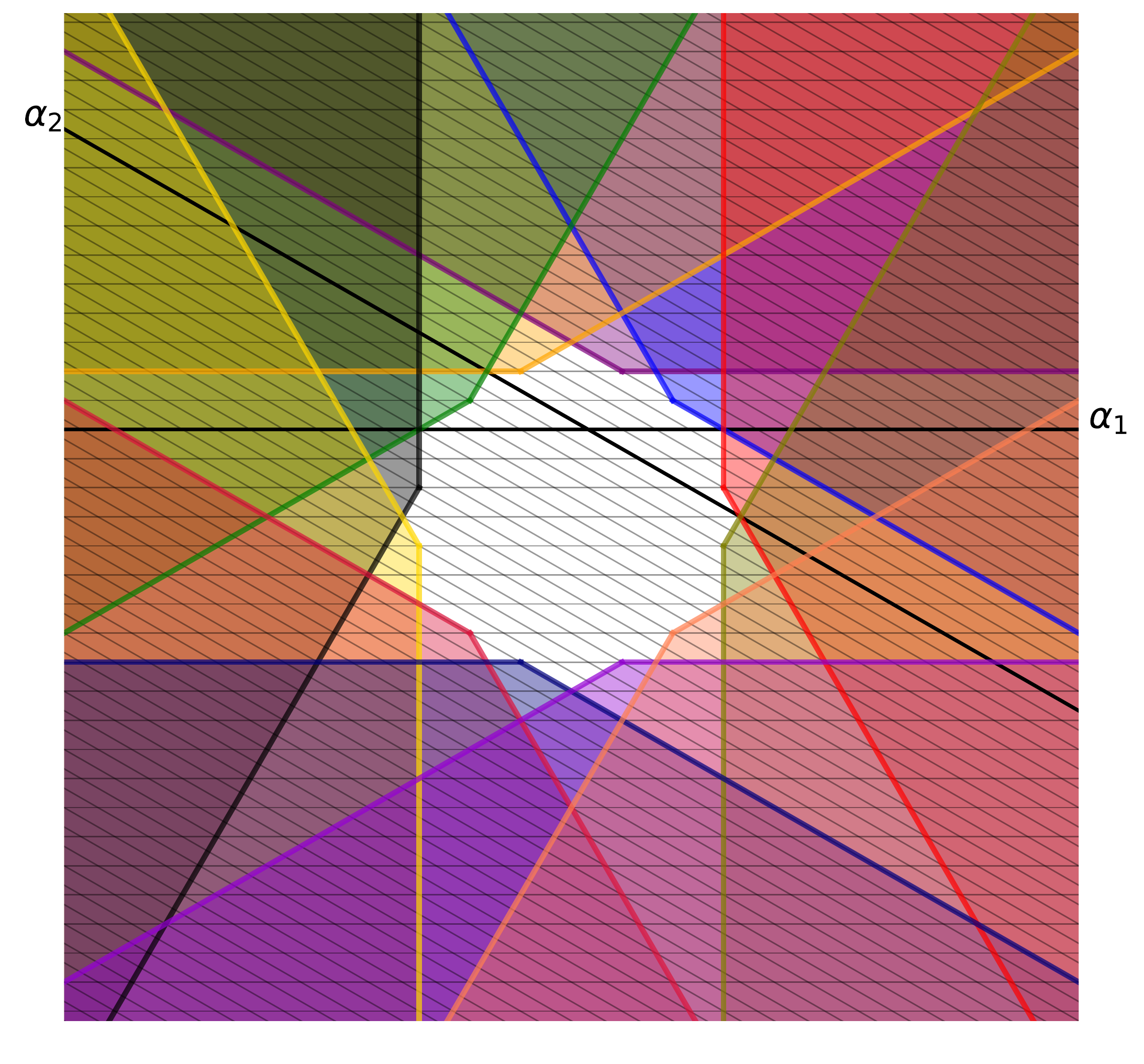} }
    \label{subfig:g2_n4m2}}
    \caption{Weyl alternation diagrams of $\fg_2$ where $\mu=n\a_1+m\a_2$ where $2n+1 > 3m$ and $2m+1 > n$.} 
    \label{fig:g2_mu_w}
\end{figure}

\begin{figure}[H]
    \centering
    \includegraphics[scale=.2]{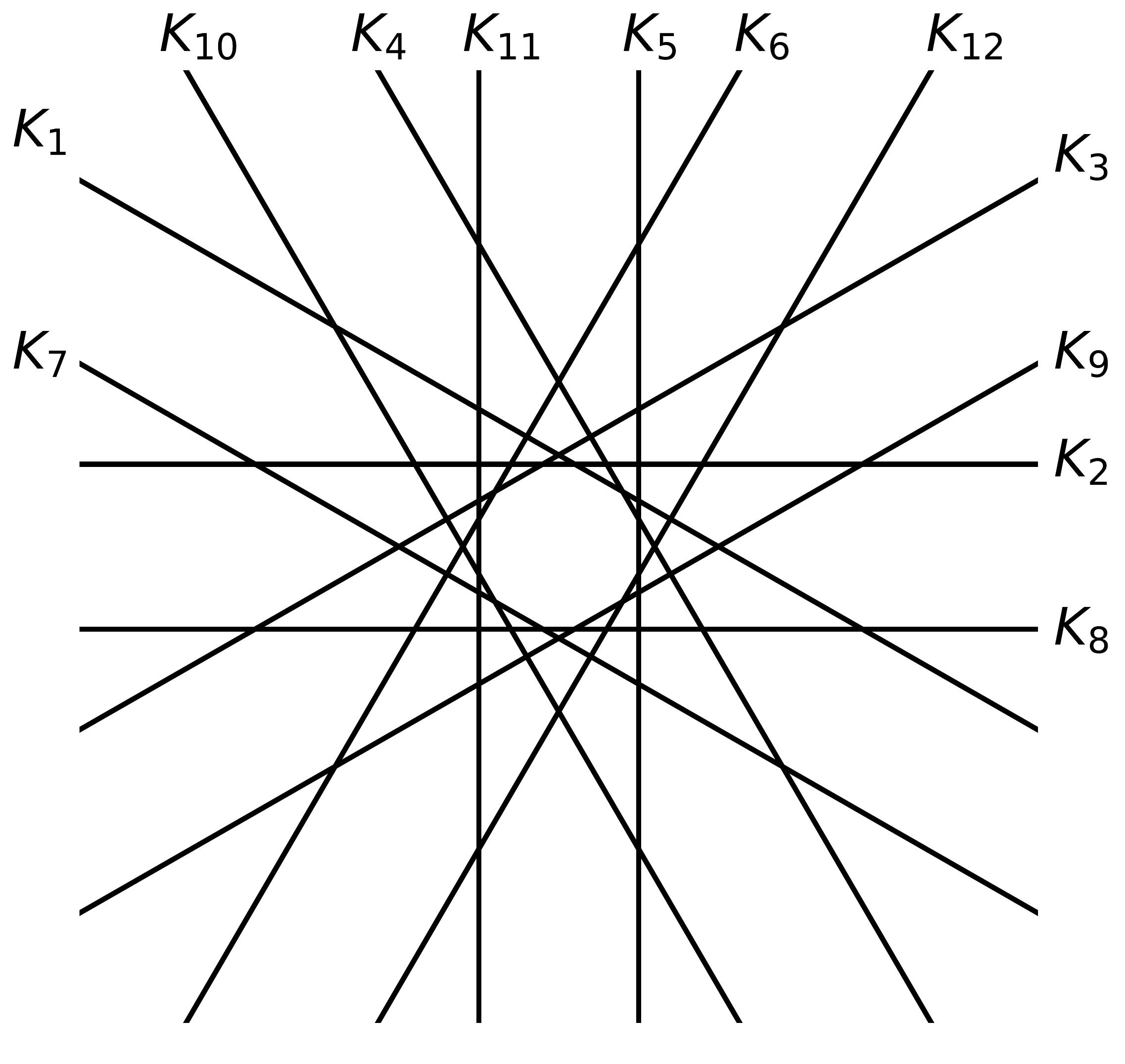}
    \caption{Set of linear inequalities for determining the boundaries of the Weyl alternation sets for the Lie algebra of type $G_2$.}
    \label{fig:g2_ineqs}
\end{figure}

To explain the shapes that form in the empty region of each Weyl alternation diagram for the Lie algebra of type $G_2$ we turn to Figure \ref{fig:g2_ineqs}. From Table \ref{tab:g2}, we notice that all inequalities depend on $n$ and $m$ which simply shift the inequalities. However, changing $n$ and $m$ never changes the direction of the line. This means that Figure \ref{fig:g2_ineqs} is a good representation of the inequalities formed. 
\begin{figure}[H]%
    \centering
    \subfloat[Edge on top.]{{\includegraphics[width=5cm]{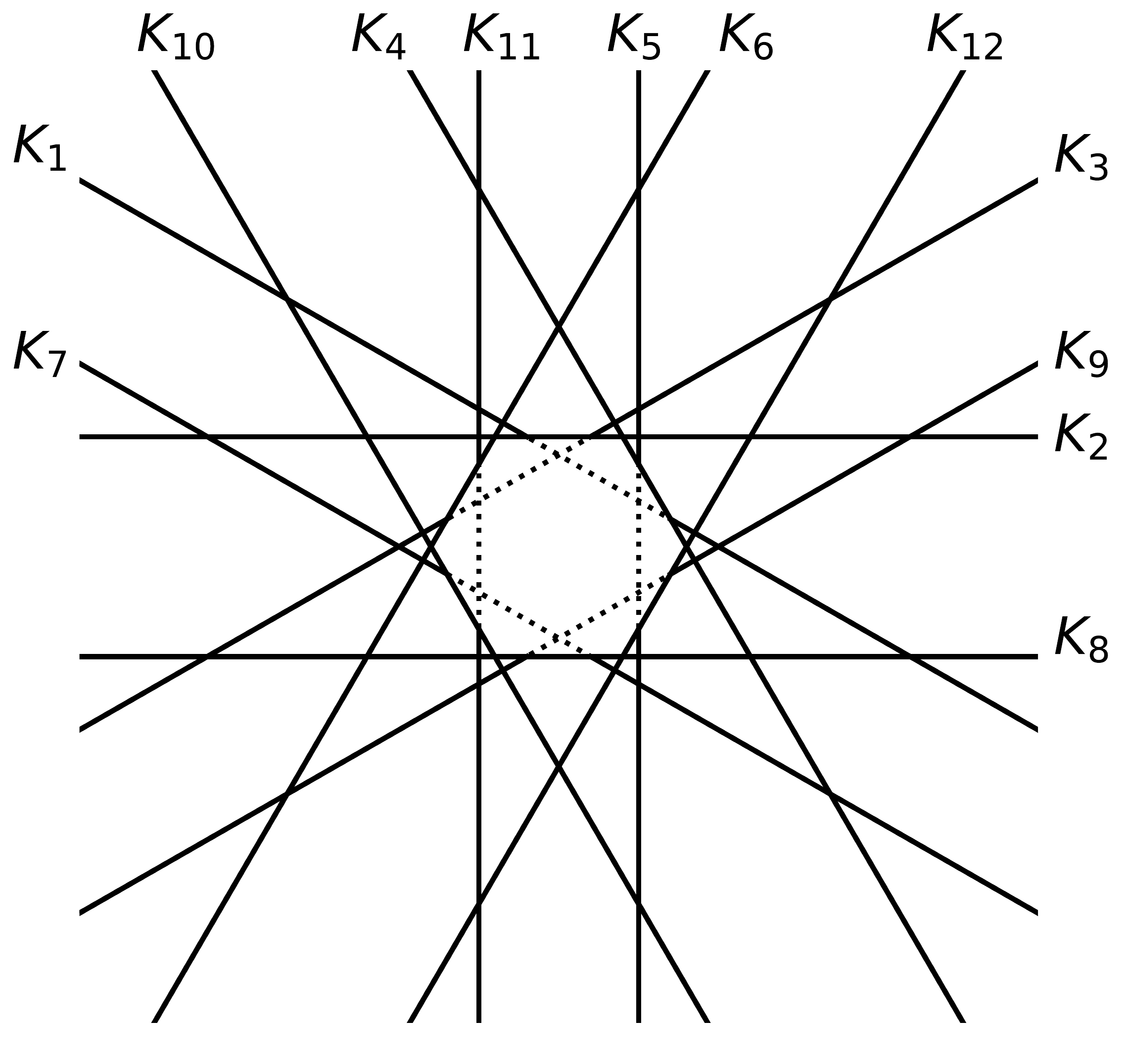}}
    \label{subfig:g2_edge}
    }
    \quad
    \subfloat[Vertex on top.]{{\includegraphics[width=5cm]{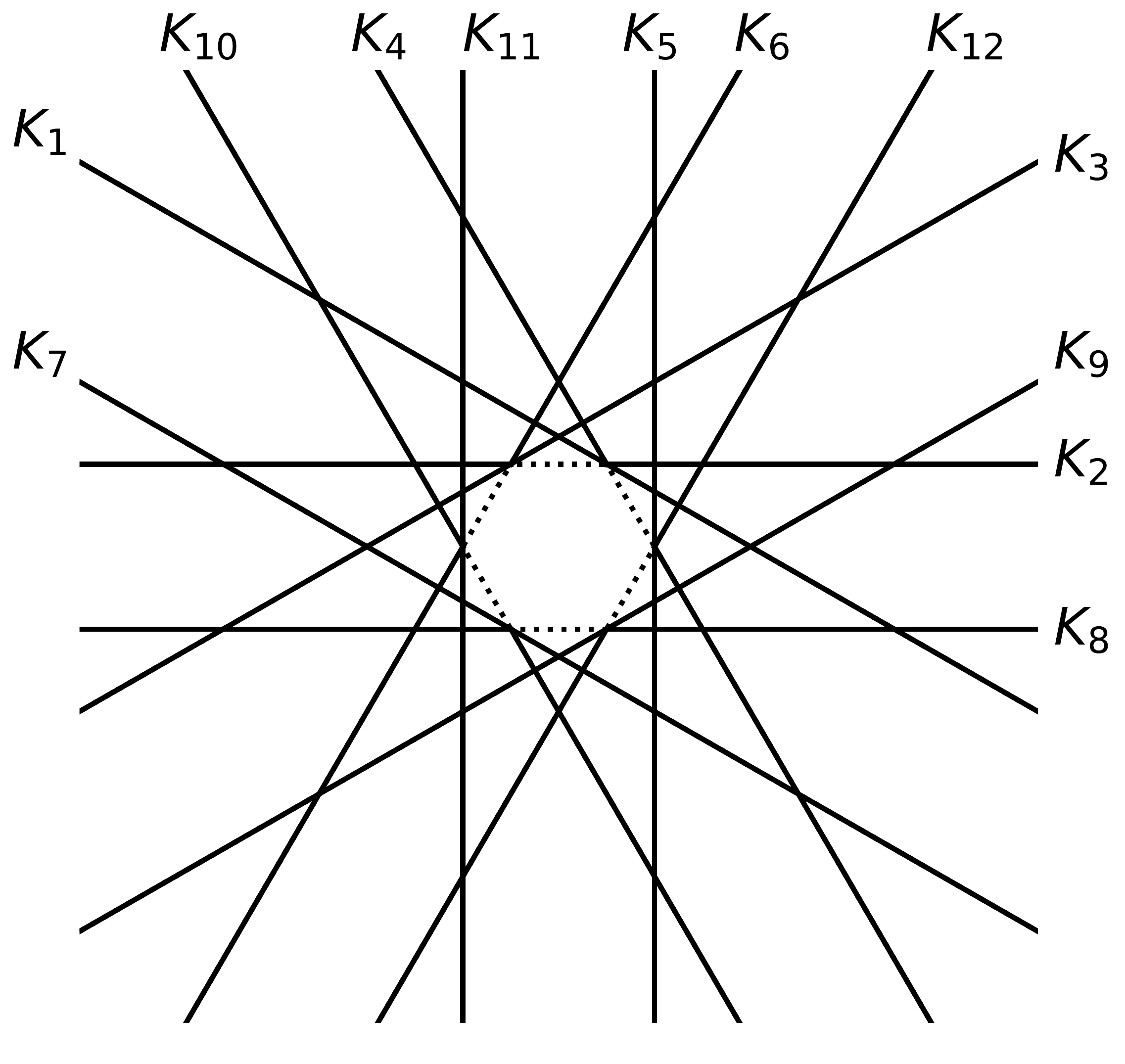} }
    \label{subfig:g2_vertex}
    }
    \quad
    \subfloat[12-pointed star.]{{\includegraphics[width=5cm]{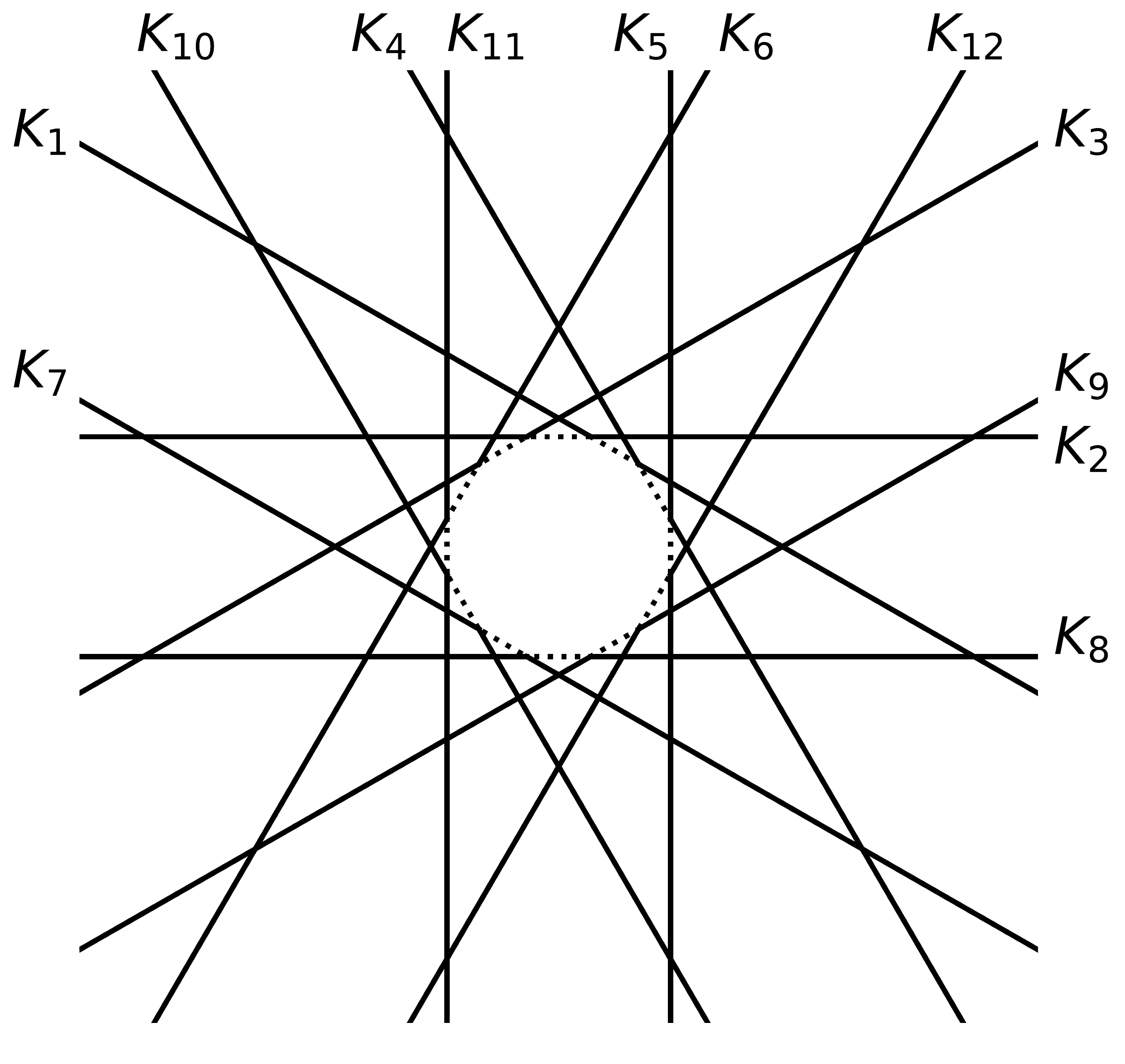}}
    \label{subfig:g2_star}
    }
    \caption{Different formations of the center empty region for the Lie algebra of type $G_2$.}
    \label{fig:g2_cases}
\end{figure}
 From Figure \ref{subfig:g2_edge}, we note that the hexagon with an edge on top will occur when the inequalities $K_1$ and $K_3$ intersect at or below $K_2$ and the inequalities $K_4$ and $K_2$ intersect strictly above $K_1$. The dashed lines indicate the continuation of the inequalities into the empty region. Similarly, the empty region takes the shape of a hexagon with a vertex on top when inequalities $K_1$ and $K_3$ intersect strictly above $K_2$ and the inequalities $K_4$ and $K_2$ intersect at or below $K_1$. This is depicted in Figure \ref{subfig:g2_vertex}. The dashed lines indicate the continuation of the inequalities into the empty region. In addition, the empty region takes the shape of a 12-pointed star if and only if inequalities $K_1$ and $K_3$ are strictly above $K_2$ and the inequalities $K_2$ and $K_4$ intersect strictly above $K_1$. This is depicted in Figure \ref{subfig:g2_star}. This occurs exactly when $\mu = n\w_1+m\w_2$ with $n,m \in \NN$.
 
\section{Future work}\label{sec:open}
One possible direction for extending the work presented in this manuscript is to study the Weyl alternation sets of the rank 3 Lie algebras along with their Weyl alternation diagrams. For example, one can ask the following.

\begin{question}
What are the Weyl alternation diagrams for the Lie algebra $\mathfrak{sl}_4(\mathbb{C})$? How does the empty region change as the weight $\mu$ changes?
\end{question}

\section*{Acknowledgements}
This research was supported in part by the Alfred P. Sloan Foundation, the Mathematical Sciences Research Institute, and the National Science Foundation. We would like to thank Prof. Rebecca Garcia (Sam Houston State University) for her feedback on earlier drafts of this manuscript.

\end{document}